\documentclass[a4paper,reqno,11pt]{amsart}

\usepackage{amsmath, amsfonts, amssymb, amsthm, amscd}
\usepackage{graphicx}
\usepackage{psfrag}
\usepackage{perpage}
\usepackage{url}
\usepackage{color}
\usepackage{mathrsfs}
\usepackage{mathabx}
\usepackage{scalerel}
\usepackage{soul}


\usepackage{tikz}\usepackage{caption,subcaption}
\usetikzlibrary{arrows,decorations.pathmorphing,backgrounds,positioning,fit,petri}
\usepackage{subdepth}

\usepackage{dsfont} 

\usepackage[utf8]{inputenc}
\usepackage[T1]{fontenc}
\usepackage{microtype}

\usepackage[a4paper,scale={0.72,0.74},marginratio={1:1},footskip=7mm,headsep=10mm]{geometry}

\usepackage{hyperref}


\makeatletter
\def\@secnumfont{\bfseries\scshape}

\def\section{\@startsection{section}{1}
  \z@{.9\linespacing\@plus\linespacing}{.5\linespacing}%
  {\normalfont\large\bfseries\scshape\centering}}

\def\subsection{\@startsection{subsection}{2}%
  \z@{.5\linespacing\@plus.7\linespacing}{-.5em}%
  {\normalfont\bfseries\scshape}}

\def\subsubsection{\@startsection{subsubsection}{3}%
  \z@{.5\linespacing\@plus.7\linespacing}{-.5em}%
  {\normalfont\scshape}}

\def\specialsection{\@startsection{section}{1}%
  \z@{\linespacing\@plus\linespacing}{.5\linespacing}%
  {\normalfont\centering\large\bfseries\scshape}}
\makeatother

%
%
%


\makeatletter

\renewenvironment{proof}[1][\proofname]{\par
\pushQED{\qed}%
\normalfont \topsep4\p@\@plus4\p@\relax
\trivlist
\item[\hskip\labelsep
\bfseries
#1\@addpunct{.}]\ignorespaces
}{%
\popQED\endtrivlist\@endpefalse
}
\makeatother

\setcounter{tocdepth}{2}

\makeatletter
\newcommand \Dotfill {\leavevmode \leaders \hb@xt@ 6pt{\hss .\hss }\hfill \kern \z@}
\makeatother

\makeatletter
\def\@tocline#1#2#3#4#5#6#7{\relax
  \ifnum #1>\c@tocdepth 
  \else
    \par \addpenalty\@secpenalty\addvspace{#2}%
    \begingroup \hyphenpenalty\@M
    \@ifempty{#4}{%
      \@tempdima\csname r@tocindent\number#1\endcsname\relax
    }{%
      \@tempdima#4\relax
    }%
    \parindent\z@ \leftskip#3\relax \advance\leftskip\@tempdima\relax
    \rightskip\@pnumwidth plus4em \parfillskip-\@pnumwidth
    #5\leavevmode\hskip-\@tempdima
      \ifcase #1
       \or\or \hskip 1.65em \or \hskip 3.3em \else \hskip 4.95em \fi%
      #6\nobreak\relax
    \Dotfill
    \hbox to\@pnumwidth{\@tocpagenum{#7}}\par
    \nobreak
    \endgroup
  \fi}
\makeatother

\makeatletter
\def\l@section{\@tocline{1}{0pt}{1pc}{}{\scshape}}
\renewcommand{\tocsection}[3]{%
\indentlabel{\@ifnotempty{#2}{\ignorespaces#1 #2.\hskip 0.7em}}#3}
\def\l@subsection{\@tocline{2}{0pt}{1pc}{5pc}{}}

\def\l@subsubsection{\@tocline{3}{0pt}{1pc}{7pc}{}}

\makeatother

%


\setcounter{secnumdepth}{2}

\frenchspacing

\numberwithin{equation}{section}


\newtheoremstyle{mytheorem}{.7\linespacing\@plus.3\linespacing}{.7\linespacing\@plus.3\linespacing}%
     {\itshape}
     {}
     {\bfseries}
     {. }
     {0.3ex}
     {\thmname{{\bfseries #1}}\thmnumber{ {\bfseries #2}}\thmnote{ (#3)}}  

\theoremstyle{mytheorem}

\newtheorem{theorem}{Theorem}[section]
\newtheorem{lemma}[theorem]{Lemma}
\newtheorem{proposition}[theorem]{Proposition}
\newtheorem{corollary}[theorem]{Corollary}
\newtheorem{remark}[theorem]{Remark}
\newtheorem{definition}[theorem]{Definition}



\newcommand{\bbE}{{\ensuremath{\mathbb E}} }

\newcommand{\bbP}{{\ensuremath{\mathbb P}} }

\newcommand{\bbT}{{\ensuremath{\mathbb T}} }

\newcommand{\bbV}{{\ensuremath{\mathbb V}} }



\newcommand{\cB}{{\ensuremath{\mathcal B}} }

\newcommand{\cF}{{\ensuremath{\mathcal F}} }

\newcommand{\cM}{{\ensuremath{\mathcal M}} }
\newcommand{\cN}{{\ensuremath{\mathcal N}} }

\newcommand{\cQ}{{\ensuremath{\mathcal Q}} }

\newcommand{\cS}{{\ensuremath{\mathcal S}} }
\newcommand{\cT}{{\ensuremath{\mathcal T}} }
\newcommand{\cU}{{\ensuremath{\mathcal U}} }

\newcommand{\cZ}{{\ensuremath{\mathcal Z}} }


\newcommand{\gb}{\beta}



\DeclareMathSymbol{\leqslant}{\mathalpha}{AMSa}{"36} 
\DeclareMathSymbol{\geqslant}{\mathalpha}{AMSa}{"3E} 
\DeclareMathSymbol{\eset}{\mathalpha}{AMSb}{"3F}     

\newcommand{\sumtwo}[2]{\sum_{\substack{#1 \\ #2}}} 


\newcommand{\be}{\begin{equation}}
\newcommand{\ee}{\end{equation}}



\newcommand{\R}{\mathbb{R}}

\newcommand{\Z}{\mathbb{Z}}
\newcommand{\N}{\mathbb{N}}

\def\bs{\boldsymbol}

\newcommand{\PEfont}{\mathrm}

\newcommand{\p}{\ensuremath{\PEfont P}}

\newcommand{\E}{\ensuremath{\PEfont  E}}
\renewcommand{\P}{\p}

\newcommand\bE{\ensuremath{\bs{\mathrm{E}}}}

\DeclareMathOperator{\bbvar}{\ensuremath{\mathbb{V}ar}}
\DeclareMathOperator{\bbcov}{\ensuremath{\mathbb{C}ov}}

\newcommand{\ind}{\mathds{1}}

\newcommand{\eps}{\varepsilon}
\renewcommand{\epsilon}{\varepsilon}
\renewcommand{\theta}{\vartheta}
\renewcommand{\rho}{\varrho}


\newenvironment{myenumerate}{%
\renewcommand{\theenumi}{\arabic{enumi}}%
\renewcommand{\labelenumi}{{\rm(\theenumi)}}%
\begin{list}{\labelenumi}
	{%
	\setlength{\itemsep}{0.4em}%
	\setlength{\topsep}{0.5em}%
	\setlength\leftmargin{2.45em}%
	\setlength\labelwidth{2.05em}%
	\setlength{\labelsep}{0.4em}%
	\usecounter{enumi}%
	}%
	}%
{\end{list}
}

{\end{list}
}

{\end{list}
}

{\end{myenumerate}}

\newenvironment{myitemize}{%
\begin{list}{$\bullet$}%
 	{%
	\setlength{\itemsep}{0.4em}%
	\setlength{\topsep}{0.5em}%
	\setlength\leftmargin{2.65em}%
	\setlength\labelwidth{2.65em}%
	\setlength{\labelsep}{0.4em}%
	}%
	}%
{\end{list}}

\renewenvironment{itemize}{
\begin{myitemize}}%
{\end{myitemize}}


\MakePerPage[2]{footnote} 



\date{\today}

\newcommand\dd{\mathrm{d}}

\newcommand\sfC{\mathsf C}
\newcommand\sfG{\mathsf G}

\newcommand\sfL{\mathsf L}

\newcommand\sfQ{\mathsf Q}

\newcommand\sfU{\mathsf U}

\newcommand\sfX{\mathsf X}

\newcommand\sfa{\mathsf a}
\newcommand\sfb{\mathsf b}

\newcommand\sfi{\mathsf i}

\newcommand\sfj{\mathsf j}

\newcommand\sfm{\mathsf m}

\newcommand\bx{\boldsymbol{x}}
\newcommand\by{\boldsymbol{y}}
\newcommand\bz{\boldsymbol{z}}
\newcommand\bw{\boldsymbol{w}}


\newcommand\cg{\mathrm{cg}}

\newcommand\notri{\mathrm{no\, triple}}

\newcommand\diff{\mathrm{diff}}

\newcommand\sfc{\mathsf{c}}

\newcommand\bcA{\vec{\boldsymbol{\mathcal{A}}}}

\newcommand\SHF{\mathsf{SHF}}




\title[The Critical $2d$ Stochastic Heat Flow]{The Critical $2d$ Stochastic Heat Flow\\ and Related Models}

\begin{document}

\author[F. Caravenna]{Francesco Caravenna}
\address{Dipartimento di Matematica e Applicazioni\\
 Universit\`a degli Studi di Milano-Bicocca\\
 via Cozzi 55, 20125 Milano, Italy}
\email{francesco.caravenna@unimib.it}

\author[R. Sun]{Rongfeng Sun}
\address{Department of Mathematics\\
National University of Singapore\\
10 Lower Kent Ridge Road, 119076 Singapore
}
\email{matsr@nus.edu.sg}

\author[N. Zygouras]{Nikos Zygouras}
\address{Department of Mathematics\\
University of Warwick\\
Coventry CV4 7AL, UK}
\email{N.Zygouras@warwick.ac.uk}

\begin{abstract}
In these lecture notes, we review recent progress in the study of the stochastic heat equation and its discrete analogue, the directed polymer model, in spatial dimension~2. It was discovered that a phase transition emerges on an intermediate disorder scale, with Edwards-Wilkinson (Gaussian) fluctuations in the sub-critical regime. In the critical window, a unique scaling limit has been identified and named the {\em Critical $2d$ Stochastic Heat Flow}. This gives a meaning to the solution of the stochastic
heat equation in the critical dimension 2, which lies beyond existing solution theories for singular SPDEs. We outline the proof ideas, introduce the key ingredients, and discuss related literature on disordered systems and singular SPDEs. A list of open questions is also provided.
\end{abstract}

\keywords{Coarse-Graining, Directed Polymer in Random Environment, Disordered Systems, KPZ Equation, Lindeberg Principle, Renormalization, Singular Stochastic Partial Differential Equation, Stochastic Heat Equation, Stochastic Heat Flow}
\subjclass[2010]{Primary: 82B44;  Secondary: 35R60, 60H15, 82D60}
\maketitle


\begin{figure}[h]
\centering
\hfill \
\begin{minipage}[b]{.5\linewidth}
\centering
\includegraphics[width=\linewidth]{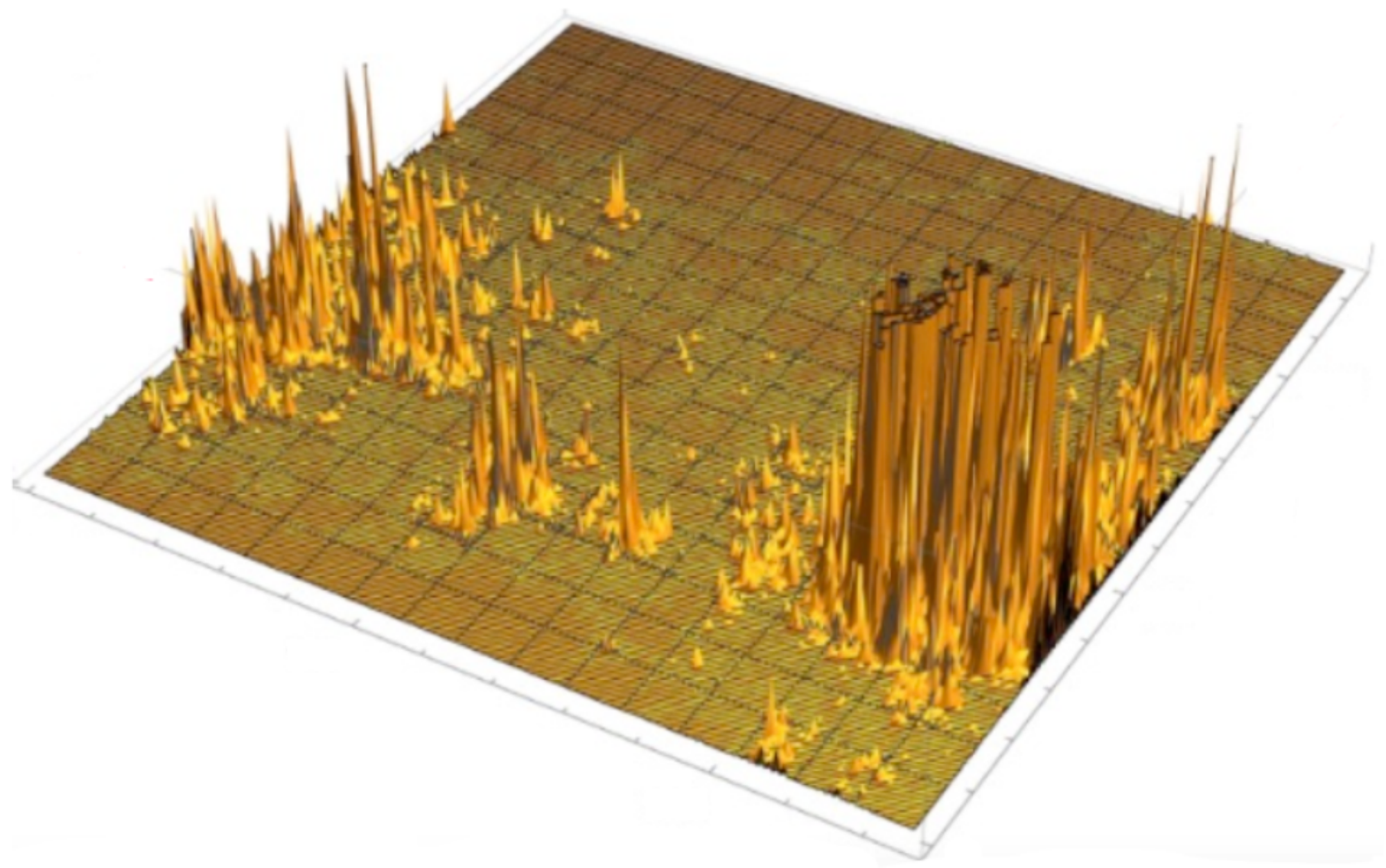}
\end{minipage}
\ \
\begin{minipage}[b]{.4\linewidth}
\centering
\includegraphics[width=\linewidth]{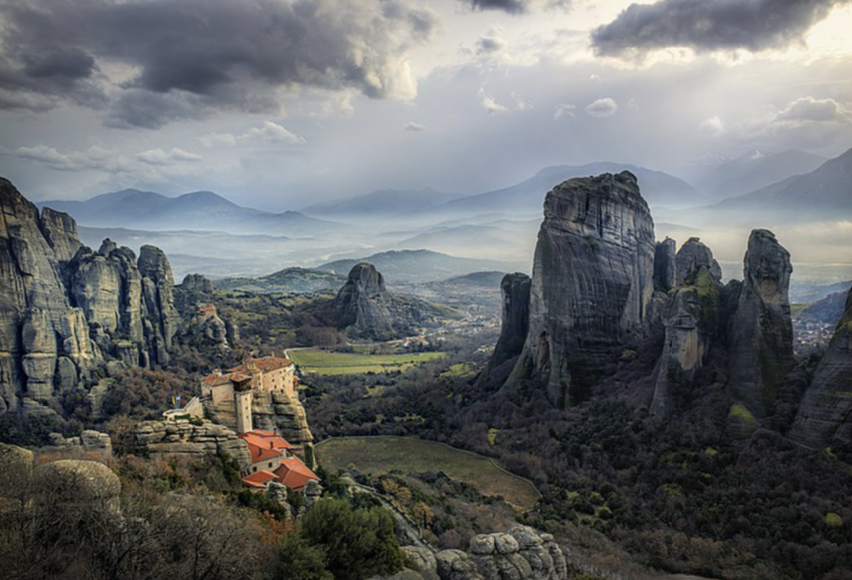}

~
\end{minipage}
\hfill \
\caption*{\it \footnotesize A picture of the critical $2d$ stochastic heat flow is shown on the left, and
on the right, a resembling natural landscape located in central Greece and named Meteora
(\href{https://en.wikipedia.org/wiki/Meteora\#/media/File:Meteora's_monastery_2.jpg}{photo} by
	\href{https://www.flickr.com/photos/antistath/}{Stathis Floros}, \href{https://creativecommons.org/licenses/by-sa/4.0/}{CC BY-SA 4.0}).
	The picture of the critical $2d$ stochastic heat flow has been obtained by simulating the partition function of the directed polymer model. 
	The plateaus in the simulation are a result of a truncation of high peaks.}
\end{figure}

\tableofcontents

\section{Introduction}

Our goal here is to review recent progress in making sense of the two-dimensional {\em stochastic heat equation} (SHE)
\cite{CSZ17b, CSZ19b, GQT21, CSZ23a, CSZ23b, T24}
\begin{equation}\label{eq:SHE}\tag{SHE}
	\partial_t u(t,x) =\frac{1}{2} \Delta u(t,x) + \beta \, \xi(t,x) \, u(t,x) \,,\qquad t>0, \ x\in\R^2 \,.
\end{equation}
This is a {\em singular stochastic partial differential equation} (SPDE) due to the presence of the term
$\xi(t,x)u(t,x)$, where the potential (or disorder)
$\xi(t,x)$ is a space-time white noise, i.e., a generalised Gaussian field with mean $0$ and covariance
$\bbE[\xi(t, x) \xi(s, y)]=\delta(t-s) \delta(x-y)$, and $u(t,x)$ is also expected
to be a generalised function (i.e.\ distribution in the sense of Schwartz). The constant $\beta \ge 0$
tunes the strength of the interaction.

Via the Cole-Hopf
transformation $h=\log u$, SHE is also related to the celebrated Kardar-Parisi-Zhang (KPZ) equation
\begin{equation}\label{eq:KPZ}\tag{KPZ}
    \partial_t h(t, x) = \frac{1}{2} \Delta h(t, x) + \frac{1}{2} |\nabla h(t,x)|^2 +
    \beta\,\xi(t, x) \,,\qquad t>0, \ x\in\R^d,
\end{equation}
which is a model for random interface growth and has been studied extensively in $d=1$ as a canonical example in the KPZ universality
class \cite{QS15, Cor12, Cor16, Z22}.

In recent years, there have been fundamental breakthroughs in the study of singular SPDEs \cite{H13, H14, GIP15, Kup16, GJ14}, including
the KPZ equation in $d=1$. However, these theories only apply to sub-critical singular SPDEs, while dimension $d=2$ is critical for SHE
and KPZ. In a series of papers culminating in \cite{CSZ23a}, we
were able to give a meaning to the solution of the $2d$  SHE on an
\emph{intermediate disorder scale} (i.e.\ by regularising the noise $\xi$
and, simultaneously, \emph{rescaling the disorder strength  $\beta \downarrow 0$ at a suitable rate},
as the regularisation is removed).
In particular, it was shown that
a phase transition exists on this scale, with Edwards-Wilkinson (Gaussian) fluctuation in the sub-critical regime \cite{CSZ17b}.
For the $2d$  KPZ, the same Edwards-Wilkinson fluctuation in the subcritical regime was established in \cite{CSZ20, G20}.
Most interestingly, it was shown in \cite{CSZ23a} that in a window
around the critical point, we can make sense of the solution of the $2d$  SHE as
a random measure-valued process, which is called the {\em critical $2d$ stochastic heat flow} (SHF). This is a rare example where a model
in the critical dimension and at the phase transition point admits a non-Gaussian limit, in contrast to e.g.\ the Ising and $\Phi^4$ models at the
critical dimension $d=4$ \cite{ADC21}.

Our motivation to make sense of the $2d$  SHE came from a different direction, which is the study of continuum limits of
disordered systems (see discussions in Section \ref{S:disrel}), in particular, the {\em directed polymer model} (DPM). The DPM is one of the simplest and yet most challenging
disordered system, which models a random walk interacting in equilibrium with a random environment (disorder). As the strength
of the disorder (inverse temperature $\beta$) increases, the model undergoes a phase transition from diffusive behaviour for the random walk,
with path delocalisation, to expected super-diffusive behaviour and path localisation (see \cite{C17, Z24} for more details). The DPM lies at the heart
of two different areas of intense research
in recent years. On the one hand, it is a canonical example in the KPZ universality class of random interface growth models, with the $0$-temperature
DPM in $d=1$ being exactly the {\em last passage percolation} model. On the other hand, it provides a discretisation of the SHE (and the KPZ equation via the Cole-Hopf transformation), which are classical examples of singular SPDEs. It is this latter connection that we will exploit to make sense of the $2d$  SHE.

In the rest of the introduction, we will first explain heuristically why $d=2$ is critical for SHE and what is the standard procedure to define the solution
of a singular SPDE such as the $2d$  SHE. We will then introduce the directed polymer model, its connection to SHE, and the intermediate
disorder scale on which a phase transition exists and our analysis will be carried out. We will then conclude the introduction with an outline of the
rest of the article.

\subsection{Stochastic heat equation}
We start with  a heuristic scaling argument that shows why $d=2$ is critical for the stochastic heat equation (SHE)
\begin{equation}\label{eq:SHE2}
	\partial_t u(t,x) =\frac{1}{2} \Delta u(t,x) + \beta \, \xi(t,x) \, u(t,x) \,,\qquad t>0, \ x\in\R^d.
\end{equation}
In the spirit of renormalisation group theory \cite{Kup16}, we note that the space-time rescaled solution
$\tilde u(t, x) := u(\epsilon^2 t, \epsilon x)$ formally solves
\begin{align}\label{eq:SHErg}
\partial_{ t} \tilde u =\frac{1}{2} \Delta \tilde u +
\beta\, \epsilon^{1-\tfrac{d}{2}} \,\tilde \xi \, \tilde u \,,
\qquad \tilde t>0, \ \tilde x\in\R^d,
\end{align}
where $\tilde \xi(t, x) := \epsilon^{1+\frac{d}{2}} \xi(\epsilon^2 t,
\epsilon x)$ is a new space-time white noise with the same distribution as $\xi$. We now see that as we send $\epsilon\downarrow 0$
and zoom into smaller and smaller
space-time scales, the strength of the noise vanishes in dimensions $d<2$ and diverges in dimensions $d>2$. But in the critical dimension
$d=2$, the exponent $1-\tfrac{d}{2}$ vanishes, and how scaling affects the effective strength of the noise depends on finer details of the model.
In the language of singular SPDEs \cite{H13, H14, GIP15}, the regimes $d<2$, $d=2$ and $d>2$ are called respectively {\em sub-critical}, {\em critical}, and {\em super-critical}.

The main difficulty in making sense of the solution of singular SPDEs such as \eqref{eq:SHE} is the roughness on small scales of the solution
(expected to be a generalised function), which makes terms such as $\xi(t, x)u(t, x)$ mathematically undefined. The standard procedure is
to perform a regularisation, also called {\em ultraviolet cutoff}, to smoothen things below a small spatial scale $\eps$, which leads to an approximate solution $u^\eps$. We then check whether there exist suitable centering and scaling constants $A_\eps$ and $B_\eps$, and suitable choice of the model parameter $\beta=\beta_\eps$, such that $(u^\eps-A_\eps)/B_\eps$ converges to a non-trivial limit as $\eps\downarrow 0$. If such a non-trivial limit exists, then we can interpret it
as the solution of (a renormalised version of) this singular SPDE.

There are several different ways of performing the ultraviolet cutoff. The most common approach is to mollify the space-time white noise $\xi$
on the spatial scale $\eps$. More precisely, let $j\in C_c^\infty(\R^2)$ be a smooth probability density function with compact support, and
define the mollified noise $\xi^\eps := j^\epsilon * \xi$ by convolving $\xi$ in space with $j^\eps(x):=\epsilon^{-2}j(\frac{x}{\eps})$. This
leads to the mollified SHE
\begin{equation} \label{eq:mollSHE}
\partial_t u^\eps = \frac{1}{2}\Delta u^\eps + \beta_\eps u^\eps \xi^\eps, \qquad u^\eps(0, \cdot)\equiv 1,
\end{equation}
which admits a classical It\^o solution. Furthermore, the solution admits a Feynman-Kac representation (see \cite[Section 3]{BC95}) with
\begin{equation} \label{eq:FK1}
u^\eps(t,x) = \E\Big[e^{\int_0^t (\beta_\eps \xi^\eps(t-s, B_s) - \lambda_\eps){\rm d}s} \Big| B_0=x \Big],
\end{equation}
where $\E[\cdot]$ is w.r.t.\ a standard Brownian motion $B$ in $\R^2$, and $\lambda_\eps:= \frac{\beta_\eps^2}{2} \bbV{\rm ar}(\xi^\eps(s, x))=\frac{\beta_\eps^2}{2 \eps^2}\Vert j\Vert_2^2$ ensures that $\bbE[u^\eps(t,x)]=1$.

Another approach to smoothen things on small scales is to discretize space and define an approximate solution on a grid with lattice spacing
$\eps$. This is the approach we will follow, where the approximate solution
turns out to coincide with the partition functions of the directed
polymer model. Yet another approach is to truncate the high frequency modes in the Fourier decomposition of $u(t, \cdot)$, see e.g.\ \cite{CT24} and
the references therein. It is expected that modulo a change of parameters, the different procedures of ultraviolet cutoff will lead to the same
limits, which are believed to be universal. This has been shown for the $2d$ SHE (and KPZ) in the subcritical regime in \cite{CSZ17b}, and in the critical window in \cite{CSZ23a, T24}.

\subsection{The KPZ equation}
The KPZ equation \eqref{eq:KPZ} in the critical dimension $d=2$ is also singular,
due to the presence of the term $|\nabla h|^2$.
To make sense of its solution, we also need to
perform an ultraviolet cutoff and then pass to the limit. In particular, if $\xi^\eps$ denotes the same spatially mollified noise as in \eqref{eq:mollSHE}, then we consider the mollified KPZ equation
\begin{equation}\label{eq:mollKPZ}
\partial_t h^{\eps} = \frac{1}{2}\Delta h^{\eps} + \frac{1}{2} |\nabla h^{\eps}|^2
	+ \beta_\eps \,  \xi^\eps - C_\eps, \qquad h^\eps(0, \cdot)\equiv 0,
\end{equation}
where $C_\eps:=\beta_\eps^2 \, \eps^{-2} \, \Vert j\Vert_2^2$ ensures that $h^\eps$ is related to the solution of the mollified SHE \eqref{eq:mollSHE} through the Cole-Hopf transformation $h^\eps(t, x) = \log u^\eps(t, x)$, see e.g.~\cite{CSZ20}.

One could also consider the KPZ equation with different coefficients in front of the Laplacian and the non-linearity, that is,
\begin{equation}\label{eq:genKPZ}
	\partial_t \tilde h^{\eps} = \frac{\nu}{2}\Delta \tilde h^{\eps} + \frac{\lambda}{2}|\nabla \tilde h^{\eps}|^2 + D \xi^\eps \,.
\end{equation}
For example, we could set $\nu=D=1$ and $\lambda=\beta_\eps$ as in \cite{CD20}.
However, by scaling (see e.g.\ \cite[Appendix A]{CSZ20}), the three parameters $\nu, \lambda$ and $D$ can be reduced to a single {\em effective coupling constant} $\beta^2:= \lambda^2 D/\nu^3$. Therefore it suffices to consider \eqref{eq:mollKPZ} with only the noise strength $\beta_\eps$ dependent on $\eps$.

\subsection{Directed polymer model}\label{S:DPM} The directed polymer model (DPM) is defined as a random walk interacting with a random environment (disorder) \cite{C17, Z24}. To avoid complications caused by periodicity, instead of considering the simple symmetric random walk on $\Z^2$ as done in \cite{CSZ23a}, we consider an irreducible aperiodic random walk $S = (S_n)_{n\ge 0}$ whose increment $\xi:=S_1-S_0$ has mean $0$, covariance matrix being the identity matrix, and $\E[e^{b|\xi|}]<\infty$
for some $b>0$. Let $\P$ and $\E$ denote probability and expectation for $S$, and denote its
transition probability kernels by
\begin{equation} \label{eq:rw}
	q_n (z) := \P (S_n = z \,|\, S_0 = 0), \qquad z\in \Z^2, n\in \N_0.
\end{equation}

The disorder (random environment) is given by a family of i.i.d.\ random variables $\omega:=(\omega(n,z))_{n\in\N, z\in\Z^2}$ with
\begin{equation}\label{eq:lambda}
\begin{gathered}
	\bbE[\omega]=0 \,, \qquad \bbE[\omega^2]=1 \,, \\
	\exists\, \beta_0>0 \quad \text{such that} \qquad 	\lambda(\beta)
	:= \log \bbE[e^{\beta\omega}] < \infty \qquad \forall \beta \in [0, \beta_0] \,.
\end{gathered}
\end{equation}
Probability and expectation for $\omega$ will be denoted by $\bbP$ and $\bbE$.

Given $N\in\N$, $\beta>0$, $z\in \Z^2$, and $\omega$, the polymer with length $N\in\N$, starting point $z$,
and disorder strength (inverse temperature) $\beta$ in the random environment $\omega$ is defined by the Gibbs measure
\begin{align}\label{eq:pathmeasure}
	\dd\P^{\beta,\, \omega}_{N}(S \,|\,S_0=z)
	:=\frac{1}{Z_N^{\beta}(z)}
	e^{\sum_{n=1}^{N-1} \{\beta \omega(n,S_n) - \lambda(\beta)\}} \,
	\dd\P(S \,|\, S_0 = z) \,,
\end{align}
where
\begin{align} \label{eq:paf}
	Z_N^{\beta}(z) &=
	\E\bigg[ e^{\sum_{n=1}^{N-1} \{\beta \omega(n,S_n) - \lambda(\beta)\}}
	\,\bigg|\, S_0 = z \bigg]
\end{align}
is called {\it point-to-plane partition function}. Note that $\lambda(\beta)$ in the exponent ensures that $\bbE[Z_N^{\beta}(z)]=1$ and, in fact, $(Z_N(z))_{N\in\N}$
is a martingale. It is also useful to consider the point-to-point partition functions: for $M \le N \in \N_0 = \{0,1,2,\ldots\}$ and $x, y\in\Z^2$, we
define
\begin{equation} \label{eq:ZMN}
	Z^{\beta}_{M,N}(x,y) := \E \bigg[
	e^{\sum_{n=M+1}^{N-1} \{\beta \omega(n,S_n) - \lambda(\beta)\}}
	\, \ind_{S_N = y} \,\bigg|\, S_M = x \bigg]  \,,
\end{equation}
and denote $Z^{\beta}_{M,N}(x,\ind) := \sum_{y\in\Z^2} Z^{\beta}_{M,N}(x,y)$ and $Z^{\beta}_{M,N}(\ind, y) := \sum_{x\in\Z^2} Z^{\beta}_{M,N}(x,y)$.

Comparing \eqref{eq:paf} with \eqref{eq:FK1}, we note that the Feynman-Kac representation of the solution $u^\eps(t,x)$ of the mollified SHE is
the partition function of a Brownian directed polymer in a white-noise random environment,
where the random walk $(S_i)_{0\leq i\leq N}$ in \eqref{eq:paf} is replaced by the Brownian motion
$(B_s)_{0\leq s\leq t}$, and the i.i.d.\ space-time disorder $\omega(\cdot, \cdot)$ is replaced by the
time-reversed mollified space-time white noise
$\xi^\eps(t-\cdot, \cdot)$. If we rescale space-time diffusively in the DPM, then $(S_{\lfloor Nt\rfloor}/\sqrt{N})_{t\geq 0}$ converges in law to $(B_t)_{t\geq 0}$, while $(\frac{1}{N}\omega(\lfloor Nt\rfloor, \lfloor \sqrt{N}x\rfloor))_{(t, x)\in\R\times\R^2}$ converges in law to the space-time
white noise $\xi$. In particular, the family of partition functions
\begin{equation}\label{eq:uNtx}
u^N(t, x):= Z^{\beta_N}_{0, tN}(\ind, x\sqrt{N}), \qquad t\in \frac{1}{N}\N_0, \ x\in \frac{1}{\sqrt N}\Z^2,
\end{equation}
gives an approximate solution to the $2d$  SHE through a discretisation of space and time. Compared with $u^\eps$ in \eqref{eq:mollSHE},
we see that $N$ corresponds to $\eps^{-2}$, and $\beta_N$ corresponds to $\beta_\eps$.

Instead of asking whether there are suitable choices of $\beta_\eps$, $A_\eps$ and $B_\eps$ such that $(u^\eps-A_\eps)/B_\eps$ has a non-trivial
limit, we can consider instead whether there are suitable choices of $\beta_N$, $A_N$ and $B_N$ such that $(u^N-A_N)/B_N$ has a non-trivial
limit. It turns out that the correct choice of $\beta_N$ is exactly the scale on which the DPM undergoes a phase transition.

\subsection{Phase transition on an intermediate disorder scale}

It is known that (see \cite{C17, Z24} and the references therein) the DPM undergoes a phase transition in the sense that, there is a critical value
$\beta_c\in [0,\infty)$ such that for $\beta<\beta_c$, the partition function $Z^\beta_N(0)$ defined in \eqref{eq:paf} converges a.s.\ to
a positive random variable $Z_\infty(0)$, and the polymer measure $\P_N^{\beta, \omega}$ converges to the Wiener measure under diffusive
scaling of space-time and is hence delocalised in space \cite{CY06}. On the other hand, when $\beta>\beta_c$, $Z^\beta_N(0)$ converges almost surely
to $0$, and the polymer measure $\P_N^{\beta, \omega}$ experiences localisation in space \cite{CSY03}. A recent breakthrough \cite{JL24} has shown that
when the disorder $\omega$ is bounded, there is also delocalisation at $\beta_c$, while localisation always manifests itself in a strong form.

It is known that $\beta_c=0$ in dimensions $d=1, 2$, and $\beta_c>0$ in $d\geq 3$. However, it was discovered in \cite{CSZ17b} that in $d=2$, there is
still a phase transition on an intermediate disorder scale. Recall that the expected replica overlap between two independent random walks
with the same distribution as $S$ is defined by
\begin{equation}\label{eq:RL}
R_{N} := \sum_{n = 1}^{N}
     \sum_{x \in \mathbb{Z}^2} q_n (x)^2 = \sum_{n = 1}^{N} q_{2 n} (0) \sim  \frac{\log N}{4\pi} \qquad \text{as } N\to\infty \,,
\end{equation}
where the asymptotics follows from the local limit theorem, see \eqref{eq:llt} below.

It was shown in \cite{CSZ17b} that if we choose $\beta$ to be
\begin{equation}\label{eq:hbeta}
\beta_N = \frac{\hat \beta + o(1)}{\sqrt{R_N}}, \qquad \hat\beta \in (0, \infty),
\end{equation}
then a phase transition occurs at the critical value $\hat\beta_c=1$ (see Section \ref{S:subcrit} below for more details).
We call $\hat\beta<1$ the {\em subcritical regime}, and $\hat\beta>1$ the {\em supercritical regime}. For the mollified SHE \eqref{eq:mollSHE},
the analogous choice is (see \cite[Theorem 2.15]{CSZ17b})
\begin{equation}\label{eq:epshbeta}
\beta_\eps = (\hat\beta +o(1)) \sqrt{\frac{2\pi}{\log \eps^{-1}}}, \qquad \hat\beta \in (0, \infty).
\end{equation}
This intermediate disorder scale turns out to be the correct scale on which $u^N$ from \eqref{eq:uNtx} has non-trivial limits.

As will be discussed in more detail in Section \ref{S:critical}, around the critical point $\hat\beta=1$, there is in fact a finer window which we call the {\em critical window}, with $\beta_N$ in this window satisfying
\begin{equation} \label{eq:sigma}
	\sigma_N^2:= e^{\lambda(2\beta_N)-2\lambda(\beta_N)}-1
	= \frac{1}{R_N} \bigg(1 + \frac{\theta + o(1)}{\log N}\bigg) \quad\quad
	\text{for some fixed } \theta\in \R \,.
\end{equation}
Note that for every $\theta\in \R$, we have $\beta_N \sim 1/\sqrt{R_N}$ because $\lambda(\beta) \sim \frac{1}{2} \beta^2$ as $\beta \downarrow 0$,
see \eqref{eq:lambda}. So the parameter $\theta$ only appears in the second order asymptotics of $\beta_N$. However, different $\theta$ will lead to different scaling limits for the DPM partition functions regarded as a random field, which are the so-called {\em critical $2d$ stochastic heat flows}. It will become clear in Section \ref{S:2momtrans} why it is more convenient to define the critical window in terms of $\sigma_N^2= \sigma_N^2(\theta)$.

For the mollified SHE \eqref{eq:mollSHE}, the critical window corresponds to choosing
\begin{equation}\label{eq:DBbeta2}
\beta^2_\epsilon= \frac{2\pi}{\log \frac{1}{\epsilon}} \Big(1+\frac{\theta + o(1)}{|\log \epsilon|}\Big).
\end{equation}

\subsection{Outline} \label{S:outline}
Assume that $\beta_N$ is chosen as in \eqref{eq:hbeta} for some $\hat\beta>0$, and in the case $\hat\beta=1$, $\beta_N$ is chosen in the critical
window as in \eqref{eq:sigma} for some $\theta\in \R$. Recall $u^N(t,x)$ from \eqref{eq:uNtx}. We will discuss the following results in the rest of
this article.

\begin{itemize}
\item Sections \ref{S:subcrit} and \ref{sec:2dKPZ} will be devoted to results in the subcritical regime $\hat\beta<1$, which include:
\begin{itemize}
\item[-] As $N\to\infty$, for each $(t, x)$, $u^N(t,x)$ defined in \eqref{eq:uNtx} converges to a log-normal limit when $\hat\beta<1$ and converges to $0$ when $\hat\beta\geq 1$. This shows that there is a phase transition with critical value $\hat\beta_c=1$.

\item[-] After suitable scaling, the centred random field $u^N(t, \cdot)-1$ converges to a Gaussian limit, which is the solution of the additive SHE, also known as the Edwards-Wilkinson equation.

\item[-] After suitable scaling, the centred random field $\log u^N(t, \cdot)- \bbE[\log u^N(t, \cdot)]$, which solves the $2d$ KPZ equation regularised through space-time discretisation, converges to the same Gaussian limit as $u^N(t, \cdot)-1$.
\end{itemize}

\item In Section \ref{S:related}, we discuss various results connected to the $2d$ SHE in the subcritical regime.

\item Sections \ref{S:critical} and \ref{S:CG} will be devoted to the following key result (and its proof outline) in the critical window for some $\theta\in \R$:
\begin{itemize}
\item[-] Without centering and scaling, the random measure $u^N(t, \cdot)$ converges in law to a unique limit, which is called the {\em critical $2d$ stochastic heat flow} (SHF) with parameter $\theta$,
denoted by $\SHF(\theta) = (\SHF^\theta_{s,t}(\dd x, \dd y))_{0 \le s \le t < \infty}$.
\end{itemize}

\item In Sections \ref{S:2ndmom} and \ref{S:highmom}, we will explain some key technical ingredients in the proof, focusing on the second moment and higher moments calculations.

\item In Section \ref{sec:prop}, we will discuss some properties of the critical $2d$ SHF, including:
\begin{itemize}
\item[-] An almost sure Chapman-Kolmogorov type property that justifies the name {\em flow};

\item[-] A recent axiomatic characterisation of the SHF$(\theta)$ by Tsai \cite{T24};

\item[-] SHF$(\theta)$ cannot be obtained as the exponential of a generalised Gaussian field (i.e., it cannot be a Gaussian multiplicative chaos);

\item[-] The marginal distribution of SHF$(\theta)$ at each time is almost surely singular w.r.t.\ the Lebesgue measure.
\end{itemize}

\item In Section \ref{S:open}, we will discuss some open questions.
\end{itemize}

Before discussing the results above, in Section \ref{S:prelim}, we will first warm up by introducing the polynomial chaos expansion for the directed polymer partition function and performing a second moment calculation that identifies the intermediate disorder scale.

\section{Basic tools and preliminary results} \label{S:prelim}

\subsection{Polynomial chaos expansion} \label{S:poly}
The starting point of our analysis is the \emph{polynomial chaos expansion} of the partition function,
based on the Mayer cluster expansion. More precisely, with $\beta=\beta_N$ allowed to depend on $N$,
for each $(n, z) \in \N\times\Z^2$, we can write
$$
e^{(\beta_N \omega(n, z) - \lambda(\beta_N)) \ind_{S_n=z}} = 1 + \xi_N(n,z)\ind_{S_n=z},
$$
where $(\xi_N(n,z))_{(n,z)\in \N\times \Z^2}$ is an i.i.d.\ family defined by
\begin{equation} \label{eq:xi}
\begin{gathered}
	\xi_N(n,z) := e^{\beta_N\omega(n,z)-\lambda(\beta_N)}-1 \\
	\text{with} \qquad \bbE[\xi_N(n,z)] = 0 \,, \qquad
	\bbvar[\xi_N(n,z)] =  e^{\lambda(2\beta_N)-2\lambda(\beta_N)}-1 =:\sigma_N^2 \,.
\end{gathered}
\end{equation}
We can then rewrite the partition function $Z^{\beta_N}_N(z)$ from \eqref{eq:paf} as follows:
\begin{align}
	Z^{\beta_N}_N(z) & = \E\bigg[ e^{\sum_{n=1}^{N-1} (\beta_N \omega(n,S_n) - \lambda(\beta_N))} \bigg] \notag \\
                 & = \E\bigg[ \prod_{(n, x) \in \{1, \ldots, N-1\}\times \Z^2} e^{(\beta_N \omega(n,x) - \lambda(\beta_N)) \ind_{S_n=x}}\bigg] \notag \\
	             & = \E\bigg[ \prod_{(n, x) \in \{1, \ldots, N-1\}\times \Z^2} \big(1+ \xi_N(n,x)\ind_{S_n=x}\big)\bigg] \notag \\
                 & = 1 + \sum_{r=1}^\infty \sum_{n_0=0<n_1<\cdots <n_r<N \atop z_0:=z, z_1, \ldots, z_r\in \Z^2} \prod_{i=1}^r q_{n_i-n_{i-1}}(z_i-z_{i-1}) \xi_N(n_i, z_i). \label{eq:expan1}
\end{align}
This is called a polynomial chaos expansion because it can be seen as a discrete analogue of a Wiener-It\^o chaos expansion with respect to a space-time white noise on $[0,\infty)\times \R^2$. The terms of the expansion are $L^2$-orthogonal, and each term is a multilinear polynomial in the i.i.d.\ random
variables $(\xi_N(n,x))_{(n,x)\in \N\times \Z^2}$.

For the point-to-point partition function $Z^{\beta_N}_{M, L}(x, y)$ defined in \eqref{eq:ZMN}, we have a similar expansion:
\begin{align}
	Z^{\beta_N}_{M, L}(x, y)
                 & = q_{L-M}(y-x)  \label{eq:expan2} \\
                 & \quad + \sum_{r=1}^\infty \sum_{z_0:=x, z_1, \ldots, z_r\in \Z^2 \atop n_0:=M <n_1<\cdots <n_r<L} \prod_{i=1}^r q_{n_i-n_{i-1}}(z_i-z_{i-1}) \xi_N(n_i, z_i) \cdot q_{L-n_r}(y-z_r)  \notag \\
                 & = q_{L-M}(y-x) +\sum_{r=1}^\infty \!\!\!\!\!\! \!\! \sum_{z_0:=x, z_1, \ldots, z_r\in \Z^2 \atop n_0:=M <n_1<\cdots <n_r<L} \!\!\!\!\!\!\!\!\!\!
\begin{tikzpicture}[scale=0.45]]
\draw [] (0, 0)  circle [radius=0.1];
\draw [] (14, 1)  circle [radius=0.1];
\draw[thick] (0,0) to [out=70,in=160] (2,1);
\draw [fill] (2, 1)  circle [radius=0.1];
\draw[thick] (2,1) to [out=-50,in=110] (3,-1);
\draw [fill] (3, -1)  circle [radius=0.1];
\draw[thick] (3,-1) to [out=80,in=-180] (4.5,0.5);
\draw [fill] (4.5, 0.5)  circle [radius=0.1];
\draw[thick] (4.5,0.5) to [out=60,in=120] (6, 0.5);
\draw [fill] (6, 0.5)  circle [radius=0.1];
\draw[thick] (6,0.5) to [out=-80,in=160] (7, -1);
\draw [fill] (7, -1)  circle [radius=0.1];
\draw[thick] (7,-1) to [out=60,in=200] (8, 0);
\node at (9,0) {$\cdots$};\node at (10.2,0) {$\cdots$};
\draw[thick] (11,1) to [out=10,in=100] (13, 0);
\draw [fill] (13, 0)  circle [radius=0.1];
\draw[thick] (13,0) to [out=60,in=180] (14, 1);
\node at (13,-0.5) {\scalebox{0.6}{$\xi_N(n_r, z_r)$}};
\node at (2,1.7) {\scalebox{0.6}{$\xi_N(n_1,z_1)$}};
\node at (0,-0.5)  {\scalebox{0.6}{$(M, x)$}};
\node at (14.5,0.5)  {\scalebox{0.6}{$(L, y)$}};
\end{tikzpicture}, \notag
\end{align}
where in the graphical representation, the laces represent the transition kernels $q_{n_i-n_{i-1}}(z_i-z_{i-1})$,  the solid dots
represent the disorder variables $\xi_N(n_i, z_i)$, and the hollow circles at $(M, x)$ and $(L, y)$
means the absence of the disorder variables $\xi_N(M, x)$ and $\xi_N(L, y)$.

Similarly, the solution $u^\eps$ of the mollified SHE \eqref{eq:mollSHE} admits a Wiener-It\^o chaos expansion
w.r.t.\ the underlying white noise $\xi$ (see \cite[(5.9)]{CSZ20}), which can also be obtained by Picard iterations:
\begin{equation} \label{eq:WienerKPZ}
	u^\epsilon(t, x) \overset{\rm dist }{=} 1+ \!\! \sum_{k\geq1}
	\beta_\epsilon^k \!\!\!\!\!
	\idotsint\limits_{z_1, \ldots, z_k \in \R^2 \atop 0<s_1<\cdots<s_k< t\epsilon^{-2}} \!\!
	\Bigg( \, \int\limits_{(\R^2)^k} \!\prod_{i=1}^k g_{s_i-s_{i-1}}(y_i-y_{i-1})
	\,j(y_i-z_i) \,\dd \vec y\Bigg) \prod_{i=1}^k \tilde \xi(s_i, z_i) \dd s_i \dd z_i \,,
\end{equation}
where $s_0 :=0$, $y_0 :=\eps^{-1}x$, $g_s(\cdot)$ is the heat kernel, and $\tilde \xi$ is obtained from $\xi$ via
diffusive rescaling and has the same distribution.

\subsection{A transition in the second moment}\label{S:2momtrans}
We now show that the second moment of the point-to-plane partition function $\bbE[Z^{\beta_N}_N(0)^2]$, see \eqref{eq:paf}, undergoes a transition exactly at the critical point $\hat\beta=1$ defined in \eqref{eq:hbeta}. The fact that this $L^2$ transition point coincides with the actual phase transition point is unique to dimension $d=2$ and far from trivial (see Section \ref{S:subcrit} for more details). We will also see in Corollary \ref{C:2mom} that for $\beta_N$ chosen as in \eqref{eq:hbeta}, changing the time horizon from $N$ to $N^\alpha$ effectively replaces $\hat\beta^2$ by $\alpha \hat\beta^2$.

We first recall the local central limit theorem. Let $g_t$ be the heat kernel on $\R^2$, i.e.,
\begin{equation}\label{eq:gt}
	g_t(x) := \frac{1}{2\pi t} \, e^{-\frac{|x|^2}{2t}} \,,
	\qquad g_t(x,y) := g_t(y-x) \,.
\end{equation}
Since the increments of the random walk $(S_n)_{n\geq 0}$ have an identity covariance matrix, by the local central limit theorem, see e.g.\ \cite[Theorems 2.3.5 \& 2.3.11]{LL10}, we have
\begin{equation}\label{eq:llt}
	\begin{aligned}
	q_n(x) \,=\, g_n(x) + O\big(\tfrac{1}{n^2}\big) \,=\, g_n(x) \, e^{O\big(\tfrac{1}{n}\big) + O\big(\tfrac{|x|^4}{n^3}\big)}, \qquad n\in\N, x\in \Z^2.
\end{aligned}
\end{equation}

\begin{lemma}[Transition in the second moment]\label{L:2mom}
Let $\beta_N =\hat \beta/\sqrt{R_N}$ for some $\hat\beta\in (0,\infty)$. Then the point-to-plane partition function $Z_N:=Z^{\beta_N}_N(0)$ defined in \eqref{eq:paf} satisfies
\begin{equation}\label{eq:2momlim}
\lim_{N\to\infty} \bbE[Z_N^2] = \left\{ \begin{aligned}
\frac{1}{1-\hat\beta^2} & \qquad \mbox{for } \hat\beta<1, \\
\infty \quad \quad & \qquad \mbox{for } \hat\beta\geq 1.
\end{aligned}
\right.
\end{equation}
\end{lemma}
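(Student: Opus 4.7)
My plan is to expand the partition function using the polynomial chaos representation \eqref{eq:expan1}, extract the second moment through $L^2$-orthogonality, and then analyse the resulting series via the local limit theorem \eqref{eq:llt}. Since the family $(\xi_N(n,x))$ is i.i.d.\ with mean $0$ and variance $\sigma_N^2$, squaring \eqref{eq:expan1} and taking expectation kills all off-diagonal cross-terms and yields
\begin{equation*}
\bbE[Z_N^2] \;=\; 1 \,+\, \sum_{r=1}^{\infty} \sigma_N^{2r} \sum_{\substack{0=n_0<n_1<\cdots<n_r<N \\ z_1,\ldots,z_r\in \Z^2}} \prod_{i=1}^r q_{n_i-n_{i-1}}(z_i-z_{i-1})^2.
\end{equation*}
Summing over the differences $z_i-z_{i-1}$ and setting $U_n := \sum_{z\in\Z^2} q_n(z)^2 \sim \tfrac{1}{4\pi n}$ (consistent with \eqref{eq:RL} and \eqref{eq:llt}), the substitution $m_i := n_i-n_{i-1}\geq 1$ factorises the series into
\begin{equation*}
\bbE[Z_N^2] \;=\; \sum_{r=0}^{\infty} \sigma_N^{2r}\, T_N^{(r)}, \qquad T_N^{(r)} \;:=\; \sum_{\substack{m_1,\ldots,m_r\geq 1 \\ m_1+\cdots+m_r<N}} \prod_{i=1}^r U_{m_i},
\end{equation*}
with $T_N^{(0)}:=1$. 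Thus the question reduces to controlling the series $\sum_r \sigma_N^{2r} T_N^{(r)}$ term-by-term and uniformly in $r$.

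Next I would establish two asymptotic facts. First, from the definition \eqref{eq:xi} of $\sigma_N^2$, smoothness of $\lambda$ near $0$ (from \eqref{eq:lambda}) and the choice \eqref{eq:hbeta}, one has $\sigma_N^2 = \beta_N^2(1+o(1)) = \hat\beta^2/R_N\,(1+o(1))$, so $\sigma_N^2 R_N \asto{N} \hat\beta^2$. Second, the trivial bound $T_N^{(r)}\leq R_N^r$ holds (drop the constraint $\sum m_i<N$), while the pointwise asymptotic $T_N^{(r)}\sim R_N^r$ for each fixed $r$ follows by induction on $r$ using the recursion $T_N^{(r+1)} = \sum_{m=1}^{N-r-1} U_m \, T_{N-m}^{(r)}$: splitting at $m=N/2$, the contribution from $m>N/2$ is bounded by $O(N^{-1})\cdot N \cdot R_N^r = o(R_N^{r+1})$ since $U_m = O(1/m)$ by \eqref{eq:llt}, while the contribution from $m\leq N/2$ is asymptotically $R_{N/2}\,T_N^{(r)}(1+o(1)) \sim R_N^{r+1}$ by the induction hypothesis.

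With these ingredients, the two cases finish quickly. For $\hat\beta<1$, fix $q\in(\hat\beta^2,1)$; for $N$ large, $\sigma_N^{2r}T_N^{(r)}\leq (\sigma_N^2 R_N)^r \leq q^r$ provides a summable majorant, and dominated convergence on the series over $r$ gives $\lim_{N\to\infty}\bbE[Z_N^2] = \sum_{r\geq 0}\hat\beta^{2r} = (1-\hat\beta^2)^{-1}$. For $\hat\beta\geq 1$, a Fatou-type truncation suffices: for every $K\in\N$,
\begin{equation*}
\liminf_{N\to\infty}\bbE[Z_N^2] \;\geq\; \sum_{r=0}^K \lim_{N\to\infty}\sigma_N^{2r}T_N^{(r)} \;=\; \sum_{r=0}^K \hat\beta^{2r},
\end{equation*}
and the right-hand side diverges as $K\to\infty$. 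The only non-routine step is the pointwise asymptotic $T_N^{(r)}\sim R_N^r$; the bookkeeping in the induction is the one part that requires care, though it ultimately rests only on the slow logarithmic divergence of $R_N$ and on the fact that in the dominant regime all $m_i$ are much smaller than $N$.
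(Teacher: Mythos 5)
Your proposal is correct and follows essentially the same route as the paper: expand via the polynomial chaos representation, use $L^2$-orthogonality to reduce $\bbE[Z_N^2]$ to a series in $r$, note $\sigma_N^2 R_N \to \hat\beta^2$, show the $r$-th term converges to $\hat\beta^{2r}$, and sum the geometric series (with a uniform majorant for $\hat\beta<1$ and truncation for $\hat\beta\ge 1$). The only difference is cosmetic: the paper gets the matching lower bound by restricting each increment to $\le N/r$ so that $R_{N/r}^r \sim R_N^r$, whereas you prove $T_N^{(r)}\sim R_N^r$ by induction on $r$; both rest on the same slow logarithmic growth of $R_N$.
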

We will show in Section \ref{S:2ndmom} that $\bbE[Z_N^2]\sim C\log N$ at the critical point $\hat\beta=\hat\beta_c=1$.
\begin{proof}
By the polynomial chaos expansion \eqref{eq:expan1} for the point-to-plane partition function $Z_N:=Z^{\beta_N}_N(0)$, which is an $L^2$-orthogonal expansion,
we have
\begin{align}
	\bbE[Z_N^2] & = 1 + \sum_{r=1}^\infty \sum_{n_0=0<n_1<\cdots <n_r<N \atop z_1, \ldots, z_r\in \Z^2} \prod_{i=1}^r q_{n_i-n_{i-1}}(z_i-z_{i-1})^2 \bbE[\xi_N(n_i, z_i)^2] \notag \\
& = 1 + \sum_{r=1}^\infty \sum_{n_0=0<n_1<\cdots <n_r<N} \sigma_N^{2r} \sum_{z_1, \ldots, z_r\in \Z^2} \prod_{i=1}^r q_{n_i-n_{i-1}}(z_i-z_{i-1})^2 \notag \\
& = 1 + \sum_{r=1}^\infty \sum_{n_0=0<n_1<\cdots <n_r<N} \sigma_N^{2r} \prod_{i=1}^r q_{2(n_i-n_{i-1})}(0).
\label{eq:2mom1}
\end{align}
Note that since $\beta_N\to 0$, we have
\begin{equation}\label{eq:sigN2}
\sigma_N^2 = e^{\lambda(2\beta_N)-2\lambda(\beta_N)}-1 \sim \beta_N^2=\frac{\hat\beta^2}{R_N} \sim \frac{4\pi \hat\beta^2}{\log N} \quad \mbox{as} \quad N\to\infty.
\end{equation}
For each $r\in\N$, we then have the upper bound
\begin{align*}
\sum_{n_0=0<n_1<\cdots <n_r<N} \!\!\! \!\!\sigma_N^{2r} \prod_{i=1}^r q_{2(n_i-n_{i-1})}(0)
&\leq \Big(\sigma_N^2 \sum_{n=1}^N q_{2n}(0)\Big)^r
\sim \Big(\frac{4\pi \hat\beta^2}{\log N} \sum_{n=1}^{N} \frac{1}{4\pi n}\Big)^r = (\hat\beta+o(1))^{2r},
\end{align*}
and the lower bound
\begin{align*}
\sum_{n_0=0<n_1<\cdots <n_r<N} \sigma_N^{2r} \prod_{i=1}^r q_{2(n_i-n_{i-1})}(0) \geq \Big(\sigma_N^2 \sum_{n=1}^{N/r} q_{2n}(0)\Big)^r
\sim \Big(\frac{4\pi \hat\beta^2}{\log N} \sum_{n=1}^{N/r} \frac{1}{4\pi n}\Big)^r \sim \hat\beta^{2r},
\end{align*}
where we used that $R_N=\sum_{n=1}^N q_{2n}(0)$ and the local limit theorem \eqref{eq:llt}. For each $r\in\N$, the upper and lower bounds
match asymptotically. The conclusion \eqref{eq:2momlim} then follows readily.
\end{proof}

The following corollary shows that changing the time horizon from $N$ to $N^\alpha$ effectively changes $\hat\beta^2$ to $\alpha\hat\beta^2$.
It also implies that in the subcritical regime $\hat\beta<1$, the partition function $Z^{\beta_N}_N(0)$ can be approximated in $L^2$ by $Z^{\beta_N}_{N^\alpha}(0)$ with $\alpha\in (0,1)$ close to $1$. In particular, $Z^{\beta_N}_N(0)$ is essentially determined by disorder variables $\omega(n,x)$ (equivalently $\xi_N(n, x)$) with $n\ll N$. This fact will be an important feature that drives the phenomenology
and the analysis for $\hat\beta<1$. This feature will become clear in our sketch of the proof of Theorem \ref{S:1pt}.

\begin{corollary}[Exponential time scale]\label{C:2mom}
Let $\beta_N =\hat \beta/\sqrt{R_N}$ for some $\hat\beta\in (0,\infty)$, and let $\alpha\in (0,\infty)$. Then we have
\begin{equation}\label{eq:2momlim2}
\lim_{N\to\infty} \bbE[(Z^{\beta_N}_{N^\alpha})^2] = \left\{ \begin{aligned}
\frac{1}{1-\alpha \hat\beta^2} & \qquad \mbox{for } \alpha\hat\beta^2<1, \\
\infty \quad \quad & \qquad \mbox{for } \alpha\hat\beta^2\geq 1.
\end{aligned}
\right.
\end{equation}
Furthermore, for $\hat\beta \in (0,1)$ and $\alpha \in (0, 1]$, we have
\begin{equation}\label{eq:2momlim3}
\lim_{N\to\infty} \Vert Z^{\beta_N}_{N^\alpha} -Z^{\beta_N}_N \Vert :=  \lim_{N\to\infty}\bbE[(Z^{\beta_N}_{N^\alpha} -Z^{\beta_N}_N)^2]^{1/2}
= \frac{(1-\alpha) \hat\beta^2}{(1-\alpha \hat\beta^2)(1-\hat\beta^2)} \stackrel{\alpha\uparrow 1}{\longrightarrow} 0.
\end{equation}
\end{corollary}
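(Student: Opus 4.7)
The plan is to recycle the polynomial chaos computation of Lemma \ref{L:2mom}, with two modifications: truncate the time horizon at $N^\alpha$ in place of $N$, and exploit the orthogonality of the chaos expansion to evaluate the cross moment $\bbE[Z^{\beta_N}_{N^\alpha}\, Z^{\beta_N}_N]$ in closed form.

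For \eqref{eq:2momlim2}, I would write the $L^2$-orthogonal expansion
\[
\bbE\big[(Z^{\beta_N}_{N^\alpha})^2\big] = 1 + \sum_{r\ge 1} \sum_{0=n_0<n_1<\cdots<n_r<N^\alpha} \sigma_N^{2r} \prod_{i=1}^r q_{2(n_i-n_{i-1})}(0),
\]
exactly as in \eqref{eq:2mom1}. The only change from the proof of Lemma \ref{L:2mom} lies in the one-step factor: using $\sigma_N^2 \sim 4\pi\hat\beta^2/\log N$ from \eqref{eq:sigN2} together with $\sum_{n=1}^{N^\alpha} q_{2n}(0) \sim \alpha(\log N)/(4\pi)$ (from \eqref{eq:RL} and \eqref{eq:llt}), one finds
\[
\sigma_N^2 \sum_{n=1}^{N^\alpha} q_{2n}(0) \;\longrightarrow\; \alpha\,\hat\beta^2.
\]
Matching upper and lower bounds per chaos order as in Lemma \ref{L:2mom} then yields $\bbE[(Z^{\beta_N}_{N^\alpha})^2] \to \sum_{r\ge 0}(\alpha\hat\beta^2)^r$, establishing \eqref{eq:2momlim2}. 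In the divergent regime $\alpha\hat\beta^2\ge 1$ the lower bound alone already forces divergence, since every chaos contribution is nonnegative.

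For \eqref{eq:2momlim3}, the key observation is that every chaos term of $Z^{\beta_N}_{N^\alpha}$ (with $\alpha\le 1$) appears verbatim in $Z^{\beta_N}_N$, because the index constraint $n_r < N^\alpha$ is stronger than $n_r < N$. Orthogonality of distinct chaos terms then gives the clean identity
\[
\bbE\big[Z^{\beta_N}_{N^\alpha} \, Z^{\beta_N}_N\big] = \bbE\big[(Z^{\beta_N}_{N^\alpha})^2\big],
\]
and hence $\bbE\big[(Z^{\beta_N}_{N^\alpha} - Z^{\beta_N}_N)^2\big] = \bbE\big[(Z^{\beta_N}_N)^2\big] - \bbE\big[(Z^{\beta_N}_{N^\alpha})^2\big]$. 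Passing to the limit using the first part and Lemma \ref{L:2mom}, the limit equals $\tfrac{1}{1-\hat\beta^2} - \tfrac{1}{1-\alpha\hat\beta^2} = \tfrac{(1-\alpha)\hat\beta^2}{(1-\hat\beta^2)(1-\alpha\hat\beta^2)}$, as claimed, and this tends to $0$ as $\alpha\uparrow 1$.

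No serious obstacle arises: both parts reuse estimates already established in Lemma \ref{L:2mom}, and the cross-moment identity is an immediate consequence of the chaos orthogonality that underpins all the second-moment calculations of the paper.
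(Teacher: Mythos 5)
Your proposal is correct and follows essentially the same route as the paper: the first part is exactly the paper's alternative computation, noting that $\sigma_N^2\sum_{n=1}^{N^\alpha}q_{2n}(0)\to\alpha\hat\beta^2$ replaces $\hat\beta^2$ in the proof of Lemma \ref{L:2mom} (the paper also offers the equivalent shortcut of rewriting $\beta_N=(\hat\beta\sqrt{\alpha}+o(1))/\sqrt{R_{N^\alpha}}$), and the second part is the same subset-of-chaos/orthogonality argument giving $\bbE[(Z^{\beta_N}_{N^\alpha}-Z^{\beta_N}_N)^2]=\bbE[(Z^{\beta_N}_N)^2]-\bbE[(Z^{\beta_N}_{N^\alpha})^2]$. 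Your limit is the correct value of the squared $L^2$ distance, consistent with the displayed fraction in \eqref{eq:2momlim3}.
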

\begin{proof}
If we set $M:=N^\alpha$, then using the asymptotics for $R_N$ from \eqref{eq:RL}, we can write
$$
\beta_N= \frac{\hat \beta}{\sqrt{R_N}} = \frac{1}{\sqrt{R_M}} \cdot \frac{\hat\beta \sqrt{R_M}}{\sqrt{R_N}} = \frac{\hat\beta \sqrt{\alpha}+o(1)}{\sqrt{R_M}}.
$$
The corollary then follows immediately from Lemma \ref{L:2mom}. Alternatively, we can also repeat the proof of Lemma \ref{L:2mom} and note that the key calculation is now
$$
\sigma_N^2 \sum_{n=1}^{N^\alpha} q_{2n}(0) = \frac{\hat\beta^2}{R_N} \cdot R_{N^\alpha} \sim \hat\beta^2 \frac{\log N^\alpha}{\log N}=\alpha \hat\beta^2,
$$
which replaces the original parameter $\hat\beta^2$.

Note that for $\alpha<1$, the polynomial chaos expansion for $Z^{\beta_N}_{N^\alpha}$, see \eqref{eq:expan1}, contains only a subset of terms in the polynomial chaos expansion for $Z^{\beta_N}_{N}$, which is an $L^2$-orthogonal expansion. The claim in \eqref{eq:2momlim3} then follows from \eqref{eq:2momlim2}.
\end{proof}

\section{Gaussian fluctuations in the subcritical regime}\label{S:subcrit}
In this section, we review results in the subcritical regime, that is, results (a)-(c) listed in Section \ref{S:outline}. Throughout the rest
of this section, we will consider $\beta_N$ chosen as in \eqref{eq:hbeta} for some $\hat\beta \in (0,1)$. Instead of considering $u^N(t,x)$, the solution of the space-time discretised SHE defined in \eqref{eq:uNtx}, for simplicity, we will fix $t=1$ and consider the time reversed field of point-to-plane
partition functions $Z^{\beta_N}_N(x\sqrt{N})= Z^{\beta_N}_{0, N}(x\sqrt{N}, \ind)$, $x\in \frac{1}{\sqrt N} \Z^2$.

\subsection{One point fluctuation} \label{S:1pt}
In \cite[Theorem 2.8]{CSZ17b}, it was shown that on the intermediate disorder scale defined in \eqref{eq:hbeta}, the DPM in dimension $d=2$ undergoes a phase transition similar to the phase transition in $d\geq 3$.

\begin{theorem}[Limit of individual partition function] \label{T:subcritical}
We have the following convergence result for the point-to-plane partition function $Z^{\beta_N}_N(0)$:
\begin{equation} \label{eq:conve}
	Z^{\beta_N}_N(0) \xrightarrow[\,N\to\infty\,]{d}
	\bs{Z}_{\hat\beta} :=
	\begin{cases}
	\exp\bigg(\sigma_{\hat\beta} \, W_1 - \frac{\sigma_{\hat\beta}^2}{2}
	\bigg)
	& \text{if } \hat \beta < 1 \\
	0 & \text{if } \hat\beta \ge 1
	\end{cases} \,.
\end{equation}
where $W_1$ is a standard Gaussian random variable and
\begin{equation}\label{eq:sigmahatbeta}
	\sigma_{\hat\beta}^2 := \log \frac{1}{1-\hat\beta^2} \,.
\end{equation}
\end{theorem}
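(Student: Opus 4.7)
The candidate limit $\bs{Z}_{\hat\beta}$ has second moment $1/(1-\hat\beta^2)$, which matches Lemma~\ref{L:2mom}, so the statement of the theorem is at least consistent with $L^2$ information. To upgrade to distributional convergence, the plan splits into two cases.

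For the subcritical regime $\hat\beta<1$, my strategy is to prove
\begin{equation*}
\log Z^{\beta_N}_{N}(0) \xrightarrow{d} \sigma_{\hat\beta}\, W_1 - \tfrac{1}{2}\sigma_{\hat\beta}^2
\end{equation*}
and then conclude by the continuous mapping theorem. Since $(Z^{\beta_N}_n(0))_{n\le N}$ is a positive $L^2$-bounded martingale (Lemma~\ref{L:2mom} extended through Corollary~\ref{C:2mom}), I decompose
\begin{equation*}
\log Z^{\beta_N}_N(0) = \sum_{n=1}^{N} \log\frac{Z^{\beta_N}_n(0)}{Z^{\beta_N}_{n-1}(0)} \,,
\end{equation*}
whose summands are small (order $\beta_N = \hat\beta/\sqrt{R_N}$). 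I would then apply a martingale central limit theorem: check that the predictable quadratic variation $\sum_n \bbE[(\log(Z^{\beta_N}_n/Z^{\beta_N}_{n-1}))^2 \,|\, \cF_{n-1}]$ converges in probability to $\sigma_{\hat\beta}^2 = -\log(1-\hat\beta^2)$, and verify a Lindeberg (or fourth-moment) condition on the increments. The conditional variance calculation is the heart of the matter and is carried out via the polynomial chaos expansion \eqref{eq:expan1} combined with the local CLT \eqref{eq:llt}; the telescoping exploited in the proof of Lemma~\ref{L:2mom} then reassembles the contributions into the geometric series $\sum_{r\ge 1}\hat\beta^{2r}/r = -\log(1-\hat\beta^2)$.

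For the supercritical regime $\hat\beta\ge 1$, since $\bbE[Z^{\beta_N}_N(0)]=1$ and $Z^{\beta_N}_N(0)\ge 0$, it suffices to show $\P(Z^{\beta_N}_N(0) > \eps)\to 0$ for every $\eps>0$. The $L^2$ divergence from Lemma~\ref{L:2mom} is suggestive but not sufficient. My plan is a fractional moment argument: show $\bbE[(Z^{\beta_N}_N(0))^p]\to 0$ for some $p\in(0,1)$. The key step is to compare to the subcritical case by restricting to the time scale $N^\alpha$ with $\alpha<1$ chosen so that $\alpha\hat\beta^2<1$ (by Corollary~\ref{C:2mom}, this part has a non-degenerate log-normal limit), and then to handle the residual $Z^{\beta_N}_{N^\alpha, N}$ factor using concavity of $x\mapsto x^p$ and a second-moment estimate that degrades only the exponent. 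Sending $\alpha\uparrow 1$ (equivalently $p\downarrow 0$) at an appropriate rate forces the mass to zero.

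The main obstacle is the conditional-variance calculation in the subcritical case: a naive Taylor expansion of $\log(1+\tfrac{\Delta_n}{Z_{n-1}^{\beta_N}})$ at order two yields variance $\hat\beta^2$, whereas the correct limit $-\log(1-\hat\beta^2)$ requires summing the higher-order contributions from the expansion $\log(1+x)=x-x^2/2+x^3/3-\cdots$, which on the intermediate disorder scale are not negligible and conspire into the geometric series. Controlling all these contributions rigorously — including cross-chaos cancellations and the Lindeberg condition — is where most of the work lies, and it is precisely the mechanism through which the $L^2$-transition threshold $\hat\beta_c=1$ of Lemma~\ref{L:2mom} coincides with the true phase transition of the partition function in dimension~$d=2$.
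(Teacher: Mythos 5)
Your skeleton (a martingale CLT for $\log Z^{\beta_N}_N(0)$ in the subcritical case, fractional moments for $\hat\beta\ge 1$) is closest in spirit to the paper's Proof strategy 3, which also writes $\log Z_N^{\beta_N}(0)\approx \sum_k \log Z_{N,k}^{\beta_N}$ and applies a Lindeberg CLT — but with multiplicative increments over \emph{exponential time blocks} $(\tau_{k-1},\tau_k]$, $\tau_k=\lceil N^{k/M}\rceil$, which are independent of the past, have mean $1$ and variance $O(1/M)$. Your single-step decomposition replaces these by $x_n=(Z_n-Z_{n-1})/Z_{n-1}$, and this is where the genuine gap sits: the predictable quadratic variation of $\sum_n x_n$ is $\sigma_N^2\sum_n\sum_x \mu_{n-1}(S_n=x)^2$, the replica overlap under the \emph{quenched} polymer measure, and you assert its convergence to $\sigma_{\hat\beta}^2$ can be "carried out via the polynomial chaos expansion combined with the local CLT" and the telescoping of Lemma \ref{L:2mom}. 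Those tools compute annealed moments only: they give, e.g., $\sum_n\bbE[(Z_n-Z_{n-1})^2]=\bbE[Z_N^2]-1\to \hat\beta^2/(1-\hat\beta^2)$, but say nothing about the expectation — let alone the concentration — of the ratio $\sum_n\bbE[(Z_n-Z_{n-1})^2/Z_{n-1}^2\mid\cF_{n-1}]$. The random denominator is not a technicality: it is exactly what turns $\hat\beta^2/(1-\hat\beta^2)$ into $\log\frac{1}{1-\hat\beta^2}$, and controlling it requires uniform-in-$n$ small-ball/negative-moment estimates for $Z_{n-1}$ (in the paper such control is only obtained under the extra concentration hypothesis \eqref{assD2}, not assumed in Theorem \ref{T:subcritical}) together with a decoupling of numerator and denominator across scales. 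The three strategies sketched in the paper (dominated sequences plus the Fourth Moment Theorem; $L^2$-approximation by the single-dominated-sequence chaos $X_N^{\mathrm{dom}}$ plus Feller–Lindeberg; independent exponential-scale blocks plus hypercontractivity for $(2+\epsilon)$-moments) are designed precisely to avoid ever dividing by a random partition function.

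Relatedly, your diagnosis of where the work lies is misplaced. The higher-order terms of $\log(1+x_n)$ do not reshape $\hat\beta^2$ into $\sum_{r\ge1}\hat\beta^{2r}/r$: to leading order $-\tfrac12\sum_n x_n^2$ supplies the centering $-\tfrac12\sigma_{\hat\beta}^2$ and $\sum_n|x_n|^3$ is negligible. The logarithm is already present in the quadratic variation of the first-order term: at time scale $n=N^t$ the quenched overlap exceeds the free-walk overlap $q_{2n}(0)$ by the factor $(1-\hat\beta^2 t)^{-1}$ (equivalently, the block variances behave like $\hat\beta^2 M^{-1}(1-\hat\beta^2 k/M)^{-1}$ by Corollary \ref{C:2mom}, matching the variance density in \eqref{eq:expan14}), and $\int_0^1 \hat\beta^2(1-\hat\beta^2 t)^{-1}\,\dd t=\sigma_{\hat\beta}^2$. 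Establishing this enhancement in probability \emph{is} the theorem, not a by-product of the Taylor expansion. Finally, the supercritical sketch leaves its key step unspecified: the naive fractional-moment bound $\bbE[Z_N^p]\le\sum_x\bbE[Z_{0,N^\alpha}(0,x)^p]\,\bbE[Z_{N^\alpha,N}(x,\ind)^p]\lesssim\sum_x q_{N^\alpha}(x)^p\sim N^{\alpha(1-p)}$ diverges, so one needs a coarse-graining/change-of-measure argument (or the scale-by-scale analysis of the paper), and it is exactly at the borderline $\hat\beta=1$, where Corollary \ref{C:2mom} still gives bounded second moments for every $\alpha<1$, that the argument must be sharpest.
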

\begin{remark}\label{R:1pt}
In \cite[Theorem 2.12]{CSZ17b}, it was shown that $\log Z^{\beta_N}_N(x_N)$ and $\log Z^{\beta_N}_N(y_N)$ converge to a pair of independent normal random variables in the subcritical regime $\hat\beta<1$ if $x_N/\sqrt{N}$ and $y_N/\sqrt{N}$ converge to distinct $x, y\in \R^2$. A non-trivial bivariate normal limit arises only if $\Vert x_N-y_N\Vert = N^{\alpha+o(1)}$ for some $\alpha \in (0, 1/2)$. Similar multivariate normal limits hold for
the joint distribution of $k$ $\log$-partition functions $\log Z^{\beta_N}_{N^{\gamma_i}}(x^i_N)$, $1\leq i\leq k$, with non-trivial change of the covariance matrix as we vary the exponents $\gamma_i\in (0,1)$ in the time horizon $N^{\gamma_i}$.
\end{remark}

In what follows, we sketch several alternative proof strategies for Theorem \ref{T:subcritical}, which will help shed light on the structure of the $2d$  DPM and SHE in the subcritical regime.

\medskip

\noindent
{\bf Proof strategy 1 \cite[Theorem 2.15]{CSZ17b}.} We start with the polynomial chaos expansion \eqref{eq:expan1} for the point-to-plane partition function
\begin{equation} \label{eq:expan11}
Z_N^{\beta_N}(0) = 1 + \sum_{r=1}^\infty \sum_{n_0=0<n_1<\cdots <n_r<N \atop z_0:=0, z_1, \ldots, z_r\in \Z^2} \prod_{i=1}^r q_{n_i-n_{i-1}}(z_i-z_{i-1}) \xi_N(n_i, z_i),
\end{equation}
where the parameter $\hat\beta<1$ is contained in the variance of $\xi_N$, see \eqref{eq:sigN2}. From the second moment calculations in Lemma \ref{L:2mom} and Corollary \ref{C:2mom}, we see that the dominant contribution to the series in \eqref{eq:expan11} comes from terms which of order $r=O(1)$ -- there is an exponential decay of order $\hat\beta^r$ in the chaoses of order $r$ -- the times differences are of order $n_i-n_{i-1}=: N^{t_i}$ for $t_i\in (0,1)$, and the spatial coordinates $\Vert z_i-z_{i-1}\Vert = O(\sqrt{n_i-n_{i-1}})=O(N^{t_i/2})$, for $1\leq i\leq r$. The contributions coming from $t_i\approx t_j$ for $i\neq j$ is negligible, and furthermore, the dominant contribution comes from $\Vert z_i\Vert =O(\sqrt{n_i})$, $1\leq i\leq r$.

The key observation of \cite{CSZ17b} is that, we should rewrite \eqref{eq:expan11} by partitioning
$$
(t_1, t_2, \ldots, t_r) = (s_{1, 1}, \ldots, s_{1, r_1}; s_{2, 1}, \ldots, s_{2, r_2}; \ldots; s_{k, 1} \ldots, s_{k, r_k}),
$$
where $s_{1, 1}$, $s_{2, 1}$, \ldots, $s_{k, 1}$ are the successive running maxima along the original sequence $(t_1, \ldots, t_r)$, and each
subsequence $(s_{1, 1}, \ldots, s_{i, r_i})$ is called a {\em dominated sequence} because the first element $s_i:= s_{i, 1}$ dominates the subsequent elements $s_{i, 2}$, \ldots, $s_{i, r_i}$. Recall that in \eqref{eq:expan11}, $N^{s_i}=n_{M+1}-n_M$ with $M=r_1+\cdots + r_{i-1}$, and
$$
n_M= N^{t_1} +\cdots N^{t_M}.
$$
The fact that $s_i=s_{i, 1}$ is a running maxima of $(t_1, \ldots, t_r)$ means that $s_i > t_1, t_2, \ldots, t_M$, and hence
$$
n_{M+1}-n_M = N^{s_i} \gg N^{t_1} + \cdots N^{t_M} = n_M.
$$
In particular, $n_{M+1}-n_M \approx n_{M+1}$. Therefore in \eqref{eq:expan11}, at the cost of a small error in $L^2$, we can replace the factor $q_{n_{M+1}-n_M}(z_{M+1}-z_M)$ by $q_{n_{M+1}}(z_{M+1})$, since the dominant contribution to \eqref{eq:expan11} comes from
$\Vert z_M\Vert =O(\sqrt{n_M}) \ll \sqrt{n_{M+1}}$.

Now observe that the replacement $q_{n_{M+1}-n_M}(z_{M+1}-z_M) \rightsquigarrow q_{n_{M+1}}(z_{M+1})$ in \eqref{eq:expan11} at each running maxima of the sequence $n_i-n_{i-1}=N^{t_i}$, $1\leq i\leq r$, leads to the factorisation
\begin{equation}\label{eq:expan12}
Z_N^{\beta_N}(0) \approx 1 + \sum_{k=1}^\infty \sum_{0< s_1 <\cdots < s_k<1 \atop N^{s_i}\in \N} \prod_{i=1}^k \Xi_N^{\beta_N}(N^{s_i}),
\end{equation}
where for $s\in (0,1)$ with $n_1:=N^s\in \N$,
\begin{equation}\label{eq:expan13}
\Xi_N^{\beta_N}(n_1) := \sum_{r=1}^\infty \sum_{z_1, \ldots, z_r\in \Z^2 \atop n_i-n_{i-1}<n_1} q_{n_1}(z_1) \xi_N(n_1, z_1) \prod_{i=2}^r q_{n_i-n_{i-1}}(z_i-z_{i-1}) \xi_N(n_i, z_i).
\end{equation}
The key point is that for $s\in (0,1)$, $\Xi_N^{\beta_N}(N^s)$ depends essentially only on $\xi_N(n, \cdot)$ with $n\in [N^s, N^{s+o(1)}]$. In particular,
for $s<t$, $\Xi_N^{\beta_N}(N^s)$ and $\Xi_N^{\beta_N}(N^t)$ are essentially independent and approximate the increments of a time changed Brownian motion.
More precisely, it turns out that
\begin{equation}\label{eq:expan14}
M_N^{\beta_N}(t) := \sum_{1\leq n_1\leq N^t} \Xi_N^{\beta_N}(n_1) \xrightarrow[\,N\to\infty\,]{d} \int_0^t \frac{\hat\beta}{\sqrt{1-\hat\beta^2 s}} {\rm d}W(s), \qquad t\in [0,1],
\end{equation}
where $W$ is a standard Brownian motion. Just to check that the variances match, one can use the same calculations as in Section \ref{S:2momtrans} to show that
$\mathbb{V}{\rm ar}(M_N^{\beta_N}(t)) \to \int_0^t \frac{\hat\beta^2}{1-\hat\beta^2 s} {\rm d}s$. It follows that
$\Xi_N^{\beta_N}(N^t) \approx \frac{\hat\beta}{\sqrt{1-\hat\beta^2 t}} {\rm d}W(t)$, which implies that the r.h.s.\ of \eqref{eq:expan12} should converge to the Wick exponential $: \exp\big( \int_0^1 \frac{\hat\beta}{\sqrt{1-\hat\beta^2 t}} {\rm d}W(t)\big) :$, thus matching the conclusion of Theorem \ref{T:subcritical}.

The actual proof in \cite{CSZ17b} is fairly involved and gives more information. In particular, the terms in \eqref{eq:expan13} corresponding to different
$r\in\N$ can be shown to converge to increments of independent Brownian motions. In particular, the white noise ${\rm d}W$ in \eqref{eq:expan14}
is the sum of a sequence of independent white noises, which arise as the limits of random fields defined from degree $r$ polynomials of $\xi_N(\cdot, \cdot)$, one for each $r\in\N$. The proof is based on the {\em Fourth Moment Theorem} for Gaussian limits, see \cite[Theorem 4.2]{CSZ17b} and the references therein.
\qed

\bigskip

\noindent
{\bf Proof strategy 2 \cite[Theorems 3.5 \& 3.6]{CC22}.}
Instead of proving that $Z_N^{\beta_N}(0)$ converges
in distribution to a log-normal random variable, i.e.\ the exponential
of a Gaussian, see \eqref{eq:conve},
it is also possible to prove directly that $\log Z_N^{\beta_N}(0)$
converges to a Gaussian. To this end, we approximate $\log Z_N^{\beta_N}(0)$ in terms
of a random variable $X_N^{\mathrm{dom}}$ obtained by
restricting the sum in \eqref{eq:expan11} to a \emph{single dominated sequence}
$n_1 \ge \max\{n_2-n_1, \ldots, n_k - n_{k-1}\}$:
\begin{equation}\label{eq:XNdom}
	X_N^{\mathrm{dom}} :=
	\sum_{r=1}^\infty
	\sum_{\substack{n_0=0<n_1<\cdots <n_r<N \\
	n_1 \ge \max\{n_2-n_1, \ldots, n_k - n_{k-1}\}
	\\
	z_0:=0, z_1, \ldots, z_r\in \Z^2}}
	\prod_{i=1}^r q_{n_i-n_{i-1}}(z_i-z_{i-1}) \xi_N(n_i, z_i) \,.
\end{equation}
One can show that $X_N^{\mathrm{dom}}$ is asymptotically Gaussian,
more precisely $\bbE[(X_N^{\mathrm{dom}})^2] \to \sigma_{\hat\beta}^2$
and $X_N^{\mathrm{dom}} \to N(0, \sigma_{\hat\beta}^2)$ in distribution,
see \cite[Theorems 3.6]{CC22}. The proof is based on the classic
Feller-Lindeberg Central Limit Theorems for sums of independent random variables
(obtained by partitioning the sums in \eqref{eq:XNdom} in disjoint intervals).
A strengthened version of Theorem~\ref{T:subcritical} is then obtained by showing that
$\log Z_N^{\beta_N}(0) \approx X_N^{\mathrm{dom}} - \frac{1}{2}
\bbE[(X_N^{\mathrm{dom}})^2]$ in $L^2$ (not just in distribution),
which is shown in \cite[Theorems 3.5]{CC22} by exploiting a decomposition in terms of
dominated sequences similar to the one described in Proof strategy~1.

\bigskip

\noindent
{\bf Proof strategy 3.} In \cite{CD24}, yet another proof of Theorem \ref{T:subcritical} was presented.
The main idea was to approximate $Z_N^{\beta_N}(0)$ by a product of independent random variables.
More precisely,  $Z_N^{\beta_N}(0) \approx \prod_{l=1}^M Z_{N,k}^{\beta_N}$ where
$
Z_{N,k}^{\beta_N}:= \E\Big[ e^{\sum_{n=\tau_{k-1} +1}^{\tau_k} (\beta_N \omega(n,S_n) -\lambda(\beta_N)) }\Big]
$, with $\tau_0:=0$ and $\tau_k=\lceil N^{k/M} \rceil$ for $k=1,...,M$. Note that $(\tau_k)$ encode the same
exponential time scales as in Proof strategy 1, and $(Z_{N,k}^{\beta_N})_{1\leq k\leq M}$ are independent
random variables with mean $1$ and variance of order $1/M$ (cf.\ Corollary \ref{C:2mom}). Applying a classic Lindeberg Central Limit Theorem
to  $\log Z_N^{\beta_N}(0) \approx \sum_{k=1}^N \log Z_{N,k}^{\beta_N}$ then gives the the convergence of $\log Z_N^{\beta_N}(0)$ in distribution
to a Gaussian random variable with mean $-\sigma_{\hat\beta}^2/2$ and variance $\sigma^2_{\hat\beta}$. This requires Taylor expanding
$\log Z_{N,k}^{\beta_N}  = \log \big( 1+\big(Z_{N,k}^{\beta_N} -1\big)\big)$ and bounding the $(2+\epsilon)$ moments of  $Z_{N,k}^{\beta_N} -1$.
The latter is achieved by applying hypercontractivity estimates for polynomial chaos expansions from \cite{MOO10}, which bounds moments higher than $2$ by the second moment of a modified polynomial chaos expansion. In out setting, this amounts to increasing $\hat\beta$ in \eqref{eq:hbeta} to some $\hat\beta'>\hat\beta$. Thanks to the fact that we are considering the subcritical regime $\hat\beta<1$, we can choose $\hat\beta'<1$ to be subcritical as well, which ensures that we have the desired control on the second moments of the partition functions.

\subsection{Random field fluctuation}\label{S:Rfield}
In \cite[Theorem 2.13]{CSZ17b}, it was shown that in the subcritical regime $\hat\beta<1$, the fluctuation of the random field of partition functions
$(Z^{\beta_N}_{tN}(x\sqrt{N}))_{t\in \frac{1}{N}[0,1], x\in \frac{1}{\sqrt N} \Z^2}$ (and its mollified SHE analogue $u^\eps$ from \eqref{eq:mollSHE}) is asymptotically Gaussian and solves the Edwards-Wilkinson equation (or additive SHE).

\begin{theorem}[Edwards-Wilkinson fluctuation for DPM] \label{T:EWDPM}
For any test function $\phi \in C_c(\R^2)$ and any $t\in (0, 1]$, we have
\begin{align}\label{averaged}
\frac{1}{\beta_N} \int_{\R^2} \phi(x) \Big(Z^{\beta_N}_{tN}(\lfloor x\sqrt{N}\rfloor)-1\Big) {\rm d}x \xrightarrow[\,N\to\infty\,]{d} \int_{\R^2} \phi(x) v(t, x) {\rm d}x,
\end{align}
where $v(t, x)$ is a generalised Gaussian field that solves the $2d$  Edwards-Wilkinson equation
\begin{equation}\label{eq:EW}
\begin{aligned}
	\partial_t v(t,x) & =\frac{1}{2} \Delta v(t,x) + \sqrt{\frac{1}{1-\hat\beta^2}}\, \xi(t,x), \\
               v(0,x) & \equiv 0.
\end{aligned}
\end{equation}
\end{theorem}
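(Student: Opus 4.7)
My strategy mirrors that of Theorem~\ref{T:subcritical} but emphasises spatial averaging, following \cite[Thm.~2.13]{CSZ17b}. I would insert the polynomial chaos expansion \eqref{eq:expan1} for $Z^{\beta_N}_{tN}(\lfloor x\sqrt N\rfloor)-1$ into the left-hand side of \eqref{averaged}. Denoting by $I_r^{(N)}$ the order-$r$ chaos contribution and using $L^2$-orthogonality, after summing the inner spatial variables $z_2,\ldots,z_r$ via $\sum_z q_n(z)^2=q_{2n}(0)$, one finds
\begin{equation*}
\bbvar(I_r^{(N)}) = \frac{\sigma_N^{2r}}{\beta_N^{2}}\sum_{0<n_1<\cdots<n_r<tN}\sum_{z_1\in \Z^2}\Big(\int_{\R^2}\phi(x)\,q_{n_1}(z_1-\lfloor x\sqrt N\rfloor)\,\dd x\Big)^{\!2}\prod_{i=2}^{r} q_{2(n_i-n_{i-1})}(0).
\end{equation*}

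For the first chaos $r=1$, setting $s_1=n_1/N$ and $y_1=z_1/\sqrt N$ and using the local limit theorem \eqref{eq:llt} to approximate $q_{n_1}(z_1-\lfloor x\sqrt N\rfloor)\approx N^{-1}g_{s_1}(y_1-x)$, together with $\sigma_N^2/\beta_N^2\to 1$, a Riemann-sum approximation yields
\begin{equation*}
\lim_{N\to\infty}\bbvar(I_1^{(N)}) = \int_0^t\iint_{(\R^2)^2}\phi(x)\,\phi(x')\,g_{2s_1}(x-x')\,\dd x\,\dd x'\,\dd s_1,
\end{equation*}
using $\int g_s(y-x)g_s(y-x')\,\dd y=g_{2s}(x-x')$. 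For $r\ge 2$, fixing $n_1=s_1 N$, the remaining time increments $m_i=n_i-n_{i-1}$ (for $i\ge 2$) are constrained by $\sum m_i\le (t-s_1)N$; since $\log((t-s_1)N)/\log N\to 1$, the proof of Lemma~\ref{L:2mom} (in its Corollary~\ref{C:2mom} form) yields an extra factor $(\hat\beta^2)^{r-1}$ independent of $s_1$ in the limit. Summing the geometric series $\sum_{r\ge 1}(\hat\beta^2)^{r-1}=1/(1-\hat\beta^2)$ then reproduces exactly the variance of $\int\phi(x)\,v(t,x)\,\dd x$ with $v$ solving \eqref{eq:EW}, the harmless change $s_1\mapsto t-s_1$ interchanging the DPM and EW time conventions. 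Interchanging $\sum_r$ with $\lim_N$ is justified by a uniform geometric bound on $\bbvar(I_r^{(N)})$ that follows from Corollary~\ref{C:2mom}.

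To upgrade this second-moment convergence to convergence in distribution, I would invoke the Fourth Moment Theorem for polynomial chaos, as in Proof strategy~1 of Theorem~\ref{T:subcritical} and \cite[Thm.~4.2]{CSZ17b}. Each $I_r^{(N)}$ is multilinear in the i.i.d.\ family $\xi_N(\cdot,\cdot)$, and its fourth cumulant vanishes in the limit by the standard ``diagonal pairings are negligible'' argument, yielding a Gaussian limit for each chaos order. Different orders turn out to be asymptotically independent since their dominated-sequence decompositions (from Proof strategy~1) are supported on essentially disjoint time scales. The main technical obstacle is the rigorous suppression of non-dominated configurations: small values of $n_1$ in the first chaos, where the Riemann-sum approximation fails, and configurations with $s_1$ close to $t$, where the geometric factor $(\hat\beta^2)^{r-1}$ could be truncated by the constraint $n_r<tN$. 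Both are handled by the same bookkeeping used in Section~\ref{S:prelim}, splitting time according to the running-maxima (dominated-sequence) structure leading to \eqref{eq:expan12}.
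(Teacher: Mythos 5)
Your proposal is correct and is essentially the paper's own route (the proof of \cite[Theorem 2.13]{CSZ17b}): polynomial chaos expansion, an $L^2$ computation identifying the limiting variance $\frac{1}{1-\hat\beta^2}\int_0^t\iint_{(\R^2)^2}\phi(x)\phi(x')\,g_{2s}(x-x')\,\dd x\,\dd x'\,\dd s$, and the Fourth Moment Theorem for polynomial chaos, the paper's sketch merely reorganising the same expansion around the first disorder point $(n_1,z_1)$ as in \eqref{avg_second-exp} to emphasise the local dependence of the summands. One minor correction: the asymptotic independence of different chaos orders does not come from ``essentially disjoint time scales'' (the dominant contributions of all orders have $n_1$ of order $N$), but simply from their $L^2$-orthogonality combined with joint Gaussianity, which one gets by applying the Fourth Moment Theorem directly to the truncated sum $\sum_{r\le R} I_r^{(N)}$, itself a multilinear polynomial of bounded degree, the tail $r>R$ being negligible in $L^2$ uniformly in $N$ by your geometric bound.
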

\begin{proof}[Proof sketch]
Without loss of generality, assume $t=1$. Using the polynomial chaos expansion for $Z_N^{\beta_N}(z)$ from \eqref{eq:expan1}, we can rewrite the left-hand side of \eqref{averaged} as
\begin{align}
\Phi_N & := \frac{1}{\beta_N N} \sum_{r=1}^\infty
\sum_{z_0, z_1, \ldots, z_r\in \Z^2 \atop 0 =n_0 < n_1<\cdots <n_r<N}
\phi\big(\tfrac{z_0}{\sqrt{N}} \big)  \prod_{i=1}^r q_{n_i-n_{i-1}}(z_i-z_{i-1}) \xi_N(n_i, z_i) \notag \\
& = \underbrace{\frac{1}{N} \sum_{z_1\in \Z^2 \atop 0<n_1<N} \Big(\sum_{z_0\in \Z^2} \phi\big(\tfrac{z_0}{\sqrt{N}} \big)q_{n_1}(z_1-z_0)\Big)}_{q_{N, \phi}(n_1, z_1)} \Xi_N^{\beta_N}(n_1, z_1),
\label{avg_second-exp}
\end{align}
where
\begin{align*}
 \Xi_N^{\beta_N}(n_1, z_1) & :=
\frac{\xi_N(n_1, z_1)}{\beta_N} \Big(1+\sum_{k=2}^\infty \sum_{z_2, \ldots, z_k \in \Z^2 \atop n_1< n_2<\cdots <n_k<N}  \prod_{i=2}^k q_{n_i-n_{i-1}}(z_i-z_{i-1}) \xi_N(n_i, z_i)\Big)
\end{align*}
is simply the product of $\xi_N(n_1, z_1)/\beta_N$ with the point-to-plane partition function $Z^{\beta_N}_N(n_1, z_1)$ starting at $(n_1, z_1)$ and ending at
time $N$.

Note that the sum in \eqref{avg_second-exp} is an $L^2$-orthogonal decomposition, each $\Xi_N^{\beta_N}(n_1, z_1)$ has variance of order $1$,
and the averaging w.r.t.\ the kernel $q_{N, \phi}(n_1, z_1)$ ensures that the dominant contribution to $\Phi_N$ comes from $(n_1, z_1)$ of the order $(N, \sqrt{N})$. Now the key observation is that, by Corollary \ref{C:2mom} and the calculations therein, each $\Xi_N^{\beta_N}(n_1, z_1)$ depends only on $\xi_N(n, z)$ with $(n,z)$ in a sub-diffusive time-space window around $(n_1, z_1)$. Therefore $\Phi_N$ is an average of the field of random variables
$\Xi_N^{\beta_N}(n_1, z_1)$ with very local dependence. It is then not surprising that such an average has a Gaussian limit as $N\to\infty$.
This was first proved in \cite{CSZ17b} using the Fourth Moment Theorem for polynomial chaos expansions. See also \cite{CC22} for an alternative proof
using more classic tools from the proof of CLT for triangular arrays.
\end{proof}

\section{The $2d$ KPZ  in the subcritical regime}\label{sec:2dKPZ}
In this section we will discuss the fluctuation of the solution to the KPZ equation \eqref{eq:mollKPZ} in the subcritical regime $\hat\beta<1$.
The one-point statistics can be easily deduced from the log-normality of the SHE via the Cole-Hopf transformation
$h^\epsilon(t,x)= \log u^\epsilon (t,x)$ as
\begin{equation}\label{eq:hlim}
	h^\epsilon(t, x) \, \xrightarrow[\epsilon\downarrow 0]{d} \,
	\begin{cases}
	\sigma_{\hat\beta} W_1 -\tfrac{1}{2}\sigma_{\hat\beta}^2 & \text{ if } \hat\beta < 1  \\
	-\infty & \text{ if } \hat \beta\geq 1
	\end{cases}
	\qquad \text{with} \quad  \sigma^2_{\hat\beta}:= \log \tfrac{1}{1-\hat\beta^2} \,,
	\quad W_1  \sim N(0,1) \,,
\end{equation}
The random field fluctuations are more subtle as the operation of averaging does not commute with nonlinear functions such as $\log x$, i.e.,
$\int \phi(x) \log u^\epsilon (t,x) \,\dd x \neq  \log  \int \phi(x) u^\epsilon (t,x) \,\dd x$. Surprisingly, the limiting random field fluctuation for
$\log u^\epsilon$ turns out to be the same as the fluctuation for $u^\eps$ and its polymer partition function analogue in Theorem \ref{T:EWDPM}.
The theorem we obtained in \cite{CSZ20} (see also \cite{G20}) is the following:

\begin{theorem}[Edwards-Wilkinson fluctuation for subcritical $2d$ KPZ - \cite{CSZ20, G20}]\label{T:KPZ}
Let $h^\epsilon$ be the solution of the mollified KPZ equation \eqref{eq:mollKPZ} with initial condition $h^\epsilon(0,x)\equiv 0$ and
 $\beta_\epsilon= \hat\beta\sqrt{2\pi/\log \epsilon^{-1}}$,
 $\hat\beta\in (0,1)$. Denote
\begin{align}
	\label{eq:mainresult}
	\mathfrak{h}^\epsilon(t,x) := \frac{h^\epsilon(t, x) - \bbE[h^\epsilon(t,x)]}{\beta_\epsilon}
	= \frac{\sqrt{\log \epsilon^{-1}}}{\sqrt{2\pi} \, \hat\beta}
	\big( h^\epsilon(t, x) - \bbE[h^\epsilon(t,x)] \big) \,,
\end{align}
where the centering satisfies $\bbE[h^\epsilon(t,x)] = -\frac{1}{2}
\sigma_{\hat\beta}^2 + o(1)$ as $\epsilon \downarrow 0$, see \eqref{eq:hlim}.

For any $t > 0$ and $\phi \in C_c(\R^2)$, the following convergence in law holds:
\begin{equation}\label{eq:mainconv}
	\langle \mathfrak{h}^\epsilon(t,\cdot), \phi(\cdot)\rangle
	=  \int_{\R^2} \mathfrak{h}^\epsilon(t,x) \phi(x) {\rm d}x
	\  \xrightarrow[\epsilon \downarrow 0]{d} \ \langle
	v(t, 	\cdot), \phi(\cdot) \rangle,
\end{equation}
where $v(\cdot, \cdot)$ is the solution of the two-dimensional Edwards-Wilkinson equation in \eqref{eq:EW}.
\end{theorem}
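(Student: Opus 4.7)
My plan is to use the Cole--Hopf relation $h^\epsilon=\log u^\epsilon$ to reduce the KPZ Edwards--Wilkinson fluctuation to the Edwards--Wilkinson fluctuation for the mollified SHE itself, which follows along the lines of Theorem \ref{T:EWDPM} (in its $u^\epsilon$ version). The underlying heuristic is that in the subcritical regime $\hat\beta<1$, the averaged quantity $\int\phi(x)(u^\epsilon(t,x)-1)\,\dd x$ has size $O(\beta_\epsilon)$, as implied by Theorem \ref{T:EWDPM} and the $L^2$ computations in Section \ref{S:2momtrans}. Hence \emph{after averaging}, $u^\epsilon$ is effectively close to $1$, and the Taylor identity $\log(1+y)=y-\tfrac12 y^2+\cdots$ should translate the known Gaussian limit for $u^\epsilon-1$ into the same limit for $\log u^\epsilon$.

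Concretely, using $\bbE[u^\epsilon(t,x)]\equiv 1$, I would write
\begin{equation*}
\log u^\epsilon(t,x) - \bbE[\log u^\epsilon(t,x)] = \bigl(u^\epsilon(t,x)-1\bigr) - R^\epsilon(t,x),
\end{equation*}
with $R^\epsilon := (u^\epsilon-1) - \log u^\epsilon + \bbE[\log u^\epsilon]$. After integrating against $\phi$ and dividing by $\beta_\epsilon$, the first term converges in distribution to the Edwards--Wilkinson solution $v(t,\cdot)$. The main task is then to show
\begin{equation*}
\frac{1}{\beta_\epsilon}\int \phi(x)\, R^\epsilon(t,x)\,\dd x \ \xrightarrow[\epsilon\downarrow 0]{\bbP}\ 0.
\end{equation*}
For this, I would decompose $u^\epsilon$ via its Wiener chaos expansion \eqref{eq:WienerKPZ} and analyze $R^\epsilon$ through the corresponding decomposition of $\log u^\epsilon$. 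A key observation is that the first Wiener chaos of $\log u^\epsilon$ coincides with that of $u^\epsilon-1$ (via a Malliavin derivative computation at $\beta_\epsilon=0$), so $R^\epsilon$ lives entirely in chaoses of order $\geq 2$. The contribution of the $k$-th chaos to $\int \phi\,R^\epsilon\,\dd x$ can then be estimated by the same second-moment counting as in Section \ref{S:2momtrans}: using $\beta_\epsilon^2\log\epsilon^{-1}\to 2\pi\hat\beta^2$, the $k$-th chaos contributes something of order $\hat\beta^{2k}$ (as in Lemma \ref{L:2mom}) times an additional vanishing prefactor coming from the $\phi$-averaging over the extra spatial variable, and the resulting series is summable by subcriticality $\hat\beta<1$.

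The main obstacle will be to make the chaos decomposition of $\log u^\epsilon$ sufficiently explicit to carry out these estimates, since unlike $u^\epsilon$ the functional $\log u^\epsilon$ does not come with a ready-made polymer or Feynman--Kac representation. I see two natural options: (i) invoke Malliavin calculus, e.g.\ the Clark--Ocone formula, to express the $k$-th chaos kernel of $\log u^\epsilon$ in terms of iterated Malliavin derivatives; or (ii) more pragmatically, truncate the Taylor series $\log(1+y)=\sum_{k=1}^{K}(-1)^{k-1}y^k/k + O(y^{K+1})$ at $y=u^\epsilon-1$ and estimate each $(u^\epsilon-1)^k$ by direct second-moment computations analogous to Lemma \ref{L:2mom}. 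Because $|u^\epsilon-1|<1$ fails pointwise, the Taylor remainder requires separate control, which I would obtain by invoking hypercontractive $L^p$ bounds on $u^\epsilon$ in the subcritical regime (as used in Proof strategy~3 of Theorem \ref{T:subcritical}) combined with a truncation or good-event argument ensuring that the remainder is negligible after averaging against $\phi$.
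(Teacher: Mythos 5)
Your overall reduction—write $\log u^\epsilon-\bbE[\log u^\epsilon]=(u^\epsilon-1)-R^\epsilon$, use Theorem \ref{T:EWDSHE} for the linear part, and kill the remainder—restates the theorem correctly, but the method you propose for killing $R^\epsilon$ does not work, because the Taylor expansion around $1$ has no small parameter pointwise. By \eqref{eq:hlim} (equivalently Theorem \ref{T:subcriticalSHE}), $y=u^\epsilon(t,x)-1$ converges to a non-degenerate random variable, and your key claim that each Taylor term gains ``an additional vanishing prefactor from the $\phi$-averaging'' already fails at order $k=2$: for fixed $x\neq y$ one finds
\begin{equation*}
\bbcov\big((u^\epsilon(t,x)-1)^2,\,(u^\epsilon(t,y)-1)^2\big)\;=\;\Theta\Big(\tfrac{1}{\log\epsilon^{-1}}\Big)\quad\text{with coefficient}\;\propto\;\big(\bbE[(u^\epsilon)^2]-1\big)^2\to\Big(\tfrac{\hat\beta^2}{1-\hat\beta^2}\Big)^2>0,
\end{equation*}
because the leading connected diagram is a \emph{single} cross pairing between the two space--time clusters, dressed by ladders near $x$ and near $y$—exactly the same mechanism and the same order $\beta_\epsilon^2$ as for the linear term. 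Hence $\beta_\epsilon^{-1}\int\phi\,\big[(u^\epsilon-1)^2-\bbE(u^\epsilon-1)^2\big]\,\dd x$ has fluctuations of order one, not $o(1)$: every Taylor term contributes at the same scale as the main term, and the theorem holds only because of an exact cancellation \emph{across all orders} (heuristically, the dressed coefficient $\bbE[u f'(u)]$ equals $1$ both for $f(u)=u-1$ and $f(u)=\log u$, so it vanishes for $R^\epsilon$). This cancellation cannot be captured by your term-by-term second-moment estimates; worse, the remainder after truncating at order $K$ carries a coefficient of size $\bbE[(u-1)^K]$, which \emph{grows} super-exponentially in $K$ for the log-normal limit, so no truncation/good-event/hypercontractivity argument closes the gap (and hypercontractivity with constants uniform over the whole regime $\hat\beta\in(0,1)$ is unavailable anyway, which is precisely why \cite{G20} covers only part of the subcritical regime). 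Finally, your assertion that the first Wiener chaos of $\log u^\epsilon$ coincides with that of $u^\epsilon-1$ ``via a Malliavin computation at $\beta_\epsilon=0$'' is unsubstantiated: the first chaos of $\log u^\epsilon$ is $\bbE[D_{s,y}u^\epsilon/u^\epsilon]$, and identifying it with $\bbE[D_{s,y}u^\epsilon]$ to the required precision is essentially the content of the theorem (this is what \cite{G20} extracts from the Clark--Ocone formula \eqref{COformula}); moreover, even exact vanishing of the first chaos of $R^\epsilon$ would not suffice, since higher-chaos kernels of pointwise-local functionals still produce covariances of order $\beta_\epsilon^2$ at macroscopic separation (two shared disorder points adjacent to each other cost only an extra factor $\sigma_N^2R_N=O(1)$, not an extra $\beta_\epsilon^2$).

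The paper's proof (following \cite{CSZ20}, sketched for Theorem \ref{log-polymer}) avoids exactly this trap: the log is linearised \emph{not} around $1$ but around the local partition function $Z^{A}$ built from disorder in the microscopic window \eqref{def:setA}, so the expansion parameter $\hat Z^{A}/Z^{A}$ in \eqref{first_approx_heur0} is pointwise small; the Taylor error is controlled via moments slightly above $2$ together with negative-moment/concentration bounds \eqref{assD2}--\eqref{eq:boundf}; the centred field $\log Z^A$ averages out by locality (Estimate 2); and the fluctuation is then read off the linear term through the factorisation \eqref{main-appro}, $\hat Z^{A}\approx Z^{A}(Z^{B}-1)$, which cancels the denominator and reduces the problem to a restricted partition function handled as in Theorem \ref{T:EWDPM}. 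If you prefer a Malliavin-calculus route, the viable version is the Clark--Ocone plus second-order Poincar\'e argument of \cite{G20}, which again hinges on the same localisation idea rather than on a global expansion of $\log(1+y)$ at $y=u^\epsilon-1$.
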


The analogue of Theorem \ref{T:KPZ} for the log partition functions of the directed polymer is also given in  \cite{CSZ20}. This will be the theorem whose proof we will outline.
The proof of Theorem \ref{T:KPZ} follows exactly the same lines if instead of working with the polynomial chaos expansion of the partition function we work with the
Wiener chaos expansion of $u^\epsilon$ as in \eqref{eq:WienerKPZ}. We refer to \cite[Section 5]{CSZ20} for details.

\begin{theorem}
\label{log-polymer}
Assume the same setting as in Theorem \ref{T:EWDPM} for some $\hat\beta \in (0,1)$.
Apart from \eqref{eq:lambda}, suppose that the disorder $\omega$ also satisfies the concentration of measure property
\footnote{Condition \eqref{assD2} is satisfied if $\omega$ are bounded,  Gaussian,
or if they have a density $\exp(-V(\cdot) + U(\cdot))$, with $V$ uniformly strictly convex and
$U$ bounded. We refer to \cite{L01} for more details.
}:
\begin{equation}\label{assD2}
\begin{split}
	\exists \gamma>1, C_1, C_2 \in (0,\infty): \text{ for all $n\in\N$ and } f: \R^n \to \R
	\text{ convex and $1$-Lipschitz} \\
	\bbP \Big( \big| f(\omega_1, \ldots, \omega_N) - M_f \big|
	\ge t\Big) \le C_1\exp \bigg(-\frac{t^\gamma}{C_2}\bigg) \,, \qquad \qquad \quad
\end{split}
\end{equation}
where $M_f$ denotes a median of
$f(\omega_1, \ldots, \omega_N)$.
Denote\footnote{The scaling constant $\sqrt{4\pi}$ here is different than the constant $\sqrt{\pi}$ in \cite{CSZ20}
	due to the aperiodicity of the random walk we consider here.}
\begin{equation}\label{eq:H}
	\mathfrak{h}_N(t,x) := \frac{\log Z^{\beta_N}_{tN}(x \sqrt{N}) -
	\bbE[\log Z^{\beta_N}_{tN}]}{\beta_N} =
	\frac{\sqrt{\log N}}{\sqrt{4 \pi} \, \hat\beta}
	\big(\log Z^{\beta_N}_{tN}(x \sqrt{N}) -
	\bbE[\log Z^{\beta_N}_{tN}] \big)  ,
\end{equation}
For any $t>0$ and $\phi \in C_c(\R^2)$, the following convergence in law holds
with $v(\cdot, \cdot)$ as in \eqref{eq:EW}:
\begin{equation} \label{eq:Hconv}
	\langle \mathfrak{h}_N(t,\cdot), \phi(\cdot) \rangle
	= \int_{\R^2} \mathfrak{h}_N(t,x) \, \phi(x) \, \dd x
	\ \xrightarrow[N\to\infty]{d} \ \langle v(t,\cdot), \phi(\cdot) \rangle  \,,
\end{equation}
\end{theorem}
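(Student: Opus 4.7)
The plan is to deduce the theorem from Theorem~\ref{T:EWDPM} by showing that, after testing against $\phi$ and normalising by $1/\beta_N$, the recentred log-partition function $\log Z_{tN}(z) - \bbE[\log Z_{tN}]$ has the same distributional limit as the centred partition function fluctuation $Z_{tN}(z) - 1$. The heuristic is that $\log(1+w) = w - \tfrac{1}{2} w^2 + \ldots$, so in the subcritical regime the higher-order terms should contribute only to a deterministic shift (absorbed into $\bbE[\log Z_{tN}]$) plus random corrections that become negligible after spatial averaging at scale $\sqrt N$.

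Since $Z_{tN}(z) - 1$ has variance of order~$1$ by Lemma~\ref{L:2mom}, a direct Taylor expansion is not legitimate. Instead, following Proof strategy~3, I would partition $[0, tN]$ into $M$ exponentially-growing windows $[\tau_{k-1}, \tau_k]$ with $\tau_k := \lceil (tN)^{k/M}\rceil$, and introduce the partial partition functions $Y_{N,k}(z)$ over each window started from the walk's current position. By the Markov property, $Z_{tN}(z) \approx \prod_{k=1}^M Y_{N,k}(z)$, and Corollary~\ref{C:2mom} yields $\bbE[Y_{N,k}] = 1$ and $\mathrm{Var}(Y_{N,k}) \leq C/M$ uniformly in $k$. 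For $M$ large, each factor is close to~$1$ with high probability, and one has the pointwise Taylor expansion
\begin{equation*}
\log Y_{N,k}(z) = (Y_{N,k}(z) - 1) - \tfrac{1}{2}(Y_{N,k}(z) - 1)^2 + R_{N,k}(z),
\end{equation*}
with a cubic remainder $R_{N,k}$ controllable in $L^2$ using the concentration-of-measure hypothesis~\eqref{assD2}, which delivers tail bounds on $\log Y_{N,k}$ uniform in $N$ and legitimates the truncation of the logarithm.

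Summing over $k$ and using the algebraic identity $\prod_{k=1}^M (1 + a_k) - 1 = \sum_k a_k + \sum_{|A|\ge 2} \prod_{k\in A} a_k$ with $a_k := Y_{N,k}(z) - 1$, I rewrite
\begin{equation*}
\log Z_{tN}(z) \approx \big(Z_{tN}(z) - 1\big) + \cE_N(z),
\end{equation*}
where $\cE_N(z)$ collects the cross-products $\prod_{k\in A}(Y_{N,k}(z)-1)$ with $|A|\ge 2$, the quadratic terms $-\tfrac{1}{2}\sum_k (Y_{N,k}(z)-1)^2$, and the cubic remainders. Testing against $\phi$ and dividing by $\beta_N$, the leading contribution $\frac{1}{\beta_N}\int \phi(x)(Z_{tN}(x\sqrt N) - 1)\dd x$ converges in law to $\langle v(t,\cdot), \phi\rangle$ by Theorem~\ref{T:EWDPM}. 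The deterministic parts of $\cE_N$ are precisely what produces the centring $\bbE[\log Z_{tN}]$, so the proof is reduced to showing that
\begin{equation*}
\frac{1}{\beta_N}\int \phi(x)\big(\cE_N(x\sqrt N) - \bbE[\cE_N(x\sqrt N)]\big)\dd x \ \xrightarrow[N\to\infty]{L^2} \ 0,
\end{equation*}
after which one lets $M\to\infty$ to remove the factorisation error.

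The main obstacle is the above $L^2$ bound on the averaged remainder. Although each product $\prod_{k\in A}(Y_{N,k}(z)-1)$ has variance merely $O(M^{-|A|})$, this is not enough on its own because the prefactor $1/\beta_N^2 \sim \log N$ is divergent. The saving mechanism is that such products live in polynomial chaoses of order $\ge 2$ in the variables $\xi_N$, and the diffusive spatial averaging against $\phi(\cdot/\sqrt N)$ produces a gain of order $(\log N)^{-(|A|-1)}$ via iterated applications of the local limit theorem~\eqref{eq:llt}, entirely analogous to the computation driving Theorem~\ref{T:EWDPM}. Hypercontractivity for polynomial chaos~\cite{MOO10} then promotes the second-moment estimates to the higher moments required for the Lindeberg step, as in Proof strategy~3. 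Combining these ingredients by taking $M\to\infty$ after $N\to\infty$ reduces the log-partition field to its linearisation, and the theorem follows.
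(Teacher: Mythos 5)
Your route is genuinely different from the paper's: instead of linearising $\log Z^{\beta_N}_{tN}$ around the \emph{local} partition function $Z^A_{N,\beta_N}$ and using the key factorisation $\hat Z^A_{N,\beta_N}\approx Z^A_{N,\beta_N}\,(Z^B_{N,\beta_N}-1)$ (Estimates 1--3 of \cite{CSZ20}), you adapt the multiplicative decomposition over exponential time windows of Proof strategy~3 / \cite{CD24}, which in the paper is only used for the one-point statement. The overall architecture (reduce to Theorem~\ref{T:EWDPM} plus an $L^2$ bound on the amplified, averaged remainder) is reasonable, and the mechanism you invoke for the multilinear cross products $\prod_{k\in A}(Y_{N,k}-1)$ --- each chaos order beyond the first gains a factor of order $1/\log N$ under diffusive spatial averaging --- is indeed the correct one for those terms. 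The gap is in the terms that are \emph{not} multilinear, and this is exactly where the difficulty of the theorem sits.

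Concretely: neither the cubic Taylor remainders $R_{N,k}$ (which involve negative powers of $Y_{N,k}$) nor the factorisation error $\log Z_{tN}(z)-\sum_k\log Y_{N,k}(z)$ is a polynomial chaos, so the two tools you cite --- iterated local limit theorem gains and hypercontractivity \`a la \cite{MOO10} --- do not apply to them as stated. Pointwise $L^2$ control via \eqref{assD2} is not enough either: after multiplying by $1/\beta_N\sim \hat\beta^{-1}\sqrt{\log N/(4\pi)}$, any error whose pointwise size is merely $O_M(1)$ (independent of $N$) diverges unless the spatial averaging supplies a variance gain of order at least $1/\log N$. For windows $k\le M-1$ you can extract this from genuine independence at distance $2\sqrt{\tau_k}\ll\sqrt N$, but for the last window $k=M$ the correlation length of $Y_{N,M}(\cdot)$ is comparable to the averaging scale $\sqrt N$, so no independence gain exists; you would have to prove quantitative covariance decay of these nonlinear functionals through \emph{all mesoscopic separations} $|z-z'|=N^{\alpha/2}$, $\alpha<1$, which for functionals containing $1/Y_{N,M}$ requires H\"older-type decoupling together with negative moments of partition functions --- this is where \eqref{assD2} really enters in \cite{CSZ20}, not merely to ``legitimate the truncation of the logarithm''. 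Relatedly, your plan to ``remove the factorisation error by letting $M\to\infty$'' after $N\to\infty$ only works if you first establish $\limsup_N$ of the $L^2$ norm of $\sqrt{\log N}$ times the averaged, centred factorisation error to be at most $\varepsilon(M)$ with $\varepsilon(M)\to0$; since that error is multiplied by the divergent factor $\sqrt{\log N}$ before $M$ is sent to infinity, the later $M$-limit cannot repair a quantity that is not already uniformly controlled in $N$, and no such uniform estimate is provided. Supplying these missing estimates would essentially amount to redoing, in the multiplicative-decomposition language, the decoupling work (Estimates~1--3 and the negative-tail bound) that constitutes the actual proof; as written, the proposal has not reduced the theorem to the cited ingredients.
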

\begin{remark}
We note that the constant $4\pi$ that appears in \eqref{eq:H} is different than the constant $2\pi$ that appears in the
corresponding statement in \cite{CSZ20}. This discrepancy is due to the fact that in this review we work with aperiodic random
walks.
\end{remark}

\begin{proof}[Proof sketch for Theorem \ref{log-polymer}]
The main idea is to ``linearize'' $\log Z^{\beta_N}_{N}(x)$ by Taylor expansion, but not around $\E[Z^{\beta_N}_{N}(x)]=1$. Rather, we should
Taylor expand $\log Z^{\beta_N}_{N}(x)$ around $Z_{N,\gb_N}^A(x)$, which depends only on disorder in a small time-space window around the starting point of the polymer, but with $Z_{N,\gb_N}^A(x) \approx Z^{\beta_N}_{N}(x)$ by Corollary \ref{C:2mom}. More precisely, we define the time-space window
\begin{align}\label{def:setA}
	A_N^x:=\Big\{ (n,z)\in \N\times \Z^2 \,\, \colon \,\, n\leq N^{1-a_N}\, , \,
	|z-x|<N^{\tfrac{1}{2}-\tfrac{a_N}{4}}  \Big\} \,,
\end{align}
where
\begin{align}\label{def_an}
a_N=\frac{1}{(\log N)^{1-\gamma}} \qquad \text{with} \qquad \gamma\in (0,\gamma^*),
\end{align}
for some $\gamma^*>0$ depending only on $\hat\beta$, which makes the time window $\ll N$. The precise choice of $\gamma^*$ is more of a technical nature and we will not bother with it here; one can refer to \cite{CSZ20} for details. The spatial window of the set $A_N^x$ is slightly
superdiffusive compared to the time window to ensure that the random walk starting from $x$ at time $0$ will stay inside $A_N^x$ till the end of
the time window with high probability.

We define now the partition function $Z_{N,\beta}^A(x)$
which only uses disorder in $A_N^x$, i.e.,
\begin{equation}\label{eq:ZA0}
	Z_{N,\beta_N}^A(x) :=
	\E_x\big[ e^{H_{A_N^x}^{\beta_N}} \big] \,,
	\qquad \text{where} \qquad
	H_{A_N^x}^{\beta_N} := \sum_{(n,x) \in A_N^x} (\beta_N \omega_{n,x} -
	\lambda(\beta_N)) \ind_{\{S_n = x\}} \,.
\end{equation}

This allows us to decompose the original
partition function $Z_{N}^{\gb_N}(x)$ as follows:
\begin{align}\label{decomposition1}
Z_{N}^{\gb_N}(x) = Z_{N,\gb_N}^A(x) + \hat Z_{N,\gb_N}^A( x),
\end{align}
where $\hat Z_{N,\gb_N}^A(x)\stackrel{L^2}{\approx} 0$ is the ``remainder'', although it will be the source of the limiting fluctuation in \eqref{eq:Hconv}.
We now perform the Taylor expansion
\begin{align}\label{first_approx_heur0}
\log Z_{N}^{\gb_N}(x)
&= \log Z_{N,\gb_N}^A( x) + \log \Big(1+ \frac{\hat Z_{N,\gb_N}^A( x)}{Z_{N,\gb_N}^A( x)} \Big)
= \log Z_{N,\gb_N}^A( x) + \frac{\hat Z_{N,\gb_N}^A( x)}{Z_{N,\gb_N}^A( x)} +O_N(x),
\end{align}
where $O_N(x)$ is the error term. This approximation is quantified via the following estimates.
\medskip

{\bf Estimate 1.} For suitable test functions $\phi(\cdot)$, we have
\begin{equation} \label{eq:Rconv}
	\sqrt{\log N} \cdot \frac{1}{N} \sum_{x\in \Z^2}
	\big( O_N(x) - \bbE[O_N(x)] \big) \, \phi(\tfrac{x}{\sqrt{N}})
	\ \xrightarrow[N\to\infty]{L^2(\bbP)} \ 0 \,.
\end{equation}
The proof of this estimate uses a simple Taylor expansion estimate, which says that, essentially, the error term $O_N(z)$ is bounded by
$\big(\frac{\hat Z^A_{N,\beta_N}(x)}{Z^A_{N,\beta_N}(x)} \big)^2$.
In order to bound this error, we apply H\"older inequality to separate the numerator and denominator. This in turn requires bounds
on the moments of the partition function of order slightly higher than $2$, and bounds on arbitrarily negative moments.
In \cite{CSZ20}, hyper-contractivity was used to bound moments of order higher than $2$. Alternatively, one can also use more
recent techniques from \cite{LZ23, CZ23} to bound moments of all orders in the subcritical regime.

To bound the negative moments, we use Assumption \eqref{assD2} and concentration of measure estimates (\cite[Poposition 3.1]{CSZ20}):
\smallskip

{\bf Negative tails.} For any $\hat\beta \in (0,1)$, there exists
$c=c(\hat\beta) \in (0,\infty)$ with the following property:
for every $N\in\N$ and for every choice
of $\Lambda \subseteq \{1,\ldots, N\} \times \Z^2$, one has
\begin{align} \label{eq:boundf}
	\forall t \ge 0: \qquad
	\ \bbP(\log Z_{\Lambda,\gb_N}\leq -t) \leq
	c \, e^{- t^\gamma / c} \,,
\end{align}
where $\gamma > 1$ is the same exponent appearing in assumption \eqref{assD2}.

Next, we need:
\smallskip

{\bf Estimate 2.}
For $Z_{N,\gb_N}^A(\cdot)$ defined as in \eqref{eq:ZA0} and suitable test function $\phi$,
\begin{equation} \label{eq:Z'conv}
	\sqrt{\log N}
	\cdot \frac{1}{N} \sum_{x\in\Z^2}
	\big( \log Z_{N,\gb_N}^A(x) - \bbE[\log Z_{N,\gb_N}^A(x)] \big)
	\, \phi(\tfrac{x}{\sqrt{N}})
	\ \xrightarrow[N\to\infty]{L^2(\bbP)} \ 0 \,.
\end{equation}
The proof of this is a fairly simple $L^2(\bbP)$ estimate and uses the fact that $Z_{N,\gb_N}^A(\cdot)$ has very local dependence on the disorder, i.e., $Z_{N,\gb_N}^A(x)$ and $Z_{N,\gb_N}^A(y)$ are independent if $|x-y|\geq 2 N^{\tfrac{1}{2}-\tfrac{a_N}{4}} \ll \sqrt{N}$.
\medskip

{\bf Estimates 1} and {\bf  2} imply that the fluctuations of the field $\log Z_{N}^{\beta_N}(\cdot)$ is governed by the fluctuations of the field
$ \tfrac{\hat Z_{N,\gb_N}^A( \cdot)}{Z_{N,\gb_N}^A( \cdot)} $. The crucial point here is that the numerator {\it approximately factorises }
in a way that cancels the denominator, and what remains is a restricted polymer partition function for which we can apply a variant of Theorem \ref{T:EWDPM}.
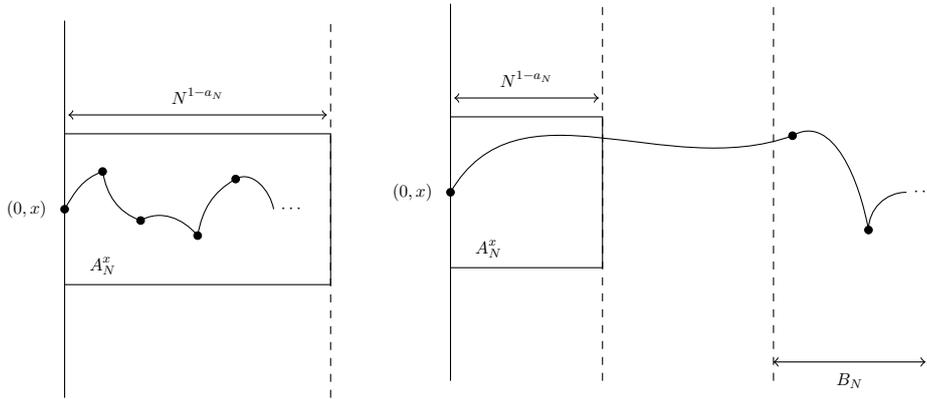
\begin{figure}[t]
\hskip -2cm
\begin{minipage}[b]{.33\linewidth}
\centering
\begin{tikzpicture}[scale=0.5]
\draw (0,-5)--(0,5); \draw (0,-2)--(7,-2)--(7,2)--(0,2);
\draw[dashed] (7,-5)--(7,5);
\node at (1,-1.5) {\scalebox{0.6}{$A_N^x$}};
\draw[<->] (0.1,2.5)--(6.9, 2.5);  \node at (3.5,3) {\scalebox{0.6}{$N^{1-a_N}$}};
\draw  [fill] (0, 0)  circle [radius=0.1]; \draw  [fill] (1, 1)  circle [radius=0.1]; \draw  [fill] (2, -0.3)  circle [radius=0.1]; \draw  [fill] (3.5, -0.7)  circle [radius=0.1];
\draw  [fill] (4.5, 0.8)  circle [radius=0.1]; \node at (6,0) {\scalebox{0.6}{{$\cdots$}}};
\draw (0,0) to [out=60,in=-160] (1,1) to [out=-80,in=160] (2,-0.3) to [out=30,in=130] (3.5,-0.7) to [out=80,in=-150] (4.5, 0.8) to [out=30,in=100] (5.5, 0);
\node at (-1,0) {\scalebox{0.6}{$(0,x)$}};
\end{tikzpicture}
\subcaption{Partition fuction $Z_{N,\beta_N}^A(x)$.
\label{figure:ZA}}
\end{minipage}
\,\,
\begin{minipage}[b]{.33\linewidth}
\centering
\begin{tikzpicture}[scale=0.5]
\draw (0,-5)--(0,5); \draw (0,-2)--(4,-2)--(4,2)--(0,2);
\draw[<->] (8.5,-4.5)--(12.5, -4.5);
\draw[<->] (0.1,2.5)--(3.9, 2.5);  \node at (2,3) {\scalebox{0.6}{$N^{1-a_N}$}};
\draw[dashed] (4,-5)--(4,5); \draw[dashed] (8.5,-5)--(8.5,5);
\node at (1,-1.5) {\scalebox{0.6}{$A_N^x$}};
\node at (10.5,-5) {\scalebox{0.6}{$B_N$}};
\node at (-1,0) {\scalebox{0.6}{$(0,x)$}}; \draw  [fill] (0, 0)  circle [radius=0.1];
 \draw  [fill] (9,1.5)  circle [radius=0.1];
\draw  [fill] (11, -1)  circle [radius=0.1];
\draw (0,0) to [out=60,in=-160]   (9,1.5) to [out=30, in=100] (11,-1)
to [out=90, in=180] (12,0); \node at (12.5,0) {\scalebox{0.6}{$\cdots$}};
\end{tikzpicture}
\subcaption{Partition function $Z_{N,\gb_N}^{B}(x).$\label{figure:ZB}}
\end{minipage}
\hskip -0.3cm
\caption{
The above figures depict the chaos expansions of $Z_{N,\gb_N}^A(x)$ and $Z_{N,\gb_N}^{B}(x)$. The disorder used by
 $Z_{N,\gb_N}^A(x)$ is restricted to the set $A_N^x$, while the disorder used by $Z_{N,\gb_N}^{B}(x)$ is restricted to $B_N$. \label{fig:multi_repAB}}
\end{figure}
With reference to Figure \ref{fig:multi_repAB}~(B), we define the partition function
\begin{equation*}
	Z_{N,\beta_N}^{B}(x) :=
	\E_x\big[ e^{H_{B_N}^{\beta_N}} \big] \,
	\quad \text{where} \quad
	H_{B_N}^{\beta_N} := \sum_{(n,x) \in B_{N}} (\beta_N \omega_{n,x} -
	\lambda(\beta_N)) \ind_{\{S_n = x\}} \,
\end{equation*}
where the set $B_N$ is as in Figure \ref{fig:multi_repAB}.
The crucial approximation that we establish in \cite{CSZ20} is
\begin{align}\label{main-appro}
\hat Z_{N,\gb_N}^{A}(x) \approx Z_{N,\gb_N}^A(x) \,
\big( Z_{N,\gb_N}^{B}(x) - 1 \big)  \,.
\end{align}
Here we use again the fact that the time-space window $A_N^x$ is of microscopic scale compared to the diffusive scale $(N, \sqrt{N})$.
The quantitative estimate related to this approximation is
\medskip

{\bf Estimate 3.}
For $Z_{N,\gb_N}^A(\cdot)$, $\hat Z_{N,\gb_N}^A(\cdot)$, $Z_{N,\gb_N}^{B}(\cdot)$
defined as above and for suitable test function $\phi $, we have
\begin{equation*}
	\sqrt{\log N}  \cdot \frac{1}{N} \sum_{x\in\Z^2}
	\bigg(
	\frac{\hat Z_{N,\gb_N}^A(x)}{Z_{N,\gb_N}^A(x)} \, - \,
	\big( Z_{N,\gb_N}^{B}(x) - 1 \big)
	\bigg)
	\, \phi(\tfrac{x}{\sqrt{N}})
	\ \xrightarrow[N\to\infty]{L^1(\bbP)} \ 0 \,.
\end{equation*}
\medskip

The above estimates reduce the study of the fluctuation of the field $\log Z_{N}^{\beta_N}(\cdot)$ to those of the field $Z_{N,\beta_N}^B(\cdot)$,
which is a restricted partition function and we can apply the same proof as for Theorem \ref{T:EWDPM} to establish the Edward-Wilkinson limit.
\end{proof}

\section{Related results outside the critical regime} \label{S:related}
In this section we will briefly list some further related work. The focus will be on the
critical dimension, but below the critical disorder strength. We will also discuss results in the quasi-critical regime and results in
higher dimensions.

\subsection{Other marginally relevant directed polymer models}
Theorems \ref{T:subcritical} and \ref{T:EWDPM} are special cases of more
general results in \cite{CSZ17b} concerning one-point and random field fluctuations of directed polymer models that are {\it marginally relevant}. Roughly speaking, a directed polymer model in the time-space domain $\N\times\Z^d$ is marginally relevant if its replica overlap
\begin{align*}
R_N:=\sum_{n=1}^N \sum_{x\in \Z^d} \P(S_n=S_n'),
\end{align*}
with $S$ and $S'$ being two independent random walks on $\Z^d$ with law $\P$, diverges as a slowly varying function in $N$.
We will explain in more detail what is meant by the marginal relevance of a disordered systems and the connection with singular
SPDE in Section \ref{S:disrel}.

As we have seen, for the $2d$ DPM, the divergence of the replica overlap $R_N$ is logarithmic.  Besides the $2d$ DPM, other examples include
directed polymer in $\N\times \Z^d$ with $d=1$, defined from a heavy-tailed random walk in the domain of attraction of the
Cauchy distribution, i.e.
$\P(S_n>x) \sim x^{-1}$,  or the disordered pinning model which can be regarded as a directed polymer on $\N\times \Z^d$
with $d=0$, defined from a renewal process on $\N\times \{0\}$ (marginally relevant if the renewals are the times when a simple symmetric random walk on $\Z$ returns to $0$).
It was shown in \cite{CSZ17b} that these marginally relevant directed polymer models all exhibit the
same asymptotic limit behavior as in Theorem~\ref{T:subcritical} 
for the $2d$ DPM, which shows a surprising universality across different dimensions.

Very recently, a log-normality result in the spirit of Theorem~\ref{T:subcritical}
was obtained in \cite{CCD25} for the directed polymer model in dimension
$d=2$ with a Gaussian environment which is independent in time but \emph{critically correlated in space},
with spatial correlations decaying as $|x-y|^{-2}$ (with possible logarithmic corrections).

\subsection{Stochastic Heat Equation}
The analogue of Theorem \ref{T:subcritical} for the solution $u^\eps$ of the mollified SHE \eqref{eq:mollSHE} was proved in \cite[Theorem 2.15]{CSZ17b}. More precisely,
\begin{theorem}[1-point statistics for the mollified SHE]\label{T:subcriticalSHE}
If $\beta_\eps=(\hat\beta+o(1)) \sqrt{\frac{2\pi}{\log \frac{1}{\epsilon}}}$ with $\hat\beta\in (0,1)$,
then for any $t>0$ and $x\in \R^2$, we have
\begin{align*}
u^\eps(t,x)
 \xrightarrow[\,\eps \to 0\,]{d}
	\begin{cases}
	\exp\bigg(\sigma_{\hat\beta} \, W_1 - \frac{\sigma_{\hat\beta}^2}{2} \bigg)
	&  \quad \text{if } \hat \beta < 1 \\
	0 & \quad \text{if } \hat\beta \ge 1
	\end{cases} \,,
\end{align*}
with $W_1$ a standard normal random variable and $\sigma^2_{\hat\beta}= \log \frac{1}{1-\hat\beta^2}$.
\end{theorem}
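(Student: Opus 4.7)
The plan is to mimic in the continuum setting the strategy used for the discrete analogue (Theorem \ref{T:subcritical}), exploiting the fact that the mollified SHE solution admits the Wiener--Itô chaos expansion \eqref{eq:WienerKPZ} which is the exact continuum counterpart of the polynomial chaos expansion \eqref{eq:expan1} for the DPM. I would first recast \eqref{eq:WienerKPZ} by undoing the diffusive rescaling, writing
\[
u^\eps(t,x) = 1 + \sum_{k\geq 1} \beta_\eps^k \! \idotsint\limits_{\substack{0 < s_1 < \cdots < s_k < t \\ z_1,\ldots, z_k\in\R^2}} \!\!
\Big( \prod_{i=1}^k (g_{s_i-s_{i-1}} * j^\eps)(z_i - z_{i-1}) \Big) \prod_{i=1}^k \xi(s_i, z_i)\, \dd s_i\, \dd z_i
\]
with $s_0=0$ and $z_0=x$. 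This expansion is $L^2$-orthogonal, and each $k$-th term is a $k$-fold Wiener integral.

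The next step is the second moment computation, which plays the role of Lemma~\ref{L:2mom} and pins down the critical scale. Using orthogonality and the fact that in $d=2$ one has $\int_{\R^2} (g_s * j^\eps)(z)^2 \dd z = g_{2s}(0) + O(1)$ for $s \gg \eps^2$, together with the identity $\int_{\eps^2}^t g_{2s}(0)\, \dd s = \frac{1}{4\pi}\log(t/\eps^2)+O(1)$, one obtains
\[
\bbE[(u^\eps(t,x))^2] = \sum_{k\ge 0} \Big( \beta_\eps^2 \cdot \tfrac{1}{4\pi}\log \eps^{-2} \cdot (1+o(1))\Big)^k = \sum_{k\ge 0} \hat\beta^{2k}(1+o(1)),
\]
matching the calibration $\beta_\eps^2 = 2\pi\hat\beta^2/\log\eps^{-1}$ and yielding convergence to $1/(1-\hat\beta^2)$ iff $\hat\beta < 1$, and divergence otherwise. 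This already gives the transition at $\hat\beta_c = 1$ in $L^2$.

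For $\hat\beta<1$ the core step is to decompose the chaos series by \emph{dominated sequences} of successive running maxima of the time increments $s_i - s_{i-1}$, exactly as in Proof strategy~1 for the DPM. For each running maximum one replaces $(g_{s_i - s_{i-1}} * j^\eps)(z_i - z_{i-1})$ by $(g_{s_i} * j^\eps)(z_i - x)$ at negligible $L^2$ cost (since the preceding times are microscopic on the scale $s_i$), producing an approximate factorisation
\[
u^\eps(t,x) \approx 1 + \sum_{k\ge 1} \int_0^t \!\!\! \cdots \!\!\int_{s_{k-1}}^t \prod_{j=1}^k \Xi^\eps(\dd s_j),
\]
where $\Xi^\eps$ is a sequence of essentially independent random fields indexed by exponentially separated time scales. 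Using the Fourth Moment Theorem for Wiener chaos (equivalently, the Peccati--Tudor criterion) one shows that $\int_0^t \Xi^\eps \to \int_0^t \hat\beta(1-\hat\beta^2 s/t)^{-1/2}\, \dd W_s$, a centred Gaussian with variance $\sigma_{\hat\beta}^2$, and the Wick exponential structure of the factorised sum then yields the log-normal limit. Alternatively, Proof strategy~2 transfers verbatim by showing that $\log u^\eps(t,x)$ is approximated in $L^2$ by a single-dominated-sequence Wiener integral to which the classical Feller--Lindeberg CLT applies.

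For $\hat\beta\ge 1$ the convergence $u^\eps(t,x) \to 0$ in distribution does not follow from second moment blow-up alone, because $u^\eps$ is a non-negative random variable of mean $1$; this will be the main obstacle. The cleanest route is via fractional moment / truncation: pick $\hat\beta' \in (\hat\beta, 1)$ impossible when $\hat\beta \ge 1$, so one instead uses a size-biased tilt, writing $\bbP(u^\eps > \delta) \le \delta^{-p} \bbE[(u^\eps)^p]$ for $p<1$ and controlling $\bbE[(u^\eps)^p]$ via the Feynman--Kac representation \eqref{eq:FK1} and a Girsanov tilt that reduces the problem to an exponential functional of two independent Brownian paths, whose replica overlap diverges as $\log\eps^{-1}$ and kills the fractional moment. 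The continuum analogue of Proof strategy~3 provides a more transparent alternative: approximate $u^\eps(t,x) \approx \prod_{k=1}^M u^\eps_k$ over exponentially separated time slabs, where each $u^\eps_k$ is a Brownian polymer partition function on a slab contributing an effective parameter $\hat\beta^2/M$, then apply a hypercontractive Lindeberg argument slab by slab. The subcritical slab argument gives $\log u^\eps \Rightarrow N(-\sigma_{\hat\beta}^2/2, \sigma_{\hat\beta}^2)$ for $\hat\beta<1$, while for $\hat\beta \ge 1$ the sum of slab variances diverges and a standard martingale / fractional moment argument on the product delivers $u^\eps \to 0$ in probability.
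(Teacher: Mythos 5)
Your overall route is the same as the paper's: Theorem \ref{T:subcriticalSHE} is proved in \cite[Theorem 2.15]{CSZ17b} by exactly the transfer you describe, namely running the polynomial-chaos argument of Theorem \ref{T:subcritical} (dominated sequences of running maxima on exponential time scales, Fourth Moment Theorem, or alternatively the single-dominated-sequence/slab variants) on the Wiener--It\^o chaos expansion \eqref{eq:WienerKPZ}, with the second-moment calibration $\beta_\eps^2\cdot\tfrac{1}{2\pi}\log\eps^{-1}\to\hat\beta^2$ replacing \eqref{eq:sigN2}.

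Two points need repair. First, a parametrisation slip: the limiting Gaussian must be indexed by the exponential-scale variable, not by physical time. As written, $\int_0^t \hat\beta\,(1-\hat\beta^2 s/t)^{-1/2}\,\dd W_s$ has variance $t\,\log\frac{1}{1-\hat\beta^2}$, whereas the correct limit (cf.\ \eqref{eq:expan14}, with $s\in[0,1]$ labelling scales $\eps^{2(1-s)}$) has variance $\sigma_{\hat\beta}^2=\log\frac{1}{1-\hat\beta^2}$ independent of $t$; for fixed $t$ the $t$-dependence only enters at order $1/\log\eps^{-1}$. Second, and more seriously, your first route for $\hat\beta\ge 1$ does not work as stated: divergence of the replica overlap (i.e.\ of the second moment) says nothing about fractional moments, which satisfy $\bbE[(u^\eps)^p]\le 1$ automatically for $p<1$ by Jensen, so ``the overlap diverges and kills the fractional moment'' is not an argument. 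The mechanism that does work --- and which your slab alternative only gestures at --- is the conditioning/truncation step: the partition function built from the noise in an initial time window of length $\eps^{2(1-\alpha)}$ (equivalently, horizon $N^{\alpha}$ with $N=\eps^{-2}$) equals the conditional expectation of $u^\eps(t,x)$ given that noise, so by conditional Jensen $\bbE[(u^\eps)^{1/2}]\le\bbE[(u^{\eps,\alpha})^{1/2}]$; for $\alpha\hat\beta^2<1$ the truncated object is subcritical and log-normal in the limit with variance $\log\frac{1}{1-\alpha\hat\beta^2}$, whence $\limsup_\eps\bbE[(u^\eps)^{1/2}]\le \exp(-\tfrac18\log\frac{1}{1-\alpha\hat\beta^2})$, and letting $\alpha\uparrow 1/\hat\beta^2\le 1$ gives $u^\eps\to 0$ in probability. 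Note that the naive product-over-all-slabs approximation of $u^\eps$ is itself only controlled in $L^2$ below the critical horizon, so this projection step (rather than the factorisation) is what carries the supercritical case.
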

 Moreover, the analogue Theorem \ref{T:EWDPM}, i.e. Edwards-Wilkinson limit
for the field of the solution $u^\eps(t,\cdot)$ was also established in \cite{CSZ17b}:

\begin{theorem}[Edwards-Wilkinson fluctuation for the mollified SHE]\label{T:EWDSHE}
If $\beta_\eps=(\hat\beta+o(1)) \sqrt{\frac{2\pi}{\log \frac{1}{\epsilon}}}$ with $\hat\beta\in (0,1)$,
then for any test function $\phi\in C_c(\R^2)$, we have
\begin{align*}
\frac{1}{\beta_\epsilon} \int_{\R^2} \phi(x) \Big(u^\eps(t,x)-1\Big) {\rm d}x
\xrightarrow[\,\eps\to 0\,]{d} \int_{\R^2} \phi(x) v(t, x) {\rm d}x,
\end{align*}
where $v(t, x)$ is an in \eqref{eq:EW}.
\end{theorem}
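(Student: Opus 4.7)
The plan is to mirror the proof sketch of Theorem \ref{T:EWDPM} for the polymer case, replacing the polynomial chaos expansion by the Wiener--It\^o chaos expansion \eqref{eq:WienerKPZ}. Setting
\begin{equation*}
\Phi_\eps := \frac{1}{\beta_\eps} \int_{\R^2} \phi(x) \bigl(u^\eps(t,x)-1\bigr) \, \dd x,
\end{equation*}
the first step is to reorganise each chaos by pulling out the noise variable $\xi(s_1,z_1)$ with the smallest time argument, obtaining an $L^2$-orthogonal representation
\begin{equation*}
\Phi_\eps = \int_0^t \!\! \int_{\R^2} \Xi^\eps_\phi(s_1, z_1) \, \xi(\dd s_1, \dd z_1),
\end{equation*}
where $\Xi^\eps_\phi(s_1, z_1)$ is the product of the $\phi$-averaged mollified heat kernel $\int_{\R^2} \phi(x) (g_{t-s_1} * j^\eps)(x - z_1) \, \dd x$ and an SHE-type Wiener chaos propagating from $(s_1, z_1)$ up to time $t$, in direct analogy with the rewriting \eqref{avg_second-exp} in the discrete setting.

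Second, I would compute $\bbE[\Phi_\eps^2]$ and show that it matches the Edwards--Wilkinson covariance. By $L^2$-orthogonality of the chaos,
\begin{equation*}
\bbE[\Phi_\eps^2] = \frac{1}{\beta_\eps^2} \sum_{k \geq 1} \beta_\eps^{2k} \iint \phi(x) \phi(x') \mathcal{I}_k^\eps(x,x') \, \dd x \, \dd x',
\end{equation*}
where $\mathcal{I}_k^\eps$ is an iterated mollified heat-kernel convolution. A continuum analogue of Lemma \ref{L:2mom}, exploiting the asymptotic $\beta_\eps^2 \int_0^t \!\! \int_{\R^2} (j^\eps * g_s)(z)^2 \, \dd z \, \dd s \to \hat\beta^2$ (the $\log(\eps^{-1})$-divergence of the mollified heat-kernel overlap exactly balances the denominator in $\beta_\eps^2 \sim 2\pi\hat\beta^2/\log(\eps^{-1})$), reduces the $k$-th chaos contribution to $\hat\beta^{2(k-1)}$ times the first-order one, and the geometric series $\sum_{k \geq 0} \hat\beta^{2k} = (1-\hat\beta^2)^{-1}$ supplies the renormalisation factor. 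Combined with the Chapman--Kolmogorov identity $\int_{\R^2} g_{t-s}(x-z) g_{t-s}(x'-z) \, \dd z = g_{2(t-s)}(x-x')$, this yields exactly the covariance
\begin{equation*}
\frac{1}{1-\hat\beta^2} \int_0^t \!\! \iint \phi(x) \phi(x') \, g_{2(t-s)}(x-x') \, \dd s \, \dd x \, \dd x'
\end{equation*}
of $\int \phi(x) v(t,x) \, \dd x$ for $v$ solving \eqref{eq:EW}.

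Third, for the Gaussian limit I would invoke the Fourth Moment Theorem for Wiener chaos (cf.\ \cite[Theorem 4.2]{CSZ17b} and references therein): it reduces the problem to checking convergence of the fourth moment for each fixed chaos order, together with a uniform tail estimate on the chaos decomposition. The near-local-dependence structure of $\Xi^\eps_\phi$ (each evaluation depends essentially on the noise in a sub-diffusive window around $(s_1, z_1)$) plays the same role as the CLT for weakly-dependent sums in the polymer proof.

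\textbf{Main obstacle.} The principal technical difficulty is establishing the continuum analogue of the dominated-sequence decomposition from Proof strategy~1 for Theorem \ref{T:subcritical}: one must show that multi-chaos configurations where consecutive times $s_i, s_{i+1}$ (or consecutive points $z_i, z_{i+1}$) cluster at strictly sub-dominant scales contribute only negligibly, otherwise the telescoping to a single geometric series fails. Once this renewal-type combinatorial estimate is secured, both the variance calculation and the fourth-moment condition close simultaneously thanks to the precise logarithmic balance between $\beta_\eps^2$ and the heat-kernel overlap.
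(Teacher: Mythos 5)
Your proposal is correct and follows essentially the same route as the paper: the paper proves the directed-polymer analogue (Theorem \ref{T:EWDPM}) by pulling out the earliest noise variable in the chaos expansion, matching the variance via the Lemma \ref{L:2mom}-type computation with the geometric series $\sum_k \hat\beta^{2k} = (1-\hat\beta^2)^{-1}$, and invoking the Fourth Moment Theorem together with the near-local dependence of the resulting field, and it explicitly notes that the mollified-SHE case is obtained by the same argument with the Wiener--It\^o expansion \eqref{eq:WienerKPZ} in place of the polynomial chaos. The only small mis-emphasis is your ``main obstacle'': the dominated-sequence decomposition belongs to the one-point log-normal limit (Theorem \ref{T:subcritical}), whereas for the averaged field the variance asymptotics and the chaos-by-chaos fourth-moment (contraction) estimates, summed with the uniform $\hat\beta^{2k}$ tail, are what actually need to be closed.
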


Theorem 2.17 in \cite{CSZ17b} actually considered the fluctuation of the solution as a
space-time field instead of as a field in space at a given time, as stated above. The proof
of Theorem \ref{T:EWDSHE} is simpler. The joint limit of the solution at
finitely many space-time points at sub-diffusive scales is identified in \cite[Theorem 2.15]{CSZ17b}.

\subsection{Alternative approach to subcritical $2d$ KPZ}
In a subset of the subcritical regime for the $2d$ KPZ considered in Theorem \ref{T:KPZ}, tightness of
the fluctuation field was first established in \cite{CD20}, while \cite{G20} gave an alternative proof using
tools from stochastic analysis. The linearisation strategy is the same as in \cite{CSZ20}, but \cite{G20}
used the Clark-Ocone formula (see \cite{N06} for a reference and more background on Malliavin calculus)
to express
\begin{align*}
\log u^\eps(t,x) = \bbE[\,\log u^\eps \,] + \int_0^t \int_{\R^2}
\bbE\big[ D_{s,y} \log u^\eps(t,x) \,|\, \cF_s \,\big] \,\dd W(s,y),
\end{align*}
where $D_{s,y}$ is the Malliavin derivative and $\cF_s$ is the filtration generated by the white noise field
$(W(r,x) \colon r<s )$ up to time $s$. Using Malliavin calculus, the above is equal to
 \begin{align}\label{COformula}
 \log u^\eps(t,x) = \bbE[\,\log u^\eps \,] + \int_0^t \int_{\R^2}
  \bbE\Big[ \frac{D_{s,y}  u^\eps(t,x)}{u^\eps(t,x)} \,\Big|\, \cF_s \,\Big] \,\dd W(s,y) \,.
 \end{align}
The linearisation is based on the same observation as in Section \ref{sec:2dKPZ}, namely, $u^\eps(t, x)$ can be approximated
by a quantity that only depends on the noise in a tiny neighborhood of $(t,x)$, and hence
decouples from the numerator in the right-hand side of \eqref{COformula} as well as the $\sigma$-field $\cF_s$.
Once the linearisation is achieved, the Edwards-Wilkinson limit is obtained via the {\it second
order Poincar\'e inequality} \cite{Cha09, NPR09}, which states that given a random variable $X$
defined from a white noise $W$ on a domain $\Lambda$, the total variation distance
between the distribution of $X$ and that of a normal random variable $\zeta$ with matching
mean and variance can be bounded by
\begin{align}\label{2ndPoinc}
d_{TV}(X,\zeta) \leq C \,\bbE\big[ \|DX\|_H^4 \big]^{1/4} \, \bbE\big[ \|D^2 X\|_{\rm op}^4 \big]^{1/4},
\end{align}
where $D$ is again the Malliavin derivative, $H$ is the $L^2$ space of functions on $\Lambda$, and
$\|\cdot\|_{\rm op} $ is the operator norm of the Hessian operator $D^2X$. The fact that the second order Poincar\'e
inequality involves a fourth moment is why the result of \cite{G20} is restricted to a subset of the subcritical regime.

We also mention that, spatial averages of the solution of the mollified $2d$ KPZ equation on the mesoscopic scale
have been studied in \cite{Tao24b}. In \cite{NN23}, Theorem \ref{T:KPZ} has been generalised to the fluctuation field of $F(u^\eps(t, x))$ with general initial condition $u^\eps(0, \cdot)$ and for a suitable class of functions $F$ that includes $F(z)=\log z$.

\subsection{Nonlinear stochastic heat equations} A program to study semilinear stochastic heat equations at the critical dimension two
was initiated in \cite{DG22} and studied further in \cite{Tao24, DG23, DG24}. In \cite{DG22, Tao24}, they consider SPDEs of the form
\begin{align}\label{nonlinSHE}
\partial_t u^\epsilon = \frac{1}{2} \Delta u^\epsilon
+ \frac{\sqrt{2\pi}}{\sqrt{|\log\epsilon|}} \sigma\big( u^\epsilon \big) \,\xi^\epsilon, \qquad t>0, \, x\in \R^2,
\end{align}
where $\sigma\colon [0,\infty) \to [0,\infty)$ is a globally Lipschitz function with Lipschitz constant ${\rm Lip}(\sigma) <1$,
which is below the critical value $\hat\beta_c=1$ of the linear SHE.

In \cite{DG22}, the analogue of Theorem \ref{T:subcriticalSHE} on the limit of one-point distribution was established. More precisely,
if $u^\epsilon(0,\cdot) \equiv a \geq 0$, then the solution $u^\eps(t, x)$ of \eqref{nonlinSHE} at any $t>0$ and $x\in \R^2$ converges
in distribution to a random variable $\Xi_{a, Q}(Q)$ with $Q=2$, which solves the following forward-backward SDE (FBSDE):
\begin{equation}\label{FBSDE}
\begin{split}
\dd \Xi_{a, Q}(q)&= J(Q-q, \Xi_{a, Q}(q)) \, \dd B(q), \qquad q\in (0,Q],\\
\Xi_{a, Q}(0)&=a, \\
J(r,b) &= \frac{1}{\sqrt{2}} \bE\big[\sigma^2\big( \Xi_{b, r}(r) \big)\big]^{1/2},
\end{split}
\end{equation}
where $(B(q))_{q\geq 0}$ is a one-dimensional Brownian motion, and $a$ is the initial condition of the FBSDE $\Xi_{a, Q}(\cdot)$, with
terminal time $Q$. When $\sigma(u)=c u$ with $c<1$, $\Xi_{a, 2}(2)$ is log-normal and this recovers Theorem \ref{T:subcriticalSHE}.

The connection between \eqref{nonlinSHE} and the FBSDE \eqref{FBSDE} is that for $Q\in (0,2]$, $u^\eps(\eps^{2-Q}, x) \sim \Xi_{a, Q}(Q)$.
Furthermore, given $Q\in (0, 2]$, one has
$$
u^\eps(\eps^{2-Q} -\eps^{2-(Q-q)}, x)\approx \Xi_{a, Q}(q), \qquad q\in [0, Q],
$$
where
$\eps^{-(Q-q)}$ corresponds to the exponential time scale $N^{\frac{Q-q}{2}}$ in Corollary \ref{C:2mom} with $N=\eps^{-2}$. Such exponential
time scales for the nonlinear SHE are motivated by the same type of second moment calculations as in Corollary \ref{C:2mom}. The heuristic justification for \eqref{FBSDE} is the following. Denote $T=\eps^{2-Q}$ and $r=Q-q$. By second moment
calculations, a small error is incurred if we turn off the noise $\xi^\eps$ (i.e., evolve according to the heat equation) in the time interval $(T-\eps^{2-r}, T-\eps^{2-r+\delta})$ for a very small $\delta>0$. Smoothing by the heat flow on this time interval allows one to replace $u^\eps(T-\eps^{2-r+\delta}, \cdot)$ by a random constant distributed roughly as $\Xi_{a, Q}(q)=\Xi_{a, Q}(Q-r)$, which becomes the initial condition for the evolution of $u^\eps$ from time $T-\eps^{2-r+\delta}$ onward. Consider the evolution of $u^\eps$ till a later time $T-\eps^{2-r+\Delta}$ for some small $\Delta>\delta$. The time duration of this evolution is $T-\eps^{2-r+\Delta} - (T-\eps^{2-r+\delta})\approx \eps^{2-r}$, and therefore to compute the increment of the quadratic variation of $u^\eps(\cdot, x)$ in this time interval, it suffices to consider \eqref{nonlinSHE}  with initial condition $\Xi_{a, Q}(Q-r)$ and time duration $\eps^{2-r}$, which leads to the SDE in \eqref{FBSDE} with diffusion coefficient $J(r, \Xi_{a, Q}(Q-r))$.

The analogue of Theorem \ref{T:EWDSHE} on the Edwards-Wilkinson limit for the non-linear SHE \eqref{nonlinSHE} was established in \cite{Tao24}.

In \cite{DG23, DG24}, the results in \cite{DG22} and \cite{Tao24} were extended to $\sigma$ that do not necessarily satisfy the condition
Lip$(\sigma)<1$. For technical convenience, they considered a variant of \eqref{nonlinSHE} where smoothing is performed on $\sigma(u) \xi$ instead of on $\xi$, but the result is expected to be the same. The key idea of \cite{DG23} is that if Lip$(\sigma)=:\sqrt{A}>1$, then the result of \cite{DG22}
applies to $u^\eps$ up to a time scale of $\tilde \eps^2:=\eps^{2-\frac{2}{A}}$ with the corresponding spatial scale $\tilde \eps= \eps^{1-\frac{1}{A}}$.
Heuristically, if one performs an averaging on this space-time scale, then $\tilde \eps$ replaces $\eps$ as the basic spatial scale of smoothing, while $J(2/A, \cdot)=: \tilde\sigma(\cdot)$ replaces $\sigma(\cdot)$ as the renormalised diffusion function. If Lip$(\tilde \sigma)=:\sqrt{\tilde A}<\infty$,
then one can apply \cite{DG22} to this renormalised non-linear SHE up to a time scale of $\tilde \eps^{2-\frac{2}{\tilde A}}=(\eps^{(2-\frac{2}{A})})^{1-\frac{1}{\tilde A}}\gg \tilde \eps^2$. This procedure can then be iterated to find the maximal time scale $\eps^{2-Q}$ at which one can identify the distributional limit of $u^\eps(\eps^{2-Q}, x)$. The corresponding Edwards-Wilkinson limit was obtained in \cite{DG24}.

Similar results have been obtained very recently \cite{DHL25} for semilinear stochastic heat equations in dimension $d\geq 3$ with a noise that is white in time and has a critical power law decay in space. 

\subsection{Anisotropic KPZ and related models}
The general 2-dimensional KPZ is formally written as
\begin{align}\label{AKPZ}
\partial_t h= \frac{1}{2} \Delta h +\langle \nabla h , Q \nabla h \rangle + \xi, \qquad t>0 , \, x\in \R^2,
\end{align}
where $\langle\cdot, \cdot \rangle$ is the Euclidean inner product,
$\xi$ is the usual space-time white noise, and $Q$ is a $2\times 2$ matrix. When $Q=\left( \begin{array}{cc} 1 & 0 \\ 0 & \!\!-1 \end{array}\right)$,
it is called the purely anisotropic KPZ (AKPZ).
 This model emerged in the physics literature in \cite{W91}.
 It was shown in \cite{CET23b} that the correlation length of the interface
grows like $\sqrt{t (\log t)^{1/2}}$.

Although the 2-dimensional anistropic KPZ is also a critical singular SPDE, just as the 2-dimensional SHE and isotropic KPZ, the techniques are very different. We refer to the lecture notes by Cannizzaro and Toninelli \cite{CT24} for more details, where their techniques apply to a family of
critical and supercritical singular SPDEs, including the stochastic Navier Stokes equation with divergence-free driving noise, Burgers equation, diffusion in the curl of a $2d$ Gaussian Free Field, self-repelling Brownian polymers, etc.

The weak coupling limit of \eqref{AKPZ}, with the same noise strength as in Theorem \ref{T:KPZ} for the isotropic KPZ, has also been studied \cite{CES21, CET23b}. More precisely, in \cite{CES21, CET23b}, instead of mollifying the noise, all Fourier modes beyond order $1/\eps$ are
truncated, which is an alternative method of performing the so-called ultraviolet cutoff to smoothe things at spatial scale $\eps$. The strength of
non-linearity in the AKPZ (c.f.~\eqref{eq:genKPZ}) is then scaled down as $\lambda=\sqrt{\hat\lambda/ \log \epsilon^{-1}}$.
The random field fluctuation of the AKPZ in this weak coupling regime is shown to converge to the solution of the following
Edwards-Wilkinson equation:
\begin{align}\label{AKPZheat}
\partial_t h = \frac{\nu_{\rm eff}(\hat\lambda)}{2} \Delta h +
\sqrt{\nu_{\rm eff}(\hat\lambda)} \, \xi, \qquad \text{with}
\qquad \nu_{\rm eff}(\hat\lambda)= \sqrt{\frac{2}{\pi} \hat\lambda^2+1}.
\end{align}
We note that, in contrast to the isotropic $2d$ KPZ treated in Theorem \ref{T:KPZ}, the coefficient of $\Delta h$ is also modified.
But there is no phase transition in $\hat\lambda$ in the sense that the strength of the limiting noise $\xi$ is bounded for all $\hat \lambda>0$.
A key property needed in the analysis of the AKPZ is that it has an explicit Gaussian invariant measure (the Gaussian free field). This is a common
feature shared by models in this class. Again, we refer to the lecture notes \cite{CT24} for more details.

 \subsection{The quasi-critical regime}
 \label{sec:quasi-critical}
The fluctuations of the partition functions \emph{as a random field} have very different features
in the sub-critical and critical regimes:
\begin{itemize}
\item in the \emph{sub-critical regime} $\hat\beta < 1$,
fluctuations are \emph{asymptotically Gaussian}
after centering and \emph{rescaling by $\beta_N^{-1} \sim \hat\beta^{-1} \sqrt{\log N}$}, see Theorem~\ref{T:EWDPM}:
\begin{equation} \label{eq:EWcomp}
	\sqrt{\log N} \int_{\R^2} \phi(x) \Big(Z^{\beta_N}_{tN}(\lfloor x\sqrt{N}\rfloor)-1\Big) {\rm d}x
	\xrightarrow[\,N\to\infty\,]{d}
	\frac{\hat\beta}{\sqrt{1-\hat\beta^2}} \, \int_{\R^2} \phi(x) \hat v(t, x) {\rm d}x \,,
\end{equation}
where $\partial_t \hat v(t, x)$ solves a normalised version of the
Edwards-Wilkinson equation \eqref{eq:EW}, namely
$\partial_t \hat v(t, x) = \frac{1}{2} \Delta \hat v(t, x)
+ \xi(t,x) \, \hat v(t, x)$ with $\partial_t \hat v(0, \cdot) = 1$
(we moved the scaling factor $(1-\hat\beta^2)^{-1/2}$ into \eqref{eq:EWcomp}
for later comparison, see below);

\item
on the other hand, in the \emph{critical regime} $\hat\beta = 1$,
and actually in the whole critical window
$\hat\beta^2 = 1 + O(\frac{1}{\log N})$, see \eqref{eq:sigma},
a \emph{non-Gaussian} scaling limit emerges
with \emph{no need of centering and rescaling}, see Theorem~\ref{th:main0}:
\begin{equation*}
	\int_{\R^2} \phi(x) \, Z^{\beta_N}_{tN}(\lfloor x\sqrt{N}\rfloor) \, {\rm d}x
	\xrightarrow[\,N\to\infty\,]{d}
	\int_{\R^2} \varphi(x) \, \SHF_{0,t}^\theta(\dd x, \R^2) \,,
\end{equation*}
where $\SHF_{s,t}^\theta(\dd x, \dd y)$ is the critical $2d$ stochastic heat flow,
see Section~\ref{S:critical}.
\end{itemize}

To interpolate between these regimes, we can let
$\hat\beta=\hat\beta(N) \uparrow 1$ from below the critical window.
In view of \eqref{eq:sigma},
this amounts to taking $\beta =\beta_N$ in the
\emph{quasi-critical regime} defined by:
\begin{equation}\label{eq:quasicrit}
	\sigma_{\beta_N}^2 = \frac{1}{R_N} \bigg( 1 -
	\frac{\theta_N}{\log N}\bigg)
	\qquad \text{with} \quad 1 \ll \theta_N \ll \log N  \,.
\end{equation}
It is proved in \cite{CCR25}  that throughout this quasi-critical regime, the
partition function  as a random field
has \emph{Gaussian fluctuations},
after centering and \emph{rescaling by $\sqrt{\theta_N}$}, a  factor which interpolates between
$\sqrt{\log N}$ and~$1$:
\begin{equation}\label{eq:mainCCR}
	\sqrt{\theta_N} \int_{\R^2} \phi(x) \Big(Z^{\beta_N}_{tN}(\lfloor x\sqrt{N}\rfloor)-1\Big) {\rm d}x
	\xrightarrow[\,N\to\infty\,]{d} \,
	\int_{\R^2} \phi(x) \hat v(t, x) {\rm d}x \,,
\end{equation}
where $\hat v(t,x)$, as in \eqref{eq:EWcomp}, solves the Edwards-Wilkinson
equation.\footnote{To compare with \cite[Theorem~1.1]{CCR25}, note that the RHS
of \eqref{eq:mainCCR} is a Gaussian random variable with variance $\sigma_{t,\phi}^2 :=
\int_{\R^2 \times \R^2} \phi(x) \, K_t(x,x') \, \phi(x') \, \mathrm{d}x \, \mathrm{d}x'$
with $K_t(x,x') := \int_{0}^{t} \frac{1}{2u} \, e^{-\frac{|x-x'|^2}{2u}} \, \mathrm{d}u$.}
This shows that non-Gaussian behavior does not appear below the critical window.
Note that setting formally $\hat\beta^2 = 1 - \frac{\theta_N}{\log N}$ in \eqref{eq:EWcomp},
as prescribed by \eqref{eq:quasicrit}, we would obtain \eqref{eq:mainCCR}.

The strategy to prove \eqref{eq:mainCCR}, inspired by \cite{CC22}, is to
decompose the space-averaged partition function approximately into a sum of independent random variables
and then apply a Central Limit Theorem under a Lyapunov condition,
which requires estimating moments of the partition function
of order higher than two. A key difficulty is the lack of hypercontractivity, which holds
in the sub-critical regime $\hat\beta < 1$, but not in the quasi-critical regime \eqref{eq:quasicrit}
(nor in the critical regime $\hat\beta = 1$),
since the main contribution now comes from terms of diverging order in the chaos expansion.
To this end, novel moment estimates are derived in \cite{CC22} by
exploiting and extending the strategy in \cite{CSZ23a, LZ23}.

\subsection{Higher dimensions}
The Directed Polymer Model, the Stochastic Heat Equation and the KPZ equation have also
been studied in dimensions $d\geq 3$. A difference in the higher dimensional setting is that, contrary to the two-dimensional
case, the $L^2$ critical point (at which the second moment of the polymer partition function diverges) and the true critical point
(where the polymer undergoes a localisation-delocalisation phase transition) do not coincide.
This is a phenomenon known for some time in the directed polymer literature, see the references in \cite{C17, Z24}.
The analogue of Theorems \ref{T:EWDPM}, \ref{T:EWDSHE}, and \ref{T:KPZ} have been established in dimensions $d\geq 3$
below the $L^2$ critical point \cite{MU18, CCM24, CCM20, CNN22, LZ22}, which requires scaling of the noise strength in mollified
SHE and KPZ by a factor of $\epsilon^{\frac{d-2}{2}}$ (equivalent to no scaling in the inverse temperature $\beta$ in the directed
polymer model). Characterising the random field in $d\geq 3$ beyond the $L^2$ regime remains a challenging task. Some progress
has been made in the directed polymer setting by Junk \cite{J23, J25}, and the equivalence of the fluctuation for the mollified SHE
and the mollified KPZ has recently been established in \cite{JN25}, where the limiting field is believed to solve an analogue of the
Edwards-Wilkinson equation driven by a stable noise field.

\subsection{Space-time mollifications of noise}
So far, we have been discussing spatial mollifications of the noise, which is convenient because the noise is still white in time and hence
we can define the solution of the mollified SHE \eqref{eq:mollSHE} in terms of It\^o integral. However, it is also meaningful to consider a regularisation where the noise is mollified in both space and time.
This approach was considered in \cite{GRZ18} for the SHE in dimensions $d\geq 3$ and in \cite{K24} in $d=2$.
Even though there is still Edwards-Wilkinson limit in the weak disorder regime, there are additional difficulties because the
time mollification destroys the martingale structure of the process and the path measures are tilted by self-intersections local times.
We refer to \cite{GRZ18, K24} for details.

\begin{figure}[h]
\centering
\begin{minipage}[b]{.33\linewidth}
\centering
\includegraphics[height=70mm]{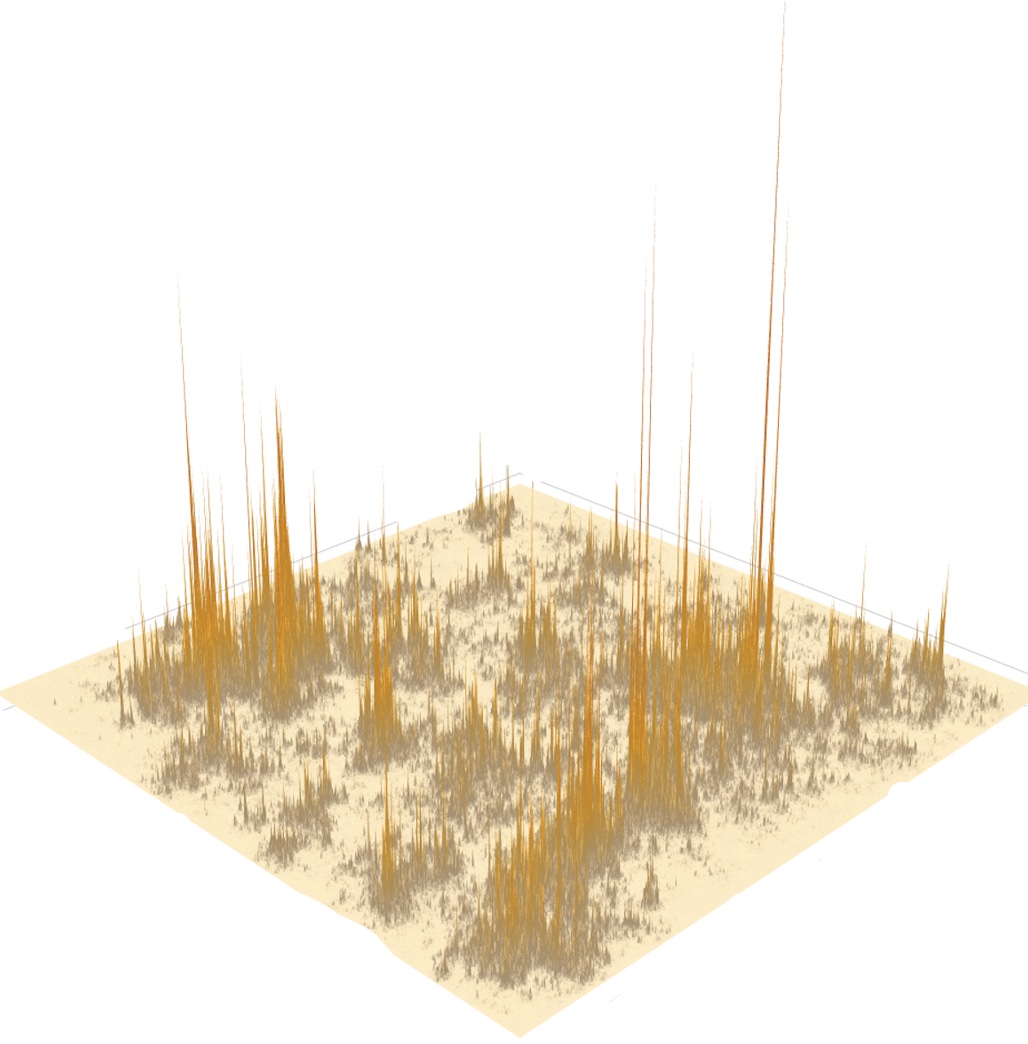}
\end{minipage}
\hfill
\begin{minipage}[b]{.43\linewidth}
\centering
\includegraphics[height=70mm]{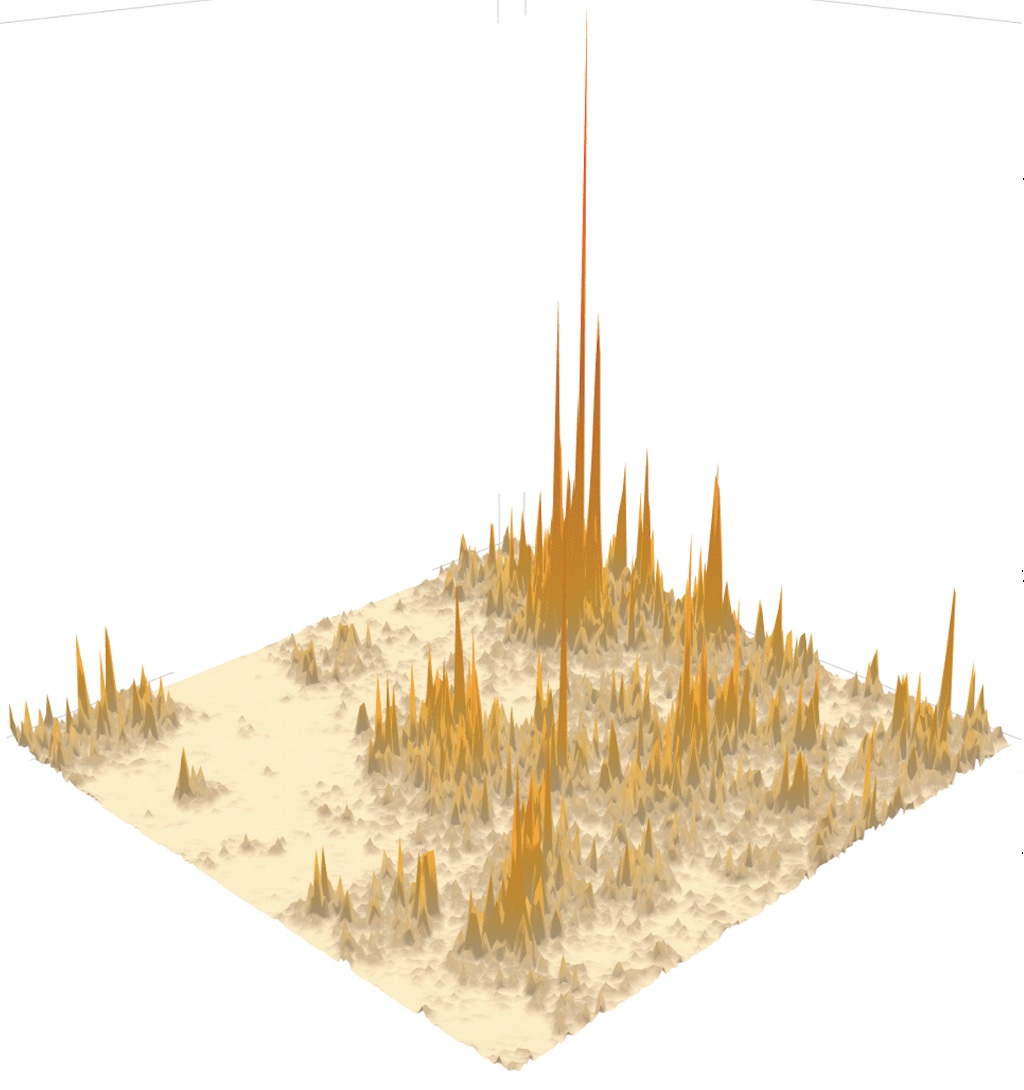}
\end{minipage} \\
\hskip -1cm
\begin{minipage}[b]{.33\linewidth}
\centering
\includegraphics[height=70mm]{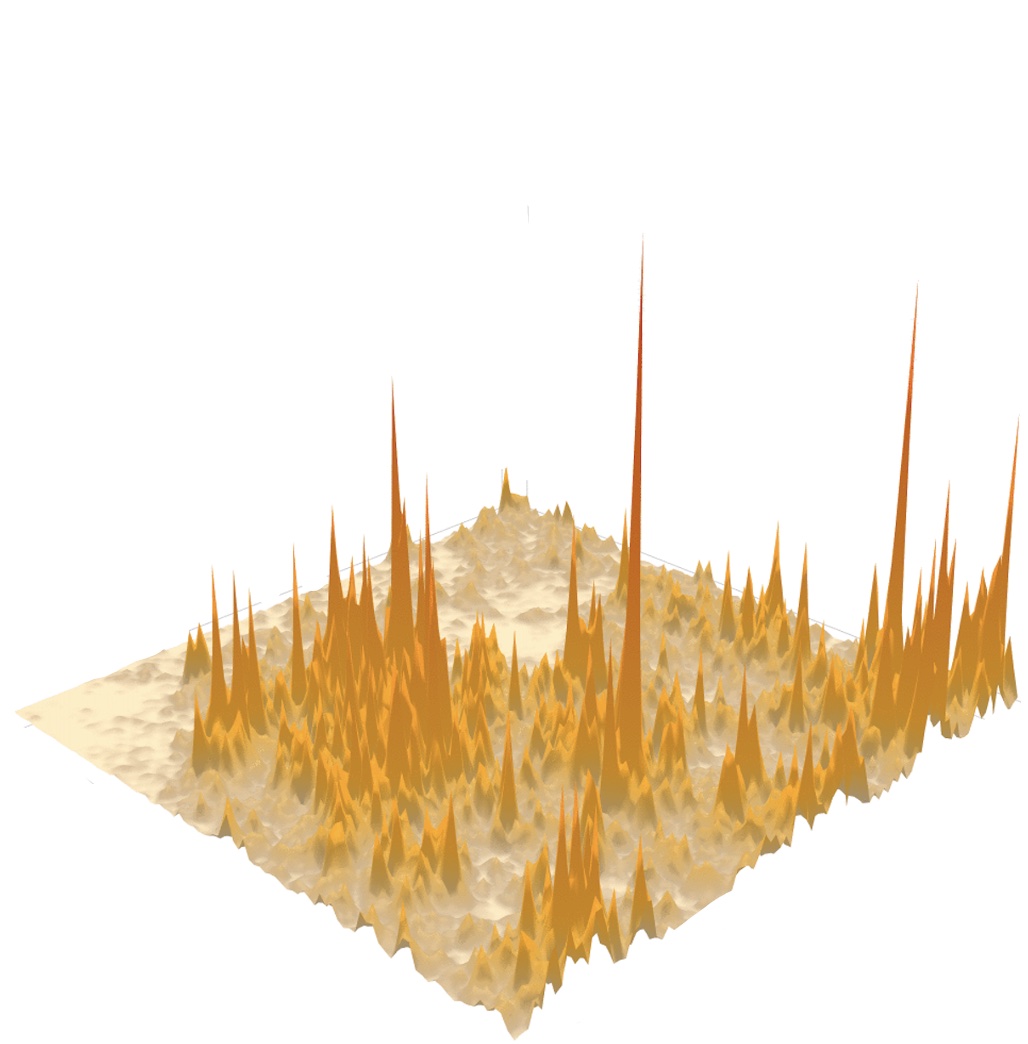}
\end{minipage}
\hskip 3cm
\begin{minipage}[b]{.33\linewidth}
\centering
\includegraphics[height=70mm]{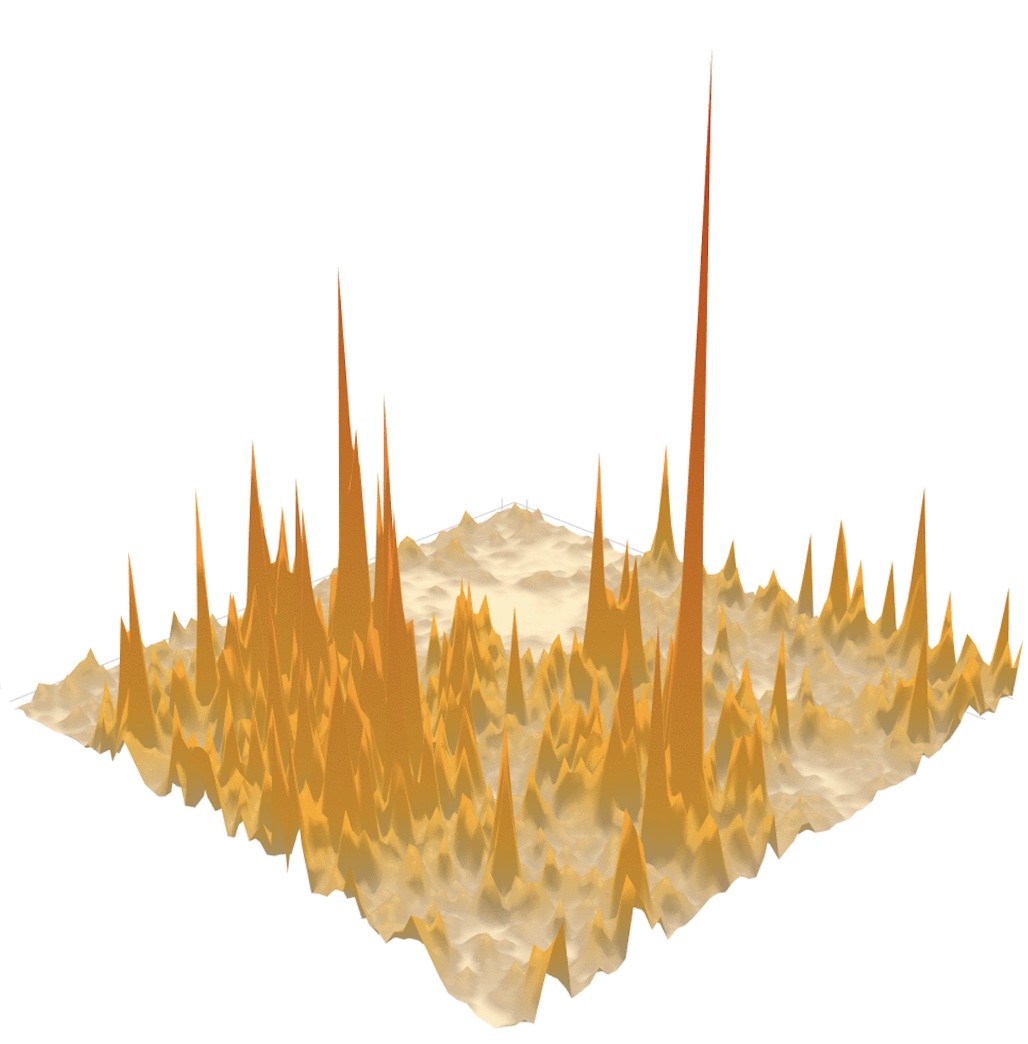}
\end{minipage}
\caption{The pictures above illustrate the critical $2d$ stochastic heat flow, $\SHF(\theta)$, and its scaling properties.
The top-right picture and the two bottom pictures are zoomed-in snapshots of the upper-left picture,
which is a simulation of $\SHF(\theta)$
with $\theta=0$.
The singularity of the critical $2d$ SHF w.r.t.\ the Lebesque measure can be seen from the upper-left picture.
The zoomed-in snapshots demonstrate the scaling covariance of $\SHF(\theta)$ stated in
\eqref{eq:scaling}. In particular, the zoomed-in fields appear to be smoother than the original field, since
\eqref{eq:scaling} shows that zooming in has the effect of decreasing the disorder strength $\theta$.}
	\label{fig:SHFzoom}
\end{figure}

\section{The Critical $2d$ SHF}\label{S:critical}

\subsection{Main result}
We assume throughout this section that $\beta_N$ and $\sigma_N^2$ have been chosen in the critical window as in \eqref{eq:sigma} for some $\theta\in \R$.

Recall the family of point-to-point partition functions $Z^{\beta_N}_{M,N}(x,y)$ from \eqref{eq:ZMN}. To keep the notation
simple, we will omit $\beta_N$ from $Z^{\beta_N}_{M,N}$. We regard $Z_{M, N}(\cdot, \cdot)$ as a random measure on $\Z^2\times \Z^2$. Scaling space and time diffusively and also the mass of the random measure, we define the family of rescaled random measures $\cZ_N := \big(\cZ_{N;\, s,t}(\dd x, \dd y)\big)_{0\leq s\leq t}$ by
\begin{equation} \label{eq:rescZmeas}
\iint_{\R^2\times \R^2} \varphi(x)\psi(y) \cZ_{N;\, s,t}(\dd x, \dd y) :=
    \frac{1}{N} \sum_{x, y\in \Z^2} \varphi\Big(\frac{x}{\sqrt{N}}\Big) \psi\Big(\frac{y}{\sqrt{N}}\Big) Z_{\lfloor Ns\rfloor, \lfloor Nt\rfloor}(x, y),
\end{equation}
where $\varphi, \psi\in C_c(\R^2)$ are test functions with compact support. We regard $\cZ_{N;\, s,t}(\dd x, \dd y)$ as a random variable taking values
in the space of Radon measures $\cM(\R^2 \times \R^2)$ on $\R^2\times \R^2$, equipped with the vague topology.

The main result from \cite{CSZ23a} is that in the critical window, the rescaled partition function $\cZ_{N;\, s,t}(\dd x, \dd y)$, regarded as a stochastic process in the space $\cM(\R^2 \times \R^2)$
indexed by $0 \le s \le t < \infty$, converges in law
to a unique limit, that we denote by $\SHF_{s,t}^\theta(\dd x , \dd y)$.

\begin{theorem}[Critical $2d$ SHF]\label{th:main0}
Let $\beta_N$ be chosen in the critical window \eqref{eq:sigma} for some $\theta\in \R$. As $N\to\infty$, the family of random measures
$\cZ_N = (\cZ_{N;\, s,t}(\dd x, \dd y))_{0\le s \le t}$ converges in finite dimensional distribution to a {\it unique} limit
\begin{equation*}
	\SHF(\theta) := (\SHF_{s,t}^\theta(\dd x , \dd y))_{0 \le s \le t <\infty} \,,
\end{equation*}
called the \emph{critical $2d$ stochastic heat flow}. The limit $\SHF(\theta)$ does not depend on the law of the disorder $\omega$ except for the assumptions in \eqref{eq:lambda}.
The first and second moments are
\begin{align}
	\label{eq:mean-SHF}
	\bbE[\SHF^\theta_{s,t}(\dd x , \dd y)]
	&= g_{t-s}(y-x) \, \dd x \, \dd y \,, \\
	\label{eq:var-SHF}
	\bbcov[\SHF^\theta_{s,t}(\dd x , \dd y),
	\SHF^\theta_{s,t}(\dd x' , \dd y')]
	&= K_{t-s}^\theta(x,x'; y, y') \, \dd x \, \dd y \, \dd x' \, \dd y' \,,
\end{align}
with the kernel $K^\theta_{t-s}$ defined by
\begin{equation}
\label{eq:m2-lim}
\begin{split}
	K_{t}^{\theta}(x,x'; y,y')
	&\,:=\, 4\pi \: g_{\frac{t}{2}}\big(\tfrac{y+y'}{2} - \tfrac{x+x'}{2}\big)
	\!\!\! \iint\limits_{0<a<b<t} \!\!\! g_{2a}(x'-x) \,
	G_\theta(b-a) \, g_{2(t-b)}(y'-y) \, \dd a \, \dd b \,,
\end{split}
\end{equation}
where $g$ is the heat kernel in $\R^2$ defined in \eqref{eq:gt}, and $G_\theta$ is defined in \eqref{dick-green1}-\eqref{dick-green2}.
\end{theorem}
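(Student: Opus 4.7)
The plan is to follow a three-step program standard for establishing such scaling limits: tightness in $\cM(\R^2 \times \R^2)$ with the vague topology, computation of limiting first and second moments, and identification of a unique limit via a coarse-graining scheme combined with a Lindeberg-type universality argument. Throughout, the workhorse is the polynomial chaos expansion \eqref{eq:expan2} applied to the point-to-point partition function $Z_{M,N}(x,y)$, together with the local CLT \eqref{eq:llt}.

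First I would compute moments. The first-moment formula \eqref{eq:mean-SHF} follows immediately from $\bbE[Z_{M,N}(x,y)] = q_{N-M}(y-x)$ and the diffusive rescaling together with \eqref{eq:llt}. For the second moment, I would square the chaos expansion \eqref{eq:expan2} and use $L^2$-orthogonality, to reduce to a sum over pairs of paths that share disorder variables at common points $(n_i, z_i)$. The critical tuning \eqref{eq:sigma} is engineered precisely so that, when one collects the contribution of the ``overlap'' between the two paths, the renewal structure $\sum_r (\sigma_N^2)^r \sum_{0<n_1<\cdots<n_r\leq N} \prod q_{2(n_i-n_{i-1})}(0)$ produces in the limit a non-trivial function $G_\theta$ (the Dickman-type Green's function defined in \eqref{dick-green1}-\eqref{dick-green2}), rather than blowing up or collapsing. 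Coupling this overlap piece with the independent ``initial'' legs (giving the $g_{2a}(x'-x)$ factor) and ``terminal'' legs (giving $g_{2(t-b)}(y'-y)$), and using that the center-of-mass diffusion decouples from the relative motion (producing the $g_{t/2}$ prefactor), yields the kernel $K_t^\theta$ of \eqref{eq:m2-lim}. This is discussed in detail in Section~\ref{S:2ndmom}.

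Next, tightness of $\{\cZ_N\}_N$ as a sequence of random elements of $\cM(\R^2 \times \R^2)$. Since the vague topology is metrizable by testing against a countable dense family of $\varphi \otimes \psi \in C_c(\R^2 \times \R^2)$, it suffices to show that, for each such test function and for each pair $(s,t)$ with $s \le t$, the real-valued random variable $\cZ_{N;s,t}(\varphi \otimes \psi)$ has a tight law. Second-moment control from the previous step provides this, and in fact extends to joint tightness at finitely many $(s_1, t_1),\ldots, (s_k, t_k)$.

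The main obstacle is uniqueness of the subsequential limit, and this is where a coarse-graining scheme (developed in Section~\ref{S:CG}) enters. My plan is to partition the macroscopic time interval $[s,t]$ into mesoscopic blocks of length $\eps N$ (with $\eps \downarrow 0$ taken after $N\to\infty$) and to decompose $Z_{\lfloor Ns\rfloor,\lfloor Nt\rfloor}(x,y)$ via Chapman--Kolmogorov into a sum of products of ``block partition functions,'' each depending only on disorder within a single mesoscopic slab of time together with a spatially localised window. The block variables are essentially independent, and the sum over block sequences mimics a Riemann-sum approximation to a continuum iterated integral. One then applies a Lindeberg-type invariance principle \emph{at the level of blocks}: replacing the block partition functions by Gaussian surrogates with matching first and second moments does not alter the limit, provided one can control higher moments of the block pieces. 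This is precisely where the higher-moment estimates of Section~\ref{S:highmom} are crucial, since hypercontractivity fails at criticality and the dominant contribution to $\bbE[\cZ_N^h]$ comes from chaos terms of diverging order. Once Lindeberg applies, the limiting finite-dimensional distributions are determined entirely by the (universal) second-moment kernel $K_t^\theta$ and the iterated block structure, proving both uniqueness of $\SHF(\theta)$ and its universality with respect to the disorder law, subject only to \eqref{eq:lambda}. The delicate step will be verifying that the coarse-grained approximation is faithful: errors from truncating small-scale contributions within a block, from path trajectories crossing block boundaries non-trivially, and from the diagonal ``triple overlap'' terms must all be shown to vanish uniformly in $N$ before $\eps \downarrow 0$.
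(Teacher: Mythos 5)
Your moment computations, tightness remark, and the overall coarse-graining skeleton do track the paper's structure (Sections \ref{S:2ndmom}, \ref{S:CG}), but the uniqueness step — the heart of Theorem \ref{th:main0} — contains a genuine gap. You propose to apply Lindeberg by replacing the mesoscopic block partition functions with \emph{Gaussian surrogates} having matching first and second moments, and then conclude that the limiting finite-dimensional distributions are ``determined entirely by the second-moment kernel $K_t^\theta$.'' This cannot be right as stated: the critical $2d$ SHF is non-Gaussian, it is not a Gaussian multiplicative chaos (Theorem \ref{th:mmom}), and its law is not determined by its second moments — indeed not even by the full sequence of its moments, which grow too fast (this is precisely why the paper stresses in Section \ref{S:sketch} that no characterisation of the limit is available). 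Even granting the Lindeberg swap of blocks for Gaussians, you would only have traded the original problem for an equivalent one: the Gaussianised coarse-grained model still depends on $\eps$, and you would need to prove that \emph{it} converges (uniquely) as $\eps \downarrow 0$, which requires comparing coarse-grained structures across different scales $\eps$ — exactly the difficulty you started with. Your proposal never addresses how that limit is pinned down.

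The paper's actual route is different in a crucial way: the Lindeberg principle (for \emph{dependent} inputs — the block variables $\Theta_{N,\eps}$ are uncorrelated but not independent, so a bespoke Lindeberg principle had to be developed) is used to compare the same coarse-grained polynomial $\mathscr{Z}^{(\cg)}_{\eps}(\varphi,\psi\,|\,\cdot)$ evaluated at inputs $\Theta_{N,\eps}$ and $\Theta_{M,\eps}$ coming from two \emph{different microscopic scales} $N$ and $M$, see \eqref{eq:ZcgNM}; combined with the $L^2$ coarse-graining approximation \eqref{eq:ZcgL2}, this shows that the laws of $\cZ_N(\varphi,\psi)$ form a Cauchy sequence in distribution, uniformly as $\eps\downarrow 0$, and the limit is \emph{defined} as the limit of this Cauchy sequence (the idea borrowed from Kozma's work on loop-erased random walk). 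No characterisation of the limit by its covariance — or by any moments — is ever invoked; what the Lindeberg comparison needs is the convergence of the covariances of $\Theta_{N,\eps}$ as $N\to\infty$, the local dependency structure, and the fourth-moment bounds \eqref{eq:Theta4}, \eqref{eq:Zcg4} supplied by the functional inequalities of Section \ref{S:highmom}. If you want to salvage your write-up, replace the ``Gaussian surrogate'' step by this two-scale comparison; as it stands, the claimed identification of the limit through $K_t^\theta$ is where the argument fails.
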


\begin{remark}
The original definition of the Stochastic Heat Flow in
\cite[Theorem~1.1]{CSZ23a}, denoted by $\mathscr{Z}^\theta$,
was derived from directed polymers with simple symmetric random walk,
which is periodic and has variance $\frac{1}{2}$ (in each component).
The definition of $\SHF(\theta)$ here is slightly different,
because the random walk $S$ is aperiodic and has unit variance,
which matches the scaling limit of the solution of the mollified SHE.
To match the two definitions, it suffices to rescale space, time, and the measure
in order to match the first two moments:
compare \eqref{eq:mean-SHF}-\eqref{eq:m2-lim} with \cite[eqs.\ (1.13) and (3.56)]{CSZ23a}.
This leads to the identification
\begin{equation} \label{eq:SHF-Z}
	\SHF^\theta_{s,t}(\dd x , \dd y) \overset{d}{=}
	4 \, \mathscr{Z}^\theta_{s,t}(\dd \tfrac{x}{\sqrt{2}},
	\dd \tfrac{y}{\sqrt{2}})
	\overset{d}{=} 2 \, \mathscr{Z}^{\theta-\log 2}_{2s, 2t}(\dd x, \dd y) \,,
\end{equation}
where the second equality holds by scaling covariance, see
\cite[Theorem~1.2]{CSZ23a} and Theorem~\ref{th:main1} below,
and $\nu(\dd x, \dd y) = \mu(\dd \frac{x}{\sqrt{2}}, \dd \frac{y}{\sqrt{2}})$
denotes the measure $\nu(A) := \mu( \frac{1}{\sqrt{2}} A)$ for $A \subseteq \R^2 \times \R^2$.
Relation \eqref{eq:SHF-Z} corrects a misprint in \cite[Remark~1.5]{CSZ23a}.
\end{remark}

\begin{remark}
The covariance kernel $K_{t-s}^\theta(x,x'; y, y')$ was
first identified in \cite{BC98} $($see also \cite{CSZ19b}$)$
and is {\em logarithmically divergent} near the diagonals $x=x'$ or $y=y'$. One can also express
$K_{t-s}^\theta(x,x'; y, y')$ in terms of the Volterra function, see \cite{CM25, CM24} and the references therein.
\end{remark}

In what follows, we will sketch the proof Theorem \ref{th:main0} in \cite{CSZ23a}, which was carried out without any axiomatic characterisation of the limit SHF$(\theta)$. This difficulty was overcome by comparing with a family of coarse-grained models and then applying a Lindberg principle. Recently, Tsai \cite{T24} gave an axiomatic characterisation of SHF$(\theta)$ and then applied it to show that the solution of the mollified $2d$ SHE in the critical window also converges to SHF$(\theta)$. We will discuss this in more detail in Section \ref{S:Tsai}.

\subsection{Universality via a Lindeberg principle}\label{S:lind1}
To illustrate some of the proof ingredients for Theorem \ref{th:main0} in a simpler setting, we sketch here how one might apply a
Lindeberg principle to show that the limit of the random measures $\cZ_N$ in Theorem \ref{th:main0}, {\em if it
exists}, does not depend on the law of the disorder variables $\omega=(\omega(n,z))_{(n,z)\in\N\times\Z^2}$.

Let us consider the averaged point-to-plane partition function
\begin{equation}\label{eq:cZNphi}
\cZ_N(\varphi) := \frac{1}{N} \sum_{x\in \Z^2} \varphi\Big(\frac{x}{\sqrt{N}}\Big) Z^{\beta_N}_N(x), \qquad \varphi\in C_c(\R^2),
\end{equation}
which equals $\iint \varphi(x)\cZ_{N; 0,1}({\rm d}x, {\rm d}y)$ in the notation introduced in \eqref{eq:rescZmeas}.

By \eqref{eq:expan1}, $\cZ_N(\varphi)$ admits the polynomial chaos expansion
\begin{equation}\label{eq:expanphi}
\cZ_N(\varphi) = \overline{\varphi}_N + \frac{1}{N} \sum_{r=1}^\infty \sum_{ z_0, z_1, \ldots, z_r\in \Z^2 \atop n_0:=0<n_1<\cdots <n_r<N}
\varphi\Big(\frac{z_0}{\sqrt N}\Big)  \prod_{i=1}^r q_{n_i-n_{i-1}}(z_i-z_{i-1}) \xi_N(n_i, z_i),
\end{equation}
where
\begin{equation}
\overline{\varphi}_N := \frac{1}{N} \sum_{z_0\in \Z^2} \varphi\Big(\frac{z_0}{\sqrt N}\Big).
\end{equation}
Note that $\cZ_N(\varphi)$ is a function of the i.i.d.\ random variables $\xi_N:=(\xi_N(n,x))_{n,x\in \N\times \Z^2}$.

A Lindeberg principle is said to hold when the law of a function $\Psi(\zeta)$ of a family of random variables $\zeta=(\zeta_i)_{i\in \bbT}$ does not change much if $\zeta$ is replaced by another family of random variables $\eta=(\eta_i)_{i\in \bbT}$ with matching first few moments. Lindeberg principles provide powerful tools to prove universality, the simplest instance being the universality of the Gaussian distribution from the central limit theorem. The usual formulation such as in \cite{Cha06} requires the family of random variables to be independent (or exchangeable), and $\Psi$ needs to have bounded first three partial derivatives. This is not satisfied when $\Psi$ is a multilinear polynomial, whose derivatives are unbounded. This case was treated in  \cite{R79, MOO10} (see also \cite{CSZ17a}). Although the results in these references are not directly applicable to $\cZ_N(\varphi)$
for reasons that we will explain shortly, we can still apply the general approach. We first recall the Lindeberg principle from \cite{MOO10}.
\medskip

\noindent
{\bf Lindeberg principle \cite{MOO10}.} Consider a function $\Psi(x)$ defined as a sum of mutilinear polynomials in the variables $x=(x_i)_{i\in \bbT}$ by
\begin{equation}\label{eq:CPsi}
	\Psi(x) := \sum_{I \subset \bbT, |I|<\infty} \psi(I) \prod_{i\in I} x_i,
\end{equation}
where $\psi$ is a real-valued function defined on finite subsets of the index set $\bbT$.

Let $\zeta=(\zeta_i)_{i\in \bbT}$ and $\eta=(\eta_i)_{i\in \bbT}$ be two i.i.d.\ family with mean $0$, variance $1$, and $\bbE[|\zeta_i|^3] \vee \bbE[|\eta_i|^3]<\infty$. Then we note that
$$
\bbE[\Psi(\zeta)]=\bbE[\Psi(\eta)] = \psi(\emptyset) \quad \mbox{and} \quad \bbV{\rm ar}(\Psi(\zeta))=\bbV{\rm ar}(\Psi(\eta)) =  \sum_{I \subset \bbT, 1\leq |I|<\infty} \psi(I)^2.
$$
Without loss of generality, we assume that $\Psi$ has been normalised with $\bbV{\rm ar}[\Psi(\zeta)^2]=1$. To identify conditions that ensure $\Psi(\zeta)$ and $\Psi(\eta)$ are close in distribution, we consider
$$
\bbE[f(\Psi(\eta))] - \bbE[f(\Psi(\zeta))] \qquad \mbox{for } \quad f\in C^3_b(\R),
$$
i.e., bounded continuous test functions with bounded first three derivatives.

Assume for simplicity that $\bbT$ is a finite set with elements labelled by $1, \ldots, |\bbT|$. Then we replace $\zeta_i$ by $\eta_i$ one variable at a time and for each $0\leq k\leq |\bbT|$, define the $k$-th interpolating family by $\zeta^{(k)}$ with $\zeta^{(k)}_i=\eta_i$ for $1\leq i\leq k$ and $\zeta^{(k)}_i=\zeta_i$ for $i>k$. Note that $\zeta^{(0)}=\zeta$ and $\zeta^{(|\bbT|)}=\eta$. We can then write the telescopic sum
\begin{align}\label{eq:teles1}
\bbE[f(\Psi(\eta))] - \bbE[f(\Psi(\zeta))] = \sum_{k=1}^{|\bbT|} \bbE[f(\Psi(\zeta^{(k)})) - f(\Psi(\zeta^{(k-1)}))].
\end{align}
Denote $g_{k, \zeta, \eta}(x):= f(\Psi(\eta_1, \ldots, \eta_{k-1}, x, \zeta_{k+1}, \ldots, \zeta_{|\bbT|}))$. By Taylor expansion,
\begin{align*}
f(\Psi(\zeta^{(k)})) - f(\Psi(\zeta^{(k-1)})) & = g_{k, \zeta, \eta}(\eta_k) - g_{k, \zeta, \eta}(\zeta_k) \\
& = g'_{k, \zeta, \eta}(0) (\eta_k-\zeta_k) + \frac{1}{2} g''_{k, \zeta, \eta}(0) (\eta_k^2-\zeta_k^2) + R(\eta_k) - R(\zeta_k),
\end{align*}
where $R(x)$ is the remainder in the Taylor expansion of $g_{k, \zeta, \eta}(x)$ and satisfies
$$
|R(x)| \leq \Vert g'''_{k, \zeta, \eta}\Vert_\infty |x|^3 \leq \Vert f'''\Vert_\infty |\partial_k \Psi(\zeta^{(k)})|^3 |x|^3.
$$
Here $\partial_k \Psi$ denotes the partial derivative w.r.t.\ $x_k$ and does not depend on $x_k$ --
note that because $\Psi$ is multilinear, $\partial_k^2 \Psi=0$.
Substituting the above calculations into
\eqref{eq:teles1} and using the fact that $(\zeta_k, \eta_k)$ are independent of $(\zeta_i, \eta_i)_{i\neq k}$, we find that the first and second
order terms in the Taylor expansion cancel out, and
\begin{equation} \label{eq:lind1}
\begin{aligned}
\big| \bbE[f(\Psi(\eta))] - \bbE[f(\Psi(\zeta))] \big| & \leq \Vert f'''\Vert_\infty \sum_{k=1}^{|\bbT|} \bbE[|\partial_k \Psi(\zeta^{(k)})|^3] \, (\bbE[|\zeta_k|^3] +\bbE[|\eta_k|^3]) \\
& \leq \Vert f'''\Vert_\infty \max_{k} \big(\bbE[|\zeta_k|^3] + \bbE[|\eta_k|^3]\big) \sum_{k=1}^{|\bbT|} \bbE\big[\big|\partial_k \Psi(\zeta^{(k)})\big|^3\big].
\end{aligned}
\end{equation}
This is the point where the analysis in \cite{MOO10} is no longer adequate when considering the critical case.

Note that for each $k$, $\partial_k \Psi(\zeta)$ (we replaced $\zeta^{(k)}$ by $\zeta$ for simplicity) also admits a polynomial chaos expansion in the variables $\zeta$. In \cite{MOO10},
$\bbE\big[\big|\partial_k \Psi(\zeta)\big|^3\big]$ is bounded using hyper-contractivity (see also \cite[Theorem B.1]{CSZ20}), which
bounds higher than $2$ moments by the second moment with an additional weight $c_3^{k-1}$
on each chaos component of order $k$. Therefore,
$$
\bbE\big[\big|\partial_k \Psi(\zeta)\big|^3\big] \leq \Big(\sum_{I\subset \bbT:\, k\in I} c_3^{2(|I|-1)} \psi(I)^2 \Big)^{\frac{3}{2}}
\leq c_3^{3 {\rm deg}(\Psi)} \Big(\sum_{I\subset \bbT:\, k\in I} \psi(I)^2 \Big)^{\frac{3}{2}} = c_3^{3 {\rm deg}(\Psi)} {\rm Inf}_k(\Psi)^{\frac{3}{2}},
$$
where $c_3>1$ depends only on the law of $\zeta$, ${\rm deg}(\Psi) := \max\{|I|: \psi(I)>0\}$ is the degree of $\Psi$, and
$$
{\rm Inf}_k(\Psi) := \bbE[(\partial_k \Psi(\zeta))^2]  = \sum_{I \subset \bbT: \, k\in I} \psi(I)^2
$$
is called the influence of $\zeta_k$ on $\Psi(\zeta)$ in \cite{MOO10}. We can then bound the r.h.s.\ of \eqref{eq:lind1} by
\begin{align}
\big| \bbE[f(\Psi(\eta))] - \bbE[f(\Psi(\zeta))] \big| & \leq C_{f, \zeta, \eta} c_3^{3 {\rm deg}(\Psi)} \big(\max_k {\rm Inf}_k(\Psi)\big)^{\frac{1}{2}}
\sum_{k=1}^{|\bbT|} \sum_{I \subset \bbT: \, k\in I} \psi(I)^2 \notag \\
& = C_{f, \zeta, \eta} c_3^{3 {\rm deg}(\Psi)} \big(\max_k {\rm Inf}_k(\Psi)\big)^{\frac{1}{2}} \sum_{I \subset \bbT: |I|\geq 1} |I| \psi(I)^2 \notag \\
& \leq C_{f, \zeta, \eta} c_3^{3 {\rm deg}(\Psi)} {\rm deg}(\Psi)  \big(\max_k {\rm Inf}_k(\Psi)\big)^{\frac{1}{2}} \bbV{\rm ar}(\Psi(\zeta)). \label{eq:lind2}
\end{align}
When we consider a sequence $\Psi_N$ with uniformly bounded degree and variance,  the r.h.s. of
\eqref{eq:lind2} tends to $0$ once we show that the maximum influence $\max_k {\rm Inf}_k(\Psi_N) \to 0$ as $N\to\infty$.

The argument above fails when applied to the averaged partition function $\cZ_N(\varphi)$ in the critical window, because we will see in Section \ref{S:2ndmom} that in the polynomial chaos expansion of $\cZ_N(\varphi)$ in \eqref{eq:expanphi}, the dominant contribution comes from terms of degree $A\log N$ where $A>0$ can be arbitrarily large. Therefore in \eqref{eq:lind2}, $c_3^{3 {\rm deg}(\Psi)} \approx c_3^{3A\log N}$ can overwhelm $\max_k {\rm Inf}_k(\Psi_N)$, and we will need sharper bounds than provided by hyper-contractivity.
\medskip

\noindent
{\bf Applying Lindeberg to $\cZ_N(\varphi)$.} Our starting point is \eqref{eq:lind1}, which was actually derived only using the assumption that
for each $i\in \bbT$, $\eta_k$ and $\zeta_k$ have matching first two moments. Recall the i.i.d.\ family $\xi_N=(\xi_N(n, z))_{(n,z)\in \N\times\Z^2}$
from \eqref{eq:xi}, and denote
$$
\Psi_N(\xi_N) := \cZ_N(\varphi) =
\overline{\varphi}_N + \frac{1}{N} \sum_{r=1}^\infty  \!\!\!\!\!\! \sum_{z_0, z_1, \ldots, z_r\in \Z^2 \atop n_0:=0<n_1<\cdots <n_r<N}
\!\!\!\!\!\! \varphi\Big(\frac{z_0}{\sqrt N}\Big)  \prod_{i=1}^r q_{n_i-n_{i-1}}(z_i-z_{i-1}) \xi_N(n_i, z_i).
$$
To show that the distribution of $\Psi_N(\xi_N)$ hardly changes if we replace $\xi_N$ by another family $\zeta_N$ with matching first two moments
(and fourth moment of the same order), we apply the bound in \eqref{eq:lind1}, where the index set is now $\bbT=\N\times \Z^2$. For each $(n,z)\in \bbT$, we observe the factorisation
\begin{equation}
\partial_{(n,z)} \Psi_N(\xi_N) = \frac{1}{N} Z_{0,n}(\varphi_N, z) Z_{n, N}(z, \ind),
\end{equation}
where $\ind(z)\equiv 1$, $\varphi_N(z) := \varphi(z/\sqrt{N})$ for $z\in\Z^2$, and for any $\psi:\Z^2 \to \R$ and $m<n$,
$$
Z_{m, n}(\psi, z) = \sum_{x\in \Z^2} \psi(x) Z_{m, n}(x, z) \qquad \mbox{and} \qquad Z_{m, n}(z, \psi) = \sum_{y\in \Z^2}  Z_{m, n}(z, y) \psi(y)
$$
are the point-to-plane partition functions with the free endpoint weighted by the function $\psi$ (recall the point-to-point partition function $Z_{m, n}(x,y)$ from \eqref{eq:expan2}).

Note that $Z_{0,n}(\varphi_N, z)$ and $Z_{n, N}(z, \ind)$ are independent, while $Z_{0,n}(\varphi_N, z)$ has the same distribution as $Z_{0,n}(z, \varphi_N)$ and $Z_{n, N}(z, \ind)$ has the same distribution as $Z_{0, N-n}(z, \ind)$. Therefore we have
\begin{align}
& \big| \bbE[f(\Psi_N(\xi_N))] - \bbE[f(\Psi_N(\zeta_N))] \big| \notag \\
\leq\ &  \Vert f'''\Vert_\infty \max_{(n,z)} \big(\bbE[|\xi_N(n,z)|^3] + \bbE[|\zeta_N(n,z)|^3]\big) \!\!\!\!\!\! \sum_{(n,z)\in\N\times\Z^2} \!\!\!\!\!\!  \bbE\big[\big|\partial_{(n,z)} \Psi(\zeta_N^{(n, z)})\big|^3\big]  \label{eq:lind3} \\
\leq\ &  \frac{\Vert f'''\Vert_\infty}{N^3} \max_{(n,z)} \big(\bbE[|\xi_N(n,z)|^3] + \bbE[|\zeta_N(n,z)|^3]\big) \!\!\!\!\!\! \sum_{(n,z)\in\N\times\Z^2} \!\!\!\!\!\! \bbE\big[|Z_{0,n}(z, \varphi_N)|^3\big] \bbE\big[|Z_{0, N-n}(z, \ind)|^3\big], \notag
\end{align}
where $\zeta_N^{(n, z)}$ is a family that interpolates between $\xi_N$ and $\zeta_N$ as in $\zeta^{(k)}$ in \eqref{eq:teles1}.

As will be discussed in Section \ref{S:highmom}, the key bound is that for any $\delta>0$, we have
\begin{align} \label{eq:lind4}
\bbE\big[Z_{0, N-n}(z, \ind)^4\big] \leq C N^{\delta} \qquad \mbox{and} \qquad
\bbE\big[Z_{0,n}(z, \varphi_N)^4\big] \leq  C e^{-\frac{c|z|}{\sqrt N}} N^\delta,
\end{align}
where the exponential factor $e^{-\frac{c|z|}{\sqrt N}}$ comes from the assumption that $\varphi_N=\varphi(\cdot/\sqrt{N})$ is supported on a subset of
$\Z^2$ within distance $C\sqrt{N}$ of the origin. Substituting these bounds into \eqref{eq:lind3} and using the fact that
\begin{equation}\label{eq:xiN3}
\bbE[|\xi_N(n,z)|^3]\leq \bbE[\xi_N(n,z)^4]^{3/4} \leq C \sigma_N^3 \leq \frac{C}{(\log N)^3},
\end{equation}
which follows easily from the definition of $\xi_N$ in \eqref{eq:xi}, this gives an upper bound of order $\frac{\sigma_N^3}{N^3} N^{2+2\delta}$, with the factor $\frac{1}{N^3}$ coming from the normalisation in \eqref{eq:lind3}.
This upper bound tends to $0$ as $N\to\infty$ if we choose $\delta<1/2$.

Although this argument does not address whether $\cZ_N(\varphi)$ converges in distribution to a unique limit, which is the heart of Theorem
\ref{th:main0}, it does bring out several key ingredients in the proof of Theorem \ref{th:main0}, namely, the Lindeberg principle that led to \eqref{eq:lind3}, and the necessary moment bounds \eqref{eq:lind4} for the partition function. We will see more complex variants of these when we sketch the proof of Theorem~\ref{th:main0} in Section \ref{S:CG}.

\subsection{Proof outline of Theorem~\ref{th:main0}} \label{S:sketch}
We now sketch the proof of Theorem~\ref{th:main0} -- the main result of \cite{CSZ23a}. The key difficulty is the lack of a characterisation of the limit, i.e., the
critical $2d$ stochastic heat flow (SHF). As will be discussed in Section \ref{S:highmom}, earlier results \cite{CSZ19b, GQT21, Che24}
established the convergence of all positive integer moments of the averaged point-to-point partition functions
\begin{equation}\label{eq:cZNphipsi}
\cZ^{\beta_N}_N(\varphi, \psi) := \frac{1}{N} \sum_{x\in \Z^2} \varphi\Big(\frac{x}{\sqrt{N}}\Big) Z^{\beta_N}_{0, N}(x, y) \psi\Big(\frac{x}{\sqrt{N}}\Big), \qquad \varphi\in C_c(\R^2), \psi\in C_b(\R^2),
\end{equation}
which equals $\iint \varphi(x)\psi(y)\cZ_{N; 0,1}({\rm d}x, {\rm d}y)$ in the notation introduced in \eqref{eq:rescZmeas}. However, as shown in \cite{CSZ23b}, the limiting moments $\lim_{N\to\infty} \bbE[\cZ^{\beta_N}_N(\varphi, \psi)^k]$ diverge too fast in $k\in\N$ to uniquely characterise the distributional limit of $\cZ^{\beta_N}_N(\varphi, \psi)$.

Our strategy is to show that the laws of $(\cZ^{\beta_N}_{N}(\varphi, \psi))_{N\in\N}$ form
a \emph{Cauchy sequence}, i.e.,
\begin{equation} \label{eq:close}
	\text{\emph{$\cZ_M^{\beta_M}(\varphi, \psi)$ and $\cZ_N^{\beta_N}(\varphi, \psi)$
	are close in distribution for large $M, N \in \N$}} \,.
\end{equation}
We took inspiration from the work of Kozma \cite{Koz07} on the convergence of the loop erased random walk on $\Z^3$, which also lacked
a characterisation of the scaling limit.

To establish \eqref{eq:close}, for each $\eps\in (0,1)$, we will define a \emph{coarse-grained partition function}
$\mathscr{Z}_{\epsilon}^{(\cg)}(\varphi, \psi | \Theta)$, which has a similar structure to $\cZ^{\beta_N}_N(\varphi, \psi)$ and admits a
polynomial chaos expansion w.r.t.\ a family of random variables $\Theta$. We will use $\mathscr{Z}_{\epsilon}^{(\cg)}$
as a bridge between $\cZ_M^{\beta_M}$ and $\cZ_N^{\beta_N}$.  More precisely, we first perform a coarse-grained approximation for
$\cZ_N^{\beta_N}(\varphi, \psi)$ on the time-space scale $(\eps N, \sqrt{\eps N})$ and show that
\begin{equation}\label{eq:ZcgL2}
\cZ^{\beta_N}_{N}(\varphi, \psi) \stackrel{L^2}{\approx} \mathscr{Z}_{\epsilon}^{(\cg)}(\varphi, \psi | \Theta_{N, \eps}),
\end{equation}
where $\Theta_{N, \eps}$ is a family of weakly dependent coarse-grained disorder variables that depend on $N$ and~$\eps$, and the approximation error
can be made arbitrarily small in $L^2$ by choosing $\eps$ small and $N$ large.

We then prove \eqref{eq:close} by showing that
\begin{equation}\label{eq:ZcgNM}
\mathscr{Z}_{\epsilon}^{(\cg)}(\varphi, \psi | \Theta_{N, \eps}) \stackrel{\rm dist}{\approx} \mathscr{Z}_{\epsilon}^{(\cg)}(\varphi, \psi | \Theta_{M, \eps}),
\end{equation}
which follows by applying a Lindeberg principle for sums of multilinear polynomials of a family of weakly dependent random variables $\Theta$.
Such a Lindeberg principle was developed in \cite[Appendix A]{CSZ23a}.

Similar to the discussions in Section \ref{S:lind1} where we tried to apply a Lindeberg principle directly to $\cZ_N^{\beta_N}(\varphi, \psi)$ as a function of the disorder variables $\xi_N(n,z)$, the key ingredients for the Lindeberg principle are uniform bounds on $\bbE[|\Theta_{N, \eps}|^3]$, where $\Theta_{N, \eps}$ turns out to be an averaged
partition function itself, and similar moment bounds as in \eqref{eq:lind4} for the point-to-plane versions of the
coarse-grained partition function $\mathscr{Z}_{\epsilon}^{(\cg)}(\varphi, \psi | \Theta_{N, \eps})$.

Here is a summary of the key proof ingredients:
\begin{itemize}
\item[{\bf A.}] \emph{Coarse-Graining}, which leads to \emph{coarse-grained partition functions}
with a similar structure to the original polymer partition functions;
\item[{\bf B.}] \emph{Time-Space Renewal Structure},
which gives a probabilistic representation for the second moment calculations and leads in the continuum limit
to the so-called {\it Dickman subordinator};
\item[{\bf C.}] \emph{Lindeberg Principle} for multilinear polynomials of {\em dependent} random
variables, which controls the effect of changing $\Theta$ in the coarse-grained partition function
$\mathscr{Z}_{\epsilon}^{(\cg)}(\varphi, \psi | \Theta)$;
\item[{\bf D.}] \emph{Functional Inequalities for Green's Functions} of multiple random walks on $\Z^2$,
which yield sharp higher moment bounds for both the original partition functions and the coarse-grained
partition functions.
\end{itemize}
\begin{remark}
The proof steps outlined above can be applied to models with different microscopic
details in the critical window, such as directed polymer models with different underlying random walks, or the solution of
the mollified $2d$  SHE \eqref{eq:mollSHE}. As long as space-time is scaled to produce
the same diffusion constant, they can all be compared to
the same coarse-grained model $\mathscr{Z}_{\epsilon}^{(\cg)}(\varphi, \psi | \Theta)$, just with different coarse-grained
variables $\Theta_{N, \eps}$. In particular, they should lead to the same limiting SHF $\mathscr{Z}^\theta$ in Theorem
\ref{th:main0} if their mean are normalised to the same value, the disorder strength is chosen
to produce matching values of $\lim_{N\to\infty} \bbE[\Theta_{N, \eps}^2]$ for each $\eps>0$, and there are uniform bounds on
$\bbE[|\Theta_{N, \eps}|^3]$. See Section \ref{S:CG} for more details.
\end{remark}

\section{Coarse graining and Lindeberg} \label{S:CG}

\subsection{Coarse graining}
We now explain how to define the coarse-grained partition function $\mathscr{Z}_{\epsilon}^{(\cg)}(\varphi, \psi | \Theta_{N, \eps})$,
see \eqref{eq:ZcgL2}, and the family of coarse-grained disorder variables $\Theta_{N, \eps}$. We will sketch the key ideas. The more
precise details can be found in \cite[Section 4]{CSZ23a}.

Given $\eps\in (0,1)$, we partition space-time into {\it mesoscopic} blocks indexed by $\sfi \in \N$ and $\sfa =(\sfa_1,\sfa_2) \in \Z^2$:
\begin{equation}\label{Bbox}
	\cB_{\epsilon N}(\sfi,\sfa)
	\ := \ \underbrace{((\sfi-1)\epsilon N, \sfi \epsilon N]}_{\cT_{\epsilon N}(\sfi)}
	\times \underbrace{((\sfa-(1,1))\sqrt{\epsilon N},
	\sfa\sqrt{\epsilon N}]}_{\cS_{\eps N}(\sfa)}
	 \ \, \cap \ \ \Z^3 \,,
\end{equation}
where
\begin{align*}
\big((\sfa-(1,1))\sqrt{\epsilon N}, \sfa\sqrt{\epsilon N}\,\big]
:=\big((\sfa_1-1)\sqrt{\epsilon N}, \sfa_2\sqrt{\epsilon N}\,\big] \times \big((\sfa_2-1)\sqrt{\epsilon N}, \sfa_2\sqrt{\epsilon N} \,\big].
\end{align*}

\begin{figure}
\begin{tikzpicture}[scale=0.4]]
\draw[ystep=1cm, xstep=2,gray,very thin, lightgray] (0,-2) grid (29, 8);
\draw (0, 5)  circle [radius=0.1];    circle [radius=0.08];
\draw[thick] (0,5) to [out=45,in=150] (4.2,6.5);
\draw[lightgray] (4, 6)--(6, 6)--(6, 7)--(4, 7)--(4, 6);
\draw [fill] (4.2, 6.5)  circle [radius=0.08];  \draw [fill] (5.8, 6.6)  circle [radius=0.08];
\draw[thick, densely dotted] (4.2, 6.5) to [out=20,in=180] (4.4,6.8) to [out=-30,in=120] (4.8,6.2) to [out=0, in=180] (5.8, 6.6);
\draw [fill] (10.2, 1.5)  circle [radius=0.08];   \draw [fill] (11.6,1.5) circle [radius=0.08];
\draw[thick, densely dotted] (10.2, 1.5) to [out=20,in=180] (10.4,1.8) to [out=-30,in=120] (10.8,1.2) to [out=0, in=180] (11.8, 1.6);
\draw[thick] (5.8, 6.6) to [out=25,in=110] (10.2, 1.5);
\draw[thick, densely dotted] (14.2, 0.5) to [out=20,in=180] (14.5,1.4) to [out=-30,in=120] (14.9,0.2) to [out=0, in=180] (15.2, 0.6)
to [out=0, in=180] (15.8, -0.5);
\draw[thick] (11.6,1.5) to [out=30,in=150] (14.2, 0.5);
 \draw [fill] (14.2, 0.5)  circle [radius=0.08];
 \draw [fill] (15.8, -0.6)  circle [radius=0.08];
 \draw[thick] (15.8, -0.6) to [out=70,in=160] (20.2, 4.5);
 \draw [fill] (20.2, 4.5)  circle [radius=0.08];
\draw [fill] (21.8, 4.6) circle [radius=0.08];
\draw[thick, densely dotted] (20.2, 4.5) to [out=20,in=180] (20.4,4.8) to [out=-30,in=120] (20.8,4.2) to [out=0, in=180] (21.8, 4.6);
\draw[thick] (21.8, 4.6) to [out=40,in=110] (26.5, 3.5);
\draw[thick, densely dotted] (26.5, 3.6) to [out=20,in=180] (26.8, 3.8) to [out=-30,in=120] (27, 3.2) to [out=0, in=180] (27.2, 3.6) to [out=0, in=180] (27.5, 3.8) to [out=0, in=180] (27.8, 3.2) ;
\draw[fill] (26.5, 3.6) circle [radius=0.08];  \draw[fill] (27.8, 3.2) circle [radius=0.08];
\draw[thick] (27.8, 3.2) to [out=60,in=180] (29,4.8);
\begin{pgfonlayer}{background}
 \fill[color=gray!10] (4,6) rectangle (6,7);
 \fill[color=gray!10] (10,1) rectangle (12,2);
 \fill[color=gray!10] (14,-1) rectangle (16,2);
 \fill[color=gray!10] (20,4) rectangle (22,5);
  \fill[color=gray!10] (26,3) rectangle (28,4);
 \end{pgfonlayer}
\end{tikzpicture}
\caption{Space-time is partitioned into mesoscopic boxes $\cB_{\epsilon N}(\sfi,\sfa)$ as in \eqref{Bbox}. Given $\Gamma=((n_1,z_1),...,(n_r, z_r))$,
the sequence of space-time points associated with a given term in the chaos expansion in \eqref{eq:Zppchaos},  the boxes containing the dotted lines are the ones that intersect $\Gamma$. The starting point $(d, x)\in \Gamma$ and the end point $(f, y)\in \Gamma$ of each dotted line are the points of entry and exit in $\Gamma$ for that mesoscopic time interval $\cT_{\epsilon N}(\sfi)$. Each dotted line contributes a factor $X_{d, f}(x, y)$ to the corresponding term
in the chaos expansion, while each solid line contributes a random walk transition kernel.
\label{CG-fig}}
\end{figure}
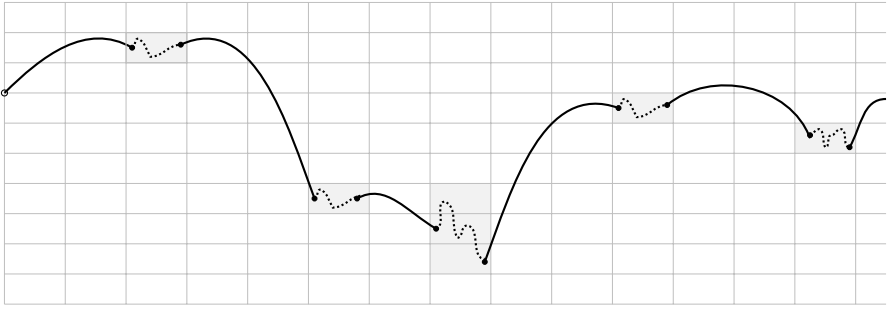

By \eqref{eq:cZNphipsi} and \eqref{eq:expan2}, we see that $\cZ^{\beta_N}_N(\varphi, \psi)$ admits a polynomial chaos expansion in the disorder variables $\xi_N$:

\begin{align}
	& \cZ^{\beta_N}_N(\varphi, \psi)   \, = \, q^N_{0,N}(\varphi,\psi)   \label{eq:Zppchaos} \\
	& \ +  \frac{1}{N} \sum_{r=1}^{\infty} \!\!\!\!\!\!\!\!
	\sum_{\substack{ z_0, \ldots, z_{r+1} \in \Z^2\\
	n_0:=0 < n_1 < \ldots < n_r < N}}\!\!\!\!\!\!\!\!\!\!\!\!
	\varphi_N(z_0)  \Bigg\{ \prod_{j=1}^r q_{n_j-n_{j-1}}(z_j-z_{j-1}) \xi_N(n_j,z_j) \Bigg\} q_{N-n_r}(z_{r+1}-z_r)\psi_N(z_{r+1}), \notag
\end{align}
where $\varphi_N(x):=\varphi(\frac{x}{\sqrt{N}})$, $\psi_N(x):=\psi(\frac{x}{\sqrt{N}})$, and
\begin{align*}
q^N_{m, n}(\varphi, \psi) & := \frac{1}{N} \sum_{x, y\in \Z^2} \varphi\Big(\frac{x}{\sqrt{N}}\Big) q_{n-m}(y-x) \psi\Big(\frac{y}{\sqrt{N}}\Big).
\end{align*}

Note that each term in \eqref{eq:Zppchaos} is associated with a sequence of space-time points
$$
\Gamma:=((n_1, z_1), \ldots, (n_r, z_r)),
$$
which contribute the product of disorder variables $\prod_{i=1}^r \xi_N(n_i, z_i)$. We will group the terms in \eqref{eq:Zppchaos} according to the set of mesoscopic blocks $\cB_{\epsilon N}(\sfi,\sfa)$ visited by points in $\Gamma$. More precisely,
we proceed as follows (see Figure \ref{CG-fig}):
\begin{itemize}
\item First, we decompose the sum in \eqref{eq:Zppchaos} according to the set of mesoscopic time intervals $\cT_{\epsilon N}(\sfi)$ visited by
points in $\Gamma=((n_1,z_1),...,(n_r, z_r))$.

\item
For each time interval $\cT_{\epsilon N}(\sfi)$ visited by points in $\Gamma$, let $(d,x)\in \Gamma$ be the point of entry into $\cT_{\epsilon N}(\sfi)$,
with $(d, x) \in \cB_{\epsilon N}(\sfi,\sfa)$ for some $\sfa\in \Z^2$. Similarly, let $(f, y) \in \Gamma$ be the point of exit from $\cT_{\epsilon N}(\sfi)$,
with $(f, y) \in \cB_{\epsilon N}(\sfi,\sfa')$ for some $\sfa'\in \Z^2$, which could be different from $\sfa$ (see Figure \ref{CG-fig}).
\end{itemize}

Given the above coarse-grained decomposition, we can rewrite
\begin{align}
	& \cZ^{\beta_N}_N(\varphi,\psi)
	 = \, q_{0,N}^N(\varphi,\psi) \notag \\
	 & \ +  \ \frac{1}{N}
	\sum_{v,w \in \Z^2} \varphi_N(v) \
	\sum_{k=1}^{\infty} \ \sum_{0 < \sfi_1 < \ldots < \sfi_k \le \frac{1}{\epsilon}} \
	\sum_{\substack{d_1 \le f_1 \,\in\,
	\cT_{\epsilon N}(\sfi_1), \, \ldots\, , \, d_k \le f_k \,\in\, \cT_{\epsilon N}(\sfi_k) \\
	x_1,\, y_1, \, \ldots \,, x_k, \, y_k \,\in\, \Z^2}} \label{eq:Zpoly3} \\
	& \  q_{d_1}(x_1-v)
	 \, X_{d_1,f_1}(x_1,y_1) \, \Bigg\{ \prod_{j=2}^k q_{d_j - f_{j-1}}(x_j - y_{j-1}) \,
	X_{d_j,f_j}(x_j,y_j) \Bigg\} \,
	q_{N-f_k}(w-y_k)  \,  \psi_N(w),\notag
\end{align}
where, recalling the point-to-point partition function from \eqref{eq:ZMN},
\begin{equation}\label{eq:Xdf}
	X_{d,f}(x,y) := \begin{cases}
	\xi_N(d, x) \ind_{\{x=y\}} & \text{if } d=f, \\
	\rule{0pt}{1.2em}\xi_N(d,x) \,
	Z^{\beta_N}_{d, f}(x,y) \, \xi_N(f,y) & \text{if } d<f.
	\end{cases}
\end{equation}

Ideally, we would like to use the local limit theorem \eqref{eq:llt} to replace the random walk transition kernels $q_{d_j - f_{j-1}}(x_j - y_{j-1})$
by a heat kernel
\begin{equation}\label{eq:qtog}
q_{d_j - f_{j-1}}(x_j - y_{j-1}) \rightsquigarrow g_{(\sfi_j-\sfi_{j-1})\eps N}((\sfa_j-\sfa_{j-1}')\sqrt{\eps N}) = \frac{1}{\eps N} g_{\sfi_j-\sfi_{j-1}}(\sfa_j-\sfa_{j-1}'),
\end{equation}
which depends only on the index of the mesocopic blocks $\cB_{\epsilon N}(\sfi_{j-1}, \sfa'_{j-1})\ni (f_{j-1}, y_{j-1})$ and $\cB_{\epsilon N}(\sfi_j,\sfa_j) \ni (d_j, x_j)$. Namely, approximate \eqref{eq:Zpoly3} by
\begin{align}\label{eq:Zcg-mock}
	  & \cZ_{N,\epsilon}^{({\rm mock}-\cg)}(\varphi, \psi | \Theta_{N, \eps})
	  =  g_1(\varphi, \psi) \\
	  & +  \epsilon \sum_{k=1}^{\lfloor \frac{1}{\eps} \rfloor}
	 \!\!\!\! \sum_{0<\sfi_1< \ldots < \sfi_k\leq \lfloor \frac{1}{\epsilon}\rfloor \atop
	\sfa'_0, \sfa_1, \sfa_1', \ldots, \sfa_k',	\sfa_{k+1} \in \Z^2} \!\!\!\!\!\!\!\!\!\!\!\!\!
	\varphi_\epsilon(\sfa_0')
	 \bigg( \, \prod_{j=1}^k
	g_{\sfi_j -\sfi_{j-1}} (\sfa_j -\sfa_{j-1}') \Theta_{N, \epsilon}(\sfi_j, \vec\sfa_j) \,\bigg)
	\cdot g_{\frac{1}{\epsilon}-\sfi_r} (\sfa_{r+1} -  \sfa'_r ) \psi_\epsilon(\sfa_{r+1}) , \notag
\end{align}
where $g_1(\varphi, \psi) := \int \varphi(x) g_1(y-x)\psi(y) {\rm d}x{\rm d}y$, and
\begin{equation}\label{aux-phi}
\begin{aligned}
&\varphi_\epsilon(\sfa)
:=  \frac{1}{\epsilon N} \!\!\sum_{v \in \cS_{\eps N}({\sfa})\cap \Z^2}
	\!\!\!\!\!\!\!\!\!\!\! \varphi_N(v) \,,
	\qquad
\psi_\epsilon(\sfa):=	
\, \frac{1}{\epsilon N}
	 \sum_{w \in \cS_{\epsilon N}({\sfa})\cap \Z^2}  \psi_N(w)  \\
	 \text{and} & \qquad
\Theta_{N, \epsilon}(\sfi, \vec\sfa)
:= \displaystyle \frac{1}{\epsilon N} \,
	\sum_{\substack{(d,x) \in \cB_{\epsilon N}(\sfi, \sfa)\\
	(f,y) \in \cB_{\epsilon N}(\sfi, \sfa')\\\text{with }d \le f}}
	X_{d,f}(x,y), \quad \text{for}\quad \vec \sfa:=(\sfa,\sfa')\in(\Z^2)^2.
\end{aligned}
\end{equation}
\begin{figure}
\begin{tikzpicture}[scale=0.4]
\draw[ystep=1cm, xstep=2,gray,very thin, lightgray] (0,-2) grid (29, 8);
\draw (0, 5)  circle [radius=0.1];    
\draw[] (0,5) to [out=45,in=150] (6,7); 
\draw [fill] (4.2, 6.5)  circle [radius=0.08];  \draw [fill] (5.8, 6.6)  circle [radius=0.08];
\draw[thick, densely dotted] (4.2, 6.5) to [out=20,in=180] (4.4,6.8) to [out=-30,in=120] (4.8,6.2) to [out=0, in=180] (5.8, 6.6);
\draw [fill] (10.2, 1.5)  circle [radius=0.08];   \draw [fill] (11.6,1.5) circle [radius=0.08];
\draw[thick, densely dotted] (10.2, 1.5) to [out=20,in=180] (10.4,1.8) to [out=-30,in=120] (10.8,1.2) to [out=0, in=180] (11.8, 1.6);
\draw[] (6.1, 7.1) to [out=25,in=110] (12, 2); 
\draw[thick, densely dotted] (14.2, 0.5) to [out=20,in=180] (14.5,1.4) to [out=-30,in=120] (14.9,0.2) to [out=0, in=180] (15.2, 0.6)
to [out=0, in=180] (15.8, -0.5);
\draw[] (12,2) to [out=30,in=150] (16, 2);
 \draw [fill] (14.2, 0.5)  circle [radius=0.08];
 \draw [fill] (15.8, -0.6)  circle [radius=0.08];
 \draw[] (16, 2) to [out=70,in=160] (22, 5); 
 \draw [fill] (20.2, 4.5)  circle [radius=0.08];
\draw [fill] (21.8, 4.6) circle [radius=0.08];
\draw[thick, densely dotted] (20.2, 4.5) to [out=20,in=180] (20.4,4.8) to [out=-30,in=120] (20.8,4.2) to [out=0, in=180] (21.8, 4.6);
\draw[] (22, 5) to [out=40,in=110] (28, 4); 
\draw[thick, densely dotted] (26.5, 3.6) to [out=20,in=180] (26.8, 3.8) to [out=-30,in=120] (27, 3.2) to [out=0, in=180] (27.2, 3.6) to [out=0, in=180] (27.5, 3.8) to [out=0, in=180] (27.8, 3.2) ;
\draw[fill] (26.5, 3.6) circle [radius=0.08];  \draw[fill] (27.8, 3.2) circle [radius=0.08];
\draw[] (28, 4) to [out=60,in=180] (29,4.8); 
\begin{pgfonlayer}{background}
 \fill[color=gray!10] (4,6) rectangle (6,7);
 \fill[color=gray!10] (10,1) rectangle (12,2);
 \fill[color=gray!10] (14,-1) rectangle (16,2);
 \fill[color=gray!10] (20,4) rectangle (22,5);
  \fill[color=gray!10] (26,3) rectangle (28,4);
 \end{pgfonlayer}
\end{tikzpicture}
\caption{A simplified coarse-graining that defines
$\cZ_{N,\epsilon}^{({\rm mock}-\cg)}(\varphi, \psi | \Theta_{N, \eps})$. A coarse-grained disorder variable $\Theta_{N, \eps}(\sfi, \vec \sfa)$
is defined for each visited mesoscopic time interval $\cT_{\epsilon N}(\sfi)$. The random walk transition kernel
connecting $\cT_{\epsilon N}(\sfi)$ and $\cT_{\epsilon N}(\sfj)$, with $\sfi<\sfj$, is replaced by a heat kernel connecting
the corner of the mesoscopic spatial box of exit from $\cT_{\epsilon N}(\sfi)$ to the corner of the box of entry in
$\cT_{\epsilon N}(\sfj)$, represented here by a solid line. Such replacements can lead to poor approximations
if  $\sfj-\sfi<K_\eps$.
\label{CG-fig2}}
\end{figure}

However, the above approximation is not good enough (this is why we gave the name ``mock-cg'')
 because to have a negiligible error from the local
limit approximation, we require that:
\begin{itemize}
\item[(R1)] The mesoscopic time intervals  $\cT_{\epsilon N}(\sfi_j)$,
$j=1,...,k$, are well separated, that is, $\sfi_j-\sfi_{j-1}> K_\epsilon$ for some $K_\epsilon\in\N$ that diverges fast enough
as $\eps\downarrow 0$, so that replacing any point in $\cB_{\epsilon N}(\sfi_{j-1}, \sfa'_{j-1})$, resp.\ $\cB_{\epsilon N}(\sfi_j, \sfa_j)$,
by a single point induces a negligible error in the local limit theorem approximation. Furthermore, we need the cumulative
error in the local limit theorem approximations to be small.

\item[(R2)] The space-time coordinates of the transition kernels $g_{\sfi_j-\sfi_{j-1}}(\sfa_j-\sfa_{j-1}')$ obey diffusive scaling, that is,
$|\sfa_j-\sfa'_{j-1} |< M_\epsilon \sqrt{\sfi_j-\sfi_{j-1}}$ for some $M_\epsilon$ that diverges slow enough as $\eps\downarrow 0$ ($M_\eps =\log\log\frac{1}{\eps}$ suffices), so that the local limit theorem approximation induces a negligible error, while restricting the spatial
coordinates to such an essentially diffusive window captures the dominant contribution to the partition function.
\end{itemize}
With regards to the first requirement (R1), a suitable choice is $K_\epsilon:=(\log \tfrac{1}{\epsilon})^6$, which ensures that in
the chaos expansion \eqref{eq:Zppchaos}, configurations of $\Gamma=((n_1, z_1), \ldots, (n_r, z_r))$ that visit three mesoscopic time
intervals $\cT_{\epsilon N}(\sfi_j)$, $\cT_{\epsilon N}(\sfi_{j+1})$, $\cT_{\epsilon N}(\sfi_{j+2})$ with $\sfi_{j+1}-\sfi_{j}< K_\eps$ and
$\sfi_{j+2}-\sfi_{j+1}< K_\eps$ give a negligible contribution to the partition function. We can therefore restrict the chaos expansion
for $\cZ_N(\varphi,\psi)$ to such {\em no-triple configurations}. However, it can be shown by $L^2$ calculations
that there will always be non-negligible contributions from configurations that contain
$O(1)$ number of mesoscopic time indices $\sfi_j$ with $\sfi_{j+1}-\sfi_j<K_\eps$. This is why
$\cZ_{N,\epsilon}^{({\rm mock}-\cg)}(\varphi, \psi)$
is not a good approximation of $\cZ_N(\varphi,\psi)$, although it captures some key features of the coarse-grained partition function
that we need.

To obtain a good coarse-grained approximation of  $\cZ_N(\varphi,\psi)$, we proceed as follows. As discussed above, we may restrict
the chaos expansion of $\cZ_N(\varphi,\psi)$ to the {\em no-triple configurations}. Given a configuration $\Gamma=((n_1, z_1), \ldots, (n_r, z_r))$
in the chaos expansion \eqref{eq:Zppchaos}, we call a visited mesoscopic time interval $\cT_{\epsilon N}(\sfi_j)$ isolated if $\sfi_j-\sfi_{j-1}, \sfi_{j+1}-\sfi_{j}\geq K_\eps$. If $\sfa_j$ (resp.\ $\sfa_j'$) denotes the mesoscopic spatial box of entry (resp.\ exit) by $\Gamma$ in the time
interval $\cT_{\epsilon N}(\sfi_j)$, then we define an associated coarse-grained disorder variable $\Theta_{N, \eps}(\sfi_j, \vec \sfa_j)$
as in \eqref{aux-phi}, with
\begin{align}\label{eq:Theta1}
\Theta_{N, \eps}(\sfi, \vec \sfa) := \frac{1}{\epsilon N} \,
	\sum_{\substack{(d,x) \in \cB_{\epsilon N}(\sfi, \sfa)\\
	(f,y) \in \cB_{\epsilon N}(\sfi, \sfa')\\\text{with }d \le f }}
	X_{d,f}(x,y).
\end{align}
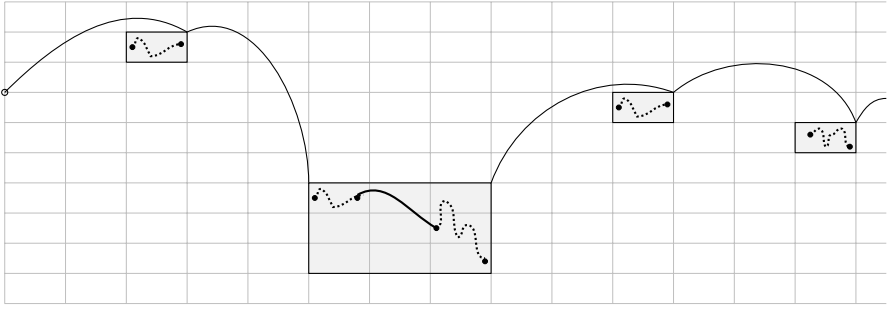
\begin{figure}
\begin{tikzpicture}[scale=0.4]
\draw[ystep=1cm, xstep=2,gray,very thin, lightgray] (0,-2) grid (29, 8);
\draw (0, 5)  circle [radius=0.1];
\draw[] (0,5) to [out=45,in=150] (6,7);
\draw[] (4, 6)--(6, 6)--(6, 7)--(4, 7)--(4, 6);
\draw[] (20, 4)--(20, 5)--(22, 5)--(22, 4)--(20, 4);
\draw[] (26, 3)--(26, 4)--(28, 4)--(28, 3)--(26, 3);
\draw [fill] (4.2, 6.5)  circle [radius=0.08];  \draw [fill] (5.8, 6.6)  circle [radius=0.08];
\draw[thick, densely dotted] (4.2, 6.5) to [out=20,in=180] (4.4,6.8) to [out=-30,in=120] (4.8,6.2) to [out=0, in=180] (5.8, 6.6);
\draw [fill] (10.2, 1.5)  circle [radius=0.08];   \draw [fill] (11.6,1.5) circle [radius=0.08];
\draw[thick, densely dotted] (10.2, 1.5) to [out=20,in=180] (10.4,1.8) to [out=-30,in=120] (10.8,1.2) to [out=0, in=180] (11.8, 1.6);
\draw[] (6, 7) to [out=25,in=90] (10, 2);
\draw[thick, densely dotted] (14.2, 0.5) to [out=20,in=180] (14.5,1.4) to [out=-30,in=120] (14.9,0.2) to [out=0, in=180] (15.2, 0.6)
to [out=0, in=180] (15.8, -0.5);
\draw[] (10,2)--(16,2)--(16,-1)--(10,-1)--(10,2);
\draw[thick] (11.6, 1.6) to [out=30,in=150] (14.2, 0.5);
 \draw [fill] (14.2, 0.5)  circle [radius=0.08];
 \draw [fill] (15.8, -0.6)  circle [radius=0.08];
 \draw[] (16, 2) to [out=70,in=160] (22, 5);
 \draw [fill] (20.2, 4.5)  circle [radius=0.08];
\draw [fill] (21.8, 4.6) circle [radius=0.08];
\draw[thick, densely dotted] (20.2, 4.5) to [out=20,in=180] (20.4,4.8) to [out=-30,in=120] (20.8,4.2) to [out=0, in=180] (21.8, 4.6);
\draw[] (22, 5) to [out=40,in=110] (28, 4);
\draw[thick, densely dotted] (26.5, 3.6) to [out=20,in=180] (26.8, 3.8) to [out=-30,in=120] (27, 3.2) to [out=0, in=180] (27.2, 3.6) to [out=0, in=180] (27.5, 3.8) to [out=0, in=180] (27.8, 3.2) ;
\draw[fill] (26.5, 3.6) circle [radius=0.08];  \draw[fill] (27.8, 3.2) circle [radius=0.08];
\draw[] (28, 4) to [out=60,in=180] (29,4.8);
\begin{pgfonlayer}{background}
 \fill[color=gray!10] (4,6) rectangle (6,7);
 \fill[color=gray!10] (10,-1) rectangle (16,2);
 \fill[color=gray!10] (14,-1) rectangle (16,2);
 \fill[color=gray!10] (20,4) rectangle (22,5);
  \fill[color=gray!10] (26,3) rectangle (28,4);
 \end{pgfonlayer}
\end{tikzpicture}
\caption{A more accurate coarse-graining approximation. Since the second and third visited boxes are
separated by less than $K_\epsilon$ mesoscopic time intervals, they need to be grouped together as
part of a single coarse-grained disorder variable $\Theta_{N, \eps}(\vec\sfi, \vec \sfa)$.
\label{CG-fig3}}
\end{figure}

We also need to consider mesoscopic time intervals $\cT_{\epsilon N}(\sfi_j)$ and $\cT_{\epsilon N}(\sfi_{j+1})$ visited by $\Gamma$ with $\sfi_{j+1}-\sfi_{j}<K_\eps$,
in which case, the no-triple configuration assumption ensures that mesoscopic time intervals $\cT_{\epsilon N}(\sfi^*)$ with $\sfi_j<\sfi^*<\sfi_{j+1}$,
or $\sfi^*\geq \sfi_j-K_\eps$, or $\sfi^*\leq \sfi_{j+1} + K_\eps$ are not visited by the configuration $\Gamma$. If $\sfa_j$ (resp.\ $\sfa'_{j+1}$)
denotes the mesoscopic spatial box of entry (resp.\ exit) by $\Gamma$ in $\cT_{\epsilon N}(\sfi_j)$ (resp.\ from $\cT_{\epsilon N}(\sfi_{j+1})$),
then we define an associated coarse-grained disorder variable $\Theta_{N, \eps}(\sfi_j, \sfi_{j+1}, \sfa_j, \sfa_{j+1}')$ by
\begin{align}\label{eq:Theta2}
\Theta_{N, \eps}(\vec \sfi, \vec \sfa) & := \Theta_{N, \eps}(\sfi, \sfi', \sfa, \sfa') \\
& := \frac{1}{\epsilon N} \!\!\!\!  \sum_{\substack{(d,x) \in \cB_{\epsilon N}(\sfi, \sfa)\\
	(f',y') \in \cB_{\epsilon N}(\sfi', \sfa')}}
	\sum_{\substack{\sfb: \, |\sfb - \sfa| \le M_\epsilon \\
	\sfb': \, |\sfb' - \sfa'| \le M_\epsilon \\
	\text{such that} \\ |\sfb' - \sfb| \le M_\epsilon \sqrt{\sfi'-\sfi}}}
	\sum_{\substack{(f,y) \in \cB_{\epsilon N}(\sfi, \sfb)\\
	(d',x') \in \cB_{\epsilon N}(\sfi', \sfb')\\
	\text{such that} \\ d \le f, \, d' \le f'}}
	\!\!\!\!\!\!\!\!\!\!  X_{d, f}(x, y) \; q_{d'-f}(x'-y) \; X_{d',f'}(x',y'), \notag
\end{align}
where $\sfi<\sfi'$, and we imposed a constraint on the mesoscopic spatial variables $\sfb, \sfb'$ relative to $\sfa$ and $\sfa'$.
To unify notation, when $\sfi=\sfi'$, we will let $\Theta_{N, \eps}(\vec \sfi, \vec \sfa):= \Theta_{N, \eps}(\sfi, \vec \sfa)$ as defined in \eqref{eq:Theta1}.
Schematically, the coarse-grained disorder variable can be represented as
\begin{equation}\label{eq:Theta-pic}
	\Theta_{N, \epsilon}(\vec\sfi, \vec\sfa) :=
	\begin{cases} \,
\begin{tikzpicture}[scale=0.4]
\draw[] (20,2.7)--(20,5.3);
\draw[] (22, 2.7)--(22, 5.3);
\draw[very thin, lightgray] (20,5)--(22,5);
\draw[very thin, lightgray] (20,3)--(22,3);
\draw[very thin, lightgray] (20, 4)--(22, 4);
\draw[thick, densely dotted] (20.2, 4.5) to [out=20,in=180] (20.5, 5.5) to [out=-30,in=120] (20.8,4.2) to [out=0, in=180] (21.8, 3.6);
\draw[fill]  (20.2, 4.5) circle [radius=0.08];
\draw[fill]  (21.8, 3.6) circle [radius=0.08];
\end{tikzpicture} & \qquad \text{if $\sfi=\sfi'$}, \\
&
\\
\begin{tikzpicture}[scale=0.4]
\draw[thick, densely dotted] (10.2, 1.5) to [out=20,in=180] (10.4,1.8) to [out=-30,in=120] (10.8,1.2) to [out=0, in=180] (11.8, 1.6);
\draw[thick, densely dotted] (14.2, 0.5) to [out=20,in=180] (14.5,1.4) to [out=-30,in=120] (14.9,0.2) to [out=0, in=180] (15.2, 0.6)
to [out=0, in=180] (15.8, -0.5);
\draw[] (16,2.3)--(16,-1.3);
\draw[] (10,-1.3)--(10,2.3);
\draw[fill]  (10.2, 1.5) circle [radius=0.08];
\draw[fill]  (11.8, 1.6) circle [radius=0.08];
\draw[fill]  (14.2, 0.5) circle [radius=0.08];
\draw[fill]  (15.8, -0.5) circle [radius=0.08];
\draw[] (11.8, 1.6) to [out=20,in=120] (14.2,0.5);
\draw[very thin, lightgray] (10,2)--(16,2);
\draw[very thin, lightgray] (10,-1)--(16,-1);
\draw[very thin, lightgray] (10,1)--(16,1);
\draw[very thin, lightgray] (10,0)--(16,0);
\draw[very thin, lightgray] (12,2)--(12,-1);
\draw[very thin, lightgray] (14,2)--(14,-1);
\end{tikzpicture}
& \qquad \text{if $\sfi'-\sfi \ge 1$}.
	\end{cases}
\end{equation}
Comparing with \eqref{eq:Zppchaos},
we can also interpret $\Theta_{N, \epsilon}(\vec\sfi, \vec\sfa)$ as an averaged partition function, where the time window has length $(\sfi'-\sfi+1)\eps N$ and the spatial average is on the scale $\sqrt{\eps N}$.

We are now ready to introduce a more accurate coarse-grained approximation of the partition function $\cZ^{\beta_N}_N(\varphi,\psi)$ than
$\cZ_{N,\epsilon}^{({\rm mock}-\cg)}(\varphi, \psi | \Theta_{N, \eps})$ in \eqref{eq:Zcg-mock}.

\begin{definition}[Coarse-grained partition function] Let $\varphi_\epsilon, \psi_\epsilon$ be defined in \eqref{aux-phi}, and
let $\Theta_{N, \eps} := (\Theta_{N, \eps}(\vec \sfi, \vec \sfa))_{\vec\sfi =(\sfi, \sfi'), \vec\sfa=(\sfa, \sfa')}$ be the family of
coarse-grained disorder variables defined in \eqref{eq:Theta1} and \eqref{eq:Theta2}. Then we define the coarse-grained
partition function
\begin{equation}\label{eq:Zcg-gen}
\begin{aligned}
	\mathscr{Z}_{\epsilon}^{(\cg)}(\varphi,\psi|\Theta_{N, \eps})
	 & :=  g_1(\varphi, \psi) + \epsilon
	 \sum_{k=1}^{(\log\frac{1}{\epsilon})^2} \!\!\! \sum_{{\sfb}, {\sfc} \in \Z^2} 	
	\sum_{\substack{(\vec\sfi_1, \ldots, \vec\sfi_k) \in \bcA_{\epsilon}^{(\notri)} \\
	(\vec\sfa_1, \ldots, \vec\sfa_k) \in \bcA_{\epsilon; \, b,c}^{(\diff)}}}
	\!\!\!\!\!\!\!\!\!\!\!\!\!\! \varphi_\epsilon({\sfb})  g_{\sfi_1}(\sfa_1 - \sfb)
	\Theta_{N, \eps}(\vec\sfi_1, \vec\sfa_1)  \\
	& \times
	\Bigg\{ \prod_{j=2}^k  g_{\sfi_j -\sfi_{j-1}'} (\sfa_j -\sfa_{j-1}')
	\, \Theta_{N, \eps}(\vec\sfi_j, \vec\sfa_j) \Bigg\}
	\, g_{\lfloor\frac{1}{\epsilon}\rfloor- \sfi_k'}({\sfc}-\sfa_k')
	 \psi_\epsilon({\sfc}) \,.
\end{aligned}
\end{equation}
\end{definition}
\noindent
In \eqref{eq:Zcg-gen}, the indices of the mesoscopic time intervals $\sfi_1\leq \sfi_1' <\sfi_2\leq \sfi_2' <\cdots \sfi_k \leq \sfi_k'$
have been grouped into blocks $\vec\sfi_j=(\sfi_j, \sfi_j')$, each associated with a coarse-grained variable
$\Theta_{N, \eps}(\vec\sfi_j, \vec \sfa_j)$. The set $\bcA_{\epsilon}^{(\notri)}$ ensures that for each $j$, $\sfi_j-\sfi'_{j-1} \geq K_\eps$,
$\sfi_j'-\sfi_j<K_\eps$, and $\sfi_{j+1}-\sfi'_j\geq K_\eps$, while the set $\bcA_{\epsilon; \, b,c}^{(\diff)}$ imposes diffusive
constraints on the mesoscopic spatial indices $\vec \sfa_j=(\sfa_j, \sfa_j')$ such that $|\sfa'_j-\sfa_j| \le  M_\epsilon \sqrt{\sfi'_j-\sfi_j+1}$ and
$|\sfa_{j} - \sfa'_{j-1}| \le M_\epsilon \sqrt{\sfi_{j} - \sfi'_{j-1} }$. For the precise definitions of $\bcA_{\epsilon}^{(\notri)}$ and
$\bcA_{\epsilon; \, b,c}^{(\diff)}$, see \cite[(4.4) and (4.6)]{CSZ23a}.

\begin{remark}{
Some remarks are in order:
\begin{itemize}
\item[1.] In \cite{CSZ23a}, we imposed a further constraint in the definition of the coarse-grained variables $\Theta_{N, \eps}$ so that
each depends only on disorder in finitely many mesocopic space-time blocks, which implies finite range dependence among the $\Theta_{N, \eps}$'s.
More precisely, in the definition of $X_{d, f}(x, y)$ in \eqref{eq:Xdf}, we replace the point-to-point partition function $Z^{\beta_N}_{d, f}(x, y)$ by a variant $Z^{(\diff)}_{d,f}(x,y)$ that does not collect any disorder
variables $\xi_N(n, z)$ outside a slightly super-diffusive window, see \cite[Section 4.3]{CSZ23a}.

\item[2.] $\mathscr{Z}_{\epsilon}^{(\cg)}(\varphi,\psi|\Theta_{N, \eps})$ is a polynomial chaos expansion in the family of coarse-grained disorder variables $\Theta_{N, \eps}= (\Theta_{N, \eps}(\vec \sfi, \vec \sfa))$ and has the same structure as the chaos expansion for $\cZ^{\beta_N}_N(\varphi,\psi)$ in \eqref{eq:Zppchaos}, which shows a degree of self-similarity.
    The time horizon $N$ in $\cZ^{\beta_N}_N(\varphi,\psi)$ is replaced by $1/\eps$, and the random walk transition kernels are replaced by heat kernels. Although the i.i.d.\ family of disorder variables $\xi_N$ has been replaced by the dependent family $\Theta_{N, \eps}$, the coarse-grained partition function $\mathscr{Z}_{\epsilon}^{(\cg)}(\varphi,\psi|\Theta_{N, \eps})$ is still critical since it approximates
$\cZ^{\beta_N}_N(\varphi,\psi)$ with $\beta_N$ in the critical window. See also \eqref{eq:SigAsymp}.

\item[3.] If we change the family $\Theta_{N, \eps}$ to $\Theta_{M, \eps}$, then $\mathscr{Z}_{\epsilon}^{(\cg)}(\varphi,\psi|\Theta_{M, \eps})$ will approximate $\cZ^{\beta_M}_M(\varphi,\psi)$ instead of $\cZ^{\beta_N}_N(\varphi,\psi)$. It was shown in \cite[Theorem 4.7]{CSZ23a} that
\begin{equation}
\lim_{\eps\downarrow 0} \limsup_{N\to\infty} \bbE\big[\big(\cZ^{\beta_N}_N(\varphi,\psi) - \mathscr{Z}_{\epsilon}^{(\cg)}(\varphi,\psi|\Theta_{N, \eps}) \big)^2\big] = 0.
\end{equation}
As we vary $N$, we only change the input $\Theta_{N, \eps}$ for the coarse-grained partition function $\mathscr{Z}_{\epsilon}^{(\cg)}(\varphi,\psi|\Theta_{N, \eps})$. This paves the way to apply a Lindeberg principle to show that $\mathscr{Z}_{\epsilon}^{(\cg)}(\varphi, \psi | \Theta_{N, \eps}) \stackrel{\rm dist}{\approx} \mathscr{Z}_{\epsilon}^{(\cg)}(\varphi, \psi | \Theta_{M, \eps})$.

\item[4.] In \eqref{eq:Zcg-gen}, the order of the chaos expansion in the variables $\Theta_{N, \eps}$ is restricted to $r\leq (\log \frac{1}{\epsilon})^2$. This induces a negligible error because it can be shown that the dominant contribution comes from terms of the order $\log \frac{1}{\eps}$, similar to the fact that in the chaos expansion for the averaged partition function $\cZ^{\beta_N}_N(\varphi,\psi)$ in the critical window, the dominant contribution comes from terms of order $\log N$, see Section \ref{S:2ndmom}. On the other hand, restricting
    to $r\leq (\log \frac{1}{\epsilon})^2$ allows us to control the cumulative error of replacing random walk transition kernels by heat kernels as in \eqref{eq:qtog}.
\end{itemize}
}
\end{remark}

\subsection{Applying Lindeberg to the coarse-grained model} \label{S:lind2}
We now sketch how to apply a Lindeberg principle to show that for each $\eps>0$,
\begin{equation}\label{eq:ZMappN}
\mathscr{Z}_{\epsilon}^{(\cg)}(\varphi, \psi | \Theta_{N, \eps}) \stackrel{\rm dist}{\approx} \mathscr{Z}_{\epsilon}^{(\cg)}(\varphi, \psi | \Theta_{M, \eps}) \qquad \mbox{as } M, N\to\infty.
\end{equation}
That is, the laws of $\mathscr{Z}_{\epsilon}^{(\cg)}(\varphi, \psi | \Theta_{N, \eps})$, $N\in\N$, form a Cauchy sequence.

The key difference from what was sketched in Section \ref{S:lind1} is that,
$$
\Psi_\eps(\Theta) := \mathscr{Z}_{\epsilon}^{(\cg)}(\varphi, \psi | \Theta)
$$
is a multilinear polynomial of the family of {\em dependant} random variables $\Theta=(\Theta(\vec\sfi, \vec\sfa))$, where
we take $\Theta=\Theta_{N, \eps}$ for $N\in\N$. Such a Lindeberg principle for multilinear polynomials of dependent random
variables was formulated in \cite[Lemma A.4]{CSZ23a}. Instead of repeating the statement of this Lindeberg principle, we
highlight here the key ingredients.

To establish \eqref{eq:ZMappN}, the first condition of the Lindeberg principle is that the covariances of
$(\Theta_{N, \eps}(\vec\sfi, \vec\sfa))$ converge to finite limits as $N\to\infty$ (see the bound in \cite[(A.11)]{CSZ23a}). It is clear from their definitions that
distinct $\Theta_{N, \eps}(\vec\sfi, \vec\sfa)$ are uncorrelated (but not independent). Therefore we only need to verify that for each
$(\vec\sfi, \vec\sfa)=( (\sfi, \sfi'), (\sfa, \sfa'))$ with $1\leq \sfi \leq \sfi' \leq \eps^{-1}$ and $\sfa, \sfa'\in\Z^2$,
the limit
\begin{equation}\label{eq:sigeps}
	\sigma_\eps^2(\vec\sfi, \vec\sfa):=  \lim_{N\to\infty}
	\bbE\Big[\big(\Theta_{N, \eps}(\vec\sfi, \vec\sfa)\big)^2\Big]
\end{equation}
exists and is finite. We will sketch the proof of this in Section \ref{S:2momAsy}. Although not needed for the Lindeberg principle,
we will also show in Section \ref{S:2momAsy} that
\begin{equation}\label{eq:SigAsymp}
\sigma_\eps^2((\sfi, \sfi), (\sfa, \sfa)) \sim \frac{4\pi}{\log \frac{1}{\eps}} \qquad \mbox{as} \quad \eps\downarrow 0.
\end{equation}

\begin{remark}
As will be explained later in \eqref{eq:ThetaV1}, among all coarse-grained disorder variables, $\Theta_{N, \eps}((\sfi, \sfi),  (\sfa, \sfa))$
are the dominant ones as $\eps\downarrow 0$. The precise asymptotics in \eqref{eq:SigAsymp} is a signature of the critical nature of the model, because it matches exactly the variance of the original disorder variable $\bbE[\xi_N(n, z)^2]=\sigma_N^2 \sim \frac{4\pi}{\log N}$
in the critical window (see \eqref{eq:sigN2}) if we identify the time horizon $\eps^{-1}$ with $N$. This shows a degree of self-similarity of the model under renormalisation.
\end{remark}

The second condition of the Lindeberg principle in \cite[Lemma A.4]{CSZ23a} is that the input $\Theta$ of $\Psi_\eps(\Theta)$ has
a local dependency structure such that if $\Theta(\vec\sfi', \vec \sfa')$ belongs to the dependency neighbourhood of $\Theta(\vec\sfi, \vec \sfa)$, and $\Theta(\vec\sfi'', \vec \sfa'')$ belongs to the dependency neighbourhood of $\{\Theta(\vec\sfi, \vec \sfa), \Theta(\vec\sfi', \vec \sfa')\}$, then no term in the chaos expansion of $\Psi_\eps(\Theta)$ can contain at least 2 of the 3 factors among $\{\Theta(\vec\sfi, \vec \sfa),
\Theta(\vec\sfi', \vec \sfa'), \Theta(\vec\sfi'', \vec \sfa'')\}$. In our setting, this is guaranteed by the facts that
we only consider $\Theta_{N, \eps}(\vec \sfi, \vec \sfa)$ with $\sfi'-\sfi<K_\eps$, $\Theta_{N, \eps}(\vec \sfi, \vec \sfa)$
and $\Theta_{N, \eps}(\vec \sfj, \vec \sfb)$ are independent when $[\sfi, \sfi']\cap [\sfj, \sfj']=\emptyset$, and each term in
the chaos expansion of $\Psi_\eps(\Theta)$ consists of well separated variables $\Theta(\vec \sfi_i, \vec \sfa_i)$ with
$\sfi_{i+1}-\sfi_i'\geq K_\eps$ thanks to the no-triple configuration condition in our coarse-graining.
Details can be found in \cite{CSZ23a}, Lemma 9.3.

The last condition of the Lindeberg principle are fourth moment bounds on $\Theta_{N, \eps}(\vec\sfi, \vec \sfa)$ and
the coarse-grained point-to-plane partition function, analogous to \eqref{eq:xiN3} and \eqref{eq:lind4}. More precisely,
we need to show that
\begin{equation}\label{eq:Theta4}
\limsup_{N\to \infty} \, \bbE \big[ \Theta_{N, \epsilon}(\vec\sfi, \vec\sfa)^4  \big] < \frac{C}{\log \frac{1}{\eps}},
\end{equation}
where $C$ is uniform in $(\vec\sfi, \vec \sfa)$ and $\eps>0$ small. We will sketch the proof of this bound in Section \ref{S:4momCG}.
This bound is not sharp, but already more than enough. In fact, it suffices to have a bound of the order
$\eps^{-c}$ for some $c>0$ sufficiently small.

For $\sfa_0\in \Z^2$ and $\sfm \in [1, 1/\eps]$, the point-to-plane analogue of the coarse-grained partition function $\mathscr{Z}_{\epsilon}^{(\cg)}(\varphi, \psi | \Theta)$ in
\eqref{eq:Zcg-gen} is defined by
\begin{align*}
	\mathscr{Z}_{[0, \sfm]}^{(\cg)}(\sfa_0,\psi_\eps|\Theta_{N, \eps})
	 & :=  g_\sfm(\sfa_0, \psi_\eps) +
	 \sum_{k=1}^{(\log\frac{1}{\epsilon})^2} \!\!\! \sum_{{\sfc} \in \Z^2} 	
	\sum_{\substack{(\vec\sfi_1, \ldots, \vec\sfi_k) \in \bcA_{\epsilon}^{(\notri)} \\
	(\vec\sfa_1, \ldots, \vec\sfa_k) \in \bcA_{\epsilon; \, \sfa_0, \sfc}^{(\diff)}}}
	\!\!\!\!\!\!\!\!\!\!\!\!\!\! g_{\sfi_1}(\sfa_1 - \sfa_0)
	\Theta_{N, \eps}(\vec\sfi_1, \vec\sfa_1)  \\
	& \times
	\Bigg\{ \prod_{j=2}^k  g_{\sfi_j -\sfi_{j-1}'} (\sfa_j -\sfa_{j-1}')
	\, \Theta_{N, \eps}(\vec\sfi_j, \vec\sfa_j) \Bigg\}
	\, g_{\sfm- \sfi_k'}({\sfc}-\sfa_k')
	 \psi_\epsilon({\sfc}) \,.
\end{align*}
Note that in contrast to \eqref{eq:Zcg-gen}, we do not multiply the sum by $\eps$ because there is no spatial averaging over the initial
point $\sfa_0$. The plane-to-point partition function $\mathscr{Z}_{[0, \sfm]}^{(\cg)}(\varphi_\eps, \sfa_0|\Theta_{N, \eps})$ is defined
similarly, and we can also shift the time window. We will then need the following analogue of \eqref{eq:lind4}: For any $\delta>0$ and
$\varphi\in C_c(\R^2)$, uniformly in $\sfm\in [1, 1/\eps]$, $\sfa_0\in \Z^2$, $N$ large and $\eps$ small, we have
\begin{align}\label{eq:Zcg4}
\bbE\big[\mathscr{Z}_{[0, \sfm]}^{(\cg)}(\sfa_0,\ind |\Theta_{N, \eps})^4\big] \leq C \eps^{-\delta} \quad \mbox{and} \quad
\bbE\big[\mathscr{Z}_{[0, \sfm]}^{(\cg)}(\sfa_0,\varphi_\eps|\Theta_{N, \eps})^4\big] \leq  C e^{-c\sqrt{\eps}|\sfa_0|} \eps^{-\delta}.
\end{align}
The same reasoning as in \eqref{eq:lind3} explains why the above bounds are sufficient for the application of the Lindeberg principle,
and why the bound in \eqref{eq:Theta4} can be replaced by $\eps^{-c}$ for any $c \in (0, 1-2\delta)$. We will sketch the proof of
\eqref{eq:Zcg4} in Section \ref{S:4momCG}.

\section{Second Moment Calculations and Dickman Subordinator}\label{S:2ndmom}

In this section, we explain how to compute the second moment of partition functions when $\beta_N$ and $\sigma_N^2$
are chosen in the critical window defined in \eqref{eq:sigma} for some $\theta\in \R$. This is based on a renewal representation
that leads to the so-called Dickman subordinator in the scaling limit. We will then discuss how these calculations can be applied to the
coarse-grained disorder variables $\Theta_{N, \epsilon}(\vec\sfi, \vec\sfa)$ introduced in Section \ref{S:CG}.

\subsection{Renewal representation}\label{S:renewal}

We recall from \eqref{eq:expan1} the polynomial chaos expansion of the point-to-plane partition function
\begin{align*}
	Z^{\beta_N}_N(0)
& = 1 + \sum_{r=1}^\infty \sum_{n_0=0<n_1<\cdots <n_r<N \atop z_0:=0, z_1, \ldots, z_r\in \Z^2} \prod_{i=1}^r q_{n_i-n_{i-1}}(z_i-z_{i-1}) \xi_N(n_i, z_i).
\end{align*}
Since the summands are $L^2$-orthogonal and $\bbE[\xi_N(n_i, z_i)^2]=\sigma_N^2$ by \eqref{eq:xi}, we have
\begin{align} \label{eq:2mom2}
	\bbE[(Z^{\beta_N}_N(0))^2]
& = 1 + \sum_{r=1}^\infty \sum_{n_0=0<n_1<\cdots <n_r<N \atop z_0:=0, z_1, \ldots, z_r\in \Z^2} \sigma_N^{2r} \prod_{i=1}^r q_{n_i-n_{i-1}}(z_i-z_{i-1})^2.
\end{align}
We now introduce a probabilistic representation of this sum by observing that
$$
\frac{ q_n(x)^2}{R_N} \, \ind_{\{n\leq  N\}}
$$
defines a probability kernel on $\{1, \ldots, N\}\times \Z^2$, which can be interpreted as the increment distribution of a time-space renewal
process. More precisely, if $(T^{(N)}_i, X^{(N)}_i)_{i\in\N}$ are i.i.d.\ random variables with distribution $\frac{ q_n(x)^2}{R_N} \, \ind_{\{n\leq  N\}}$, and we denote their partial sums by
\begin{equation} \label{eq:tauS}
	\tau^{(N)}_k := T^{(N)}_1 + \ldots + T^{(N)}_k \,, \qquad
	S^{(N)}_k := X^{(N)}_1 + \ldots + X^{(N)}_k,
\end{equation}
then $(\tau^{(N)}_k)_{k\in\N_0}$ (with $\tau^{(N)}_0:=0$) is a renewal process with increment distribution $\frac{q_{2n}(0)}{R_N} \ind_{\{n\leq  N\}}$, and we have
\begin{align}
	\bbE[(Z^{\beta_N}_N(0))^2]
& = 1 + \sum_{r=1}^\infty \sum_{n_0=0<n_1<\cdots <n_r<N \atop z_0:=0, z_1, \ldots, z_r\in \Z^2} (\sigma_N^2 R_N)^r
\P\big( (\tau^{(N)}_k, S^{(N)}_k) = (n_k, z_k) \mbox{ for } 1\leq k\leq r\big) \notag \\
& = \sum_{r=0}^\infty \bigg(1 + \frac{\theta + o(1)}{\log N}\bigg)^r \P(\tau^{(N)}_r <N). \label{eq:2mom3}
\end{align}
where we used \eqref{eq:sigma} for $\sigma_N^2$ in the critical window
and summed over the intermediate values.

We have a similar renewal representation for the second moment of the point-to-point partition function $Z^{\beta_N}_{n, m}(x, y)$, whose
chaos expansion is given in \eqref{eq:expan2}. Actually, a more fundamental object in our analysis is
\begin{align*}
U_N(n, x) := \sigma_N^2 \bbvar\big( Z_{0, n}^{\beta_N} (0, x) \big) = \sigma_N^2 \bbvar\big( Z_{\ell, \ell+n}^{\beta_N} (z, x+z) \big),
\end{align*}
for which we have
\begin{equation}\label{eq:U}
\begin{split}
	U_N(n,x) := \begin{cases}
	\displaystyle
	\ind_{\{x=0\}}
	& \text{ if } n = 0, \\
	\rule{0pt}{4.3em}
	\displaystyle
\begin{split}
	\sigma_N^2 \, q_n(x)^2 \,+\,
	& \sum_{k=1}^\infty \sum_{\substack{z_1,\ldots, z_k \in \Z^2 \\ 0 < n_1 < \ldots < n_k < n}}
	 \prod_{j=1}^{k+1} \sigma_N^2 q_{n_j-n_{j-1}}(z_j-z_{j-1})^2
\end{split}
	& \text{ if } n \ge 1,
	\end{cases}
\end{split}
\end{equation}
where $n_0:=0$, $z_0=0$, $n_{k+1}:=n$ and $z_{k+1}:=x$.
Similar to the graphical representation \eqref{eq:expan2} of the chaos expansion of $Z_{n,m}^\beta(x,y)$, we can graphically represent
the expansion for $U_N(m-n, y-x) = \sigma_N^2 \bbvar\big( Z_{n, m}^{\beta_N} (x, y) \big)$ by
\begin{align*}
U_N(m-n, y-x) =
\sum_{r\geq 1} \sumtwo{n<n_1<\cdots<n_r < m}{z_1,...,z_r \,\in \, \Z^2} \!\!
\begin{tikzpicture}[scale=0.45]]
\draw [] (0, 0)  circle [radius=0.1];
\draw[thick] (0,0) to [out=70,in=160] (2,1); \draw[thick] (0,0) to [out=-20,in=-120] (2,1);
\draw [fill] (2, 1)  circle [radius=0.1];
\draw[thick] (2,1) to [out=-50,in=110] (3,-1);\draw[thick] (2,1) to [out=-80,in=130] (3,-1);
\draw [fill] (3, -1)  circle [radius=0.1];
\draw[thick] (3,-1) to [out=80,in=-180] (4.5,0.5); \draw[thick] (3,-1) to [out=20,in=260] (4.5,0.5);
\draw [fill] (4.5, 0.5)  circle [radius=0.1];
\draw[thick] (4.5,0.5) to [out=60,in=120] (6, 0.5);\draw[thick] (4.5,0.5) to [out=-60,in=-120] (6, 0.5);
\draw [fill] (6, 0.5)  circle [radius=0.1];
\draw[thick] (6,0.5) to [out=-80,in=160] (7, -1); \draw[thick] (6,0.5) to [out=-20,in=110] (7, -1);
\draw [fill] (7, -1)  circle [radius=0.1];
\draw[thick] (7,-1) to [out=60,in=200] (8, 0); \draw[thick] (7,-1) to [out=20,in=220] (8.3, -0.5);
\node at (9,0) {$\cdots$};\node at (10.2,0) {$\cdots$};
\draw[thick] (11,1) to [out=10,in=100] (13, 0); \draw[thick] (11,-1) to [out=-10,in=-100] (13, 0);
\draw [fill] (13, 0)  circle [radius=0.1];
\node at (13.5,-1) {\scalebox{0.6}{$(n_r,z_r)$}};
\node at (2,1.7) {\scalebox{0.6}{$(n_1,z_1)$}};
\node at (0,-0.5) {\scalebox{0.6}{$(n,x)$}};
\draw[thick] (13,0) to [out=60,in=180] (14.5, 1);
\draw[thick] (13,0) to [out=10,in=-130] (14.5, 1);
\draw [fill] (14.5, 1)  circle [radius=0.1];
\node at (15.2,0.5) {\scalebox{0.6}{$(m,y)$}};
 \end{tikzpicture}
\end{align*}
where each lace corresponds to one factor of $q_{n_j-n_{j-1}}(z_j-z_{j-1})$, each solid circle corresponds to
one factor of $\sigma_N^2$, while the empty circle at $(n,x)$ means there is no factor $\sigma_N^2$ assigned
to $(n,x)$. A more compact graphical representation is
\begin{align}\label{U-diagram}
\sigma_N^2 U_N(m-n,y-x)
&\equiv \quad
\begin{tikzpicture}[baseline={([yshift=1.3ex]current bounding box.center)},vertex/.style={anchor=base,
    circle,fill=black!25,minimum size=18pt,inner sep=2pt}, scale=0.45]
\draw  [fill] (0, 0)  circle [radius=0.2];  \draw  [fill] ( 4,0)  circle [radius=0.2];
\draw [-,thick, decorate, decoration={snake,amplitude=.4mm,segment length=2mm}] (0,0) -- (4,0);
\node at (0,-1) {\scalebox{0.8}{$(n,x)$}}; \node at (4,-1) {\scalebox{0.8}{$(m,y)$}};
\end{tikzpicture} \qquad\qquad \text{or} \\
&= \sum_{r \geq 1 }\sumtwo{n_1,...,n_r}{z_1,...,z_r}
\begin{tikzpicture}[baseline={([yshift=1.3ex]current bounding box.center)},vertex/.style={anchor=base,
    circle,fill=black!25,minimum size=18pt,inner sep=2pt}, scale=0.45]
\draw  [fill] (0, 0)  circle [radius=0.2];  \draw  [fill] ( 2,0)  circle [radius=0.2]; \draw  [fill] (4,0)  circle [radius=0.2];
\draw  [fill] (8, 0)  circle [radius=0.2]; \draw  [fill] (10, 0)  circle [radius=0.2];
\draw [thick] (0,0)  to [out=45,in=135]  (2,0) to [out=45,in=135]  (4,0) to [out=45,in=180]  (4.5,0.3);
\draw [thick] (0,0)  to [out=-45,in=-135]  (2,0) to [out=-45,in=-135]  (4,0)  to [out=-45,in=180]  (4.5,-0.3);
\draw [thick] (7.5,0.3)  to [out=0,in=135]  (8,0) to [out=45,in=135]  (10,0);
\draw [thick] (7.5,-0.3) to [out=0,in=-135] (8,0)  to [out=-45,in=-135]  (10,0);
\node at (0,-1) {\scalebox{0.7}{$(n,x)$}}; \node at (2,-1) {\scalebox{0.7}{$(n_1,z_1)$}};
\node at (4,-1) {\scalebox{0.7}{$(n_2,z_2)$}}; \node at (8,-1) {\scalebox{0.7}{$(n_k,z_k)$}};
\node at (10,-1) {\scalebox{0.7}{$(m,y)$}};
\node at (6,0) {$\cdots$};
\end{tikzpicture} \notag
\end{align}
where we included an extra factor of $\sigma_N^2$ to turn the empty circle at $(n, x)$ into a solid circle.

Using the time-space renewal process introduced in \eqref{eq:tauS}, we can now write
\begin{align}\label{renewU2}
U_N(n,x)
&= \sum_{r=1}^\infty \Big( 1+\frac{\theta+o(1)}{\log N}\Big)^{r} \, \, \P( \tau_r^{(N)}=n\, , \, S_r^{(N)}=x).
\end{align}
Summing over $x\in \Z^2$ then gives
\begin{align}\label{renewU}
U_N(n) := \sum_{x\in \Z^2} U_N(n, x) = \sum_{r=1}^\infty \Big( 1+\frac{\theta+o(1)}{\log N}\Big)^{r} \, \, \P( \tau_r^{(N)}=n).
\end{align}

To identify sharp asymptotics for $\bbE[(Z^{\beta_N}_N(0))^2]$ in \eqref{eq:2mom3} and for $U_N(n,x)$ and $U_N(n)$ as $N\to\infty$,
the key observation is that $\frac{\tau^{(N)}_{\lfloor s \log N \rfloor}}{N}$ converges in distribution to a L\'evy process called the
{\em Dickman subordinator}.

\subsection{Dickman subordinator}
We now introduce the {\em Dickman subordinator}, which is
a truncated, $0$-stable L\'evy process $Y=(Y_s)_{s\geq 0}$ with L\'evy measure
\begin{equation} \label{eq:bsLevymeasure}
	\nu(\dd t) := \frac{1}{t} \, \ind_{(0,1)}(t)  \, \dd t  \,.
\end{equation}
For $s\geq0$, it has Laplace transform
\begin{equation}\label{eq:LapY}
	\E[e^{\lambda  Y_s}] =
	\exp \bigg\{s \, \int_{0}^1
	(e^{\lambda t } - 1) \, \frac{\dd t}{t}
	 \bigg\} \,.
\end{equation}
The density of the $Y_s$ can be computed explicitly as (see \cite[Theorem 1.1]{CSZ19a})
\begin{equation}\label{eq:scalingf}
	 f_s(t)\,  := \frac{\P(Y_s \in \dd t)}{\dd t} =  \left\{
	 \begin{aligned}
	 & \frac{s\, t^{s-1} \, e^{-\gamma \, s}}{\Gamma(s+1)} \quad \quad & \mbox{for } t\in (0, 1], \\
	 & \frac{s\, t^{s-1}e^{-\gamma s}}{\Gamma(s+1)} - st^{s-1} \int_0^{t-1} \frac{f_s(a)}{(1+a)^s} \dd a \quad \quad & \mbox{for } t\in (1, \infty).
	 \end{aligned}
	 \right.
\end{equation}
The calculation in \cite{CSZ19a} was partly based on the observation that conditional on all jumps of $Y$ up to time $s$ being smaller than a value $t\in (0, 1)$, $Y_s/t$ has the same law as the original $Y_s$ (see \cite[Prop.~B.1]{CSZ19a}). More properties of the Dickman subordinator can be found in \cite{GKLV24}.

The name {\it Dickman subordinator} is motivated by the fact that
\begin{align*}
e^{\gamma} f_1(t)=\rho(t),
\end{align*}
where $\gamma= - \int_0^\infty (\log u)  e^{-u} {\rm d}u$ is the Euler–Mascheroni constant and $\rho(\cdot)$ is the so-called {\em Dickman function}. It was originally derived by Dickman \cite{Dickman} in studying the distribution of the largest prime factor of a uniformly chosen number in $\{1, \ldots, n\}$. It also arises from the distribution of the size of the longest cycle in a uniformly chosen random permutation of $n$ elements. More instances can be found in \cite{PW04} and the references therein.

The key observation in the asymptotic analysis of $\bbE[(Z^{\beta_N}_N(0))^2]$, $U_N(n,x)$ and $U_N(n)$ is the following convergence result for
the time-space renewal process $(\tau^{(N)}, S^{(N)})$ defined in \eqref{eq:tauS} (see \cite[Prop.~2.2]{CSZ19a} and \cite[Lemma 3.3]{CSZ23a}).
\begin{lemma}\label{L:DickConv}
We have the following weak convergence in c\`adl\`ag space:
\begin{equation}\label{joint-renew}
	\Bigg( \frac{\tau^{(N)}_{\lfloor s \log N \rfloor}}{N},
	\frac{S^{(N)}_{\lfloor s \log N \rfloor}}{\sqrt{N}} \Bigg)_{s\geq 0} \xrightarrow[N\to\infty]{d}
	(Y_s, \tfrac{1}{\sqrt 2} W_{Y_s})_{s\geq 0},
\end{equation}
where $(Y_s)_{s\geq 0}$ is the Dickman subordinator and $(W_s)_{s\geq 0}$ an independent standard Brownian motion on $\R^2$.
\end{lemma}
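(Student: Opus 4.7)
The plan is to deduce \eqref{joint-renew} from classical convergence criteria for partial sums of triangular arrays of i.i.d.\ increments to a compound L\'evy process. Writing $(T^{(N)}, X^{(N)})$ for a generic copy of the renewal increment and $\mu_N$ for the law of $(T^{(N)}/N, X^{(N)}/\sqrt{N})$, it suffices to (i) identify the limiting L\'evy measure from the single-step law, (ii) verify marginal convergence through characteristic functions, and (iii) establish functional tightness in the Skorohod space.

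Step one is the single-step asymptotic. Applying the local limit theorem \eqref{eq:llt} together with $R_N \sim (\log N)/(4\pi)$ yields, for $(t,x) \in (0,1] \times \R^2$,
\begin{equation*}
    N^2\, \mu_N(\dd t, \dd x) \,=\, \frac{N^2}{R_N}\,q_{\lfloor Nt\rfloor}(\lfloor \sqrt{N}x\rfloor)^2\,\dd t\,\dd x \,\sim\, \frac{1}{(2\pi t)^2\, R_N}\,e^{-|x|^2/t}\,\dd t\,\dd x,
\end{equation*}
so that, vaguely on $(0,1]\times\R^2$,
\begin{equation*}
    (\log N)\, \mu_N(\dd t, \dd x) \,\xrightarrow[N\to\infty]{}\, \frac{\ind_{(0,1)}(t)}{t}\,\dd t \;\cdot\; \frac{1}{\pi t}\,e^{-|x|^2/t}\,\dd x \,=:\, \nu(\dd t)\,\gamma_t(\dd x).
\end{equation*}
The right-hand side factorises as the Dickman L\'evy measure \eqref{eq:bsLevymeasure} tensored with the density of a centered Gaussian of covariance $\tfrac{t}{2}\, I$ in $\R^2$, i.e.\ the law of $\tfrac{1}{\sqrt 2}\, W_t$; hence it is exactly the L\'evy measure of the target process $(Y_s, \tfrac{1}{\sqrt 2}\, W_{Y_s})_{s \geq 0}$.

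Step two verifies one-point convergence through the joint Laplace--Fourier transform $\varphi_N(\lambda,\mu) := \E[e^{-\lambda T^{(N)}/N + i\mu\cdot X^{(N)}/\sqrt{N}}]$. Using the step-one asymptotics and the Gaussian identity $\int_{\R^2} e^{i\mu\cdot x}\,\tfrac{1}{\pi t}\,e^{-|x|^2/t}\,\dd x = e^{-|\mu|^2 t/4}$, one obtains
\begin{equation*}
    (\log N)\,\bigl(1 - \varphi_N(\lambda, \mu)\bigr) \,\xrightarrow[N\to\infty]{}\, \int_0^1 \bigl(1 - e^{-(\lambda + |\mu|^2/4)\, t}\bigr)\,\frac{\dd t}{t}.
\end{equation*}
Raising to the $\lfloor s\log N\rfloor$-th power and comparing with \eqref{eq:LapY} identifies the limit as $\E[e^{-\lambda Y_s + i\mu\cdot W_{Y_s}/\sqrt 2}]$, obtained by conditioning on $Y_s$. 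Finite-dimensional convergence then follows at once from the independence of the increments of $(\tau^{(N)}, S^{(N)})$.

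Step three is functional tightness in $D([0,\infty);[0,1]\times\R^2)$. This follows from the standard criterion for triangular-array convergence to L\'evy processes (cf.\ Jacod--Shiryaev, \emph{Limit Theorems for Stochastic Processes}, Thm.~VII.3.4, or Kallenberg, \emph{Foundations of Modern Probability}), given the vague convergence of the compensator measures $k_N\mu_N \to s\,(\nu\otimes\gamma_\cdot)$ and the deterministic bound $T^{(N)}/N \le 1$, which trivially controls the tail of the single-step jumps. The main technical nuisance will be that the local-limit approximation \eqref{eq:llt} degrades for $n$ of order $1$, where the Gaussian replacement of $q_n(x)^2$ is crude; but since $1 - e^{-\lambda n/N + i\mu\cdot x/\sqrt N} = O(n/N + |x|/\sqrt N)$ in that regime, the corresponding contribution to $(\log N)(1-\varphi_N)$ is only $O(1/\log N)$ and hence negligible. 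An analogous truncation handles the contribution near the cutoff $n = N$.
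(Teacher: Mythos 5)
Your proposal is correct and follows essentially the same route as the paper: a transform computation for a single renewal increment based on the local limit theorem and $R_N\sim \frac{\log N}{4\pi}$, raised to the power $\lfloor s\log N\rfloor$ and matched with the Laplace transform \eqref{eq:LapY} of the subordinated process, with finite-dimensional convergence coming from the i.i.d.\ increments and tightness from standard criteria. The only difference is cosmetic: you carry out the joint Laplace--Fourier computation (including the spatial component and the identification of the limiting L\'evy measure) explicitly and invoke the general triangular-array-to-L\'evy theorem for the functional part, whereas the paper's sketch illustrates only the time component and cites \cite[Prop.~2.2]{CSZ19a} and \cite[Lemma 3.3]{CSZ23a} for the full f.d.d.\ convergence and process-level tightness.
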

\begin{proof}[Proof sketch]
The finite-dimensional distribution convergence in \eqref{joint-renew} can be verified by showing the convergence of their Laplace transforms. We will illustrate this for
$\tau^{(N)}$:
\begin{align*}
\E\Big[ e^{\tfrac{\lambda}{N} \,\tau_{s\log N}^{(N)}}\Big] = \E\Big[ e^{\tfrac{\lambda}{N} \,T_{1}^{(N)}}\Big]^{s\log N}
&=  \Big(\, 1 +\frac{1}{\log N}\sum_{n=1}^N \big(e^{\tfrac{\lambda}{N} n} -1\big) \frac{1+o(1)}{n}   \,\Big)^{s\log N}\\
&\xrightarrow[N\to\infty]{} \exp\Big( s \int_0^1\big(e^{\lambda x} -1\big) \frac{\dd x}{x}\Big),
\end{align*}
where in the second equality we used the local central limit theorem to approximate $\tfrac{1}{R_N} q_{2n}(0)= \tfrac{1}{\log N}\tfrac{1}{n} (1+o(1))$
for $n$ large. The full f.d.d.\ convergence can be found in the proof of \cite[Prop.~2.2]{CSZ19a}, while process level tightness was proved in
\cite[Lemma 3.3]{CSZ23a}.
\end{proof}
\medskip

In our analysis of the polymer partition functions in the critical window with parameter $\theta$, defined in \eqref{eq:sigma},
we will need the following weighted Green's function of the Dickman subordinator:

\begin{equation} \label{dick-green1}
	G_{\theta}(t) := \int_0^\infty
	e^{\theta s} \, f_s(t) \, \dd s \quad t \in (0,\infty), \,\, \theta\in \R.
\end{equation}
When $t\in (0,1]$, by \eqref{eq:scalingf}, we have the more explicit form
\begin{equation} \label{dick-green2}
	G_{\theta}(t) = \int_0^\infty
	\frac{ e^{(\theta - \gamma) s} \,
	s \, t^{s-1}}{\Gamma(s+1)} \, \dd s \,,
	\qquad  \quad t \in (0,1] \,, \
	\theta \in \R \,.
\end{equation}
We note that $G_\theta$ is related to the Volterra function
\begin{align*}
\nu(t):=\int_0^\infty \frac{t^s}{\Gamma(s+1)} \dd s.
\end{align*}
See \cite{A10} for more information on functions of this type.

We also record here the small $t$ asymptotics of $G_\theta(t)$, which plays a crucial role in our analysis of the critical $2d$ SHF (see
\cite[Prop.~1.6]{CSZ19a}):

\begin{proposition}
For $t\in (0,1]$, $G_\theta(t)$ is $C^\infty$ and strictly positive. As $t \downarrow 0$, we have
\begin{equation}\label{eq:Gas}
	G_\theta(t) = \frac{1}{t(\log\frac{1}{t})^2} \bigg\{ 1 + \frac{2\theta}{\log\frac{1}{t}}
	+ O\bigg(\frac{1}{(\log\frac{1}{t})^2}\bigg) \bigg\}
\end{equation}
and
\begin{equation}\label{eq:G5}
\int_0^t G_\theta(s) {\rm d}s = \frac{1}{\log \frac{1}{t}} \bigg\{ 1 + \frac{\theta}{\log\frac{1}{t}}
	+ O\bigg(\frac{1}{(\log\frac{1}{t})^2}\bigg) \bigg\} .
\end{equation}
\end{proposition}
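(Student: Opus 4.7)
The plan is to derive both expansions from the integral representation \eqref{dick-green2} by a single rescaling argument. Setting $L := \log \frac{1}{t}$, so that $t^s = e^{-sL}$, I would perform the change of variables $u = sL$ in \eqref{dick-green2}, using the identity $\frac{s}{\Gamma(s+1)} = \frac{u/L}{\Gamma(1+u/L)}$. This produces the compact form
\begin{equation*}
	G_\theta(t) \,=\, \frac{1}{t L^2} \int_0^\infty u \, e^{-u} \, \Phi_\theta(u/L) \, \mathrm{d}u,
	\qquad \Phi_\theta(x) := \frac{e^{(\theta-\gamma)x}}{\Gamma(1+x)}.
\end{equation*}
The function $\Phi_\theta$ is smooth with $\Phi_\theta(0) = 1$, and, since $\Gamma'(1) = -\gamma$, one checks that $\Phi_\theta'(0) = \theta$. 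A Taylor expansion thus gives $\Phi_\theta(x) = 1 + \theta x + O(x^2)$ near the origin. Plugging this into the display above and using the moment identities $\int_0^\infty u \, e^{-u} \, \mathrm{d}u = 1$ and $\int_0^\infty u^2 \, e^{-u} \, \mathrm{d}u = 2$ produces exactly the $1 + \tfrac{2\theta}{L}$ correction required in \eqref{eq:Gas}.

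For \eqref{eq:G5}, the cleanest route I see is first to exchange the order of integration by Fubini (the integrand is positive) to obtain $\int_0^t G_\theta(s)\,\mathrm{d}s = \int_0^\infty e^{\theta r} \, \mathbb{P}(Y_r \le t) \, \mathrm{d}r$, and then to use the explicit cumulative distribution $\mathbb{P}(Y_r \le t) = e^{-\gamma r} t^r / \Gamma(r+1)$ valid for $t \in (0,1]$, which is immediate from \eqref{eq:scalingf}. The identical substitution $u = rL$ then gives
\begin{equation*}
	\int_0^t G_\theta(s)\,\mathrm{d}s \,=\, \frac{1}{L} \int_0^\infty e^{-u} \, \Phi_\theta(u/L) \, \mathrm{d}u,
\end{equation*}
and the same Taylor expansion of $\Phi_\theta$ now yields $\tfrac{1}{L}\big(1 + \tfrac{\theta}{L} + O(L^{-2})\big)$ via $\int_0^\infty e^{-u}\, \mathrm{d}u = \int_0^\infty u\, e^{-u}\,\mathrm{d}u = 1$, giving \eqref{eq:G5}.

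The main technical point I expect is the uniform control of the Taylor remainder: the expansion $\Phi_\theta(x) = 1 + \theta x + O(x^2)$ is \emph{a priori} only local, whereas after the substitution $x = u/L$ one integrates over all $u \in [0,\infty)$. I would resolve this by exploiting the super-exponential decay of $1/\Gamma(1+x)$ afforded by Stirling's formula: since $\Phi_\theta(x) \to 0$ faster than any polynomial as $x \to \infty$, the function $\Phi_\theta$ is in fact bounded on $[0,\infty)$, so that the pointwise remainder $|\Phi_\theta(x) - 1 - \theta x|$ admits a global majorant of the form $C_\theta(1 + x)$. Splitting the integral at $u = \sqrt{L}$ — using the quadratic Taylor bound on $[0,\sqrt{L}]$ and the super-exponential smallness of $\int_{\sqrt{L}}^\infty u \, e^{-u}\,(1+u/L)\,\mathrm{d}u$ on the tail — will then produce the desired $O(1/L^2)$ error uniformly.

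Finally, smoothness and strict positivity of $G_\theta$ on $(0,1]$ are immediate from \eqref{dick-green2}: the integrand $s\, e^{(\theta-\gamma)s}\, t^{s-1}/\Gamma(s+1)$ is strictly positive, and after $k$-fold differentiation in $t$ it acquires a polynomial factor in $s$ and the power $t^{s-k-1}$, which is dominated on each compact subset of $(0,1]$ by an integrable function of $s$ (using once more the super-exponential decay of $1/\Gamma(s+1)$ for large $s$ and boundedness of $t$ away from $0$ to control the small-$s$ regime). Differentiation under the integral sign then applies, giving $G_\theta \in C^\infty((0,1])$.
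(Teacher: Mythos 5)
Your argument is correct, and it is genuinely different from what the paper does: the paper does not prove the proposition at all, but cites \cite[Prop.~1.6]{CSZ19a}, where the asymptotics are obtained through the renewal framework behind the Dickman subordinator (with the alternative of invoking the asymptotic theory of Volterra functions \cite{A10}). What you propose instead is a short, self-contained Laplace-type (Watson's lemma style) expansion of the explicit representation \eqref{dick-green2}: after the substitution $u = s\log\frac1t$ both quantities reduce to integrals of $e^{-u}$ or $u e^{-u}$ against $\Phi_\theta(u/L)$ with $\Phi_\theta(x)=e^{(\theta-\gamma)x}/\Gamma(1+x)$, and the expansions \eqref{eq:Gas}--\eqref{eq:G5} drop out of $\Phi_\theta(0)=1$, $\Phi_\theta'(0)=\theta$ (via $\Gamma'(1)=-\gamma$) together with the Gamma moments $\int_0^\infty u^k e^{-u}\,\mathrm{d}u = k!$. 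I checked the individual steps: the identity $\int_0^t G_\theta(s)\,\mathrm{d}s=\int_0^\infty e^{\theta r}\,\P(Y_r\le t)\,\mathrm{d}r$ is legitimate by Tonelli, the closed form $\P(Y_r\le t)=e^{-\gamma r}t^r/\Gamma(r+1)$ for $t\in(0,1]$ follows directly from \eqref{eq:scalingf}, and your handling of the Taylor remainder (global linear majorant from the super-exponential decay of $1/\Gamma(1+x)$, split of the integral at $u=\sqrt L$, super-polynomially small tails) does give a uniform $O(L^{-2})$ error, which is exactly what \eqref{eq:Gas} and \eqref{eq:G5} require. The smoothness and positivity argument by differentiation under the integral sign, with $t$ confined to compact subsets of $(0,1]$ and integrability in $s$ supplied by $1/\Gamma(s+1)$, is also fine. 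What your route buys is independence from the renewal-theoretic machinery of \cite{CSZ19a}; what it gives up is the probabilistic interpretation there, which is what the rest of the paper leans on (e.g.\ the local limit theorems \eqref{renewLLT1}--\eqref{renewLL2}), but for this proposition alone your computation is complete.
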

These asymptotics were derived in \cite{CSZ19a} via a renewal framework. But they can also be derived from the
asymptotic theory of Volterra functions \cite{A10}. Note that $G_\theta$ is barely integrable near $0$, which
is the hallmark of a model at the critical dimension.

\subsection{Asymptotics} \label{S:2momAsy}

We are now ready to state the asymptotics for $\bbE[(Z^{\beta_N}_N(0))^2]$, $U_N(n,x)$ and $U_N(n)$ in terms
of the weighted Green's function $G_\theta(t)$ for the Dickman subordinator. We will also illustrate how
to identify the asymptotics of the coarse-grained disorder variables $\Theta_{N, \epsilon}(\vec\sfi, \vec\sfa)$
introduced in Section \ref{S:CG}.

\begin{lemma}\label{L:Z2Asymp}
Let $\beta_N$ and $\sigma_N^2$ be chosen in the critical window defined in \eqref{eq:sigma} for some $\theta\in \R$. Then
we have
\begin{equation}
\bbE[(Z^{\beta_N}_N(0))^2] = (\bar G_\theta +o(1)) \log N \qquad \mbox{with} \quad \bar G_\theta:= \int_0^1 G_\theta(t) {\rm d}t.
\end{equation}
\end{lemma}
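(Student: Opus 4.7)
The plan is to identify $\bbE[(Z^{\beta_N}_N(0))^2]/\log N$ as a Riemann-sum approximation of $\bar G_\theta$, with the identification of the limit density coming from the Dickman subordinator convergence of Lemma~\ref{L:DickConv}. Starting from the renewal representation \eqref{eq:2mom3}, I would write
\begin{equation*}
	\frac{\bbE[(Z^{\beta_N}_N(0))^2]}{\log N}
	\,=\, \int_0^\infty g_N(s)\,\dd s\,,
	\qquad g_N(s):=\Big(1+\tfrac{\theta+o(1)}{\log N}\Big)^{\lfloor s\log N\rfloor} \P\big(\tau^{(N)}_{\lfloor s \log N\rfloor}<N\big)\,,
\end{equation*}
which is the standard Riemann-sum encoding of \eqref{eq:2mom3} with mesh $1/\log N$ in the variable $s=r/\log N$.

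Next I would identify the pointwise limit $g_N(s)\to e^{\theta s}\P(Y_s<1)$ for each fixed $s>0$. The numerical factor gives $(1+(\theta+o(1))/\log N)^{\lfloor s\log N\rfloor}\to e^{\theta s}$, while Lemma~\ref{L:DickConv} yields $\tau^{(N)}_{\lfloor s\log N\rfloor}/N \Rightarrow Y_s$; since $Y_s$ admits the continuous density $f_s$ from \eqref{eq:scalingf}, one has $\P(\tau^{(N)}_{\lfloor s\log N\rfloor}<N)\to \P(Y_s<1)=\int_0^1 f_s(t)\,\dd t$. Fubini together with definition \eqref{dick-green1} then produces the target value
\begin{equation*}
	\int_0^\infty e^{\theta s}\!\int_0^1 f_s(t)\,\dd t\,\dd s \,=\, \int_0^1 G_\theta(t)\,\dd t \,=\, \bar G_\theta\,,
\end{equation*}
which is finite by the integrable $t\downarrow 0$ asymptotics \eqref{eq:Gas}.

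The main obstacle is supplying an $N$-uniform integrable majorant for $g_N(s)$ to justify dominated convergence, because the prefactor $(1+\theta/\log N)^{\lfloor s\log N\rfloor}$ may grow exponentially in $s$. I would produce the required decay via a Chernoff bound: for any $\lambda>0$,
\begin{equation*}
	\P\big(\tau^{(N)}_r<N\big) \,\le\, e^\lambda\,\big(\E[e^{-\lambda T^{(N)}_1/N}]\big)^r\,,
\end{equation*}
where $T^{(N)}_1$ has law $q_{2n}(0)/R_N\cdot\ind_{\{n\le N\}}$. A short computation using the local limit theorem \eqref{eq:llt} and \eqref{eq:RL} gives
\begin{equation*}
	\log\E\big[e^{-\lambda T^{(N)}_1/N}\big] \,=\, \frac{1+o(1)}{\log N}\int_0^1(e^{-\lambda t}-1)\,\frac{\dd t}{t}\,,
\end{equation*}
so that $g_N(s)\le e^\lambda\exp\!\big(s(1+o(1))\cdot\{\theta+\int_0^1(e^{-\lambda t}-1)\tfrac{\dd t}{t}\}\big)$. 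Choosing $\lambda$ large enough that the bracket is strictly negative yields the uniform integrable majorant, legitimizing the passage to the limit and completing the proof.
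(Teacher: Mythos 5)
Your proof is correct and follows essentially the same route as the paper: the renewal representation \eqref{eq:2mom3}, the exact Riemann-sum rewriting with mesh $1/\log N$, and the Dickman-subordinator convergence of Lemma~\ref{L:DickConv} to identify the limit $\int_0^\infty e^{\theta s}\,\P(Y_s<1)\,\dd s=\bar G_\theta$. The only difference is in how the interchange of limit and integral is justified: the paper cites the large deviation bounds of \cite[Lemma 6.1]{CSZ19a}, whereas you supply a self-contained Chernoff/exponential-moment majorant for $\P(\tau^{(N)}_r<N)$, which is a valid substitute.
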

\begin{proof}
By \eqref{eq:2mom3}, we have
\begin{align*}
\bbE[(Z^{\beta_N}_N(0))^2]
& = \sum_{r=0}^\infty \bigg(1 + \frac{\theta + o(1)}{\log N}\bigg)^r \P(\tau^{(N)}_r <N) \\
& = \log N \int_0^\infty \bigg(1 + \frac{\theta + o(1)}{\log N}\bigg)^{\lfloor s\log N\rfloor} \P\Big(\frac{1}{N}\tau^{(N)}_{\lfloor s\log N\rfloor} <1\Big) {\rm d}s \\
& = (1+o(1))\log N \int_0^\infty e^{\theta s} \P(Y_s<1) {\rm d}s = \Big(\int_0^1 G_\theta(t) {\rm d}t + o(1)\Big) \log N,
\end{align*}
where the last line follows from the convergence in Lemma \ref{L:DickConv} and the lower large deviation bounds for the renewal process $\tau^{(N)}$ stated in \cite[Lemma 6.1]{CSZ19a}.
\end{proof}

We also record here the asymptotics for $U_N(n)$ and $U_N(n)$, which are of the local limit theorem type and play a crucial in the analysis in \cite{CSZ23a}. These results were proved in \cite[Theorems 1.4, 2.3, 3.7]{CSZ19a}, although the constants therein differ because the underlying random was the simple symmetric random walk on $\Z^2$.

\begin{proposition}\label{UN-as}
Let $\theta\in \R$ be as in Lemma \ref{L:Z2Asymp}. For any fixed $\delta > 0$ and uniformly in $\delta N \le n \le N$,
as $N\to\infty$, we have
\begin{align}
	U_N(n) &= \frac{\log N}{N} \, G_\theta\big(\tfrac{n}{N}\big)
	(1+ o(1)), \label{eq:asU1} \\
	U_N (n,x)
	&= \frac{\log N}{N^2} \,
	G_{\theta}\big(\tfrac{n}{N}\big) \, g_{ \frac{n}{2N}}
	\big(\tfrac{x}{\sqrt{N}}) \,\big(1+ o(1)\big) \,. \label{eq:asU2}
\end{align}
\end{proposition}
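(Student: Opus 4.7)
The plan is to leverage the renewal representations \eqref{renewU} and \eqref{renewU2} and the convergence of the rescaled renewal process to the Dickman subordinator (Lemma~\ref{L:DickConv}), upgrading the latter to a local limit theorem with a conditional Gaussian factor for the spatial component.

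\textbf{Step 1 (rewriting as a Riemann integral).} Starting from \eqref{renewU}, we reparametrise the summation index by $s=r/\log N$ and view the sum as a Riemann sum of step $1/\log N$ in $s$:
\begin{equation*}
	U_N(n) \;=\; \log N \int_0^\infty
	\Big(1+\tfrac{\theta+o(1)}{\log N}\Big)^{\lfloor s\log N\rfloor}
	\,\P\!\big(\tau^{(N)}_{\lfloor s\log N\rfloor}=n\big)\,\dd s \,.
\end{equation*}
The exponential factor converges pointwise to $e^{\theta s}$, so the task reduces to identifying the pointwise limit of $N\cdot \P(\tau^{(N)}_{\lfloor s\log N\rfloor}=n)$ for $n/N\to t\in[\delta,1]$, together with a dominated convergence argument to justify passing to the limit in $s$.

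\textbf{Step 2 (local limit theorem for the renewal sequence).} I would establish that, uniformly for $s$ in compact subsets of $(0,\infty)$ and $n/N\in[\delta,1]$,
\begin{equation*}
	N\cdot \P\!\big(\tau^{(N)}_{\lfloor s\log N\rfloor}=n\big) \;\longrightarrow\; f_s(n/N) \,,
\end{equation*}
where $f_s$ is the density of the Dickman subordinator, see \eqref{eq:scalingf}. This is the standard sharpening of the weak convergence in Lemma~\ref{L:DickConv} to a local limit theorem for a triangular array of i.i.d.\ heavy-tailed increments; it can be proved via Fourier inversion, using that the characteristic function of $T_1^{(N)}/N$ concentrates near $1$ at rate $1/\log N$ and computing the asymptotic Laplace/Fourier transform exactly as in the proof sketch of Lemma~\ref{L:DickConv}. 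Combined with Step~1 and the small/large $s$ tail bounds discussed below, this yields \eqref{eq:asU1} after recognising $\int_0^\infty e^{\theta s} f_s(t)\,\dd s = G_\theta(t)$.

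\textbf{Step 3 (joint local limit theorem).} For \eqref{eq:asU2}, I would condition on the renewal skeleton and use the fact that, by the local limit theorem for the walk \eqref{eq:llt}, the conditional density of $X_i^{(N)}$ given $T_i^{(N)}=n_i$ is $q_{n_i}(x)^2/q_{2n_i}(0)\sim g_{n_i/2}(x)$. Hence, conditionally on $\tau_r^{(N)}=n$, the spatial sum $S_r^{(N)}$ is a sum of independent centred variables with total variance asymptotic to $n/2$, and by a standard CLT + LLT on this conditional distribution,
\begin{equation*}
	\P\!\big(\tau^{(N)}_{\lfloor s\log N\rfloor}=n,\ S^{(N)}_{\lfloor s\log N\rfloor}=x\big)
	\;\sim\; \P\!\big(\tau^{(N)}_{\lfloor s\log N\rfloor}=n\big)\cdot g_{n/2}(x) \,,
\end{equation*}
uniformly for $n/N\in[\delta,1]$ and $x\in\Z^2$. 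Using $g_{n/2}(x)=N^{-1}g_{n/(2N)}(x/\sqrt N)$ and inserting this into \eqref{renewU2} with the same Riemann-sum argument as in Steps~1--2 produces \eqref{eq:asU2}.

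\textbf{Step 4 (truncation and control of the tails in $s$).} The main technical obstacle is the uniformity in $s$: the density $f_s(t)$ is unbounded as $s\to\infty$ for $t$ near $0$, and $e^{\theta s}$ can grow. However, since we restrict to $n/N\ge \delta>0$ and the Dickman subordinator has jumps bounded by $1$, the event $\{Y_s\le 1\}$ (and hence $\{\tau^{(N)}_{\lfloor s\log N\rfloor}\le N\}$) becomes exponentially unlikely for large $s$; more precisely, upper large deviation bounds for $\tau^{(N)}$ analogous to \cite[Lemma 6.1]{CSZ19a} (which was already used implicitly in Lemma~\ref{L:Z2Asymp}) provide an integrable dominating function in $s$. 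The small-$s$ regime is harmless because $f_s(t)\to 0$ with $t\in[\delta,1]$ fixed as $s\downarrow 0$. Together, these estimates justify the application of dominated convergence and deliver the uniform asymptotics claimed in the proposition.
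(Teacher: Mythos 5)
Your proposal is correct and follows essentially the same route as the paper's proof sketch: rewrite $U_N(n)$ and $U_N(n,x)$ via the renewal representations \eqref{renewU}--\eqref{renewU2} as Riemann sums in $s=r/\log N$, upgrade the weak convergence of Lemma \ref{L:DickConv} to the local limit theorems \eqref{renewLLT1}--\eqref{renewLL2} (with the spatial factor $g_{n/2}(x)$ arising exactly as in your conditional-CLT computation), and conclude by dominated convergence using large-deviation tail bounds in $s$ as in \cite[Lemma 6.1]{CSZ19a}. The extra detail you give on the Fourier-inversion proof of the LLT and on the tail control fills in precisely the steps the paper leaves as "a bit more effort."
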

\begin{proof}[Proof Sketch]
Because the asymptotics are of the local limit theorem type, they do not follow directly from Lemma \ref{L:DickConv}. Instead,
using a renewal decomposition, \eqref{eq:asU1} and \eqref{eq:asU2} were deduced from their integrated versions in \cite[Sections 6.2 \& 8.3]{CSZ19a}.

We sketch here an alternative approach that is more transparent. First of all, with a bit more effort, the weak convergence in Lemma \ref{L:Z2Asymp} can be upgraded to a local limit theorem:
\begin{gather}
\P\bigg( \frac{1}{N} \tau_{\lfloor s\log N\rfloor}^{(N)} =\frac{n}{N}\bigg) = \frac{1}{N} f_s\big( \tfrac{n}{N} \big) \, (1+o(1)), \label{renewLLT1} \\
\P\bigg( \frac{1}{N} \tau_{\lfloor s\log N\rfloor}^{(N)} =\frac{n}{N}\, , \, \frac{1}{\sqrt{N}}S^{(N)}_{\lfloor s\log N\rfloor}=\frac{x}{\sqrt{N}} \bigg)
 = \frac{1}{N^2} \, f_s\big( \tfrac{n}{N} \big) g_{\tfrac{n}{2N}}\big( \frac{x}{\sqrt{N}}\big) \, (1+o(1)), \label{renewLL2}
\end{gather}
where $f_s(\cdot)$ is the density of the Dickman subordinator in \eqref{eq:scalingf}, $g_t(\cdot)$ is the heat kernel,
and the  $o(1)$ error term is uniform in $\delta N \leq n \leq N$ and $|x|\leq \frac{1}{\delta}\sqrt{N}$.

To see \eqref{eq:asU1}, note that by \eqref{renewU}, we have
\begin{align*}
U_N(n) & = \sum_{r=1}^\infty \Big( 1+\frac{\theta+o(1)}{\log N}\Big)^{r} \, \, \P\Big( \frac{1}{N}\tau_r^{(N)}= \frac{n}{N}\Big) \\
&= \log N \int_0^\infty \Big(1+\frac{\theta+o(1)}{\log N}\Big)^{\lfloor s\log N\rfloor}
\,  \P\big( \tfrac{1}{N} \tau_{\lfloor s\log N\rfloor}^{(N)} = \tfrac{n}{N} \big) \,\dd s\\
&= \frac{\log N}{N}  \int_0^\infty (1+o(1))  e^{\theta s} f_s\big( \tfrac{n}{N} \big) \, \dd s \\
&=(1+o(1)) \frac{\log N}{N}  G_\theta\big( \tfrac{n}{N} \big).
\end{align*}

Similarly, to see \eqref{eq:asU2}, we use \eqref{renewU2} to write
\begin{align*}
U_N(n,x)
&= \sum_{r=1}^\infty \Big( 1+\frac{\theta+o(1)}{\log N}\Big)^r \, \, \P\Big( \frac{1}{N}\tau_r^{(N)}=\frac{n}{N}\, ,\, \frac{1}{\sqrt N}S_r^{(N)}= \frac{x}{\sqrt N}\Big) \\
&= \log N \int_0^\infty \Big(1+\frac{\theta+o(1)}{\log N}\Big)^{\lfloor s\log N\rfloor}
\,  \P\big( \tfrac{1}{N} \tau_{\lfloor s\log N\rfloor}^{(N)} = \tfrac{n}{N}, \tfrac{1}{\sqrt N} S_{\lfloor s\log N\rfloor}^{(N)} = \tfrac{x}{\sqrt N} \big) \,\dd s\\
&= \frac{\log N}{N^2}  \int_0^\infty (1+o(1))  e^{\theta s} f_s\big( \tfrac{n}{N} \big) g_{\tfrac{n}{2N}}\big( \tfrac{x}{\sqrt{N}}\big)\, \dd s \\
&=(1+o(1)) \frac{\log N}{N^2}  G_\theta\big( \tfrac{n}{N} \big) g_{\tfrac{n}{2N}}\big( \tfrac{x}{\sqrt{N}}\big).
\end{align*}
\end{proof}

We now sketch the proof of \eqref{eq:sigeps}, that is, the second moment of the coarse-grained disorder variables $\Theta_{N, \epsilon}(\vec\sfi, \vec\sfa)$ converge as $N\to\infty$.

\begin{proof}[Proof Sketch for \eqref{eq:sigeps}]
For simplicity, we focus on $\Theta_{N, \eps}(\sfi, \vec \sfa)$ defined in \eqref{eq:Theta1} and \eqref{eq:Theta-pic}, with $\sfi=1$ and $\vec \sfa=(0, \sfa)$ for some $\sfa \in \Z^2$. By \eqref{eq:Theta1} and \eqref{eq:Xdf}, we have
\begin{align*}
\Theta_{N, \eps}(1, (0, \sfa)) := \frac{1}{\epsilon N} \sum_{(d,x) \in \cB_{\epsilon N}(1, 0)\cap \cB_{\epsilon N}(1, \sfa)} \!\!\!\!\!\!\!\!\!\!\!\!\!\!\!\! \xi_N(d, x)
+ \frac{1}{\epsilon N} \, \sum_{\substack{(d,x) \in \cB_{\epsilon N}(1, 0)\\
	(f,y) \in \cB_{\epsilon N}(1, \sfa)\\\text{with }d<f }} \xi_N(d,x) \,
	Z^{\beta_N}_{d, f}(x,y) \, \xi_N(f,y).
\end{align*}
Note that the first sum is $0$ if  $\cB_{\epsilon N}(1, 0)\cap \cB_{\epsilon N}(1, \sfa) = \emptyset$.
The summands are $L^2$-orthogonal, and
$$
\bbE[\xi_N(d,x)^2 (Z^{\beta_N}_{d, f}(x,y))^2 \xi_N(f,y)^2] = \sigma_N^2 U_N(f-d, y-x).
$$
Therefore,
\begin{align*}
\bbE[\Theta_{N, \eps}(1, (0, \sfa))^2] = \frac{1}{(\epsilon N)^2} \!\! \sum_{(d,x) \in \cB_{\epsilon N}(1, 0)\cap \cB_{\epsilon N}(1, \sfa)} \!\!\!\!\!\!\!\!\!\!\!\! \sigma_N^2
+ \frac{1}{(\epsilon N)^2} \!\! \sum_{\substack{(d,x) \in \cB_{\epsilon N}(1, 0)\\
	(f,y) \in \cB_{\epsilon N}(1, \sfa)\\\text{with }d<f }} \!\!\!\!\!\! \sigma_N^2 U_N(f-d, y-x),
\end{align*}
where the first term is bounded by $\sigma_N^2\sim C/\log N$ and hence vanishes in the limit $N\to\infty$. For the second term, we interpret
$(\epsilon N)^{-2} \sum_{(d, x) \in \cB_{\epsilon N}(1, 0)}$ as an expectation over a point $(d, x)$ chosen uniformly from $\cB_{\epsilon N}(1, 0)$.
Using the same renewal representation of $U_N(f-d, y-x)$ as before, we find that
\begin{align*}
\lim_{N\to\infty} \frac{1}{(\epsilon N)^2} \, \sum_{\substack{(d,x) \in \cB_{\epsilon N}(1, 0)\\
	(f,y) \in \cB_{\epsilon N}(1, \sfa)\\\text{with }d<f }} \sigma_N^2 U_N(f-d, y-x)
= 4\pi \int_0^\infty e^{\theta s} \P_{\cB_{\epsilon}(1, 0)}((Y_s, V_s)\in \cB_{\epsilon}(1, \sfa))  {\rm d}s,
\end{align*}
where $4\pi$ comes from $\sigma_N^2 \sim 4\pi/\log N$, and $\P_{\cB_{\epsilon}(1, 0)}$ denotes the law of the
continuum time-space renewal process $(Y_s, V_s):= (Y_s, \tfrac{1}{\sqrt 2} W_{Y_s})$ in Lemma \ref{L:DickConv} with
$(Y_0, V_0)$ chosen uniformly from $\cB_{\epsilon}(1, 0)$, with $\cB_{\epsilon}(\sfi, \sfa):= ((\sfi-1)\eps, \sfi\epsilon)\times ((\sfa-(1,1))\sqrt{\eps}, \sfa\sqrt{\epsilon})$, cf.\ \eqref{Bbox}.
Therefore,
\begin{equation}\label{eq:sigepsa}
\sigma^2_\eps (1, (0, \sfa)) := \lim_{N\to\infty} \bbE[\Theta_{N, \eps}(1, (0, \sfa))^2] = 4\pi \int_0^\infty e^{\theta s} \P_{\cB_{\epsilon}(1, 0)}((Y_s, V_s)\in \cB_{\epsilon}(1, \sfa))  {\rm d}s.
\end{equation}
For more general coarse-grained disorder variables $\Theta_{N, \epsilon}(\vec\sfi, \vec\sfa)$, the calculations are similar. See \cite[Section 7.1]{CSZ23a} for more details.
\end{proof}

Lastly, we sketch the proof of \eqref{eq:SigAsymp}, which shows that the coarse-grained variable $\Theta_{N, \epsilon}(\sfi, (\sfa, \sfa))$
has an asymptotic variance of $4\pi/\log \eps^{-1}$ as $N\to\infty$ and $\eps\downarrow 0$, which is comparable to ${\mathbb V}{\rm ar}(\xi_N(n, x))=\sigma_N^2\sim 4\pi/\log N$ for the original model if we identify $\eps^{-1}$ as the time horizon of the coarse-grained model.

\begin{proof}[Proof Sketch for \eqref{eq:SigAsymp}]
It was shown in \cite[Lemma 7.1]{CSZ23a} that whenever $\sfi>1$ or $\sfa\neq 0$,
\begin{equation}\label{eq:ThetaV1}
\sigma^2_\eps ((1, \sfi), (0, \sfa))  := \lim_{N\to\infty} {\mathbb V}{\rm ar}(\Theta_{N, \epsilon}((1, \sfi), (0, \sfa))) \leq \frac{C e^{-c |\sfa|^2/\sfi}}{(\log \tfrac{1}{\eps})^2} ,
\end{equation}
while $\sigma^2_\eps(\sfi, (\sfa, \sfa)):= \sigma^2_\eps((\sfi, \sfi), (\sfa, \sfa))$ is of the order $1/\log \tfrac{1}{\eps}$. Therefore among all the coarse-grained disorder variables, $\Theta_{N, \epsilon}(\sfi, (\sfa, \sfa))$ are dominant as $N\to\infty$ and $\eps\downarrow 0$. Furthermore,  as $\eps\downarrow 0$,
\begin{align*}
\sigma^2_\eps(1, (0, 0)) = (1+o(1)) \sum_{\sfa\in\Z^2} \sigma^2_\eps(1, (0, \sfa)) = (1+o(1)) \lim_{N\to\infty} {\mathbb V}{\rm ar}\Big(\sum_{\sfa\in\Z^2}\Theta_{N, \epsilon}(1, (0, \sfa))\Big),
\end{align*}
where the $o(1)$ correction is due to \eqref{eq:ThetaV1}.
By \eqref{eq:sigepsa}, we have
\begin{align*}
\sigma^2_\eps(1, (0, 0)) & = (1+o(1)) 4\pi \int_0^\infty e^{\theta s} \P_{\cB_{\epsilon}(1, 0)}(Y_s <\eps)  {\rm d}s \\
& = \frac{4\pi +o(1)}{\eps} \int_0^\eps  \Big(\int_0^\infty e^{\theta s} \P(Y_s <\eps-u) {\rm d}s\Big)\,  {\rm d}u  \\
& = \frac{4\pi +o(1)}{\eps} \int_0^\eps  \Big(\int_0^{\eps-u} G_\theta(t) \dd t \Big) \,  {\rm d}u \\
& = \frac{4\pi +o(1)}{\eps} \int_0^\eps  \frac{1}{\log \tfrac{1}{\eps -u}} \,  {\rm d}u =  \frac{4\pi +o(1)}{\log \tfrac{1}{\eps}},
\end{align*}
where we used that $Y_0$ is uniformly distributed in $[0, \eps]$ under $\P_{\cB_{\epsilon}(1, 0)}$ and $Y_0=0$ under $\P$. We
also used \eqref{eq:G5} in the last line.
\end{proof}

\subsection{Covariance kernel of the critical $2d$ SHF} \label{S:2ndCG}

We sketch here the covariance kernel for the critical $2d$ stochastic heat flow $\SHF(\theta)=(\SHF_{s,t}^\theta(\dd x , \dd y))_{0 \le s \le t <\infty}$ given in Theorem~\ref{th:main0}.

\begin{proof}[Proof Sketch]
The mean of
$\SHF^\theta_{s,t}(\dd x,\dd y)$ follows directly
from the convergence in Theorem \ref{th:main0}, the definition of
$\cZ_{N;\, s,t}$ in \eqref{eq:rescZmeas}, and the local limit theorem in \eqref{eq:llt}.

For illustration, we will identify the kernel
$$
\widetilde K^\theta_t(x, x') \dd x \, \dd x' :=\bbcov[\SHF^\theta_{0,t}(\dd x, \ind), \SHF^\theta_{0,t}(\dd x', \ind)].
$$
By the convergence in Theorem \ref{th:main0}, we see that
\begin{align*}
\widetilde K^\theta_t(x, x') = \lim_{N\to\infty} \bbcov[Z_{tN}^{\beta_N}(\sqrt{N} x), Z_{tN}^{\beta_N}(\sqrt{N} x')],
\end{align*}
where $Z_{tN}^{\beta_N}(z)$ is the point-to-plane partition function with chaos expansion given in \eqref{eq:expan1}.
By this chaos expansion, we note that
\begin{align}
\bbcov[Z_{tN}^{\beta_N}(\sqrt{N} x), Z_{tN}^{\beta_N}(\sqrt{N} x')] & = \!\!\!\!\!\! \sum_{z\in \Z^2, 1\leq n\leq tN} q_n(z-\sqrt{N} x) q_n(z-\sqrt{N} x')
\sigma_N^2 \bbE[Z_{tN-n}(0)^2] \notag \\
& = \sum_{n=1}^{tN} q_{2n}(\sqrt{N}(x'-x)) \sigma_N^2 \bbE[Z_{tN-n}(0)^2], \label{eq:CovZtN}
\end{align}
where $\sigma_N^2$ comes from $\bbE[\xi^2_N(n, z)]$, and we used the fact that the second moment of the point-to-plane partition function starting from $(n, z)$ and terminal time $tN$ is exactly $\bbE[Z_{tN-n}(0)^2]$.

Denote $u:=n/N$. By the same calculations as in the proof of Lemma \ref{L:Z2Asymp}, we have
$$
\bbE[Z_{tN-n}(0)^2] = \Big(\int_0^{t-u} G_\theta(a) {\rm d}a + o(1)\Big) \log N.
$$
Substituting this into \eqref{eq:CovZtN}, using $\sigma_N^2 \sim 4\pi/\log N$, the local limit theorem \eqref{eq:llt}, and a Riemann sum
approximation, we obtain
\begin{equation}\label{def:Ktilde}
\begin{aligned}
\widetilde K^\theta_t(x, x') & = (1+o(1)) \sum_{n=1}^{tN} \frac{1}{4\pi n} e^{-\frac{N |x-x'|^2}{4n}} \frac{4\pi}{\log N} \cdot \log N \int_0^{t-u}  G_\theta(a) {\rm d}a \\
& = 4\pi \iint_{0<u<v<t}  g_{2u}(x'-x) G_\theta(v-u) {\rm d}u\, {\rm d}v.
\end{aligned}
\end{equation}
We refer to \cite[Prop.~3.6]{CSZ23a} for the complete derivation of $K^\theta_{t-s}$, where the constants differ due to the periodicity of  the simple symmetric random walk.
\end{proof}

\section{Higher Moment Bounds} \label{S:highmom}

In this section, we explain how to bound higher moments of the averaged partition function (cf.\ \eqref{eq:rescZmeas})
\begin{equation} \label{eq:rescZmeas2}
\cZ^{\beta_N}_{N; 0, t}(\varphi, \psi) =
\frac{1}{N} \sum_{x, y\in \Z^2} \varphi\Big(\frac{x}{\sqrt{N}}\Big) \psi\Big(\frac{y}{\sqrt{N}}\Big) Z_{0, \lfloor Nt\rfloor}(x, y),
\end{equation}
where $\beta_N$ is chosen in the critical window defined in \eqref{eq:sigma} for some $\theta\in \R$. As explained in Section \ref{S:lind1}
and in particular in \eqref{eq:lind4}, such moment bounds (4th moment will suffice) are needed to apply the Lindeberg principle.
As explained in Section \ref{S:sketch} and \ref{S:lind2}, to prove Theorem \ref{th:main0} on the critical $2d$ stochastic heat flow (SHF),
we will apply a Lindeberg principle for dependent random variables, which requires the moment bound \eqref{eq:Theta4} for the
coarse-grained disorder variables $\Theta_{N, \epsilon}(\vec\sfi, \vec\sfa)$ and the moment bound \eqref{eq:Zcg4} for the coarse-grained
partition function $\mathscr{Z}_{\epsilon}^{(\cg)}(\varphi, \psi | \Theta_{N, \eps})$. We will focus mainly on the derivation of
moment bounds for $\cZ^{\beta_N}_{tN}(\varphi, \psi)$. We will then sketch how this implies the desired moment bound for $\Theta_{N, \epsilon}(\vec\sfi, \vec\sfa)$ and how the same strategy can be used to derive moment bounds for $\mathscr{Z}_{\epsilon}^{(\cg)}(\varphi, \psi | \Theta_{N, \eps})$.
At the end of this section, we will also formulate moment bounds for the critical $2d$ SHF and discuss related results on the $2d$ Delta-Bose gas.

\subsection{Moment bounds and proof strategy}\label{momandmethods}
We now formulate the higher moment bounds for the averaged partition function $\cZ^{\beta_N}_{N; 0, t}(\varphi, \psi)$ defined in
\eqref{eq:rescZmeas2} for some parameter $\theta\in \R$ in the critical window \eqref{eq:sigma}. For consistency with \cite[Theorem 6.1]{CSZ23a}
where such a bound was established, we will let $N'=t N$ and consider the partition averaged on the spatial scale $\sqrt{N'}$ instead
of $\sqrt{N}$. Namely we consider
\begin{equation} \label{eq:rescZmeas3}
\cZ^{\beta_N}_{N'}(\varphi, \psi) =
\frac{1}{N'} \sum_{x, y\in \Z^2} \varphi_{N'}(x)  Z_{0, N'}(x, y) \psi_{N'} (y),
\end{equation}
where for $\phi:\R^2 \to \R$, $\phi_{N'}: \Z^2 \to\R$ is defined by setting $\phi_{N'}(z)$ to be the average of
$\phi$ on the square of volume $1/N'$ centered at $z/\sqrt{N'}$.

\begin{theorem}[Higher moments]\label{th:mom}
Fix $p, q \in (1,\infty)$ with $\frac{1}{p}+\frac{1}{q}=1$, any integer $h\ge 3$, and any
weight function $w: \R^2 \to (0,\infty)$ such that $\log w$ is Lipschitz continuous. Then
there exist $\sfC = \sfC(h), \sfC' = \sfC'(h) < \infty$ such that, uniformly in large $N'\leq N\in \N$ and
locally integrable $\varphi, \psi: \R^2 \to \R$, we have
\begin{align}
	\Big|\bbE\Big[\Big(\cZ_{N'}^{\beta_{N}}(\varphi, \psi) -
	\bbE[\cZ_{N'}^{\beta_{N}}(\varphi, \psi)]\Big)^h\Big]\Big|
	& \,\leq\, \frac{\sfC}{\log (1+\tfrac{N}{N'} )}
	\, \frac{1}{(N')^{h}} \, \Big\Vert \frac{\varphi_{N'}}{w_{N'}}
	\Big\Vert_{\ell^p}^h \, \Vert \psi_{N'}\Vert_{\ell^\infty}^h
	\, \Vert w_{N'} \ind_{B_{N'}} \Vert_{\ell^q}^h  \label{eq:mombd1} \\
	& \,\leq\, \frac{\sfC'}{\log (1+ \tfrac{N}{N'})} \, \Big\Vert \frac{\varphi}{w}
	\Big\Vert_{p}^h \, \Vert \psi\Vert_{\infty}^h
	\, \Vert w \ind_{B} \Vert_{q}^h \,,   \label{eq:mombd2}
\end{align}
where $\Vert \phi\Vert_{\ell^p}:= (\sum_z |\phi(z)|^p)^{1/p}$,
$B\subset \R^2$ is any ball (could be $\R^2$) that contains the support of $\psi$, and $B_{N'} := B \sqrt{N'}$.
\end{theorem}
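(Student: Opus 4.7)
My plan is to bound the $h$-th centered moment by expanding in polynomial chaos, organizing the expectation by pairwise encounters of $h$ replicas of the partition function, and then reducing the estimate to an operator bound controlled by the Dickman Green's function $G_\theta$.

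First, I would write $\cZ^{\beta_N}_{N'}(\varphi,\psi) - \bbE[\cZ^{\beta_N}_{N'}(\varphi,\psi)]$ using the polynomial chaos expansion \eqref{eq:Zppchaos}, dropping the zeroth-order term $q^N_{0,N'}(\varphi,\psi)$. Raising to the $h$-th power and taking expectation produces a sum indexed by $h$-tuples of space-time configurations $(\Gamma_1,\ldots,\Gamma_h)$; because $\bbE[\xi_N]=0$ and $\bbE[|\xi_N|^k]\le C_k\sigma_N^k$ for $k\ge 2$, only configurations in which every noise space-time point is visited by at least two of the $\Gamma_i$'s give a nonzero contribution. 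Grouping noise points by location yields a weighted sum over $h$ random walk trajectories with enforced pairwise coincidences at prescribed space-time points, each coincidence carrying a factor $\sigma_N^2\sim 4\pi/\log N$.

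Second, I would apply a first-encounter decomposition. Among the $h$ walks starting from sites weighted by $\varphi_{N'}$, pick the earliest time-space location at which some pair $\{i,j\}$ coincides. The pair $(i,j)$ travels independently up to this encounter, contributing a two-walk transition kernel governed by $U_N$ from \eqref{eq:U}, while the remaining $h-2$ walks run free. After the encounter, the pair merges into an effective single walker weighted by $\sigma_N^2\, U_N$, and we reduce to $h-1$ effective trajectories whose next encounter is treated identically. Iterating produces a representation of the moment as a finite sum, over $k\ge 1$ and over sequences of pair labels, of $k$-fold compositions of a single first-encounter operator $\sfT^{(h)}_{N,N'}$ acting on functions of the $h$ endpoints.

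Third, the key estimate is a functional inequality of the form
\begin{equation*}
    \Vert \sfT^{(h)}_{N,N'}\Vert_{\ell^p\to\ell^q}
    \,\le\, \frac{C(h)}{\log(1+N/N')},
\end{equation*}
from which summing the geometric series in $k$ and applying H\"older to the boundary contributions of $\varphi_{N'}$ and $\psi_{N'}$ gives \eqref{eq:mombd1}. The small prefactor comes from combining one factor $\sigma_N^2\sim 4\pi/\log N$ per encounter with the integral asymptotics $\int_0^{N'/N}G_\theta(t)\,\dd t\sim 1/\log(N/N')$ from \eqref{eq:G5}: after the first encounter there remains only a residual time window of relative size $N'/N$ in which a second encounter can occur, and the barely-integrable nature of $G_\theta$ near $0$ produces exactly the logarithmic gain. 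The weight $w$ and the conjugate pair $(p,q)$ serve to decouple the polynomial decay of $\varphi$ at infinity from the support restriction on $\psi$, and the passage from \eqref{eq:mombd1} to the continuum bound \eqref{eq:mombd2} is a Riemann-sum comparison using Lipschitz continuity of $\log w$.

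The main obstacle is securing the sharp $1/\log(1+N/N')$ prefactor with a constant $\sfC(h)$ that depends only on $h$, not on $N$ or $N'$. Classical hypercontractivity is unavailable in the critical window because the dominant contribution to moments comes from chaoses of diverging order $O(\log N)$, as already observed in Sections \ref{S:lind1} and \ref{sec:quasi-critical}. To overcome this, one must exploit the functional inequalities for Green's functions of $h$ random walks on $\Z^2$ (ingredient \textbf{D} in Section \ref{S:sketch}), which allow the $h$-body first-encounter operator to be dominated by an iterated two-body operator whose norm is controlled by $G_\theta$. These multi-body Green's function estimates, together with the careful bookkeeping of pair labels that arise at each successive encounter, are what absorb the combinatorial cost of handling $h\ge 3$ replicas into the finite constant $\sfC(h)$.
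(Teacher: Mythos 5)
Your overall architecture coincides with the paper's: expand the $h$-th power of the chaos expansion, keep only configurations where noise points are matched in pairs, organise these into streaks of collisions between a single pair (the replica evolution weighted by $\sigma_N^2 U_N$, i.e.\ the wiggle lines of Figure \ref{figure-fourth-moment}) separated by constrained free evolution of all $h$ walks, and reduce the estimate to operator bounds governed by the Dickman Green's function, with the prefactor traced to $\sigma_N^2\sum_{n\le N'}U_N(n)\sim\int_0^{N'/N}G_\theta(t)\,\dd t\sim 1/\log(1+\tfrac{N}{N'})$ via \eqref{eq:G5}. (A caveat on your phrasing: the pair does not ``merge'' and the number of trajectories never drops to $h-1$; all $h$ walks persist, the streaks are indexed by pairs $I_\ell$ with $I_\ell\neq I_{\ell+1}$, and the same pair may recur later, which your final description of operators acting on functions of the $h$ endpoints does capture.)

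The genuine gap is in the iteration step. You posit a single encounter operator with norm at most $C(h)/\log(1+\tfrac{N}{N'})$ and then ``sum the geometric series in $k$''. But the bound is claimed uniformly in $N'\le N$, and for $N'\asymp N$ the ratio $C(h)/\log 2$ is not a contraction, so the series does not close; moreover, as written the time-ordered sums over encounter times $0<a_1\le b_1<\cdots<N'$ do not factorise into a product of operator norms at all. The paper's device that fixes both problems is the Laplace transform in time \eqref{eq:QU} with a large parameter $\lambda$: it decouples the ordered time increments so that \eqref{Laplace-mom} is bounded by a product of norms of $\widehat\sfQ_{\lambda,N}$ and $\widehat\sfU_{\lambda,N}$ acting on the constrained spaces $\ell^q((\Z^2)^h_I)$ (note the composition must live on a fixed $\ell^q$ scale, not $\ell^p\to\ell^q$; the exponent $p$ enters only through the boundary terms and \eqref{eq:norm2}), and by \eqref{eq:norm3} it supplies a per-encounter factor of order $pq/\log\lambda<1$ uniformly in $N'$, with the factor $1/\log(1+\tfrac{N}{N'})$ then harvested \emph{once} by replacing $\sfU_{\lambda,N}$ with a horizon-$N'$ variant — not assumed at every encounter. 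Relatedly, the core of ingredient \textbf{D}, the bound $\Vert\sfQ^{I,J}_{\lambda,N}\Vert_q\le Cpq$ in \eqref{eq:norm1}, is invoked by you as a black box: it is a borderline HLS-type inequality where the naive H\"older estimate fails by the logarithmic divergence \eqref{failHLS} and is rescued by inserting the ratio $|x_k-x_\ell|^\alpha/|y_i-y_j|^\alpha$ before applying H\"older, exploiting the diagonal identifications in $(\Z^2)^h_I$ and $(\Z^2)^h_J$. Finally, since you need an upper bound on the absolute centred moment, the configurations containing triple or higher matchings cannot simply be discarded; they require a (routine but necessary) separate estimate showing they are $o(1)$.
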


\begin{remark}
Some remarks are in order:
\begin{itemize}
\item The bound in \eqref{eq:mombd2} follows from \eqref{eq:mombd1} via a simple Riemann sum approximation. For
$\cZ^{\beta_N}_{N; 0, t}(\varphi, \psi)$ defined in \eqref{eq:rescZmeas2} with averaging on the spatial scale $\sqrt{N}$,
the same bound \eqref{eq:mombd2} holds with $t=N'/N$. This can be seen by applying \eqref{eq:mombd1} with $\varphi$ and $\psi$
therein replaced with $\varphi_t(x) :=\varphi(\sqrt{t} x)$ and $\psi_t(x) :=\psi(\sqrt{t} x)$, and using the weight function $w_N$
instead of $w_{N'}$. But there is probably room for improvement since we expect that averaging on larger spatial scales will decrease
the variance and the higher moments of the averaged partition function.

\item Theorem \ref{th:mom} with $N'=\eps N$ will be used later to deduce the moment bound \eqref{eq:Theta4} for the coarse-grained variables $\Theta_{N, \epsilon}(\vec\sfi, \vec\sfa)$.

\item The weight function $w$ in \eqref{eq:mombd1} and \eqref{eq:mombd2} allows us to include the case $\psi\equiv 1$, and to
control the spatial decay when $\Vert \varphi\Vert_p<\infty$ and the support of $\psi$ moves away to $\infty$, which is needed for the bounds in \eqref{eq:lind4} and \eqref{eq:Zcg4}.

\item The bound \eqref{eq:mombd1} implies the moment bound for the point-to-plane partition function $Z_{0, N'}(z, \ind)$ in \eqref{eq:lind4}.
Indeed, assume $z=0$ and let $w(x)=e^{-\Vert x\Vert}$,  $\psi\equiv 1$, and choose $\varphi$ such that $\varphi_{N'}(y)=N'\cdot \ind_{y=0}$ and
$\cZ_{N'}^{\beta_{N}}(\varphi, \ind) = Z_{0, N'}(0, \ind)$. Then \eqref{eq:mombd1} gives
\begin{equation}\label{eq:p2plane}
\big|\bbE\big[\big(
	Z_{0, N'}(0, \ind) - 1 \big)^h\big]\big|
	\leq C \Vert w_{N'}\Vert_{\ell^q}^h \leq C' \, N^{\frac{h}{q}} \,,
\end{equation}
where the exponent can be made arbitrarily small by choosing $q$ large.

\item
Theorem~\ref{th:mom} can be strengthened by showing that the constants $\sfC$ and $\sfC'$
in \eqref{eq:mombd1} and \eqref{eq:mombd2} are proportional to $pq$, which allows one to
send $p=p_N\downarrow 1$ and $q=q_N\uparrow \infty$. This was done in the subcritical
regime in \cite[Theorem 1.5]{LZ23} and in the quasi-critical regime in \cite{CCR25} (cf.\ Section \ref{sec:quasi-critical}).
There is in fact a unified bound for all disorder strength $\beta_N$ in or below
the critical window, which says that the $h$-th centred moment of the averaged partition
function $\cZ_N^{\beta_N}(\varphi) :=\cZ_N^{\beta_N}(\varphi, \ind)$ can be controlled by its variance raised to the power
$h/2$, provided $\varphi$ is sufficiently regular and is supported on an appropriate spatial
scale such that \emph{the variance remains bounded} (we considered flat terminal condition
$\psi = \ind$ for simplicity): for some $C = C(h) < \infty$
\begin{equation*}
	\Big|\bbE\Big[\Big(\cZ_{N}^{\beta_N}(\varphi) -
	\bbE[\cZ_{N'}^{\beta_N}(\varphi)]\Big)^h\Big]\Big|
	\le C \, \bbvar [\cZ_{N}^{\beta_N}(\varphi) ]^{\frac{h}{2}}.
\end{equation*}
The details can be found in the forthcoming paper \cite{CSZ25}.
Such a bound in the quasi-critical regime can be found in \cite[Proposition~2.2]{CCR25}.
\end{itemize}
\end{remark}

\begin{proof}[Proof Sketch]
The proof of Theorem \ref{th:mom} is based on the analysis of collision diagrams of $h$ independent random walks (see
Figure \ref{figure-fourth-moment}). Let us first explain at the heuristic level before formalising it in terms of operators.
For simplicity, we will assume $N'=N$.

Recalling from \eqref{eq:Zppchaos} the polynomial chaos expansion for $\cZ_{N}(\varphi, \psi):= \cZ_{N}^{\beta_{ N}}(\varphi, \psi)$,
we can write
\begin{align} \label{Exp_h}
   &\big|\bbE\big[\big(\cZ_{N}(\varphi, \psi) -
	\bbE[\cZ_{N}(\varphi, \psi)]\big)^h\big]\big|
	 =  \frac{1}{N^h} \times    \\
 & \bbE \Bigg [ \bigg(\sum_{k=1}^\infty \!\!\!\!\! \sumtwo{x_1,\cdots, x_k\in \Z^2}{0<n_1<\dots<n_k< N} \!\!\!\!\!\!\!\!\!\!
 q_{n_1}(\varphi_N, x_1)\xi_N(n_1, x_1) \bigg\{ \prod_{j=2}^k q_{n_j-n_{j-1}}(x_j- x_{j-1}) \xi_N(n_j, x_j) \bigg\} \notag
 q_{N-n_k}(x_k,\psi_N)\bigg)^h \Bigg],
\end{align}
where $\varphi_N(x):=\varphi(\frac{x}{\sqrt{N}})$, $\psi_N(x):=\psi(\frac{x}{\sqrt{N}})$,
\begin{align}
q_m(\varphi_N, z) := \sum_{x\in \Z^2} \varphi_N(x) q_m(z-x) \quad \mbox{and} \quad q_m(z, \psi_N) := \sum_{y\in \Z^2} q_m(y-z) \psi_N(y).
\end{align}

We can expand the power inside the expectation in \eqref{Exp_h} as
\begin{align} \label{h_power_exp}
    &\sum_{k_1,\dots,k_h\geq 1} \sumtwo{(n^{(r)}_i,x^{(r)}_i) \in \, \N \times \Z^2\, }{1\leq i \leq k_r, 1\leq r \leq h} \prod_{r=1}^h q_{n^{(r)}_1}(\varphi_N,x^{(r)}_1) \Big\{ \prod_{j=2}^{k_r} q_{n^{(r)}_j-n^{(r)}_{j-1}}(x^{(r)}_j-x^{(r)}_{j-1}) \Big\} q_{N-n^{(r)}_{k_r}}(x^{(r)}_{k_r},\psi_N) \notag \\
    &\hspace{4cm} \times\ \bbE\Big[\prod_{r=1}^h \, \prod_{j=1}^{k_r} \, \xi_N(n^{(r)}_j,x^{(r)}_j)\Big],
\end{align}
which sums over $h$ sequences of time-space points $\Gamma^{(r)} :=(n_i^{(r)}, x_i^{(r)})_{1\leq i \leq k_r}$ with $0<n^{(r)}_1<\cdots < n^{(r)}_{k_r}<N$,
and consecutive points in each sequence are connected by the random walk transition kernel $q$. Since $\xi_N$ is an i.i.d.\ family with
zero mean, the factor $\bbE[\prod \xi_N(\cdot, \cdot)]$ is non-zero only when each $\xi_N(n,x)$ that appears in the product appears at least twice.
It was shown in \cite{CSZ23a} that the dominant contribution (as $N\to\infty$) to the expansion in \eqref{h_power_exp} comes from configurations of $(\Gamma^{(r)})_{1\leq r\leq h}$  where $\xi_N(n^{(r)}_j, x^{(r)}_j)$ are matched in pairs, with the expectation of each matched pair contributing a factor of $\sigma_N^2$ as in \eqref{eq:xi}. In other words, any $\xi_N(n,x)$ that is collected by one of the $h$ sequences $\Gamma^{(r)}$ is
collected by exactly 2 of the $h$ sequences, and we say there are only pair collisions among $(\Gamma^{(r)})_{1\leq r\leq h}$.
We will focus on such configurations of $\Gamma^{(r)}$, $1\leq r\leq h$, and keep track of the matchings between points in $\Gamma^{(r)}$. In particular, we will partition the sum in \eqref{h_power_exp} according to streaks of consecutive matching between
points in the same pair of sequences $\Gamma^{(i)}$ and $\Gamma^{(j)}$.

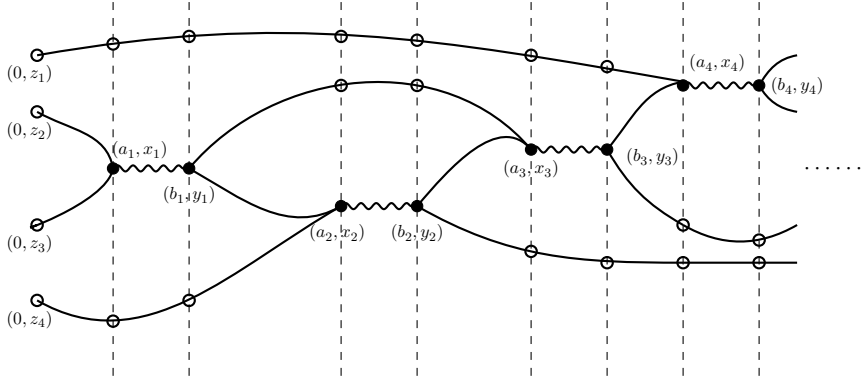
\begin{figure}
\hskip -0.2cm
\begin{tikzpicture}[scale=0.5]
\draw[dashed] (-6,-5)--(-6,5); \draw[dashed] (-4,-5)--(-4,5); \draw[dashed] (0,-5)--(0,5); \draw[dashed] (2,-5)--(2,5); \draw[dashed] (5,-5)--(5,5);
\draw[dashed] (7,-5)--(7,5); \draw[dashed] (9,-5)--(9,5);  \draw[dashed] (11,-5)--(11,5);
\draw  [thick] (-6, -3.55)  circle [radius=0.15]; \draw  [thick] (-4, -3)  circle [radius=0.15]; \draw  [thick] (0, 2.7)  circle [radius=0.15];
\draw  [thick] (2, 2.7)  circle [radius=0.15]; \draw  [thick] (5, -1.7)  circle [radius=0.15];
\draw  [thick] (7, -2)  circle [radius=0.15]; \draw  [fill] (9, 2.7)  circle [radius=0.15];
\draw  [thick] (9, -2)  circle [radius=0.15];
\draw  [thick] (-8,3.5) circle [radius=0.15];
\node at (-8.2,3.0) {\scalebox{0.6}{$(0,z_1)$}};
\draw  [thick] (-6, 3.8)  circle [radius=0.15];
\draw  [thick] (-4, 4.0)  circle [radius=0.15];
\draw  [thick] (2, 3.9)  circle [radius=0.15];
\draw  [thick] (0, 4.0)  circle [radius=0.15];
\draw  [thick] (5, 3.5)  circle [radius=0.15];
\draw  [thick] (7, 3.2)  circle [radius=0.15];
\draw  [fill] (0, -0.5)  circle [radius=0.15]; \node at (-0.1,-1.2) {\scalebox{0.6}{$(a_2,x_2)$}};
\draw [-,thick, decorate, decoration={snake,amplitude=.4mm,segment length=2mm}] (0,-0.5) -- (2,-0.5);
\draw  [fill] (2, -0.5)  circle [radius=0.15]; \node at (2,-1.2) {\scalebox{0.6}{$(b_2,y_2)$}};
\draw[thick] (2,-0.5) to [out=50,in=130] (5,1);
\draw  [fill] (5, 1)  circle [radius=0.15]; \node at (5,0.5) {\scalebox{0.6}{$(a_3,x_3)$}};
 \draw [-,thick, decorate, decoration={snake,amplitude=.4mm,segment length=2mm}] (5,1) -- (7,1);
\draw  [fill] (7, 1)  circle [radius=0.15]; \node at (8.2,0.8) {\scalebox{0.6}{$(b_3,y_3)$}};
\draw[thick] (-4,0.5) to [out=50,in=130] (5,1);
\draw  [fill] (-4,0.5)  circle [radius=0.15]; \node at (-4,-0.2) {\scalebox{0.6}{$(b_1,y_1)$}};
 \draw [-,thick, decorate, decoration={snake,amplitude=.4mm,segment length=2mm}] (-6,0.5) -- (-4,0.5);
 \draw[thick] (-8,-1) to [out=200,in=260] (-6,0.5);
  \draw  [thick] (-8,-1)  circle [radius=0.15];  \node at (-8.2,1.5) {\scalebox{0.6}{$(0,z_2)$}};
   \draw[thick] (-8,2) to [out=-30,in=100] (-6,0.5);
   \draw  [thick] (-8,2)  circle [radius=0.15];  \node at (-8.2,-1.5) {\scalebox{0.6}{$(0,z_3)$}};
 \draw  [fill] (-6,0.5)  circle [radius=0.15]; \node at (-5.3,1) {\scalebox{0.6}{$(a_1,x_1)$}};
 \draw[thick] (-4,0.5) to [out=-30,in=210] (0,-0.5);
  \draw[thick] (-8,-3) to [out=-30,in=210] (0,-0.5); \node at (-8.2,-3.5) {\scalebox{0.6}{$(0,z_4)$}};
   \draw  [thick] (-8,-3)  circle [radius=0.15];
  \draw[thick] (2,-0.5) to [out=-30,in=180] (9,-2);
   \draw  [thick] (9,-1)  circle [radius=0.15];  \node at (9.9,3.3) {\scalebox{0.6}{$(a_4,x_4)$}};
    \draw  [thick] (11,-1.4)  circle [radius=0.15];  \draw  [thick] (11,-2)  circle [radius=0.15];
    \draw[thick] (7,1) to [out=-60,in=150] (9,-1);

    \draw [-,thick, decorate, decoration={snake,amplitude=.4mm,segment length=2mm}] (9,2.7) -- (11,2.7);
    \draw  [fill] (11,2.7)  circle [radius=0.15];  \node at (12,2.7) {\scalebox{0.6}{$(b_4,y_4)$}};
    \draw[thick] (11,2.7) to [out=60,in=190] (12,3.5);
    \draw[thick] (11,2.7) to [out=-60,in=-190] (12, 2);
    \draw[thick] (7,1) to [out=50,in=190] (9,2.8);
    \draw[thick] (-8,3.5) to [out=10,in=170] (9,2.8);
     \draw[thick] (9,-1) to [out=-30,in=-150] (12,-1);  \draw[thick] (9,-2)--(12,-2);
     \node at (13,0.5) {\scalebox{0.8}{$\cdots\cdots$}};
\end{tikzpicture}
\caption{Illustration of the expansion in \eqref{h_power_exp} for the fourth moment. Each wiggle line represents
a streak of consecutive matchings of time-space points in the same pair of sequences $\Gamma^{(i)}$ and $\Gamma^{(j)}$, which
is assigned weight $\sigma_N^2 U_N(b-a, y-x)$ (see \eqref{U-diagram}) if the streak starts at $(a, x)$ and ends at $(b, y)$.
Each solid curve between two circles is assigned a random walk transition kernel $q$, while each hollow circle at a point
$(t, y)$ arises from a sequence $\Gamma^{(r)}$ that is unmatched at time $t$ and the Chapman-Kolmogorov decomposition of the
associated transition kernel $q_{n^{(r)}_{j-1}, n^{(r)}_j}(x^{(r)}_{j-1}, x^{(r)}_{j})
= \sum_y q_{n^{(r)}_{j-1}, t}(x^{(r)}_{j-1}, y) q_{t, n^{(r)}_j}(y, x^{(r)}_{j})$ at the intermediate time $t$. In each time
interval between consecutive dashed vertical lines, the product of
the curve weights define a kernel which we call Replica Evolution (see \eqref{eq:sfG}) if it contains a wiggle line, or Constrained
Evolution (see \eqref{eq:sfQ}) if there is no wiggle line in this time interval.}
\label{figure-fourth-moment}
\end{figure}

As illustrated in Figure \ref{figure-fourth-moment}, we will rewrite the expansion \eqref{h_power_exp} of the $h$-th moment by first identifying
the time intervals $[a_\ell, b_\ell]$, $1\leq \ell\leq m$, where the streaks of consecutive matching occurs (the wiggle lines
in Figure \ref{figure-fourth-moment}). Denote by $I_\ell=\{i_\ell, j_\ell\}$ the indices for the pair of $\Gamma^{(r)}$ for
which matching occurs in the $\ell$-th streak. Our decomposition ensures that $I_\ell \neq I_{\ell+1}$. Summing over $m$,
$a_\ell, b_\ell$, $I_\ell$, and the spatial locations of the solid and hollow circles at times $a_\ell$ and $b_\ell$ in
Figure \ref{figure-fourth-moment}, we can then rewrite the expansion \eqref{h_power_exp} using kernels:

\begin{align}\label{discrete-mom}
       & \, \frac{1}{N^h} \sum_{m=1}^\infty (\sigma_N^2)^m \!\!\!\!\!
        \sum_{\substack{  I_1,..., I_m \subset \{1,\ldots,h\} \\
        |I_\ell|=2, \, I_\ell \neq I_{\ell+1}}}  \,\,\,
        \sum_{\substack{0<a_1\leq b_1<\cdots <a_m\leq b_m<N \\ \bx_1,...,\bx_m \,,\, \by_1,...,\by_m \in (\Z^2)^{h}}}
         \!\!\!\!\!\!\!\!\!\! \sfQ_{a_1}^{*,I_1}(\varphi_N, \bx_1) \sfU_N^{I_1}(b_1-a_1, \by_1-\bx_1) \notag  \\
        &  \times \prod_{\ell=2}^{m} \sfQ_{a_\ell-b_{\ell-1}}^{I_{\ell-1}, I_\ell }(\by_{\ell-1}, \bx_\ell)
	\, \sfU_{N}^{I_\ell}( b_\ell-a_\ell, \bx_\ell, \by_\ell)
	\,  \sfQ_{N-b_{m}}^{I_m,*}(\by_{m}, \psi_N) +o(1),
\end{align}
where $o(1)$ accounts for the contribution from configurations of $(\Gamma^{(r)})_{1\leq r\leq h}$ that contain triple
collisions, that is, there exists $(n, x)$ that belongs to more than two different $\Gamma^{(r)}$.

The kernels $\sfU_N^{I_\ell}, \sfQ_{a_1}^{*,I_1}, \sfQ_{a_\ell-b_{\ell-1}}^{I_{\ell-1}, I_\ell }, \sfQ_{N-b_{m}}^{I_m,*}$ are defined as follows:
\begin{itemize}
\item[{\bf 1.  Replica Evolution}.] Recall $U_N$ from \eqref{eq:U}. For $I=\{i<j\}\subset \{1,...,h\}$ and $\bx=(x_a)_{1\leq a\leq h}, \by=(y_a)_{1\leq a\leq h}\in (\Z^2)^h$, we define
\begin{align} \label{eq:sfG}
	\sfU_N^I(n,\bx, \by)
	\,:= \ind_{\{x_i=x_j\}} \cdot \, \Big( \prod_{a \in \{1,\ldots, h\} \setminus \{i,j\}}
	\!\!\!\!\!\!\!\! q_{n} (y_a - x_a)  \Big) \,
          U_N (n, y_i - x_i) \, \cdot \ind_{\{y_i=y_j\}} \,.
\end{align}
\item[{\bf 2. Constrained Evolution.}]
For $I=\{i<j\}, J=\{k<\ell\} \subset \{1,...,h\}$ and $\bx, \by\in (\Z^2)^h$, we define
\begin{align}\label{eq:sfQ}
	\sfQ_{n}^{I,J}(\by,  \bx)
	\,:=\, \ind_{\{y_i=y_j\}}  \cdot \Bigg( \prod_{a=1}^h \,
	q_{n} (x_a - y_a)  \Bigg)
	\cdot \ind_{\{x_k=x_l\}} \,.
\end{align}
Removing the spatial constraint on either side, we define
\begin{align}\label{eq:sfQ*}
	\sfQ_{n}^{*, J}(\by,  \bx)
	\,:=\, \prod_{a=1}^h \,
	q_{n} (x_a - y_a)\
	\cdot \ind_{\{x_k=x_l\}}, \quad
	\sfQ_{n}^{I,*}(\by,  \bx)
	\,:=\,  \ind_{\{y_i=y_j\}} \prod_{a=1}^h \,
	q_{n} (x_a - y_a),
\end{align}
while
\begin{align}\label{eq:sfQ**}
\sfQ^{*,J}_n(\varphi_N, \bx)
:=\sum_{\bz \in (\Z^2)^h} \Big(\prod_{i=1}^n \varphi\big(\tfrac{z_i}{\sqrt{N}}\big)\Big) \sfQ^{*,J}_n(\bz, \bx),
\end{align}
and similarly for $\sfQ^{I,*}_n(\by, \psi_N)$.
\end{itemize}
To decouple the summation range for the time increments $a_1, b_1-a_1, a_2-b_1, \ldots$ in \eqref{discrete-mom}, we consider the Laplace transforms
of the kernels above:
\begin{equation}\label{eq:QU}
\begin{aligned}
	& \sfQ_{\lambda,N}^{I, J} (\by, \bz)
	:= \sum_{n=1}^{2N} e^{-\frac{\lambda}{N} n} \, \sfQ^{I, J}_n(\by, \bz),   \qquad  &\by, \bz \in (\Z^2)^h , \\
	& \sfU_{\lambda, N}^{J}(\by, \bz)
	:= \sum_{n=0}^{2N} e^{-\frac{\lambda}{N} n} \,
	\sfU^J_{ N} (n, \by, \bz),  &\by, \bz \in (\Z^2)^h.
\end{aligned}
\end{equation}
Inserting the factor $e^{\lambda} e^{-\frac{\lambda}{N} \sum_{i=1}^{m-1}((a_i-b_{i-1})+(b_{i+1}-a_i )) - \frac{\lambda}{N} (N-b_m)}=1$
in \eqref{discrete-mom} and enlarging the range of summation for $a_1, a_2-b_1, \ldots$ to $\{1, \ldots, 2N\}$ and
the range of summation
for $b_i-a_i$ to $\{0, \ldots, 2N\}$, we obtain the upper bound
\begin{align}\label{Laplace-mom}
&\frac{1}{N^{h+1} }\sum_{m=1}^\infty (\sigma_N^2)^m \!\!\!\!\! \!\!\!\!\!\!\!\!
        \sum_{\substack{  I_1,..., I_m \subset \{1,\ldots,h\} \\
        |I_\ell|=2, \, I_\ell \neq I_{\ell+1} \\
        \bx_1,...,\bx_m \,,\, \by_1,...,\by_m \in (\Z^2)^{h} }}
        \sfQ_{\lambda, N}^{*,I_1}(\varphi_N, \bx_1) \sfU^{I_1}_{\lambda, N}(b_1-a_1, \by_1-\bx_1) \notag \\
& \qquad \qquad \qquad \times \prod_{\ell=2}^{m} \sfQ_{\lambda, N}^{I_{\ell-1}, I_\ell }(\by_{\ell-1}, \bx_\ell)
	\, \sfU_{\lambda, N}^{I_\ell}(  \bx_\ell, \by_\ell)
	\times \,  \sfQ_{\lambda, N}^{I_m,*}(\by_{m}, \psi_N),
\end{align}
where there is an additional factor $N^{-1}$ because to introduce the last kernel $\sfQ_{\lambda, N}^{I_m,*}$, we need to sum over
$a_{m+1}-b_m \in \{1, \ldots, 2N\}$, instead of $a_{m+1}=N$ as in \eqref{discrete-mom}. The additional sum over $a_{m+1}$ is compensated
by the averaging factor $N^{-1}$, and the fact this averaging over $a_{m+1}$ is comparable to having fixed $a_{m+1}=N$ is justified by
the following inequality (see \cite[(6.11)]{CSZ23a})
\begin{equation*}
    \sfQ^{I_m,*}_{N-b_m} \big(\by_m,\bz' \big)
    \leq \frac{c}{N}\, \sum_{a_{m+1} \in \{N+1,\dots,2N\}} \sfQ^{I_m,*}_{a_{m+1}-b_m}\big(\by_m,\bz' \big).
\end{equation*}

\begin{remark}
Instead of averaging over the additional variable $a_{m+1}$,
one could also take the \emph{maximum} over $b_m$ of the last random walk operator
$\sfQ_{N-b_{m}}^{I_m,*}(\by_{m}, \psi_N)$ (which requires no assumption on $\psi_N$).
This altenative approach, which has been developed in \cite{CCR25}, leads to a slightly
sharper version of the bound \eqref{eq:mombd2}. The key point is that
the maximal random walk operator described above
is bounded in $L^q$ for $q > 1$, thanks to
a variant of the Hardy-Littlewood maximal inequality, see \cite[Lemma~4.18]{CCR25}.
\end{remark}

The kernels $\sfU_{\lambda, N}^{I_\ell}$ and $\sfQ_{\lambda, N}^{*,I_1}, \sfQ_{\lambda, N}^{I_{\ell-1}, I_\ell }, \sfQ_{\lambda, N}^{I_m,*}$
define integral operators and we can bound \eqref{Laplace-mom} in terms of their operator norms. More precisely, the indicator constraints
in \eqref{eq:sfG} and \eqref{eq:sfQ} suggest that we should regard $\sfU_{\lambda, N}^{I}$, with $I=\{i<j\}$, as an integral operator acting on functions
defined on the following subset of $(\Z^2)^h$:
\begin{equation} \label{eq:ZhI}
(\Z^2)^h_I:=\{ \bx\in (\Z^2)^h: x_i=x_j\}.
\end{equation}
Let $\ell^q((\Z^2)^h_I)$ be the space of functions $f: (\Z^2)^h_I \to \R$ with $\Vert f\Vert_q:= ( \sum_{x\in (\Z^2)^h_I} |f(x)|^q)^{1/q}<\infty$. For $q>1$, we regard $\sfU_{\lambda, N}^{I}$ as an operator from $\ell^q((\Z^2)^h_I)$ to $\ell^q((\Z^2)^h_I)$ with operator norm
\begin{align*}
\| \sfU_{\lambda, N}^{I} \|_q := \sup_{f, g: (\Z^2)^h_I \to \R \atop \|f\|_p, \|g\|_q=1} \langle f, \sfU_{\lambda, N}^{I} g\rangle,
\end{align*}
where $\langle \cdot, \cdot\rangle$ is the inner product and $\frac{1}{p}+\frac{1}{q}=1$.

For $I=\{i <j\}$ and $J=\{k <l\}$, we will regard $\sfQ_{\lambda, N}^{I, J}$ as an integral operator from $\ell^q((\Z^2)^h_J)$ to $\ell^q((\Z^2)^h_I)$ with operator norm $\Vert \sfQ_{\lambda, N}^{I, J}\Vert_q$ defined as before. Similarly, $\sfQ_{\lambda, N}^{*, I}$ will be an integral operator from
$\ell^q((\Z^2)^h_I)$ to $\ell^q((\Z^2)^h)$, and $\sfQ_{\lambda, N}^{I,*}$ an integral operator from $\ell^q((\Z^2)^h)$ to $\ell^q((\Z^2)^h_I)$.
Using the norms of these operators for a fixed $q>1$, we can now bound \eqref{Laplace-mom} by
\begin{align}\label{geom-op}
& \frac{C e^{\lambda }}{N^{h+1}} \! \sum_{m=1}^\infty (\sigma_N^2)^m \!\!\!\! \sum_{I_1,...,I_m} \!\!\!\!
       \big\langle \varphi_N^{\otimes h}, \sfQ_{\lambda, N}^{*,I_1} \, \sfU_{\lambda, N}^{I_1} \, \sfQ_{\lambda, N}^{I_1,I_2}
       \cdots \sfU_{\lambda, N}^{I_m} \, \sfQ_{\lambda, N}^{I_m,*} \, \psi_N^{\otimes h}\big \rangle \notag \\
\leq & \frac{C e^{\lambda }}{N^{h+1}} \! \sum_{m=1}^\infty (\sigma_N^2)^m \!\!\!\! \sum_{I_1,...,I_m} \!\!\!\!
        \|\varphi_N^{\otimes h} \|_p \| \sfQ_{\lambda, N}^{*,I_1}\|_q \, \|\sfU_{\lambda, N}^{I_1}\|_q \,\| \sfQ_{\lambda, N}^{I_1,I_2}\|_q
       \cdots \|\sfU_{\lambda, N}^{I_m}\|_q \, \|\sfQ_{\lambda, N}^{I_m,*}\|_q \, \|\psi_N^{\otimes h}\|_q ,
\end{align}
where $\phi^{\otimes h}(\bx):= \prod_{i=1}^h \phi(x_i)$. The key is to bound these operator norms, which is summarised
in the following proposition.

\begin{proposition}\label{prop:opnorm}
Fix an integer $h\ge 2$ and $p, q>1$ with $\frac{1}{p}+\frac{1}{q}=1$.
There exists $C = C(h) < \infty$ such that uniformly in $\lambda>0$, $N$ large, and $I\neq J \subset \{1,...,h\}$ with $|I|=|J|=2$,
we have
\begin{gather}
	\big\Vert  \sfQ^{I, J}_{\lambda, N} \big\Vert_q
	\leq C \, pq \,;\label{eq:norm1} \\
	\big\Vert  \sfQ^{*,I}_{\lambda,N} \big\Vert_q
	\leq C\, N^{\frac{1}{q}} \,,
	\qquad \big\Vert  \sfQ^{I,*}_{\lambda,N} \big\Vert_q \leq
	C\, N^{\frac{1}{p}} \,;
	\label{eq:norm2} \\
	\text{and } \quad \quad \big\Vert \sfU^{I}_{\lambda, N} \big\Vert_q \leq \frac{C}{(\log \lambda )
	\, \sigma_{ N}^2} .
	\label{eq:norm3}
\end{gather}
\end{proposition}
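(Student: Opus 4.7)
The plan is to prove the three bounds separately, reflecting the distinct structures of the three operators.

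For the bound \eqref{eq:norm3}, I would exploit tensor factorisation: writing $I=\{i<j\}$, the kernel $\sfU_N^I(n,\bx,\by)$ is a product of $h-2$ single-walk transition kernels $q_n(y_a-x_a)$ for $a\notin I$ and a single ``diagonal'' kernel $U_N(n,y_i-x_i)$ acting on the submanifold $\{x_i=x_j,\,y_i=y_j\}$. Since each random walk kernel is a probability measure, convolution against it is an $\ell^q$-contraction. Young's inequality then reduces the problem to bounding the total $\ell^1$ mass of the diagonal convolution kernel:
\[
\big\|\sfU^I_{\lambda,N}\big\|_q \;\leq\; \sum_{n=0}^{2N}e^{-\lambda n/N}\sum_{z\in\Z^2}U_N(n,z) \;=\; \sum_{n=0}^{2N}e^{-\lambda n/N}\,U_N(n).
\]
Plugging in the renewal representation \eqref{renewU} and using Lemma~\ref{L:DickConv} with the change of variables $r=s\log N$, the right-hand side converges as $N\to\infty$ to $(1+o(1))\,\log N \cdot \int_0^\infty e^{\theta s}\bbE[e^{-\lambda Y_s}]\,\dd s$. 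By the L\'evy-Khintchine formula \eqref{eq:LapY}, $\bbE[e^{-\lambda Y_s}]=e^{-sA(\lambda)}$ with $A(\lambda)=\int_0^1(1-e^{-\lambda t})\,\dd t/t \sim \log\lambda$ as $\lambda\to\infty$, so the integral equals $(A(\lambda)-\theta)^{-1}\sim 1/\log\lambda$. Combined with $\sigma_N^2\sim 4\pi/\log N$ from \eqref{eq:sigN2}, this yields the claim.

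For \eqref{eq:norm2}, the operators $\sfQ^{*,I}_{\lambda,N}$ and $\sfQ^{I,*}_{\lambda,N}$ carry only a single indicator constraint. After integrating out the $h-2$ unconstrained walks (each an $\ell^q$-contraction), the problem reduces to estimating an operator on a lower-dimensional space. H\"older's inequality applied to the remaining indicator on a diffusive spatial window of volume $\sim N$ then produces the factor $N^{1/q}$ (and, by duality, $N^{1/p}$ for the other operator).

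The hard part will be \eqref{eq:norm1}. For $I\neq J$ with $|I|=|J|=2$, I would perform a case analysis on $|I\cap J|\in\{0,1\}$. Again the $h-|I\cup J|$ unconstrained walks contribute norm $1$, reducing the problem to $3$ or $4$ walks with coupled endpoint constraints. The key analytic input is a sharp $L^p$--$L^q$ estimate on the $2d$ random walk Green's function $G_\lambda(\cdot)=\sum_n e^{-\lambda n/N}q_n(\cdot)$, which is only logarithmically singular at the origin and decays rapidly at infinity; its $L^p$ norm grows at most linearly in $p$ as $p\to\infty$. Applying H\"older to the collision kernel $\sum_n e^{-\lambda n/N}q_n(\cdot)^2$ (or its analogue when $|I\cap J|=1$) then produces the stated $Cpq$ dependence, uniformly in $N$ and $\lambda$. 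The main obstacle is to carry out this bookkeeping without picking up any spurious $N$- or $\lambda$-dependent factor; this relies on the fact that $I\neq J$ prevents the ``worst case'' in which the same pair of walks would be pinned together at both endpoints (which would reduce to the divergent integral handled in \eqref{eq:norm3}).
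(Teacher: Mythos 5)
Your treatment of \eqref{eq:norm3} is essentially the paper's argument: the kernel is a convolution kernel on $(\Z^2)^h_I$, so its $\ell^q$ norm is controlled by its $\ell^1$ mass $\sum_{n\le 2N}e^{-\lambda n/N}U_N(n)$, which the renewal/Dickman computation bounds by $C\log N\int_0^\infty e^{-\lambda t}G_\theta(t)\,\dd t\le C/(\sigma_N^2\log\lambda)$; just phrase this as a bound uniform in large $N$ rather than an $N\to\infty$ limit, and note that $\int_0^\infty e^{\theta s}\E[e^{-\lambda Y_s}]\,\dd s=(A(\lambda)-\theta)^{-1}$ requires $\lambda$ large enough that $A(\lambda)>\theta$, which is how the bound is used. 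The sketch for \eqref{eq:norm2} is rough but the mechanism (a row sum of order $\log N$ against a column sum $\sum_{n\le 2N}e^{-\lambda n/N}\lesssim N$, combined via H\"older/Schur) is the right one.

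The genuine gap is \eqref{eq:norm1}, which is the heart of the proposition. Your plan—integrate out the unconstrained walks and then apply H\"older using sharp $\ell^p$ bounds on the single-walk Green's function or on the collision kernel—uses only the \emph{size} of the kernel, i.e.\ the estimate $\sfQ^{I,J}_{\lambda,N}(\bx,\by)\lesssim(1+|\bx-\by|^2)^{-(h-1)}$ on spaces of dimension $2(h-1)$ (see \eqref{eq:QlambN}). This is exactly the borderline, HLS-endpoint decay: the corresponding convolution operator on $\Z^{2(h-1)}$ has $\ell^q$ operator norm of order $\log N$, so no argument relying only on such pointwise/Green's-function bounds can yield a constant uniform in $N$. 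Concretely, the unweighted Schur/H\"older test fails because, for $J=\{k<\ell\}$, the row sum $\sum_{\by}\sfQ^{I,J}_{\lambda,N}(\bx,\by)=\sum_{n\le 2N}e^{-\lambda n/N}q_{2n}(x_k-x_\ell)$ is of order $\log\frac{N}{1+|x_k-x_\ell|^2}$, hence of order $\log N$ when the pair $J$ starts close together—and this happens even though $I\neq J$ (cf.\ \eqref{failHLS}). So your closing remark that ``$I\neq J$ prevents the worst case'' identifies where structure must enter but does not supply a mechanism: the condition $I\neq J$ only rules out the $\sfU$-type contribution \eqref{eq:norm3}, which carries a compensating $\sigma_N^2$; it does not remove the logarithmic divergence of the row sums. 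What is missing is the cancellation device the paper uses: insert the factor $\frac{|x_k-x_\ell|^{\alpha}}{|y_i-y_j|^{\alpha}}\cdot\frac{|y_i-y_j|^{\alpha}}{|x_k-x_\ell|^{\alpha}}$ before applying H\"older, as in \eqref{eq:HLS3}, and verify that the weighted row sums $\sum_{\by}\sfQ^{I,J}_{\lambda,N}(\bx,\by)\,|x_k-x_\ell|^{\alpha p}|y_i-y_j|^{-\alpha p}$ (and the dual sums with $\alpha q$) are bounded by a constant: the numerator is small precisely where the unweighted row sum is large, the denominator is summable near the diagonal for $\alpha p<2$, and optimizing over $\alpha$ produces the $Cpq$ dependence. (An alternative is the Fourier-analytic route of Dell'Antonio--Figari--Teta and \cite{GQT21}, but the paper notes this is not easy to adapt to the lattice, which is why the weighted real-space argument is used.) A further warning about your proposed input: the $\ell^p$ norm of the lattice Green's function $\sum_n e^{-\lambda n/N}q_n(\cdot)$ is not merely ``linear in $p$''—it carries a volume factor of order $(N/\lambda)^{1/p}$—so invoking it directly would also introduce $N$-dependent factors with no visible cancellation.
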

Substituting these bounds into \eqref{geom-op} leads to a geometric series that is convergent provided $\lambda>0$ is chosen
large enough. To arrive at the final form in Theorem \ref{th:mom}, we need two modifications:
 \begin{itemize}
 \item Firstly, we need to modify \eqref{geom-op} by introducing a weight function $w_N^{\otimes h}(\bx)$ with $w_n(x)=w(x/\sqrt{N})$.
  This is done by rewriting \eqref{Laplace-mom} in terms of the weighted operators
\begin{equation}\label{eq:QUw}
\begin{aligned}
	&\widehat \sfQ_{\lambda,N}^{I, J} (\by, \bz):= \sfQ_{\lambda,N}^{I, J} (\by, \bz) \frac{w_N^{\otimes h}(\by)}{w_N^{\otimes h}(\bz)} \quad \text{and} \quad
	\widehat \sfU_{\lambda, N}^{J}(\by, \bz):=   \sfU^J_{ N} (\by, \bz)\frac{w_N^{\otimes h}(\by)}{w_N^{\otimes h}(\bz)}.
\end{aligned}
\end{equation}
This allows us to replace the bound in \eqref{geom-op} by
\begin{align*}
&\Big|\bbE\Big[\Big(\cZ_{N}^{\beta_{ N}}(\varphi, \psi) -
	\bbE[\cZ_{N}^{\beta_{N}}(\varphi, \psi)]\Big)^h\Big]\Big|  \notag\\
& \leq \,\,\, \frac{C e^{\lambda }}{N^{h+1}}\sum_{m\geq 1} (\sigma_N^2)^m\sum_{I_1,...,I_m}
       \Big\langle \frac{\varphi_N^{\otimes h}}{w_N^{\otimes h}}, \widehat \sfQ_{\lambda, N}^{*,I_1} \,\widehat U_{\lambda, N}^{I_1} \, \widehat \sfQ_{\lambda, N}^{I_1,I_2}
       \cdots \widehat U_{\lambda, N}^{I_m} \, \widehat \sfQ_{\lambda, N}^{I_m,*} \, w_N^{\otimes h} \psi_N^{\otimes h}\Big \rangle \notag \\
&  \leq \,\,\, \frac{C e^{\lambda }}{N^{h+1}}\sum_{m\geq 1} (\sigma_N^2)^m\sum_{I_1,...,I_m}
        \Big\|\frac{\varphi_N^{\otimes h}}{w_N^{\otimes h}} \Big \|_p \| \widehat \sfQ_{\lambda, N}^{*,I_1}\|_q \, \| \widehat \sfU_{\lambda, N}^{I_1}\|_q \,\| \widehat \sfQ_{\lambda, N}^{I_1,I_2}\|_q
       \cdots \|\widehat \sfU_{\lambda, N}^{I_m}\|_q \, \| \widehat \sfQ_{\lambda, N}^{I_m,*}\|_q \, \| w_N^{\otimes h} \psi_N^{\otimes h} \|_q.
\end{align*}
The bounds in Proposition \ref{prop:opnorm} remain valid for $\widehat \sfQ_{\lambda, N}$ and $\widehat \sfU_{\lambda, N}$, see \cite[Prop.~6.6]{CSZ23a}.

\item  Secondly, to obtain the pre-factor $1/\log(1+\tfrac{N}{N'})$ in Proposition \ref{th:mom} when $N'<N$, we need to replace the operator
$\widehat \sfU_{\lambda, N}^{I}$ by a variant that takes into account the shorter time horizon, even though $\beta_N$ and $\sigma_N^2$ remain
the same. For more details, see \cite[Prop.~6.6 and (6.24)]{CSZ23a}.
\end{itemize}
\end{proof}

\subsection{Functional inequalities}
In this subsection, we sketch how to obtain the bounds in Proposition  \ref{prop:opnorm}. We note that
$L^2$-variants of \eqref{eq:norm1} first appeared in the work of Dell'Antonio, Figari and Teta \cite{DFT94}
(see the proof of Lemma 3.1 therein) and was then used in \cite[(5.2)]{GQT21} to bound the moments of
the mollified SHE \eqref{eq:mollSHE} in the critical window. Since it is non-trivial to adapt the
Fourier transform techniques of \cite{DFT94} to our discrete setting, we instead work directly
in real space. Our method is robust enough to be applied to the coarse-grained model introduced in Section
\ref{S:CG}. Our extension from $L^2$ bounds to $L^q$ bounds for arbitrary $q>1$ is also important as discussed in
\eqref{eq:p2plane}.

The inequality \eqref{eq:norm1} is reminiscent of the Hardy-Littlewood-Sobolev (HLS) inequality \cite[Theorem 4.3]{LL01}:
for $f, g: \R^d \to \R$, $p, r>1$ and $0<\nu<d$ satisfying $\frac{1}{p} + \frac{1}{r}+\frac{\nu}{d}=2$,
\begin{equation} \label{HLS}
\Big| \iint_{\R^d \times \R^d} \frac{f(x) g(y)}{|x-y|^\nu} {\rm d}x {\rm d}y \Big| \leq C(d, \nu, p) \|f\|_p \|g\|_r.
\end{equation}
Indeed, \eqref{eq:norm1} is equivalent to showing that for all $f\in \ell^p((\Z^2)_I^h)$ and $g\in \ell^q((\Z^2)_J^h)$
\begin{equation} \label{eq:HLS2}
\sum_{\bx \in (\Z^2)_I^h, \by \in (\Z^2)_J^h} f(\bx) \sfQ^{I,J}_{\lambda,N}(\bx, \by) g(\by) \leq C pq \Vert f\Vert_p \Vert g\Vert_q,
\end{equation}
where for $\bx \in (\Z^2)_I^h$ and $\by \in (\Z^2)_J^h$,  we have
\begin{equation}\label{eq:QlambN}
\sfQ^{I,J}_{\lambda,N}(\bx, \by) = \sum_{n=1}^{2N} e^{-\frac{\lambda}{N} n} \prod_{i=1}^h q_n(y_i-x_i)
\leq
\left\{
\begin{aligned}
\frac{C}{(1+|\by-\bx|^2)^{h-1}} \quad & \mbox{for   } |\bx-\by| \leq \sqrt{N}, \\
\frac{C}{N^{h-1}} e^{-\frac{|\by-\bx|^2}{CN}} \quad & \mbox{for   } |\bx-\by|> \sqrt{N},
\end{aligned}
\right.
\end{equation}
with a lower bound of the same order. In other words, for $|\bx-\by| \leq \sqrt{N}$, $\sfQ^{I,J}_{\lambda,N}(\bx, \by)$
is comparable to the Green's function of a random walk in $\Z^{2h}$. Note that  in \eqref{eq:HLS2}, the variables $\bx$ and $\by$
are being summed in a space of dimension $2(h-1)$ which corresponds to dimension $d$ in \eqref{HLS}, $r=q$, while at first glance,
$\nu$ in \eqref{HLS} is comparable to the exponent $2(h-1)$ in $\sfQ^{I,J}_{\lambda,N}(\bx, \by)$. This will be exactly the borderline
case where the HLS inequality fails because $\nu=d$. But what saves us is that when we restrict to $\bx \in (\Z^2)_I^h$ and
$\by \in (\Z^2)_J^h$, we have made the identifications $x_i=x_j$ and $y_k=y_l$, which makes $(1+|\by-\bx|^2)^{1-h}$ more regular
because $|\by-\bx|$ is greater than the Euclidean distance between $\bx \in (\Z^2)_I^h$ and $\by \in (\Z^2)_J^h$ regarded as two points
in $\Z^{2(h-1)}$. Indeed, $|\bx-\by|^2$ contains terms such as $|x_i-y_i|^2 + |x_j-y_j|^2 \geq \frac{1}{3} (|x_i-y_i|^2 + |y_i-y_j|^2)$,  which
penalises the summation in $y_i$ and $y_j$ in \eqref{eq:HLS2} when they are far apart.

A naive attempt to bound \eqref{eq:HLS2} is to write $\sfQ^{I,J}_{\lambda,N}(\bx, \by) = \sfQ^{I,J}_{\lambda,N}(\bx, \by)^{\frac{1}{p}}  \sfQ^{I,J}_{\lambda,N}(\bx, \by)^{\frac{1}{q}}$ and then apply H\"older's inequality to bound
\begin{align}\label{eq:naive}
\langle f,  \sfQ^{I,J}_{\lambda,N} g\rangle \leq \Big(\sum_{\bx \in (\Z^2)_I^h, \by \in (\Z^2)_J^h} |f(\bx)|^p \sfQ^{I,J}_{\lambda,N}(\bx, \by) \Big)^{1/p}
\Big(\sum_{\bx \in (\Z^2)_I^h, \by \in (\Z^2)_J^h}  \sfQ^{I,J}_{\lambda,N}(\bx, \by) |g(\by)|^q \Big)^{1/q}.
\end{align}
This will not work because it can be shown that
\begin{align}\label{failHLS}
\sum_{\by \in (\Z^2)_J^h} \sfQ_{\lambda, N}^{I,J}(\bx, \by) \geq C \log |x_i-x_j|.
\end{align}
The trick is to insert a factor $\frac{\gamma(\bx, \by)}{\gamma(\bx, \by)}$ and then apply H\"older such that the factor $\gamma$ or $1/\gamma$
will remove the logarithmic singularity. More precisely, we choose a small $\alpha>0$ and bound
\begin{align}
&\sum_{\bx \in (\Z^2)_I^h, \by \in (\Z^2)_J^h}  \!\!\!\!\!\! f(\bx) \sfQ^{I,J}_{\lambda,N}(\bx, \by) g(\by)
=  \sum_{\bx, \by}f(\bx) \sfQ^{I,J}_{\lambda,N}(\bx, \by)
       \frac{|x_k-x_\ell |^\alpha}{|y_i-y_j|^\alpha} \cdot \frac{|y_i-y_j |^\alpha}{|x_k-x_\ell|^\alpha} g(\by)  \notag \\
&\leq \Bigg( \sum_{\bx, \by} f(\bx)^p \sfQ^{I,J}_{\lambda,N}(\bx, \by)
       \frac{|x_k-x_\ell |^{\alpha p}}{|y_i-y_j|^{\alpha p}}\Bigg)^{\tfrac{1}{p}}
        \Bigg( \sum_{\bx,\by}  \sfQ^{I,J}_{\lambda,N}(\bx, \by)
       \frac{|y_i-y_j |^{\alpha q}}{|x_k-x_\ell|^{\alpha q}} g(\by)^q\Bigg)^{\tfrac{1}{q}}. \label{eq:HLS3}
\end{align}
A straightforward computation shows that
 \begin{align*}
 \sum_{\by  \in (\Z^2)_J^h} \sfQ^{I,J}_{\lambda,N}(\bx, \by)
       \frac{|x_k-x_\ell |^{\alpha p}}{|y_i-y_j|^{\alpha p}} \leq C,
 \end{align*}
 which gives \eqref{eq:norm1} (see \cite[Section 6.2]{CSZ23a} for details). Keeping track of the constants more carefully and optimising
 over $\alpha$ will reveal the dependence on $p$ and $q$ as in \eqref{eq:HLS2} (see \cite[Prop.~3.3]{LZ23}).

The proof of \eqref{eq:norm2} and \eqref{eq:norm3} are similar. To see how the factor $(\sigma_N^2 \log \lambda)^{-1}$ arises in \eqref{eq:norm3},  we first recall that $\sfU^{I}_{\lambda, N}$ differs from $\sfQ^{I, I}_{\lambda, N}(\bx, \by)$ in that the transition kernels $q_n(y_i-x_i) q_n(y_j-x_j)$
in \eqref{eq:sfQ} are replaced by $U_N(n, y_i-x_i)$ in \eqref{eq:sfG}.
Here we can apply the naive approach \eqref{eq:naive}:
\begin{align*}
\sum_{\bx, \by \in (\Z^2)_I^h } f(\bx) \, \sfU^I_{\lambda, N}(\bx, \by) \, g(\bx)
\leq  \Big(\sum_{\bx, \by} f(\bx)^p \, \sfU^I_{\lambda, N}(\bx, \by) \Big)^{\frac{1}{p}}  \Big(\sum_{\bx, \by}
\sfU^I_{\lambda, N}(\bx, \by) \, g(\bx)^q \Big)^{\frac{1}{q}} ,
\end{align*}
where by \eqref{eq:sfG}, the definition of $U_N(n)$ in \eqref{renewU}, and the asymptotics in \eqref{eq:asU1} and \eqref{eq:Gas},
\begin{align*}
\sum_{\by \in (\Z^2)_I^h} \, \sfU^I_{\lambda, N}(\bx, \by)
= \sum_{n=1}^{2N} e^{-\frac{\lambda}{N} n} U_N(n)
\leq C \log N \int_0^\infty e^{-\lambda t} G_\theta(t) \,\dd t= \frac{C}{\sigma_N^2 \log \lambda} .
\end{align*}

\subsection{Moment bounds for the coarse-grained model} \label{S:4momCG}
In this section, we explain how to obtain the moment bounds \eqref{eq:Theta4} and \eqref{eq:Zcg4}, which are crucial in the application
of the Lindeberg principle to the coarse-grained model $\mathscr{Z}_{\epsilon}^{(\cg)}(\varphi, \psi | \Theta)$ as outlined in Section \ref{S:lind2}.

To deduce \eqref{eq:Theta4}, the key observation is that the chaos expansion for $\Theta_{N, \epsilon}(\vec\sfi, \vec\sfa)$ in \eqref{eq:Theta2} and \eqref{eq:Theta-pic} is similar to the chaos expansion for the averaged partition function $\cZ^{\beta_N}_N(\varphi, \ind)$ in \eqref{eq:Zppchaos}. Indeed, if we follow the renewal interpretation of the sequence of time-space points $(n_1, z_1)$, \ldots, $(n_r, z_r)$ in the chaos expansion
for $\cZ^{\beta_N}_N(\varphi, \ind)$ as we did in Section \ref{S:renewal}, which is based on an expansion for $\bbE[\cZ^{\beta_N}_N(\varphi, \ind)^2]$, then the first renewal point $(n_1, z_1)$ is sampled according to the mass function
$$
M(n_1, z_1):= \frac{1}{N^2} \Big(\sum_{z_0\in \Z^2} \varphi(\tfrac{z_0}{\sqrt N}) q_{n_1}(z_1-z_0)\Big)^2, \qquad 0<n_1<N, z_1\in \Z^2.
$$
It is easy to see that the total mass $\sum_{0<n_1<N, z\in \Z^2} M(n_1, z_1)$ converges to a positive constant as $N\to\infty$, and if $\varphi(x)\geq 1$
for all $\Vert x\Vert \leq 1$, then uniformly in $0<n_1<N$ and $\|z_1\|\leq \sqrt{N}$, $M(n_1, z_1)$ is bounded from below by $c/N^2$, i.e., a positive multiple of the uniform probability mass function on the set of such $(n_1, z_1)$.

If we apply the same renewal interpretation to the chaos expansion for $\Theta_{N, \epsilon}(\vec\sfi, \vec\sfa)$ in \eqref{eq:Theta2}, then we see that
the first renewal point $(n_1, z_1)$ is sampled uniformly from the time-space box with $0<n_1<\eps N$ and $\Vert z_1\Vert \leq \sqrt{\eps N}$, which
can be compared to the chaos expansion for the averaged partition function $\cZ^{\beta_N}_{\eps N}(\varphi, \ind)$ defined in \eqref{eq:rescZmeas3}
with $\varphi(x)=1$ for $\|x\|\leq 1$, time horizon $N'=\eps N$ and spatial averaging on the scale $\sqrt{N'}$. This allows us to apply Theorem \ref{th:mom} with $N'=\eps N$, $\varphi (x) = 1_{\{\Vert x\Vert \leq 1\}}$ and $\psi\equiv 1$ to deduce that for all $N$ large,
\begin{equation}
\bbE \big[ \Theta_{N, \epsilon}(\vec\sfi, \vec\sfa)^4  \big] < \frac{C}{\log \frac{1}{\eps}}.
\end{equation}
See \cite[Section 7.2]{CSZ23a} for more details.

The moment bound \eqref{eq:Zcg4} for the coarse-grained partition function follows from the following analogue of Theorem \ref{th:mom}.

\begin{theorem}\label{th:cgmom}
Let $\mathscr{Z}_{N, \epsilon}(\varphi,\psi)
:=\mathscr{Z}_{\epsilon}^{(\cg)}(\varphi,\psi|\Theta_{N, \eps})$ be the coarse-grained partition function
defined in \eqref{eq:Zcg-gen}.
Further, assume that $\Vert\psi\Vert_\infty<\infty$ and $\psi$ is supported on a
ball $B$ (possibly $B=\R^2$). Then for any $p, q \in (1,\infty)$ with $\frac{1}{p}+\frac{1}{q}=1$ and
any $w: \R^2 \to (0,\infty)$ such that $\log w$ is Lipschitz continuous, there exists $C\in (0,\infty)$
such that uniformly in $\eps\in (0,1)$,
\begin{equation}\label{eq:mombdcg}
	\limsup_{N\to\infty} \bbE\Big[\big(\mathscr{Z}_{N, \epsilon}(\varphi,\psi)
	- \bbE[\mathscr{Z}_{N, \epsilon}(\varphi,\psi)]\big)^4\Big] \leq C \eps^{\frac 4p}
	\Big\Vert \frac{\varphi_{1/\eps}}{w_{1/\eps}}\Big\Vert_{\ell^p(\Z^2)}^4 \, \Vert \psi\Vert_\infty^4
	\Vert w \ind_B\Vert_q^4,
\end{equation}
where for $\phi: \R^2 \to\R$, $\phi_\epsilon: \Z^2\to\R$ is defined as above Theorem \ref{th:mom}.
\end{theorem}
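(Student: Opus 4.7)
The plan is to mimic the proof of Theorem \ref{th:mom} for the original partition function, exploiting the self-similarity between the chaos expansion of $\mathscr{Z}_{\epsilon}^{(\cg)}(\varphi,\psi|\Theta_{N,\eps})$ in \eqref{eq:Zcg-gen} and the chaos expansion of $\cZ^{\beta_N}_N(\varphi,\psi)$ in \eqref{eq:Zppchaos}. More precisely, we expand the fourth power of the centred coarse-grained partition function and organise the resulting sum by the pair-collision structure of the four copies of the chaos expansion. Since the family $(\Theta_{N,\eps}(\vec\sfi,\vec\sfa))$ is uncorrelated, only configurations where each coarse-grained variable appears at least twice survive the expectation, and (as in the proof of Theorem \ref{th:mom}) the dominant contribution after taking $N\to\infty$ comes from pure pair-collision configurations. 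Triple-collisions are expected to be negligible because the coarse-grained variables have essentially finite range of dependence by construction (as recalled in the remarks after \eqref{aux-phi}), so that the only meaningful joint cumulants are pairwise.

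I then rewrite the remaining sum in the form \eqref{discrete-mom}--\eqref{Laplace-mom}, but now with the following coarse-grained analogues. The role of the Replica Evolution kernel $\sfU_N^I$ will be played by the covariance kernel of pairs of coarse-grained variables, whose asymptotic variance is identified in \eqref{eq:sigeps}--\eqref{eq:SigAsymp} and satisfies $\sigma_\eps^2 \sim 4\pi/\log(1/\eps)$; the role of Constrained Evolution $\sfQ_n^{I,J}$ will be played by the product of four heat kernels connecting successive coarse-grained blocks, with the diagonal constraints induced by the pair-collision structure. After Laplace-transforming in the mesoscopic time variable (with parameter of order $1$ in the rescaled time $1/\eps$), I obtain a geometric series in operator products of the form
\begin{equation*}
    \widehat\sfQ^{*,I_1}_{\lambda,\eps}\,\widehat\sfU^{I_1}_{\lambda,\eps}\,\widehat\sfQ^{I_1,I_2}_{\lambda,\eps}\cdots\widehat\sfU^{I_m}_{\lambda,\eps}\,\widehat\sfQ^{I_m,*}_{\lambda,\eps},
\end{equation*}
weighted by the log-Lipschitz weight $w$ as in \eqref{eq:QUw}. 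Summability of the geometric series, and hence the desired bound, follows once we establish the coarse-grained analogues of Proposition \ref{prop:opnorm}.

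The required operator-norm bounds split into two pieces. For the Constrained Evolution built from heat kernels on $(\R^2)^4$, the exact same Hardy--Littlewood--Sobolev type argument as in \eqref{eq:HLS3} works, because the two-body heat kernel $g_t(x-y)g_t(x'-y')$ restricted to the diagonals $\{x=x'\}$ and $\{y=y'\}$ has the same critical singularity $\sim 1/|\bx-\by|^{2}$ as $\sfQ^{I,J}_{\lambda,N}$ in \eqref{eq:QlambN}; the weighted version follows by the same trick of inserting $|y_i-y_j|^\alpha/|x_k-x_\ell|^\alpha$. For the Replica Evolution, I use the fourth moment input \eqref{eq:Theta4}, namely $\limsup_N\bbE[\Theta_{N,\eps}(\vec\sfi,\vec\sfa)^4]\leq C/\log(1/\eps)$, together with the asymptotic second-moment formula $\sigma_\eps^2\sim 4\pi/\log(1/\eps)$ and the small-$t$ asymptotic $\int_0^t G_\theta(s)\,\dd s\sim 1/\log(1/t)$ from \eqref{eq:G5}. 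These yield an operator-norm bound of order $1/(\sigma_\eps^2\log\lambda)$, matching \eqref{eq:norm3}. The prefactor $\eps^{4/p}$ in \eqref{eq:mombdcg} appears because the outer operators $\sfQ^{*,I_1}$ and $\sfQ^{I_m,*}$ contribute a factor $(1/\eps)^{1/q}$ each (analogous to the $N^{1/q}$ in \eqref{eq:norm2}, but now with time horizon $1/\eps$), and these get redistributed into the rescaled $\ell^p$ norm $\|\varphi_{1/\eps}/w_{1/\eps}\|_{\ell^p}$ together with the factor $\eps$ coming from the outer normalisation in \eqref{eq:Zcg-gen}.

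The main obstacle I anticipate is controlling the non-Gaussian contributions from joint cumulants of order $\geq 3$ among the coarse-grained variables $\Theta_{N,\eps}$, which are not i.i.d.\ as the $\xi_N$'s were. The dependence is only finite range (and in fact between at most two neighbouring mesoscopic time blocks, by the construction in \eqref{eq:Theta1}--\eqref{eq:Theta2} and the modification described in Remark after \eqref{eq:Zcg-gen}), and the no-triple constraint encoded in $\bcA_\eps^{(\notri)}$ forces well-separated indices; combining these should reduce the genuine cumulants to pair contributions of the same scaling order as those already counted, plus an error going to $0$ as $N\to\infty$. This is exactly the mechanism behind the Lindeberg principle of Section \ref{S:lind2} and can be controlled by the same uniform third-moment bound on $\Theta_{N,\eps}$ used there, combined with the $\eps^{-c}$ slack discussed after \eqref{eq:Theta4}. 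Once this is in place, the rest is a direct transcription of the arguments following \eqref{Laplace-mom}.
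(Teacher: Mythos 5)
Your overall skeleton coincides with the paper's route (the paper simply defers to \cite[Section~8]{CSZ23a}): expand the fourth moment of the chaos expansion \eqref{eq:Zcg-gen} in the variables $\Theta_{N,\eps}$, pass to heat-kernel analogues of the operators of Section~\ref{momandmethods} with the weighted HLS-type bounds of \eqref{eq:HLS3}, and feed in the fourth-moment input \eqref{eq:Theta4}. The genuine gap is in how you dispose of the dependence among the $\Theta$'s. First, the classification ``only configurations where each coarse-grained variable appears at least twice survive the expectation'' is not correct: the $\Theta_{N,\eps}$ are merely uncorrelated, not independent, so a product of four \emph{distinct} $\Theta$'s whose time blocks overlap can have non-zero expectation even though no variable is repeated; what annihilates a configuration is only a $\Theta$ whose time range is disjoint from all the others. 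Second, and more seriously, the claim that triple/quadruple alignments are negligible, or that joint cumulants of order $\ge 3$ ``reduce to pair contributions plus an error going to $0$ as $N\to\infty$'', fails quantitatively: by \eqref{eq:Theta4} and \eqref{eq:SigAsymp} one has $\limsup_N\bbE[\Theta_{N,\eps}^4]\asymp 1/\log\frac1\eps$ while $(\bbE[\Theta_{N,\eps}^2])^2\asymp 1/(\log\frac1\eps)^2$, so — unlike the triple collisions of $\xi_N$ in the proof of Theorem~\ref{th:mom} — configurations in which three or four of the copies place a $\Theta$ in the same mesoscopic time block are genuine contributions to the main term, not an $o(1)$ error as $N\to\infty$ (that limit is taken at fixed $\eps$). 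The Lindeberg machinery of Section~\ref{S:lind2} is also not the tool here: it serves the distributional comparison of $\Theta_{N,\eps}$ with $\Theta_{M,\eps}$, not the moment bound you are proving.

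The correct mechanism, which is what \cite[Section~8]{CSZ23a} implements, is: factorise the expectation over groups of $\Theta$'s whose time blocks are disjoint (these are genuinely independent, thanks to the modification that confines each $\Theta$ to its own mesoscopic blocks), bound the mixed moments of each time-aligned group of two, three or four $\Theta$'s — possibly with different spatial labels — by H\"older together with the uniform bound $\bbE[\Theta_{N,\eps}^4]\le C/\log\frac1\eps$ obtained from Theorem~\ref{th:mom} with $N'=\eps N$, and then absorb these weights into the operator formalism, where the additional spatial constraints forced by multi-alignments and the smallness of $\sigma_\eps^2$ from \eqref{eq:sigeps}--\eqref{eq:SigAsymp} make the geometric series converge. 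With that replacement your outline does go through; note also that the relevant singularity of the constrained evolution for $h=4$ is $|\bx-\by|^{-2(h-1)}=|\bx-\by|^{-6}$ on the diagonally constrained subsets of $(\R^2)^4$, not $|\bx-\by|^{-2}$, although the weight-insertion trick of \eqref{eq:HLS3} indeed applies verbatim.
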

This is Theorem 8.1 in \cite{CSZ23a}, which is proved by adapting the proof for Theorem \ref{th:mom}. Complications
arise because the coarse-grained disorder variables $\Theta_{N, \epsilon}(\vec\sfi, \vec\sfa)$ is a family of
dependent random variables. We refer to \cite[Section 8]{CSZ23a} for details.

\subsection{Moments of the Critical $2d$ SHF}
We already gave the second moment of the Critical $2d$ SHF in Theorem~\ref{th:main0}. Now we give a formula for higher
integer moments of the SHF, which arises as the limit of the expansion in \eqref{discrete-mom} and hence consists of
an infinite series. In particular, the kernels $\sfU_N^{I_\ell}, \sfQ_{a_1}^{*,I_1}, \sfQ_{a_\ell-b_{\ell-1}}^{I_{\ell-1}, I_\ell }, \sfQ_{N-b_{m}}^{I_m,*}$ in \eqref{discrete-mom} will be replaced by their continuum analogues as follows.

Similar to \eqref{eq:ZhI}, for $I=\{i<j\}\subset \{1, \ldots, h\}$, define
\begin{equation} \label{eq:R2hij}
	(\R^2)^h_I \, := \, \big\{ \bx = (x_1, \ldots, x_h) \in (\R^2)^h: \
	x_i = x_j \big\} \,.
\end{equation}
\begin{itemize}
\item[{\bf 1. Replica Evolution.}] The continuum analogue of $\sfU_N^I(n,\bx, \by)$ in \eqref{eq:sfG}
is given by the kernel with density w.r.t.\ Lebesgue measure on $(\R^2)^h_I$
\begin{align} \label{eq:sfG2}
	\sfG_{\theta,t}^I(\bx,  \by)
	\,:=\, \prod_{\ell \in \{1,\ldots, h\} \setminus I}
	\!\!\!\! g_t (y_\ell - x_\ell) \ \cdot
	G_\theta(t) \, g_{\frac{t}{2}} (y_i - x_i), \qquad \bx,\by \in (\R^2)^h_I,
\end{align}
which is consistent with the definition of $\sfU_N^I(n,\bx, \by)$ by \eqref{eq:llt} and \eqref{eq:asU2}. This kernel
defines an integral operator from $L^p((\R^2)^h_I)$ to $L^p((\R^2)^h_I)$. Similar to $U_N(n, y_i-x_i)$ in \eqref{eq:sfG},
the factor $G_\theta(t) \, g_{\frac{t}{2}} (y_i - x_i)$ gives the weight of the wiggle line in Figure \ref{figure-fourth-moment}.

\item[{\bf 2. Constrained Evolution.}] For $I=\{i<j\}$ and $J=\{k< \ell\}$, the continuum analogue of $\sfQ^{I, J}_t$ in
\eqref{eq:sfQ} is given by the kernel with density
\begin{align}\label{eq:sfQ2}
\cQ_{t}^{I,J}(\by, \bx) \,:=\, \prod_{a=1}^h \, g_t (x_a - y_a), \qquad \by \in (\R^2)^h_I, \bx \in (\R^2)^h_J.
\end{align}
This kernel defines an integral operator from $L^p((\R^2)^h_J)$ to $L^p((\R^2)^h_I)$ and corresponds to time strips in
Figure \ref{figure-fourth-moment} that contain no wiggle line. As in \eqref{eq:sfQ*} and \eqref{eq:sfQ**}, we can define the boundary kernels
$\cQ_t^{I,*}(\by,  \bx)$, $\cQ_t^{*, J}(\by,  \bx)$, $\cQ^{*,J}_t(\varphi, \bx)$, and $\cQ^{I,*}_t(\by, \psi)$.
\end{itemize}

We can now give the formula for higher integer moments of the critical $2d$ SHF, which was first derived in \cite{GQT21}. The
series representation for the $3$rd moment was derived earlier in \cite{CSZ19b}.

\begin{theorem}\label{th:moments}
Fix $h\in\N$ with $h \ge 3$. For $\varphi\in C_c(\R^2)$ and $\psi\in C_b(\R^2)$, the $h$-th moment of $\SHF_{0, t}^\theta(\varphi, \psi)$ is finite and admits the expression
\begin{align} \label{eq:Zmomh}
	\bbE\big[ \SHF^\theta_{0, t}(\varphi, \psi)^h \big]
	&= \idotsint\limits_{(\R^2)^h\times (\R^2)^h } \varphi^{\otimes h}(\bz) \,
	\mathscr{K}_t^{(h)}(\bz, \bw) \, \psi^{\otimes h}(\bw)\, \dd \bz\, \dd \bw \,,
\end{align}
where $\bx=(x_1, \ldots, x_h)\in (\R^2)^h$ and $\phi^{\otimes h}(\bx) =\prod_{i=1}^h \phi(x_i)$, and
\begin{align}
       &  \mathscr{K}_t^{(h)}(\bz, \bw) := \notag \\
        & \, 1 +\!\!
       \sum_{m=1}^\infty
        (4\pi)^m
       \!\!\!\!\!\!\!\!
       \sum_{\substack{  I_1,..., I_m \subset \{1,\ldots,h\} \\
        |I_\ell|=2, \, I_\ell \neq I_{\ell+1}}} \ \
        \idotsint\limits_{0 < a_1 < b_1 < \ldots < a_m < b_m < t} \!\!\!\!\!\!\!\!\!
        \dd \vec{a} \, \dd \vec{b} \
        \idotsint\limits_{\bx^{(\ell)}, \by^{(\ell)} \in (\R^2)^{h}_{I_\ell} \atop \mbox{\tiny for } \ell=1, \ldots, m} \prod_{\ell=1}^m \dd \bx^{(\ell)} \, \dd \by^{(\ell)} \label{eq:Zmomh-kernel} \\
        & 	\cQ_{a_1}^{*,I_1}(\bz, \bx^{(1)}) \, \sfG_{\theta, b_1-a_1}^{I_1} (\bx^{(1)}, \by^{(1)})
	\Big(\prod_{\ell=2}^{m} \cQ_{a_\ell-b_{\ell-1}}^{I_{\ell-1}, I_\ell}(\by^{(\ell-1)}, \bx^{(\ell)})
	\, \sfG_{\theta, b_\ell-a_\ell}^{I_\ell}(\bx^{(\ell)},  \by^{(\ell)})\Big) \cQ_{t-b_m}^{I_m, *}(\by^{(m)}, \bw). \notag
\end{align}
\end{theorem}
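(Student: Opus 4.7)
The plan is to deduce Theorem \ref{th:moments} from the convergence in Theorem \ref{th:main0} together with the moment bound in Theorem \ref{th:mom} and the diagrammatic expansion \eqref{discrete-mom} that was already carried out in Section \ref{S:highmom}. First I would note that $\cZ_{N;0,t}(\varphi,\psi) \xrightarrow{d} \SHF_{0,t}^\theta(\varphi,\psi)$ by Theorem \ref{th:main0}. To upgrade this to convergence of the $h$-th moment, I would apply Theorem \ref{th:mom} with any $h' > h$, which yields $\sup_N \bbE[|\cZ_{N;0,t}(\varphi,\psi)|^{h'}] < \infty$ (using $\varphi \in C_c(\R^2)$ and $\psi \in C_b$ together with the weight function $w$ to control the unbounded support of $\psi$). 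Uniform integrability then gives
\begin{equation*}
	\bbE\bigl[\SHF_{0,t}^\theta(\varphi,\psi)^h\bigr] \,=\, \lim_{N\to\infty} \bbE\bigl[\cZ_{N;0,t}(\varphi,\psi)^h\bigr].
\end{equation*}
In particular the left-hand side is finite.

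Next I would compute the right-hand side by starting from the expansion \eqref{discrete-mom}, applied to $\cZ_{N;0,t}(\varphi,\psi) = \cZ_{N';0,1}(\varphi_t,\psi_t)$ with $N'=\lfloor tN\rfloor$ (or equivalently writing the $h$-th moment expansion directly over $[0,tN]$). The expansion involves a sum over $m$ (the number of replica-matching streaks), over tuples $(I_1,\ldots,I_m)$ of matching pairs satisfying $I_\ell \neq I_{\ell+1}$, over times $0<a_1\le b_1<\ldots<a_m\le b_m< tN$, and over spatial variables $\bx_\ell,\by_\ell \in (\Z^2)^h$. I would first show that the $o(1)$ error in \eqref{discrete-mom}, which encodes triple collisions, vanishes in the limit; this is standard and uses that the contribution of configurations with at least one triple collision is controlled by an expression of the same combinatorial type as \eqref{Laplace-mom} but with a shrinking prefactor.

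The core step is to pass to the limit termwise inside the sum over $m$. For fixed $m$ and fixed pattern $(I_1,\ldots,I_m)$, I would apply the local limit theorem \eqref{eq:llt} to the random walk transition kernels inside $\sfQ_{n}^{I_{\ell-1},I_\ell}, \sfQ_{n}^{*,I_1}, \sfQ_{n}^{I_m,*}$, and the replica asymptotics \eqref{eq:asU2} together with $\sigma_N^2 \sim 4\pi/\log N$ to each $\sigma_N^2 \cdot \sfU_N^{I_\ell}(b_\ell-a_\ell, \by_\ell-\bx_\ell)$. A Riemann-sum argument then converts the discrete sums over $(\vec a, \vec b, \bx_\ell, \by_\ell)$ into the integrals over $(\R^2)^h_{I_\ell}$ in \eqref{eq:Zmomh-kernel}, where the factor $(4\pi)^m$ arises from the $m$ factors of $\sigma_N^2 \log N \to 4\pi$. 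The indicator constraints in \eqref{eq:sfG} and \eqref{eq:sfQ} naturally lift to the coincidence sets $(\R^2)^h_{I_\ell}$ in \eqref{eq:R2hij}. This identifies the limiting integrand as exactly the kernel $\mathscr{K}_t^{(h)}(\bz,\bw)$ in \eqref{eq:Zmomh-kernel}.

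The main obstacle will be justifying the interchange of $\lim_{N\to\infty}$ and $\sum_{m\ge 1}$. For this I would exploit precisely the operator-norm machinery from Proposition \ref{prop:opnorm}: for fixed $\lambda$ large and $p,q\in(1,\infty)$ conjugate, the inequality \eqref{geom-op} gives a term-by-term domination of the $m$-th summand by
\begin{equation*}
	\bigl(\sigma_N^2\bigr)^m \cdot 2^h \cdot \bigl(Cpq\bigr)^{m-1} \cdot \Bigl(\tfrac{C}{\sigma_N^2 \log\lambda}\Bigr)^m \cdot (\text{boundary terms}) \,\le\, C_h \cdot \Bigl(\tfrac{C'pq}{\log\lambda}\Bigr)^m,
\end{equation*}
which, for $\lambda$ large enough, is summable in $m$ uniformly in $N$. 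Dominated convergence on $\N$ (w.r.t.\ counting measure over $m$ and $(I_1,\ldots,I_m)$) then yields the series representation in \eqref{eq:Zmomh-kernel}. A subtlety is that one must remove the auxiliary Laplace damping $e^{-\lambda n/N}$ inserted to obtain \eqref{Laplace-mom}; this is handled by noting that the finite sum over $a_{m+1}-b_m \in \{1,\ldots,2N\}$ used to produce the last operator $\sfQ^{I_m,*}_{\lambda,N}$ has an analogous Riemann-sum limit that produces the desired $\cQ^{I_m,*}_{t-b_m}$ integrated against $\psi^{\otimes h}$, up to a combinatorial constant that is absorbed by the normalisation. Finally, finiteness of $\mathscr{K}_t^{(h)}$ and of the integral \eqref{eq:Zmomh} follows a posteriori from the left-hand side being finite, or directly from the continuum analogue of Proposition \ref{prop:opnorm}, which can be proved by the same weighted H\"older argument \eqref{eq:HLS3} applied to the kernel $\cQ_t^{I,J}$.
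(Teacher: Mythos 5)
Your proposal is correct and follows essentially the same route as the paper, which obtains \eqref{eq:Zmomh}--\eqref{eq:Zmomh-kernel} by taking $N\to\infty$ termwise in the diagrammatic expansion \eqref{discrete-mom} (via the local limit theorem, \eqref{eq:asU2} and $\sigma_N^2\log N\to 4\pi$), with uniform integrability and summability over $m$ supplied by the operator-norm bounds of Proposition \ref{prop:opnorm} / Theorem \ref{th:mom}, as in \cite{GQT21}. One small simplification: since the inserted factor $e^{\lambda}e^{-\frac{\lambda}{N}(\cdots)}\ge 1$, the Laplace-damped expression \eqref{Laplace-mom} is only a termwise \emph{upper bound} used for domination, so nothing actually needs to be ``removed'' when passing to the limit in \eqref{discrete-mom} itself.
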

The identity \eqref{eq:Zmomh-kernel} can be proved by taking the limit $N\to\infty$ in \eqref{discrete-mom}, see \cite{GQT21}. Similar bounds as in Theorem
\ref{th:mom} also hold for $\SHF^\theta_{0, t}(\varphi, \psi)$, which allows one to take more general $\varphi$ and $\psi$ in \eqref{eq:Zmomh}
as long as the terms in the r.h.s.\ of \eqref{eq:mombd2} are finite.

\subsection{Related literature}
The positive integer moments of the directed polymer partition function can be expressed in terms of the collision local times of independent
random walks, which are also connected to the so-called Delta-Bose gas.
\medskip

\noindent{\bf Exponential moments of collision local times.}
Recall the definition of the point-to-point partition function $Z^\beta_N(z, w):=Z^\beta_{0, N}(z, w)$ from \eqref{eq:ZMN}. Let us consider mixed moments of the form $\bbE[Z^\beta_N(z_1, w_1) \cdots Z^\beta_N(z_h, w_h)]$. Assuming the disorder variables $\omega(n, z)$ are i.i.d.\ standard normal, we can then compute directly
\begin{align}\label{collision}
\bbE\Big[ \prod_{i=1}^h Z_{N}^{\beta}(z_i, w_i) \Big]
&=\bbE \E^{\otimes h}_{\bz}\Big[  e^{\sum_{i=1}^h
\sum_{n=1}^{N-1} \big(\beta \omega(n, S_n^{(i)}) - \lambda(\beta) \big)}  \prod_{i=1}^h \ind_{\{S_N^{(i)}=w_i\}}  \Big] \notag \\
& = \E^{\otimes h}_{\bz}\bigg[ \prod_{(n, x)\in \{1, \ldots, N-1\}\times \Z^2} e^{\beta^2 \sum_{1\leq i<j\leq h}  \ind_{\{S_n^{(i)}=S_n^{(j)}=x\}}} \prod_{i=1}^h \ind_{\{S_N^{(i)}=w_i\}} \bigg] \notag\\
&=  \E^{\otimes h}_{\bz}\bigg[ e^{ \beta^2  \sum_{1\leq i<j\leq h}  \sfL_N^{\{i,j\}}} \prod_{i=1}^h \ind_{\{S_N^{(i)}=w_i\}}\bigg],
\end{align}
where $\E^{\otimes h}_{\bz}$ denotes expectation w.r.t.\ $h$ independent random walks $(S^{(i)})_{1\leq i\leq h}$ starting from $\bz=(z_1, \ldots, z_h)$ respectively, and $\sfL_N^{\{i,j\}} := \sum_{n=1}^{N-1} \ind_{\{S_n^{(i)}=S_n^{(j)}\}}$ denotes the collision local time between $S^{(i)}$ and $S^{(j)}$.

When $z_i=0$ for all $1\leq i\leq h$, the classic Erd\H{o}s-Taylor theorem \cite{ET60} shows that each $ \sfL_N^{\{i,j\}}/R_N=\sfL_N^{\{i,j\}}/\E[\sfL_N^{\{i,j\}}]$ converges in distribution to an Exp$(1)$ random variable. When $I$ is a set of pairs $\{i<j\}$ such that the graph with vertex set $\{1, \ldots, h\}$ and edge set $I$ contains no loops, it follows from \cite{GS09} that $(\sfL^{i,j}_N/R_N)_{\{i<j\}\subset I}$ converges jointly to a family of i.i.d.\ Exp$(1)$ random variables. Finally, \cite{LZ24} proved the joint convergence of the full family $(\sfL^{i,j}_N/R_N)_{1\leq i<j\leq h}$ to a family of i.i.d.\ Exp$(1)$ random variables. This implies that if we choose $\beta=\beta_N = (\hat\beta/R_N)^{1/2}$ for some $\hat\beta<1$, i.e., the subcritical window defined in \eqref{eq:hbeta}, then we can identify the limit of \eqref{collision} as $N\to\infty$ in terms of the exponential moments of independent exponential random variables even if $z_i=z_j$ for some $i\neq j$.

However, when $\beta=\beta_N$ is critical with $\hat\beta=1$, allowing $z_i=z_j$ for $i\neq j$ will lead to divergence in \eqref{collision} as $N\to\infty$ because
$$
\E[e^{\hat\beta(\sfL^{i,j}_N/R_N)}] = \bbE[Z_N^{\beta_N}(0)^2] \sim C\log N,
$$
where the aysmptotics follows from Lemma \ref{L:Z2Asymp}. Nevertheless, as long as $(z_i/\sqrt{N})_{1\leq i\leq h}$ converge to a vector of distinct points $\bz'$ as $N\to\infty$ and the same holds for $(w_i/\sqrt{N})_{1\leq i\leq h}\to \bw'$, we expect \eqref{collision} to converge (after normalising by $N^h$) to the kernel $\mathscr{K}_1^{(h)}(\bz', \bw')$ in Theorem \ref{th:moments}.

\medskip
\noindent{\bf Delta-Bose gas and singular interacting diffusions.}
Formally, the continuum analogue of \eqref{collision} is
\begin{align}\label{Bose-FK}
\mathscr{S}_t^{(h)}(\bz, \bw) := \E^{\otimes h}_{\bz}
 \bigg[ e^{ \beta^2  \sum_{1\leq i<j\leq h} \int_0^t \delta(B_s^i-B_s^j)  \,\dd s}
  \,\prod_{i=1}^h \delta(B^i_t-w_i)\bigg],
\end{align}
where $(B^i)_{1\leq i \leq h}$ are i.i.d.\ standard Brownian motion on $\R^2$, starting from $\bz=(z_1, \ldots, z_h)$ respectively,
and $\delta(\cdot)$ is the delta function at the origin. This is the Feynman-Kac semigroup associated with the Schr\"odinger operator
with delta potential on the diagonals
\begin{equation}\label{eq:Sch}
\frac{1}{2}\Delta + \beta^2\sum_{1\leq i<j\leq h} \delta(x_i-x_j),
\end{equation}
known as the Delta-Bose gas. In other words, $\mathscr{S}_t^{(h)}(\bz, \bw)$ solves (formally)  the parabolic equation
\begin{equation}\label{BosePDE}
\begin{aligned}
\partial_t u(t, \bx) & =\frac{1}{2}\Delta u(t, \bx) +\beta^2\sum_{1\leq i<j\leq h} \delta(x_i-x_j) u(t, \bx), \qquad t>0, \bx \in (\R^2)^h, \\
u(0, \bx) & = \prod_{i=1}^h \delta(x_i-w_i).
\end{aligned}
\end{equation}
Of course, the presence of the delta function makes \eqref{Bose-FK}-\eqref{BosePDE} ill-defined, especially because Brownian motion in
$\R^2$ does not hit points. The challenge is to make sense of the semigroup $\mathscr{S}_t^{(h)}(\bz, \bw)$ and its associated operator. A natural approach is to replace the delta function by its mollified version  $\delta_\epsilon(x)= \frac{1}{\epsilon^2} j(\frac{x}{\epsilon})$ for some smooth probability density function $j$. This introduces a minimal spatial scale $\eps>0$ and has the same effect as discretising space by considering the discrete time-space kernel considered in \eqref{collision}. Similar to \eqref{eq:epshbeta}, the most interesting choice will be to choose
\begin{equation}\label{eq:DBbeta}
\beta^2=\beta^2_\epsilon= \frac{2\pi}{\log \frac{1}{\epsilon}} \Big(1+\frac{\theta+o(1)}{|\log \epsilon|}\Big),
\end{equation}
which lies in the critical window with $\hat\beta=1$. This has been the choice considered in the literature, see e.g.\ \cite{AFHKKL92, DFT94, ABD95, BC98, DR04, AGHKH05}. In particular, \cite{DFT94} defined self-adjoint extensions of the operator in \eqref{eq:Sch} using an
 approach of quadratic forms and $\Gamma$-convergence, while \cite{DR04} followed an approach using resolvents and $L^2$ Fourier analysis. The latter approach is closely related to the diagrammatic expansions used in \cite{GQT21} and \cite{CSZ23a}, which is illustrated in Figure \ref{figure-fourth-moment}.

In light of the connection with the directed polymer and the stochastic heat equation, the semigroup $\mathscr{S}_t^{(h)}(\bz, \bw)$ in \eqref{Bose-FK} should in fact be a family of semigroups indexed by $\theta\in \R$, given by the kernels $\mathscr{K}_t^{(h)}(\bz, \bw)$ in Theorem \ref{th:moments} for the $h$-th moment of the critical $2d$ SHF.

The formal semigroups $\mathscr{S}_t^{(h)}(\bz, \bw)$ in \eqref{Bose-FK} were defined rigorously in \cite{Che21} as the limit of approximate semigroups via mollification of the delta function and choosing $\beta$ as in \eqref{eq:DBbeta}. This naturally leads to a family of $(\R^2)^h$-valued time-inhomogeneous Markov processes $Y^{T, (h)}_t=(Y^{T, (h)}_{1, t}, \ldots, Y^{T, (h)}_{h, t})$, such that given terminal time $T$, it has
transition densities
$$
g^{T, (h)}_{s, t} (\bx, \by):=  \mathscr{S}_{t-s}^{(h)}(\bx, \by) \cdot
\frac{\int \mathscr{S}_{T-t}^{(h)}(\by, \bz) \dd \bz}{\int \mathscr{S}_{T-s}^{(h)}(\bx, \bz) \dd \bz}, \qquad 0\leq s<t\leq T, \, \bx, \by \in (\R^2)^h.
$$
When $h=2$, \cite{CM25} considered the difference $X^T_t:= Y^{T, (2)}_{2, t} - Y^{T, (2)}_{1, t}$ between the two-components and showed that it solves a stochastic differential equation (SDE) with a singular drift toward the origin, which enables $X^T_\cdot$ to visit and accumulate a local time at the origin, defined as the limit (as $\eps\downarrow 0$) of
$$
L_t^\eps :=\frac{1}{2\epsilon^2 (\log \frac{1}{\epsilon})^2} \big| \{ s\in [0,t] \colon |X^T_s|\leq \epsilon \} \big|, \qquad t\in [0, T].
$$
This stands in contrast to a standard Brownian motion in $\R^2$, which does not hit the origin.

The semigroups $\mathscr{S}_t^{(h)}(\bz, \bw)$ were also constructed in \cite{Che22, Che24} via a stochastic representation in the spirit of
\eqref{Bose-FK}, but in terms of a special diffusion process instead of a Brownian motion. In particular, when $h=2$, the analogue of the semigroup
in \eqref{Bose-FK} in terms of the relative motion $B^1_t-B^2_t$ is given by
\begin{align*}
P_t^{\rho} f(z_0) & \ ``="\  \E^{\otimes h}
 \bigg[ e^{ \beta^2 \int_0^t \delta(B_s^1-B_s^2)  \,\dd s}
  \, f(B^1_t-B^2_t) \,\bigg|\, B^1_0-B^2_0=z_0 \bigg] \\
  &\ \  := \ \ \E_{z_0}\Bigg[ \frac{e^{\rho t} K_0(\sqrt{\rho} |z_0| )}{K_0(\sqrt{\rho} |Z_t|)} f(Z_t)\, \bigg|\, Z_0=z_0\Bigg],
\end{align*}
where $K_0$ is the Macdonald function or generalised Bessel function of second kind of order $0$,
the parameter $\rho$ is determined by $\theta$ in \eqref{eq:DBbeta} and the mollifier $j$,
and $(Z_t)_{t\geq 0}$ is an $\R^2$-valued diffusion whose radial component $|Z_t|$ is a transformation of a
Bessel process constructed by Donati-Martin and Yor \cite{DMY06},
with $0$ being a point of instantaneous reflection for $|Z_t|$.

There have also been recent studies of delta-Bose gas in dimension $d \ge 3$ \cite{Wan24}.

\section{Further properties of critical $2d$ SHF} \label{sec:prop}

In this section, we list some further properties of the critical $2d$ SHF with parameter $\theta\in \R$,
that we denote by $\SHF(\theta) =
(\SHF_{s,t}^{\theta}(\dd x , \dd y))_{0 \le s \le t <\infty}$.

\subsection{Shift invariance and scaling covariance}
The critical $2d$ SHF satisfies the following shift invariance and scaling covariance properties \cite[Theorem 1.2]{CSZ23a}.
\begin{theorem}[Translation invariance and scaling covariance] \label{th:main1}
The critical $2d$ SHF is translation invariant in law:
\begin{equation*}
	(\SHF_{s+\sfa, t+\sfa}^\theta(\dd (x+\sfb) , \dd (y+\sfb)))_{0 \le s \le t <\infty}
	\stackrel{\rm dist}{=}
	(\SHF_{s,t}^{\theta}(\dd x , \dd y))_{0 \le s \le t <\infty}
	\quad \ \forall \sfa \ge 0, \ \forall \sfb \in \R^2 \,,
\end{equation*}
and it satisfies the following scaling relation:
\begin{equation}\label{eq:scaling}
	(\SHF_{\sfa s, \sfa t}^\theta(\dd (\sqrt{\sfa} x) , \dd (\sqrt{\sfa} y)))_{0 \le s \le t <\infty}
	\stackrel{\rm dist}{=}
	(\sfa\, \SHF_{s,t}^{\theta+ \log \sfa}(\dd x , \dd y))_{0 \le s \le t <\infty}
	\quad \ \forall \sfa > 0 \,.
\end{equation}
\end{theorem}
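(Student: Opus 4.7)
Both parts will be obtained by lifting exact (or approximate) symmetries of the discrete partition functions $\cZ_{N;\,s,t}$ to $\SHF(\theta)$ via the convergence in Theorem~\ref{th:main0}.

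For \emph{translation invariance}, the i.i.d.\ distribution of the disorder $\omega=(\omega(n,z))$ immediately gives, for any $k\in \N_0$ and any $z\in \Z^2$, the exact distributional identity
$$\bigl(Z^{\beta_N}_{M+k,\, L+k}(x+z, y+z)\bigr)_{M\le L,\,x,y\in\Z^2} \;\stackrel{\rm dist}{=}\; \bigl(Z^{\beta_N}_{M, L}(x, y)\bigr)_{M\le L,\,x,y\in\Z^2}.$$
Inserting this into \eqref{eq:rescZmeas} with $k_N := \lfloor N\sfa\rfloor$ and $z_N$ a lattice point within $O(1)$ of $\sqrt{N}\,\sfb$ yields an identity in law for the process $\cZ_N$ with shifts $\sfa_N := k_N/N \to \sfa$ and $\sfb_N := z_N/\sqrt{N} \to \sfb$. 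Sending $N\to\infty$, the finite-dimensional convergence of Theorem~\ref{th:main0}, together with the continuity of the pairings $\iint \varphi(x)\psi(y)\,\cZ_{N;\,s,t}(\dd x,\dd y)$ in the translation parameters for $\varphi,\psi\in C_c(\R^2)$, transfers the invariance to $\SHF(\theta)$.

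For \emph{scaling covariance}, a change of the summation variables $(x',y')\leftrightarrow (\sqrt{\sfa}\,x,\sqrt{\sfa}\,y)$ in \eqref{eq:rescZmeas}, combined with setting $M:=\lfloor \sfa N\rfloor$, yields the deterministic identity (when paired with $\varphi,\psi\in C_c(\R^2)$, up to an $o_N(1)$ error from rounding)
$$\cZ_{N;\,\sfa s,\,\sfa t}\bigl(\dd(\sqrt{\sfa}\,x),\dd(\sqrt{\sfa}\,y)\bigr) \;=\; \sfa\,\cZ_{M;\,s,t}(\dd x,\dd y) \,+\, o_N(1),$$
with the factor $\sfa$ arising from $1/N = \sfa/M + o(1/M)$. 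The key observation is that the \emph{same} disorder strength $\sigma_N^2$ sits in the critical window \eqref{eq:sigma} with parameter $\theta$ relative to horizon $N$, and simultaneously in the critical window with parameter $\theta + \log \sfa$ relative to horizon $M=\sfa N$: using $R_M - R_N = \tfrac{\log \sfa}{4\pi} + o(1)$ from \eqref{eq:RL}, a direct expansion gives
$$\sigma_N^2 \;=\; \frac{1}{R_N}\Bigl(1 + \frac{\theta + o(1)}{\log N}\Bigr) \;=\; \frac{1}{R_M}\Bigl(1 + \frac{\theta + \log \sfa + o(1)}{\log M}\Bigr).$$
Applying Theorem~\ref{th:main0} with time horizon $M$ thus shows that the right-hand side of the deterministic identity above converges in law to $\sfa\,\SHF^{\theta+\log \sfa}_{s,t}(\dd x,\dd y)$, while its left-hand side converges to the left-hand side of \eqref{eq:scaling}, establishing the scaling covariance.

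\emph{Main obstacle.} Both reductions are essentially straightforward given Theorem~\ref{th:main0}; the only technical care required is to show that the rounding errors introduced by integer parts (in $\lfloor N\sfa\rfloor$, in $\lfloor N\sfa s\rfloor$ vs.\ $\lfloor Ms\rfloor$, and in the spatial roundings) vanish in the limit. This can be verified using the uniform continuity of test functions in $C_c(\R^2)$ together with the uniform moment bounds of Section~\ref{S:highmom}, which rule out spurious contributions along the approximating subsequences. The joint control over finitely many $(s,t)$ pairs is already built into the statement of Theorem~\ref{th:main0}.
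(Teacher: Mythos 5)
Your proposal is correct. For the translation invariance it coincides with what the notes indicate: the identity is inherited from the i.i.d.\ lattice disorder and passed to the limit through Theorem \ref{th:main0}, exactly as you do. For the scaling covariance you take a genuinely different (and more complete) route than the one-line indication in the text: the notes suggest ``checking'' \eqref{eq:scaling} by computing the covariance kernel, which only verifies consistency at the level of the first two moments and cannot by itself give equality in law (the moments of the SHF grow too quickly to determine the distribution, as recalled in Section \ref{S:sketch}). Your argument instead lifts the symmetry from the lattice: diffusively rescaling $\cZ_N$ by $\sfa$ reproduces, up to rounding, the field with time horizon $M=\lfloor \sfa N\rfloor$ and mass factor $\sfa$, and the computation $R_{\sfa N}=R_N+\tfrac{\log \sfa}{4\pi}+o(1)$ shows that the same $\sigma_N^2$ lies in the critical window \eqref{eq:sigma} with parameter $\theta+\log\sfa$ relative to $M$; since Theorem \ref{th:main0} identifies a unique limit along any sequence satisfying \eqref{eq:sigma}, applying it along $M=\lfloor \sfa N\rfloor$ yields \eqref{eq:scaling}. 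This is in fact the mechanism behind the proof in \cite{CSZ23a}, and it is what makes the statement a theorem rather than a moment-level consistency check. The only points worth making explicit are (i) that you are invoking the sequence-independence (uniqueness) of the limit in Theorem \ref{th:main0} when the horizon runs over $\lfloor \sfa N\rfloor$ rather than all of $\N$, and (ii) a small $L^2$ continuity estimate showing that the $O(1)$ discrepancies between $\lfloor Ms\rfloor$ and $\lfloor N\sfa s\rfloor$, and the replacement of $\sqrt{\sfa N}$ by $\sqrt{M}$ inside the test functions, are negligible; both are flagged in your plan and follow from the second-moment bounds of Sections \ref{S:2ndmom}--\ref{S:highmom}.
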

The shift invariance is inherited from the shift invariance of the directed polymer model and that of the
underlying disorder $\omega$. The scaling covariance relation can be checked by computing the
covariance of the critical $2d$ SHF. It shows that zooming out in space-time diffusively
has the effect of increasing the disorder strength $\theta$.

\subsection{Flow property}
The heat equation induces the so-called heat flow in the sense that the solution is linear in the initial condition and can be written as a mixture of heat kernels
that satisfy the Chapman-Kolmogorov equation $p_{s, u}(x, {\rm d}y) = \int p_{s, t}(x, {\rm d}z) p_{t, u}(z, {\rm d}y)$. If the family of kernels
$p_{s, t}(x, {\rm d}y)_{s<t, x\in \R}$ is replaced by a random family $(p^\omega_{s, t}(x, {\rm d}y))_{s<t, x\in \R}$ such that for all $s<t<u$ and
$x\in \R$, almost surely $p^\omega$ satisfies the Chapman-Kolmogorov equation
\begin{equation}\label{eq:CK}
p^\omega_{s, u}(x, {\rm d}y) = \int p^\omega_{s, t}(x, {\rm d}z) p^\omega_{t, u}(z, {\rm d}y),
\end{equation}
and $p^\omega_{s_i, t_i}$ are independent over disjoint time intervals $[s_i, t_i]$ and translation invariant in law under space-time shifts, then $(p^\omega_{s, t}(x, {\rm d}y))_{s<t, x\in \R}$ is a {\em stochastic flow of kernels} introduced by Le Jan and Raimond \cite{LR04} and can be interpreted as the transition kernels of a random motion in an `i.i.d.' space-time random environment (see e.g.~\cite{SSS14}).The critical $2d$ SHF $(\,\SHF^\theta_{s,t} (\dd x, \dd y))_{s<t}$ satisfies a similar property, except that it is not meaningful to consider delta initial condition $\int \delta_x\, \SHF^\theta_{s,t} (\dd x, \dd y)$ as in \eqref{eq:CK}. Instead, we should consider $\SHF^\theta_{s,t} (\mu, \dd y)$ for initial condition $\mu({\rm d}x)$ that is sufficiently regular.

In a forthcoming work \cite{CSZ25+}, we will show that the critical $2d$ SHF defines a linear measure-valued Markov process with a state space
$S$, which consists of locally finite measures $\mu$, such that for any $R>0$,
\begin{align*}
\iint_{|x|, |y|\leq R}  \log \tfrac{1}{|x-y|} \, \mu(\dd x)\, \mu(\dd y)  \,<\infty,
\end{align*}
and $\int e^{-|x|^2/2t}\mu(\dd x)<\infty$ for all $t>0$. This is a natural state space for the measure valued stochastic process induced by the critical $2d$ SHF, since
\begin{align*}
\bbvar\big( \SHF_{0, t}^\theta(\phi) \big) = \iint_{(\R^2)^2} \phi(x) \, \phi(x') \, \widetilde K_t^\theta(x,x')  \,\dd x \,\dd x',
\end{align*}
where $\widetilde K_t^\theta(x,x')$ is as in \eqref{def:Ktilde} and $\widetilde K_t^\theta(x,x') \sim c \log \frac{1}{|x-x'|}$ as $|x-x'|\to 0$.

We will show in \cite{CSZ25+} that the critical $2d$ SHF satisfies the following almost sure Chapman-Kolmogorov (flow) property: for all $s<t<u$ and initial measure $\mu\in S$, almost surely, $\SHF^\theta_{s,t} (\mu, \dd z)\in S$ and
\begin{equation}\label{eq:CK2}
\SHF^\theta_{s, u} (\mu, \dd y) = \int \SHF^\theta_{s,t} (\mu, \dd z)
\, \SHF^\theta_{t,u} (z, \dd y) =  \SHF^\theta_{t,u} \Big(\SHF^\theta_{s,t} (\mu, \dd z), \dd y\Big).
\end{equation}

Clark and Mian \cite{CM24} have given a different formulation of the flow property, which starts
by defining a notion of $\epsilon$-regularised convolution for measures. More precisely,
let $X,X'$ be Polish spaces and let $\mu_1,\mu_2$ be Borel measures on $X\times \R^2$ and $\R^2\times X'$, respectively.
Then the $\epsilon$-regularised convolution of $\mu_1$ and $\mu_2$ is defined as a measure on $X\times \R^2 \times X'$ with
\begin{align*}
\mu_1 \circ_\epsilon \mu_2 (\dd s, \dd x, \dd s') := \iint_{x'\in \R^2} \mu_1(\dd s, \dd x) g_{\epsilon}(x-x')  \,\mu_2(\dd x', \dd s'),
\end{align*}
where $g_{\epsilon}$ is the heat kernel at time $\epsilon$. The following result was established in \cite{CM24}.
\begin{theorem} \label{thm:Clark}
Let $\SHF^\theta_{s,r} (\dd x, \dd y)$ and $\SHF^\theta_{r,t} (\dd y, \dd z)$ be marginals
of the critical $2d$ SHF at times $s<r<t$. Then there exists a random measure $\SHF^\theta_{s,r, t} (\dd x, \dd y, \dd z)$ on $(\R^2)^3$ such
that
\begin{align}\label{eq:Ch-Ko}
\SHF^\theta_{s,r} (\dd x, \dd y) \circ_\epsilon \SHF^\theta_{r,t} (\dd y, \dd z)
\xrightarrow[\epsilon \downarrow 0]{L^2(\bbP)} \SHF^\theta_{s,r, t} (\dd x, \dd y, \dd z),
\end{align}
where the convergence is in $L^2(\bbP)$. Moreover,
\begin{align*}
\SHF^\theta_{s, t} (\dd x, \dd z) = \int_{y\in\R^2} \SHF^\theta_{s,r, t} (\dd x, \dd y, \dd z).
\end{align*}
\end{theorem}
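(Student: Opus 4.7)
For each test function $\varphi\in C_c((\R^2)^3)$, set
\[
I_\epsilon(\varphi) \,:=\, \iiiint \varphi(x,y,z)\,g_\epsilon(y-y')\,\SHF^\theta_{s,r}(\dd x,\dd y)\,\SHF^\theta_{r,t}(\dd y',\dd z).
\]
The plan is to show that $\{I_\epsilon(\varphi)\}_{\epsilon>0}$ is Cauchy in $L^2(\bbP)$, and then to upgrade the resulting limit functional to a random Radon measure $\SHF^\theta_{s,r,t}$. The essential structural input is that $\SHF^\theta_{s,r}$ and $\SHF^\theta_{r,t}$ are \emph{independent}: this is inherited from the independence of the disorder $\xi_N$ on disjoint time intervals in the directed polymer approximation, and is preserved by the joint scaling limit established in Theorem~\ref{th:main0}.

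Using this independence together with the mean \eqref{eq:mean-SHF} and the covariance \eqref{eq:var-SHF}, I would expand $\bbE[I_\epsilon(\varphi)\,I_{\epsilon'}(\varphi)]$ into four pieces according to whether each second-moment factor is replaced by the product of first moments or by the covariance kernel $K^\theta$ of \eqref{eq:m2-lim}. Each piece is a finite-dimensional integral involving products of heat kernels, the mollifiers $g_\epsilon, g_{\epsilon'}$, and at most two copies of $K^\theta$. Since $K^\theta_t$ has only logarithmic singularities on the diagonals $x=x'$ and $y=y'$, it lies in $L^p_{\mathrm{loc}}$ for every $p<\infty$; combined with the compact support of $\varphi$ and the Gaussian decay of the heat kernels, this yields uniform-in-$\epsilon$ domination. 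A standard mollifier argument then gives
\[
\lim_{\epsilon,\epsilon'\downarrow 0}\bbE[I_\epsilon(\varphi)\,I_{\epsilon'}(\varphi)] \,=\, M(\varphi,\varphi),
\]
with $M$ the explicit double integral obtained by identifying $y=y'$ in the integrand. Since this limit does not depend on the way $\epsilon,\epsilon'\downarrow 0$, we conclude $\bbE[(I_\epsilon-I_{\epsilon'})(\varphi)^2]\to 0$, establishing the Cauchy property.

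Denoting the $L^2$-limit by $\mathfrak I(\varphi)$, linearity in $\varphi$ and almost-sure positivity on non-negative test functions (inherited from $g_\epsilon\ge 0$ and the positivity of $\SHF^\theta$) allow me to realise $\mathfrak I$ as integration against a random Radon measure $\SHF^\theta_{s,r,t}(\dd x,\dd y,\dd z)$, using a countable dense family in $C_c((\R^2)^3)$ together with a Daniell--Stone type extension. For the marginal identity $\SHF^\theta_{s,t}=\int_y \SHF^\theta_{s,r,t}$, take $\varphi(x,y,z)=\tilde\varphi(x,z)$; the cleanest route to identifying $\lim_\epsilon I_\epsilon(\tilde\varphi)$ with $\SHF^\theta_{s,t}(\tilde\varphi)$ is to exploit the exact discrete Chapman--Kolmogorov identity $Z^{\beta_N}_{sN,tN}(x,z)=\sum_y Z^{\beta_N}_{sN,rN}(x,y)\,Z^{\beta_N}_{rN,tN}(y,z)$, whose diffusively rescaled form is precisely a discrete analogue of $I_{\epsilon_N}(\tilde\varphi)$ with $\epsilon_N\asymp 1/N$, and then to combine this with an $L^2$-version of the convergence in Theorem~\ref{th:main0} (extractable from the coarse-graining Cauchy argument of Section~\ref{S:CG}).

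\textbf{Main obstacle.} The principal technical difficulty lies in the covariance$\times$covariance piece of the second-moment expansion, where the product
\[
K^\theta_{r-s}(x_1,x_2;y_1,y_2)\,K^\theta_{t-r}(y_1,y_2;z_1,z_2)
\]
is simultaneously logarithmically singular on the common diagonal $y_1=y_2$, producing an integrand that behaves like $(\log|y_1-y_2|^{-1})^2$ near this set. Local integrability in dimension two survives this doubling, but one needs a quantitative bound uniform in the mollification scales $\epsilon, \epsilon'$ to legitimately invoke dominated convergence. A secondary, more subtle, obstacle is making the marginal identification $\SHF^\theta_{s,t}=\int_y\SHF^\theta_{s,r,t}$ rigorous: as stressed in \cite{CSZ23b}, matching all integer moments is not in general sufficient to identify the law of the critical SHF, so the identification must be obtained either through a joint polymer-level coupling or by strengthening the convergence in Theorem~\ref{th:main0} to an $L^2(\bbP)$ mode along the coarse-graining approximation.
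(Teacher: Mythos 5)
The paper itself does not prove Theorem \ref{thm:Clark}: it is quoted from Clark and Mian \cite{CM24}, so there is no internal proof to compare your argument with. Judged on its own terms, your plan for the convergence \eqref{eq:Ch-Ko} is sound in outline: the independence of $\SHF^\theta_{s,r}$ and $\SHF^\theta_{r,t}$ (inherited from the disorder on disjoint time strips and preserved under the joint convergence in Theorem \ref{th:main0}) factorises $\bbE[I_\epsilon(\varphi)I_{\epsilon'}(\varphi)]$ into the two second-moment measures determined by \eqref{eq:mean-SHF}--\eqref{eq:m2-lim}; since $K^\theta$ has only logarithmic diagonal singularities, the approximate-identity limit and the $(\log)^2$ local integrability you invoke can be made rigorous, and upgrading the $L^2$-limit functional to a random Radon measure via positivity on a countable dense family is standard.

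The genuine gap is the marginal identity $\SHF^\theta_{s,t}=\int_y \SHF^\theta_{s,r,t}$. You propose to pass the discrete identity $Z_{sN,tN}(x,z)=\sum_y Z_{sN,rN}(x,y)\,Z_{rN,tN}(y,z)$ to the limit ``combined with an $L^2$-version of the convergence in Theorem \ref{th:main0}, extractable from the coarse-graining Cauchy argument''. No such $L^2$ convergence exists: Theorem \ref{th:main0} is convergence in finite-dimensional \emph{distributions}, and the coarse-graining/Lindeberg argument of Section \ref{S:CG} compares the \emph{laws} of $\mathscr{Z}^{(\cg)}_{\epsilon}(\varphi,\psi|\Theta_{N,\epsilon})$ for different $N$; it does not couple $\cZ_N$ with the limiting SHF on a common probability space, so the phrase ``$L^2(\bbP)$ mode along the coarse-graining approximation'' has no meaning as stated. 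To close the argument you must either (i) work purely with moments: use the joint convergence over all time indices, together with uniform integrability supplied by the higher-moment bounds of Theorem \ref{th:mom}, to express $\bbE[I_\epsilon(\tilde\varphi)^2]$ and the cross term $\bbE[\SHF^\theta_{s,t}(\tilde\varphi)\,I_\epsilon(\tilde\varphi)]$ as limits of polymer moments (this is where the discrete Chapman--Kolmogorov identity actually enters), and then verify $\bbE[(\SHF^\theta_{s,t}(\tilde\varphi)-I_\epsilon(\tilde\varphi))^2]\to 0$; or (ii) use a Skorokhod representation for the joint vague convergence and control, uniformly in $N$, the second moment of the difference between the exact discrete pairing (which effectively regularises at spatial scale $N^{-1/2}$, i.e.\ $\epsilon\asymp N^{-1}$) and the $\epsilon$-regularised pairing at fixed $\epsilon$. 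Either route requires a quantitative two-scale second-moment estimate, uniform in $N$, which your sketch does not supply and which is the technical core of \cite{CM24} rather than a routine afterthought. Note also that the mixed moments across the three intervals $[s,r]$, $[r,t]$, $[s,t]$ are not among the formulas \eqref{eq:mean-SHF}--\eqref{eq:m2-lim} or Theorem \ref{th:moments} stated in these notes, so they too must be derived from the prelimit before step (i) can be carried out.
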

This result also establishes an almost sure Chapman-Komogorov type of relation for the critical $2d$ SHF.
The procedure can be iterated to show that for $0<t_1<\cdots < t_n$, the critical $2d$ SHF satisfies
\begin{align*}
\SHF^\theta_{t_1,t_2} (\dd x_1, \dd x_2)\circ_\epsilon \cdots \circ_\epsilon \SHF^\theta_{t_{n-1}, t_n} (\dd x_{n-1}, \dd x_n)
\xrightarrow[\epsilon \downarrow 0]{L^2(\bbP)} \SHF^\theta_{t_1,..., t_n} (\dd x_1,..., \dd x_n).
\end{align*}

\subsection{Characterisation of the critical $2d$ SHF} \label{S:Tsai}
Recently, Tsai \cite{T24} gave an axiomatic characterisation of the critical $2d$ SHF, which
says that, a process $(Z_{s,t}(\cdot, \cdot))_{s\leq t}$ taking values in $M_+(\R^2\times\R^2)$, the space of locally
finite non-negative measures on $\R^2\times\R^2$ equipped with the vague topology, must have the same law as the critical
$2d$ SHF with parameter $\theta$, denoted by SHF$(\theta)$, if $Z_{s,t}$ is continuous in $s$ and $t$,
satisfies the flow property formulated in \eqref{eq:Ch-Ko}, has independent increments, and has matching
first four moments with $(\SHF^\theta_{s,t}(\cdot, \cdot))_{s\leq t}$.
\begin{theorem}[Characterisation of the critical $2d$ SHF]\label{thm:tsai}
Let $Z=(Z_{s,t}(\cdot, \cdot))_{s\leq t}$ be a stochastic process taking values in $M_+(\R^2\times\R^2)$ such that:
\begin{itemize}
\item[(1)] for any $s<t<u$, the random measures $Z_{s,t}$, $Z_{t, u}$ and $Z_{s, u}$ satisfy the conclusions of Theorem \ref{thm:Clark};\footnote{In \cite{T24}, the condition is formulated such that the Gaussian kernels $g_\eps$ in Theorem \ref{thm:Clark} can be replaced by any family of mollifiers that ensure $L^2$ convergence.}
\item[(2)] for any $s<t<u$, $Z_{s,t}$ and $Z_{t,u}$ are independent;
\item[(3)] for any $s<t<u$, $1\leq n\leq 4$, and $\phi_i,\psi_i\in L^2(\R^2)$ for $i=1,\ldots,4$, the mixed moments
$\bbE\big[\prod_{i=1}^n Z_{s,t}(\phi_i,\psi_i) \big]$ agree with that of {\rm SHF}$(\theta)$, cf. \eqref{eq:Zmomh}, for some $\theta\in\R$.
\end{itemize}
Then $Z$ has the same law as the critical $2d$ stochastic heat flow {\rm SHF}$(\theta)$. Furthermore, $(Z_{s,t}(\cdot, \cdot))_{s\leq t}$ admits
a version that is almost surely continuous in $s\leq t$.
\end{theorem}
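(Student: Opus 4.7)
The plan is to establish equality of finite-dimensional distributions of $Z$ and $\SHF(\theta)$ by a Lindeberg-type telescoping in time, with the flow property~(2) decomposing an increment into many independent pieces, the independent-increments axiom~(3) making the telescoping transparent, and the matching of four moments~(4) supplying the cancellations needed to control the resulting error. Combining (1) and (3), it will suffice to match the one-dimensional law $Z_{s,t}(\varphi,\psi) \stackrel{d}{=} \SHF^\theta_{s,t}(\varphi,\psi)$ for each $s<t$ and a sufficiently rich class of test functions $\varphi, \psi$: joint laws across disjoint intervals then follow from (3), and joint laws across overlapping intervals follow from a further application of (2).

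\textbf{Flow decomposition and telescoping.} Fixing $s<t$ and a partition $s=t_0<\cdots<t_n=t$, iterating Theorem~\ref{thm:Clark} under axiom~(2) yields an $L^2(\bbP)$ identity
\begin{equation*}
	Z_{s,t}(\varphi,\psi) \,=\, \lim_{\epsilon\downarrow 0}\, F_n^\epsilon\big(Z_{t_0,t_1},\ldots,Z_{t_{n-1},t_n}\,;\,\varphi,\psi\big),
\end{equation*}
where $F_n^\epsilon$ encodes the $n$-fold $\epsilon$-regularised convolution sandwiched between $\varphi$ and $\psi$; the same identity holds with each $Z_{t_{i-1},t_i}$ replaced by $\SHF^\theta_{t_{i-1},t_i}$. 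The limit $F_n := \lim_{\epsilon\downarrow 0} F_n^\epsilon$ is multilinear in its $n$ measure-valued slots. Setting $V_k := F_n(\SHF^\theta_{t_0,t_1},\ldots,\SHF^\theta_{t_{k-1},t_k},Z_{t_k,t_{k+1}},\ldots,Z_{t_{n-1},t_n};\varphi,\psi)$ and invoking (3) (for $Z$) together with the corresponding independence for $\SHF(\theta)$ will give, for any $h\in C^4_b(\R)$,
\begin{equation*}
	\bbE[h(Z_{s,t}(\varphi,\psi))] - \bbE[h(\SHF^\theta_{s,t}(\varphi,\psi))] \,=\, \sum_{k=1}^n \bbE\big[h(V_k) - h(V_{k-1})\big].
\end{equation*}

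\textbf{Per-slot Taylor expansion.} By multilinearity of $F_n$, conditionally on the slots $j\ne k$ we can write $V_k = A_{n,k} + L_{n,k}(\SHF^\theta_{t_{k-1},t_k})$ and $V_{k-1} = A_{n,k} + L_{n,k}(Z_{t_{k-1},t_k})$, where $A_{n,k}$ and the linear functional $L_{n,k}$ depend only on the other slots and are thus independent of the $k$-th. Taylor-expanding $h$ around $A_{n,k}$ to third order with a fourth-order Lagrange remainder, the first three orders are determined by the joint first, second and third moments of $L_{n,k}(\,\cdot\,)$ evaluated on either input, which reduce to moments of $Z_{t_{k-1},t_k}$ (resp.\ $\SHF^\theta_{t_{k-1},t_k}$) tested against random smooth functions built from the other slots. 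By axiom~(4) these match, so the first three orders cancel; the fourth-order remainder is controlled by $\|h^{(4)}\|_\infty$ times the fourth moment of $L_{n,k}$ applied to either measure, and this fourth moment is in turn controlled by axiom~(4) together with the explicit formula in Theorem~\ref{th:moments}.

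\textbf{Main obstacle: vanishing of the cumulative error.} The heart of the argument will be to show that the cumulative error $\sum_{k=1}^n \bbE[\text{remainder}_k]$ tends to zero as the mesh of the partition shrinks. Via Theorem~\ref{th:moments} and the structure of $L_{n,k}$, the per-slot contribution can be rewritten as an integral of the four-point kernel $\mathscr{K}^{(4)}_{t_k-t_{k-1}}$ against four copies of a heat-kernel-like smoothing coming from the other slots and the boundary tests $\varphi, \psi$. The needed ``short-time gain'' is that the macroscopic smoothing regularises the logarithmic singularities of $\mathscr{K}^{(4)}$ enough to yield a per-slot bound of order $(t_k-t_{k-1})^{1+\eta}$ for some $\eta>0$, which sums to $o(1)$. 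Establishing this gain is delicate precisely because the model is critical: the kernel $G_\theta$ is barely integrable at $0$ (see \eqref{eq:Gas}) and the operator norms of Proposition~\ref{prop:opnorm} are borderline. The required sharp short-time estimate on the heat-smoothed $\mathscr{K}^{(4)}$, which transfers the borderline ``log'' behaviour into a usable power-law decay against a regular background, is the main technical obstacle and is the content of Tsai's argument in \cite{T24}.
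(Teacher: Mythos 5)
Your proposal follows essentially the same route as the paper's own (sketched) argument: a Lindeberg-type telescoping over a time partition, using the flow property of Theorem \ref{thm:Clark} to write $Z_{s,t}$ as a functional of the increments, independence to decouple the slots, the matched moments of axiom (4) to cancel the low-order Taylor terms, and fourth-moment bounds to control the remainder, with the genuinely hard step --- showing the cumulative error vanishes as the mesh shrinks, despite the borderline logarithmic singularities of the critical kernels --- correctly identified and deferred to Tsai \cite{T24}, exactly as the paper does.
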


The characterization in Theorem \ref{thm:tsai} was proved in \cite{T24} via a Lindeberg principle, while the continuity was established by verifying
the Kolmogorov moment criterion.
If $Z$ and $\widetilde Z$ are two processes satisfying the
assumptions in Theorem \ref{thm:tsai} for the same $\theta$, then by the Chapman-Kolmogorov property, $Z_{0, 1}$ is a functional of
$(Z_{(i-1)/n, i/n})_{1\leq i\leq n}$, and the same holds for $\widetilde Z$. The basic idea is that, in the reconstruction
of $Z_{0, 1}$ from $(Z_{(i-1)/n, i/n})_{1\leq i\leq n}$, one can successively replace $Z_{(i-1)/n, i/n}$ by
$\widetilde Z_{(i-1)/n, i/n}$ and control the error of each replacement. If the cumulative error can be shown to tend to $0$ as
$n\to\infty$, then $Z_{0,1}$ and $\widetilde Z_{0, 1}$ must be equal in law, where we can take $\widetilde Z$ to be SHF$(\theta)$.
Similar to the Lindeberg principle explained in Section \ref{S:lind1}, controlling the error of each replacement requires Taylor
expansions and matching first two moments for $Z_{(i-1)/n, i/n}$ and $\widetilde Z_{(i-1)/n, i/n}$, plus suitable bounds on higher
moments (4-th moment should suffice). Of course $Z_{(i-1)/n, i/n}$ are much more complicated objects than $\R$-valued random variables,
which requires more delicate analysis. We refer to \cite{T24} for further details.

Theorem \ref{thm:tsai} makes it much easier to prove convergence to SHF$(\theta)$ since one only needs to verify that every subsequential
limit satisfies the axioms in Theorem \ref{thm:tsai}. This was carried out in \cite{T24} for the solution $u^\eps$ of the $2d$ mollified SHE in the critical window.

We also mention that there have been some recent progress in formulating martingale problems 
for the SHF \cite{N25, C25a}. The issue of uniqueness, which would yield
a well-posed martingale problem characterisation for the SHF, is still open.

\subsection{Non-GMCness of fixed time marginals}
Let $\big(\sfX(x) \big)_{x\in \R^d}$ be a centered (generalised) gaussian field with correlation function $k(x,y)$ for $x,y\in \R^d$. The Gaussian Multiplicative Chaos (GMC) is a random measure on $\R^d$, which formally has the following ``density'' with respect to a reference measure $\sigma(\dd x)$:
 \begin{align}\label{GMCdef}
 \mathscr{M}_\gamma(\dd x):= e^{\gamma \sfX(x) -\frac{\gamma^2}{2} \bbE[\sfX^2] } \,\sigma(\dd x).
 \end{align}
 Various assumptions can be imposed on the measure $\sigma(\dd x)$ (see \cite{RV14}), but for simplicity, we will just assume that
 $\sigma(\dd x)$ is the Lebesque measure. For any test function $\phi$, we denote
 \begin{align*}
 \mathscr{M}_\gamma(\phi):=\int_{\R^d} \phi(x) \mathscr{M}_\gamma(\dd x).
 \end{align*}
 The most interesting case is when the field $\sfX$ is log-correlated in the sense that $k(x, y) \sim c \log\frac{1}{|x-y|}$ as $|x-y|\to 0$.
 Since $k(x, x)=\infty$, $X$ is a generalised Gaussian field and $\mathscr{M}_\gamma(\dd x)$ is a priori undefined. To rigorously define
 $\mathscr{M}_\gamma(\dd x)$, the standard procedure is to first replace $X$ by a regularised field $\sfX_\epsilon$ (e.g., via mollification) and then show that
    \begin{align}\label{GMCapprox}
 \mathscr{M}_{\epsilon, \gamma}(\dd x):= e^{\gamma \sfX_\epsilon(x) -\frac{\gamma^2}{2} \bbE[\sfX_\epsilon^2] } \,\sigma(\dd x),
 \end{align}
 has a limit as $\epsilon\to0$. Log-correlated Gaussian field $X$ is of special interest because there is a phase transition in the parameter
 $\gamma$ in \eqref{GMCapprox} that is similar to the phase transition in the polymer partition function in Theorem \ref{T:subcritical}.
 In particular, when $\sigma(\dd x)$ is the Lebesque measure, $\mathscr{M}_{\epsilon, \gamma}(\dd x)$ has a non-trivial limit if
 $\gamma<\gamma_c:=\sqrt{2d}$, and $\mathscr{M}_{\epsilon, \gamma}(\dd x)$ converges to the zero measure if $\gamma \geq \gamma_c$. We refer
 to \cite{RV14} for more details.
 \vskip 2mm
 It is natural to wonder whether the one-time marginal of SHF$(\theta)$ can be identified with a GMC measure on $\R^2$. The answer turns out to be no,
 which suggests heuristically that the logarithm of SHF$(\theta)$ (if defined rigorously, it will give a meaning to
 the solution of the critical $2d$ KPZ equation with parameter $\theta$), would be a Gaussian field.

Since GMC is defined from a Gaussian field, its law is entirely determined by its first two moments. If we denote by $\mathscr{M}^\theta_t$
the candidate GMC with the same first two moments as the critical $2d$ SHF $\SHF^\theta_t$ at time $t$, then we can rule out the
possibility of $\SHF^\theta_t$ being a GMC by showing that its higher moments do not match that of $\mathscr{M}^\theta_t$. This
is what we proved in \cite{CSZ23b}.

\begin{theorem}[The critical $2d$ SHF is not a GMC]\label{th:mmom}
Let $g_\delta$ be the heat kernel at time $\delta>0$. Let $\SHF_t^\theta(\cdot)$ be the critical $2d$ SHF started from Lebesgue measure
and evaluated at time $t$, and let $\mathscr{M}_t^\theta(\cdot)$ be the GMC measure on $\R^2$ with the same first two moments as $\SHF_t^\theta(\cdot)$. For any $\theta\in \R$ and $t>0$,there exists $\eta = \eta_{t,\theta} > 0$ such that for all $h\in\N$ with $h \ge 3$,
we have
\begin{equation} \label{eq:asystrict}
	\liminf_{\delta\downarrow 0}  \;
	\frac{\bbE\big[ \SHF_t^\theta(g_\delta)^h \big]}{\bbE\big[ \mathscr{M}_t^\theta(g_\delta)^h \big]}
	\ge 1+\eta > 1 \,.
\end{equation}
\end{theorem}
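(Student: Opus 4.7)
\medskip

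The plan is to analyse the $\delta\downarrow 0$ asymptotics of both moments using Feynman-diagram-type expansions, show that they have the same leading order $(\log\tfrac{1}{\delta})^{\binom{h}{2}}$, and exhibit a strict gap between the coefficients. On the GMC side, $\mathscr{M}_t^\theta$ is built from a Gaussian field $X$ whose covariance $\gamma^2 k(x,y)$ is chosen so that $e^{\gamma^2 k(x,y)}-1=\widetilde{K}_t^\theta(x,y)$, matching the SHF covariance from \eqref{def:Ktilde}. The Isserlis formula then gives
\begin{equation*}
\bbE\bigl[\mathscr{M}_t^\theta(g_\delta)^h\bigr]=\sum_{E\subseteq\binom{[h]}{2}}\idotsint\Bigl(\prod_{i=1}^h g_\delta(x_i)\Bigr)\prod_{\{i,j\}\in E}\widetilde{K}_t^\theta(x_i,x_j)\,\dd\bx.
\end{equation*}
Since $\widetilde{K}_t^\theta(x,y)\sim c_{\theta,t}\log\tfrac{1}{|x-y|}$ on the diagonal (by \eqref{def:Ktilde} and the heat-kernel singularity of $G_\theta$) and $g_\delta^{\otimes h}$ concentrates the $x_i$'s within distance $O(\sqrt{\delta})$ of each other, each edge of $E$ contributes a single factor of $\log\tfrac{1}{\delta}$, and hence
\begin{equation*}
\bbE\bigl[\mathscr{M}_t^\theta(g_\delta)^h\bigr]=C^{\rm GMC}_{h,\theta,t}\,(\log\tfrac{1}{\delta})^{\binom{h}{2}}\bigl(1+o(1)\bigr),
\end{equation*}
where $C^{\rm GMC}_{h,\theta,t}$ is determined entirely by the complete-graph term $E=\binom{[h]}{2}$.

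Turning to the SHF side, I would start from Theorem~\ref{th:moments} and organise the series \eqref{eq:Zmomh-kernel} for $\mathscr{K}_t^{(h)}$ according to the multigraph on $[h]$ induced by $(I_1,\ldots,I_m)$: label each sequence by its \emph{support} $S\subseteq\binom{[h]}{2}$ together with the sequence of pair-interactions. The key observation, to be proved via the small-time asymptotics of the Dickman Green's function $G_\theta$ in \eqref{eq:Gas}--\eqref{eq:G5}, is that each pair $\{i,j\}\in S$ yields exactly one factor of $\log\tfrac{1}{\delta}$ (arising from the integration of $G_\theta$ over a short time-window where $x_i$ and $x_j$ become effectively identified through the replica kernel $\sfG_{\theta,\,\cdot}^{\{i,j\}}$), while any pair absent from $S$ contributes only a bounded factor. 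Consequently only sequences with $S=\binom{[h]}{2}$ (the complete graph $K_h$) contribute at leading order, and summing over all time-orderings of the pair-interactions yields
\begin{equation*}
\bbE\bigl[\SHF_t^\theta(g_\delta)^h\bigr]=C^{\rm SHF}_{h,\theta,t}\,(\log\tfrac{1}{\delta})^{\binom{h}{2}}\bigl(1+o(1)\bigr).
\end{equation*}

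It remains to prove $C^{\rm SHF}_{h,\theta,t}>C^{\rm GMC}_{h,\theta,t}$. The GMC constant corresponds to a single, unordered product of pair-correlations, i.e.\ to the ``tree-factorised'' sub-family of diagrams. The SHF constant instead includes every admissible time ordering of pair-interactions on $K_h$, among which the genuinely many-body orderings (for instance, for $h=3$ the cyclic sequence $\{1,2\},\{2,3\},\{1,3\}$ which entangles all three particles through a chain of Chapman--Kolmogorov compositions) cannot be reproduced by any Gaussian/GMC expansion. Since all the integrands in \eqref{eq:Zmomh-kernel} are non-negative, these extra orderings give a strictly positive additional contribution at the same $(\log\tfrac{1}{\delta})^{\binom{h}{2}}$ order. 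Concretely I would bound $C^{\rm GMC}_{h,\theta,t}$ by the sub-sum of SHF diagrams in which distinct pairs occupy disjoint, non-interleaved time windows, and exhibit at least one extra non-negative term from an interleaved configuration, giving a uniform $\eta=\eta_{t,\theta}>0$ independent of $h$.

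The main obstacle will be the Laplace-type asymptotic analysis that pins down the leading $(\log\tfrac{1}{\delta})^{\binom{h}{2}}$ coefficient in \eqref{eq:Zmomh-kernel} and rules out spurious logarithmic contributions from high-multiplicity sequences (where some pair $I_\ell$ is repeated many times consecutively-in-support). Controlling these requires iterating the sharp estimate \eqref{eq:G5} together with the heat-kernel asymptotics of the constrained propagators $\cQ_t^{I,J}$ smeared against diagonal-concentrated test functions, plus a summability estimate on the number of sequences of given support and length, which I would handle by first pushing the entire argument through in the case $h=3$ (where only three pair-types are available and the combinatorics is transparent) and then extending to general $h$ by a combinatorial/inductive argument matching diagrams pair-by-pair.
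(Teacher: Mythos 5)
Your GMC-side computation and the identification of the common leading order $(\log\tfrac{1}{\delta})^{\binom{h}{2}}$ are consistent with the heuristic behind the theorem, and restricting the non-negative series \eqref{eq:Zmomh-kernel} to a sub-family of diagrams is indeed a legitimate way to lower bound the SHF moment. The gap is in the pivotal step: your plan to ``bound $C^{\mathrm{GMC}}_{h,\theta,t}$ by the sub-sum of SHF diagrams in which distinct pairs occupy disjoint, non-interleaved time windows'' treats the GMC moment as if it were a sub-sum of the SHF diagrammatic expansion, which it is not. In the Wick/GMC expansion each pair contributes an independent factor $\widetilde K_t^\theta(z_i,z_j)$ depending only on the initial points, with its own unconstrained time integration over all of $(0,t)$; in a sequential SHF diagram all pairs share one time axis, a later pair starts its meeting from wherever the earlier blocks left the particles (in particular the two members of a pair are glued together at the end of each wiggle), and both the remaining time and the separation at the onset of each later meeting are random and correlated with the earlier interactions. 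Hence there is no term-by-term injection of the GMC complete-graph term into the non-interleaved diagrams, and whether the non-interleaved sub-sum is at least $C^{\mathrm{GMC}}_{h,\theta,t}(\log\tfrac{1}{\delta})^{\binom{h}{2}}(1+o(1))$, uniformly in $h$, is precisely the hard comparison; asserting it and then adding ``extra non-negative interleaved terms'' does not prove \eqref{eq:asystrict}. The crude cancellation between the sum over orderings and the nested-scale factor $1/\binom{h}{2}!$ suggests the two constants are comparable, but it equally leaves open that the sequential sum falls short of the GMC constant, in which case positivity of extra diagrams alone yields no strict gap.

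This is where the actual proof (in \cite{CSZ23b}, only sketched in these notes) takes an essentially different route: it represents the moments via exponential functionals of pairwise collision local times of independent Brownian motions (through the polymer/mollified-SHE approximation) and uses the Gaussian Correlation Inequality to prove that the two-body interactions are positively correlated, i.e.\ the expectation of the product of pair interactions dominates the product of their expectations --- which is exactly the statement you are implicitly assuming when matching the GMC constant with a diagram sub-family --- and then a separate quantitative estimate on a designated triple of particles upgrades this to a factor $1+\eta$ with $\eta=\eta_{t,\theta}$ uniform over $h\ge 3$. Without a correlation inequality of this type, or an equally sharp Laplace-type identification of the leading constant in \eqref{eq:Zmomh-kernel} (which is exactly the analysis your sketch defers), the comparison does not close. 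A secondary issue: your claimed exact asymptotics $C^{\mathrm{SHF}}_{h,\theta,t}(\log\tfrac{1}{\delta})^{\binom{h}{2}}(1+o(1))$ for the SHF moment requires a matching upper bound ruling out larger contributions from high-multiplicity sequences; this is not known, and it is also not needed, since \eqref{eq:asystrict} only requires a lower bound on the SHF side together with the GMC asymptotics, so the argument should be organised as a lower bound from the start.
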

To see heuristically why Theorem \ref{th:mom} might hold, we note that Wick's theorem for the moments of gaussian variables
implies that
\begin{equation*}
	\bbE\big[ \mathscr{M}^\theta(g_\delta)^h\big]
	= \int_{(\R^2)^h} g_\delta(z_1) \cdots g_\delta(z_h) \,
	e^{\sum_{1\leq i<j\leq h}k(z_i, z_j) }
	\, \dd z_1 \cdots  \dd z_h  \sim \bbE\big[ \mathscr{M}^\theta(g_\delta)^2\big]^{h\choose 2},
\end{equation*}
where each term $k(z_i, z_j)$ represents a two-body interaction among the $h$ particles at locations $z_1, \ldots, z_h$,
and the last asymptotic uses the fact that
$$
\E[ \mathscr{M}^\theta(\dd x)  \mathscr{M}^\theta (\dd y)] = \E[ \SHF_t^\theta(\dd x)  \SHF_t^\theta (\dd y)]
=: K^\theta_t(x, y) \dd x\, \dd y
$$
with $K^\theta_t(x, y) \sim c \log \frac{1}{|x-y|}$ as $|x-y|\to 0$. In other words, $\bbE\big[ \mathscr{M}^\theta(g_\delta)^h\big]$
has the same asymptotics as if there are only independent two-body interactions. This is no longer the case for $\bbE\big[ \SHF_t^\theta(\varphi)^h \big]$, where the interactions among the $h$ particles are illustrated in Figure \ref{figure-fourth-moment}. At the heart
of the proof of Theorem \ref{eq:asystrict} is the use of the Gaussian Correlation Inequality \cite{R14, LM17}, which together
with additional analysis on the collision local times of Brownian motions, shows that there are positive correlations among the two-body
interactions.	
	
\subsection{Singularity and Regularity}
In \cite{CSZ25}, we show that the marginal distribution of the critical $2d$ SHF at each time $t>0$ is a random measure that
is almost surely singular respect to Lebegues measure (see Figure \ref{fig:SHFzoom}).
\begin{theorem}[Singularity of SHF] \label{th:singularity-SHF}
Fix any $t > 0$ and $\theta \in \R$.
Almost surely,
$\SHF_{t}^\theta(\dd x):= \int_{y\in \R^2} \SHF_{0, t}^\theta(\dd y, \dd x) $ is singular with respect to the Lebesgue measure on~$\R^2$.
\end{theorem}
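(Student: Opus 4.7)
The plan is to deduce singularity from a pointwise tail estimate on the rescaled small-ball averages of $\SHF^\theta_t$. By translation invariance and scaling covariance (Theorem~\ref{th:main1}), it suffices to treat $t=1$; set $\mu := \SHF^\theta_1(\dd x)$. By the Lebesgue differentiation theorem for Radon measures, $\mu$ admits a Lebesgue decomposition $\mu = f \cdot \dd x + \mu_s$, with density $f(x) = \lim_{\eps \downarrow 0} X_\eps(x)$ at Lebesgue-a.e.~$x$, where $X_\eps(x) := \mu(B(x,\eps))/|B(x,\eps)|$. Almost sure singularity of $\mu$ is thus equivalent to showing $f \equiv 0$ Lebesgue-a.e., almost surely.

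The reduction I propose to carry out is the following: establish that, for every $x \in \R^2$ and every $\lambda > 0$,
\begin{equation}\label{eq:plan-tail}
	\bbP\big(X_\eps(x) > \lambda\big) \,\longrightarrow\, 0 \qquad \text{as } \eps \downarrow 0.
\end{equation}
Given \eqref{eq:plan-tail}, Fubini and dominated convergence yield $\bbE\big[|\{y \in K : X_\eps(y) > \lambda\}|\big] \to 0$ for every compact $K \subset \R^2$, so the random Lebesgue mass tends to $0$ in $L^1$, hence in probability. Extracting a subsequence $\eps_n \downarrow 0$ along which this mass vanishes almost surely, Fatou's lemma applied to the indicator sets gives $|\liminf_n\{X_{\eps_n} > \lambda\}| = 0$ a.s. Since $X_{\eps_n}(y) \to f(y)$ Lebesgue-a.e.~almost surely, the set $\{f > \lambda\}$ is contained in this null set modulo a Lebesgue-null correction, so $|\{f > \lambda\}| = 0$ a.s.\ for each $\lambda > 0$. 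Letting $\lambda$ run over a countable sequence decreasing to $0$ yields $f \equiv 0$ Lebesgue-a.e., and hence $\mu = \mu_s$.

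The heart of the matter is the tail bound \eqref{eq:plan-tail}. A direct computation from the covariance formula \eqref{eq:m2-lim}, combined with the small-$t$ asymptotics $G_\theta(t) \sim (t(\log \tfrac{1}{t})^2)^{-1}$ from \eqref{eq:Gas}, gives $\widetilde{K}^\theta_1(y,y') \sim c_\theta \log(1/|y-y'|)$ as $|y-y'| \downarrow 0$, whence
\[
	\bbvar(X_\eps(x)) \,=\, \frac{1}{|B(x,\eps)|^2} \iint_{B(x,\eps)^2} \widetilde{K}^\theta_1(y,y') \,\dd y \,\dd y' \,\sim\, c'_\theta \, \log \tfrac{1}{\eps}.
\]
Thus $X_\eps(x)$ has mean $1$ but variance diverging logarithmically---the log-normal signature in which typical values drift to $0$ while rare large excursions carry the mean. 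To promote this $L^2$-level heuristic to \eqref{eq:plan-tail}, the plan is to combine the Chapman-Kolmogorov flow property (Theorem~\ref{thm:Clark}) with scaling covariance (Theorem~\ref{th:main1}): iterate the flow along a geometric chain of intermediate time-slices $1 - 2^{-k}$, then use scaling to map each slice to a small-ball average of $\SHF^{\theta_k}$ at parameter $\theta_k \to -\infty$, where, since $G_{\theta_k}(t) \to 0$ pointwise, the slice averages collapse to their means in $L^2$. A careful quantitative cascade should yield a fractional-moment decay $\bbE[X_\eps(x)^p] \to 0$ for some $p \in (0,1)$, and \eqref{eq:plan-tail} follows from Markov's inequality $\bbP(X_\eps > \lambda) \le \lambda^{-p} \bbE[X_\eps^p]$.

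The hardest part will be the fractional-moment decay itself: the $L^2$ estimate above is a priori compatible with $X_\eps(x)$ converging to a nontrivial positive limit plus growing Gaussian fluctuations, and only genuine log-normal/heavy-tailed behavior gives the required collapse. The natural auxiliary input is the polymer approximation (Theorem~\ref{th:main0}): $\mu$ is the scaling limit of averaged polymer partition functions $\cZ_N$, whose pointwise values $Z^{\beta_N}_N(z)$ converge to $0$ in distribution at the critical $\hat\beta = 1$ (Theorem~\ref{T:subcritical}). Combining this pointwise collapse with the uniform higher-moment bounds for coarse-grained partition functions from Section~\ref{S:highmom} --- in particular the bounds \eqref{eq:Theta4} and \eqref{eq:Zcg4}, which are already robust enough to pass to the continuum limit --- and carefully interpolating between pointwise and averaged behavior along a sequence of scales $\eps \gg 1/\sqrt{N}$, should deliver the fractional-moment decay and complete the proof.
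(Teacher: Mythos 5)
Your soft reduction is sound and coincides with the paper's: by the Lebesgue differentiation theorem, singularity follows once the small-ball averages $X_\delta(x)=\SHF_t^\theta(\cU_{B(x,\delta)})$ are shown to vanish as $\delta\downarrow 0$ (convergence in probability suffices, via your Fubini/subsequence argument), and the paper reduces to exactly this statement \eqref{eq:singularity-SHF} and to fractional-moment bounds plus Markov. The genuine gap is that the one estimate carrying all the weight, $\bbE[X_\delta(x)^p]\to 0$ for some $p\in(0,1)$, is never established: both routes you suggest are left at the level of ``should yield''/``should deliver''. Moreover the first route contains a step that fails as stated: diffusively rescaling a ball of radius $\delta$ at fixed time $t$ via \eqref{eq:scaling} does lower the parameter to $\theta+2\log\delta\to-\infty$, but it simultaneously stretches the time horizon to $t/\delta^2\to\infty$, and these two effects compete exactly; your own computation $\bbvar(X_\delta(x))\sim c\log\frac{1}{\delta}\to\infty$ shows there is no $L^2$ collapse of the rescaled slices, so the claim that they ``collapse to their means in $L^2$ since $G_{\theta_k}(t)\to 0$'' cannot close the cascade. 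The second route (pushing the pointwise collapse of $Z^{\beta_N}_N(z)$ from Theorem \ref{T:subcritical} through the convergence of Theorem \ref{th:main0} with the coarse-grained moment bounds) hides an uncontrolled exchange of the limits $N\to\infty$ and $\delta\downarrow 0$, which is precisely the hard technical content rather than a routine interpolation.

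What the paper's argument actually supplies, and what is missing from your plan, is a concrete mechanism for the fractional-moment decay: monotonicity of the fractional moments of the averaged SHF in $\theta$, which permits replacing the fixed $\theta$ by $\theta(\delta)=2\log\delta\to-\infty$, where Theorem \ref{th:log-normality-SHF} gives a log-normal limit $e^{\cN(0,\sigma^2)-\frac{1}{2}\sigma^2}$ with $\sigma^2=\log(1+\rho)$ for the ball average at radius $\delta^\rho$; its $p$-th moment is $(1+\rho)^{-p(1-p)/2}$, and letting the effective $\rho$ diverge with the scale produces $\bbE[X_\delta(x)^p]\to 0$, hence \eqref{eq:plan-tail}. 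Your ``log-normal signature'' heuristic points in this direction, but without the monotonicity-in-$\theta$ comparison (or a quantitative substitute for it) the central estimate remains unproved, so the proposal as written does not constitute a proof of the theorem.
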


However, $\SHF_{t}^\theta(\dd x)$ barely fails to be a function.

 \begin{theorem}[Regularity of the SHF]\label{th:regularity-SHF}
Fix any $t > 0$ and $\theta\in\R$.
Almost surely, the critical $2d$ SHF
$\SHF_{t}^\theta(\dd x)$ belongs to $\mathcal{C}^{0-}
:= \bigcap_{\epsilon > 0} \mathcal{C}^{-\epsilon}$, where $\mathcal{C}^{-\epsilon}$ is the negative H\"older space
of order $-\eps$.
\end{theorem}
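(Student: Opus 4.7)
The plan is to use the wavelet characterisation of local negative H\"older spaces and verify the necessary decay of wavelet coefficients via Theorem~\ref{th:mom}. Fix a compactly supported orthonormal Daubechies wavelet basis of $L^2(\R^2)$, with $L^2$-normalised mother wavelet $\psi_{j,k}(x)=2^j\psi(2^jx-k)$ and $\int\psi=0$. A standard fact is that a locally finite measure (or distribution) $f$ on $\R^2$ belongs to $\mathcal{C}^{-\eps}_{\mathrm{loc}}$ provided that, for every $R>0$,
\[
\sup_{j\geq 0}\ \sup_{k:\,2^{-j}k\in B(0,R)}\ 2^{j(1-\eps)}\,|\langle f,\psi_{j,k}\rangle|\ <\ \infty
\]
(together with an inessential bound on finitely many scaling-function coefficients, which is automatic since $\SHF_t^\theta$ is a locally finite random measure). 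Since $\mathcal{C}^{0-}=\bigcap_n \mathcal{C}^{-1/n}_{\mathrm{loc}}$, it suffices to prove this bound almost surely for each fixed $\eps>0$ and then intersect over a countable sequence $\eps_n\downarrow 0$ and $R_n\uparrow\infty$.

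Set $c_{j,k}:=\langle\SHF_t^\theta,\psi_{j,k}\rangle$, where $\SHF_t^\theta(\dd x)=\int\SHF_{0,t}^\theta(\dd y,\dd x)$ is the marginal. The main estimate I will use is the continuum analogue of \eqref{eq:mombd2}, which transfers to the SHF by combining the distributional convergence of Theorem~\ref{th:main0} with the uniform-in-$N$ moment bound of Theorem~\ref{th:mom} itself, applied with a slightly larger integer $h'>h$ to provide uniform integrability. Plugging in the flat initial condition $\varphi\equiv 1$, test function $\psi_{j,k}$ in the second slot, and polynomial weight $w(x)=(1+|x|)^\alpha$ with $\alpha p>2$, I obtain, uniformly over $k$ with $2^{-j}k\in B(0,R)$,
\[
\bbE\bigl[|c_{j,k}|^h\bigr]
\ \leq\ C\,\|1/w\|_p^h\,\|\psi_{j,k}\|_\infty^h\,\|w\,\ind_{B_{j,k}}\|_q^h
\ \leq\ C_{R,h}\,2^{jh(1-2/q)},
\]
where $B_{j,k}$ is a ball of radius $O(2^{-j})$ covering the support of $\psi_{j,k}$; the zero mean of $\psi$ gives $\bbE[c_{j,k}]=0$, so centering is costless. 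Choose now $q\in(1,\tfrac{2}{2-\eps})$ so that $\delta:=2-\tfrac{2}{q}-\eps<0$, and then an integer $h$ large enough that $2+h\delta<0$. Markov's inequality and a union bound over the $O(2^{2j})$ admissible $k$ give
\[
\bbP\bigl(\exists k:\,|c_{j,k}|>2^{-j(1-\eps)}\bigr)\ \leq\ C\,2^{j(2+h\delta)},
\]
which is summable in $j$; Borel--Cantelli then produces a random $j_0(\omega)$ beyond which every admissible coefficient satisfies the required decay, and the wavelet characterisation yields $\SHF_t^\theta\in\mathcal{C}^{-\eps}_{\mathrm{loc}}$ almost surely.

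The main technical obstacle is the passage from the polymer moment bound \eqref{eq:mombd2} to its continuum SHF counterpart: one must handle the flat initial condition via the weight $w$ and, more delicately, justify the exchange of the limit $N\to\infty$ with the expectation, which is done by using the bound of Theorem~\ref{th:mom} at a slightly higher order of moment to secure uniform integrability. This is precisely the type of passage carried out in the forthcoming paper \cite{CSZ24}. A pleasant feature of the argument, once the moment bound is in hand, is that it barely fails to yield positive regularity: the variance of $c_{j,k}$ is in fact bounded uniformly in $j$ (the logarithmic singularity of the covariance kernel \eqref{def:Ktilde} is annihilated by $\int\psi=0$), and it is only the union bound over $O(2^{2j})$ locations that prevents reaching the endpoint $\eps=0$---consistent with the genuine singularity asserted by Theorem~\ref{th:singularity-SHF}.
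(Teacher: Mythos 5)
Correct, and essentially the paper's approach: like the paper, you obtain the key input from Theorem~\ref{th:mom}, namely an $h$-th moment bound for the SHF tested against scale-$2^{-j}$ localized functions with the flat initial condition handled by the weight $w$ (this is exactly the bound \eqref{eq:moments-SHF}, up to normalisation), transferred to the limit by Fatou/uniform integrability, and then conclude negative H\"older regularity from a moment-to-regularity criterion. The only difference is that you re-derive that criterion by hand (Daubechies wavelets, Markov, union bound over $O(2^{2j})$ locations, Borel--Cantelli, with $h$ taken even so that \eqref{eq:mombd2} controls $\bbE[|c_{j,k}|^h]$), whereas the paper simply cites the tightness criterion of \cite{FM17}, whose proof is the same wavelet argument.
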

A consequence of Theorem \ref{th:regularity-SHF} is that, almost surely, $\SHF_{t}^\theta(\dd x)$ contains no atoms,
since delta measures on $\R^d$ belong to $C^{-d}$.
Theorem \ref{th:regularity-SHF} follows from moment estimates, see \eqref{eq:p2plane}, which
 imply that for any $h\in \N$ and $\epsilon>0$,
there exists a constant $C$, depending on $h, \epsilon, t, \theta$, such that for all small $\delta$ we have
\begin{equation}\label{eq:moments-SHF}
	\bbE \Big[ \SHF_{t}^\theta\big( \cU_{B(x,\delta)}\big)^h \Big]^{1/h}
	\le C \, \delta^{-\epsilon}
	\qquad \forall x\in\R^2  \,.
\end{equation}
Theorem \ref{th:regularity-SHF} then follows from this moment bound and a general tightness criterion for negative H\"older spaces
\cite[Theorem~2.30]{FM17}.

Theorem \ref{th:singularity-SHF} follows by applying the Lebesgue differentiation theorem and showing that, almost surely,
\begin{equation}\label{eq:singularity-SHF}
	\lim_{\delta \downarrow 0} \; \SHF_{t}^\theta\big(\cU_{B(x,\delta)}\big) = 0
	\quad \text{for Lebesgue a.e.\ $x\in\R^2$} \,.
\end{equation}
where $\cU_{B(x,\delta)}(\cdot) := \frac{1}{\pi \delta^2} \, \ind_{B(x,\delta)}(\cdot)$ for the Euclidean ball $B(x,\delta) := \big\{y \in \R^2 \colon \ |y-x| < \delta \big\}$. This is accomplished by bounding the fractional moments of $\SHF_{t}^\theta\big(\cU_{B(x,\delta)}\big)$, where
we use the monotonicity of the fractional moment in $\theta$ to decrease $\theta$ and send $\theta=\theta(\delta)\to-\infty$ at a suitable rate as $\delta\downarrow 0$. More precisely, $\theta(\delta)$ will be chosen such that $\SHF_{t}^\theta\big(\cU_{B(x,\delta)}\big)$ converges to a log-normal limit, similar to the point-to-plane partition function in the subcritical regime in Theorem \ref{T:subcritical}.

\begin{theorem} \label{th:log-normality-SHF}
Let $\theta(\delta)=2\log \delta$. For any $t > 0$ and $x\in\R^2$, the following convergence in distribution holds:
\begin{equation}\label{eq:log-normal-SHF}
	\forall \rho \in (0,\infty) \colon \qquad
	\SHF_{t}^\theta \big(\cU_{B(x,\delta^\rho)}\big)
	\ \xrightarrow[\ \delta \downarrow 0 \ ]{d} \
	e^{\cN(0,\sigma^2) - \frac{1}{2}\sigma^2}
	\qquad \text{with } \sigma^2 = \log(1+\rho) \,.
\end{equation}
\end{theorem}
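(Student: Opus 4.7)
The plan is to reduce, via the scaling covariance in Theorem~\ref{th:main1}, to the behaviour of the SHF at \emph{large time} with parameter tending to $-\infty$, and then to identify this as the log-normal subcritical regime of the directed polymer model (Theorem~\ref{T:subcritical}).

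First, by translation invariance we take $x=0$. Applying the scaling relation \eqref{eq:scaling} with $\sfa = \delta^{-2\rho}$, and using $\cU_{B(0,\delta^\rho)}(\,\cdot/\sqrt{\sfa}) = \sfa\,\cU_{B(0,1)}$, gives
\[
	\SHF^{\theta(\delta)}_t\bigl(\cU_{B(0,\delta^\rho)}\bigr)
	\stackrel{\rm dist}{=}
	\SHF^{\theta_T}_T\bigl(\cU_{B(0,1)}\bigr),
	\qquad
	T := t\,\delta^{-2\rho}, \quad
	\theta_T = -\tfrac{1+\rho}{\rho}\log T + O(1),
\]
so it suffices to prove $\SHF^{\theta_T}_T(\cU_{B(0,1)}) \xrightarrow[T\to\infty]{d} \exp(\cN(0,\sigma^2)-\tfrac{1}{2}\sigma^2)$ with $\sigma^2 = \log(1+\rho)$. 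A direct computation using the covariance kernel \eqref{def:Ktilde} and the small-$s$ asymptotics \eqref{eq:G5} of $\int_0^s G_\theta$ already verifies that the second moment of $\SHF^{\theta_T}_T(\cU_{B(0,1)})$ converges to $1+\rho = e^{\sigma^2}$, matching the log-normal target.

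Next, I would approximate the SHF by directed polymer partition functions: for each $T$, $\SHF^{\theta_T}_T(\cU_{B(0,1)}) = \lim_{N\to\infty} \cZ^{\beta_N}_{0,NT}(\ind,\cU_{B(0,1)})$ in distribution, with $\beta_N$ in the critical window at scale $N$ with parameter $\theta_T$. Choose $N = N(T) = T^\gamma$ so that at the full time horizon $M := NT$ the effective disorder strength $\hat\beta_M^2 := \sigma_N^2 R_M = (1+\log T/\log N)(1+\theta_T/\log N)+o(1)$ converges to $\rho/(1+\rho)$; substituting $\log T/\log N = 1/\gamma$ and $\theta_T/\log N\to -\tfrac{1+\rho}{\gamma\rho}$ reduces this to $\rho\gamma^2-(1+\rho)\gamma-(1+\rho)^2=0$, with positive root $\gamma = (1+\rho)(1+\sqrt{1+4\rho})/(2\rho)$. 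At this scale the polymer is \emph{strictly} subcritical with $\hat\beta_M^2 = \rho/(1+\rho)\in(0,1)$, so by Theorem~\ref{T:subcritical} the point-to-plane partition $Z^{\beta_N}_M(0)$ converges to $\exp(\cN(0,\sigma^2)-\tfrac{1}{2}\sigma^2)$ with $\sigma^2 = \log(1/(1-\tfrac{\rho}{1+\rho})) = \log(1+\rho)$, as desired.

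The hard part is twofold. First, the spatial averaging of $Z^{\beta_N}_{0,M}(\ind,y)$ against $\cU_{B(0,1)}$ is over a ball of diffusive radius $1/\sqrt{T}\to 0$ at scale $M$; one must show that this preserves the log-normal law, despite the bivariate normal structure of Remark~\ref{R:1pt} assigning non-trivial covariances to pairs with $\|y_1-y_2\| = M^{\alpha}$ for $\alpha \in (0,1/2)$. The matching second-moment asymptotics above is encouraging, but identification at the distributional level will require a careful joint analysis of the covariance structure across all scales in the window. Second, a diagonal argument is needed to combine the DPM-to-SHF convergence ($N\to\infty$ for fixed $T$) with the subcritical log-normal limit ($T\to\infty$). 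Tightness of $\{\SHF^{\theta_T}_T(\cU_{B(0,1)})\}_T$ follows from the SHF counterpart of the moment bounds of Theorem~\ref{th:mom} (cf.\ \eqref{eq:moments-SHF}); identification of subsequential limits would proceed either by a rate for $\cZ^{\beta_N}_{0,NT}\Rightarrow\SHF^{\theta_T}_T$ that is uniform along $N = T^\gamma$, or by working intrinsically within the SHF via the flow property \eqref{eq:CK2}---iteratively decomposing over exponentially spaced windows $\tau_k = T^{k/K}$ and recognising via scaling each unit-time increment as a deeply subcritical SHF whose logarithm contributes an approximately independent Gaussian fluctuation, in the spirit of Proof Strategy~3 for Theorem~\ref{T:subcritical}.
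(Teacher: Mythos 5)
There is no detailed proof of Theorem~\ref{th:log-normality-SHF} in these notes: the result is deferred to the forthcoming work \cite{CSZ24}, and the text only indicates the mechanism (a suitably averaged partition function/SHF behaves like a \emph{subcritical} point-to-plane partition function, with log-normal limit whose parameter is fixed by the second moment). Judged against that, the first half of your plan is sound: the scaling step via Theorem~\ref{th:main1} is correct, giving $\SHF^{2\log\delta}_t(\cU_{B(x,\delta^\rho)})\stackrel{d}{=}\SHF^{\theta_T}_T(\cU_{B(0,1)})$ with $T=t\delta^{-2\rho}$ and $\theta_T=-\frac{1+\rho}{\rho}\log T+O(1)$, and the target second moment does converge to $1+\rho$ (do this computation in the original $\delta$-variables: for $T\gg 1$ the effective parameter entering the kernel drifts like $\theta_T+\log(\text{remaining time})$, a point your coupling below also misses).

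The step that fails is the calibration $N=T^\gamma$ and the identification of the limit through the point-to-plane partition function $Z^{\beta_N}_M(0)$, $M=NT$. The object that approximates $\SHF^{\theta_T}_T(\cU_{B(0,1)})$ is not $Z^{\beta_N}_M(0)$ but the partition function with endpoint \emph{averaged over a ball of radius $\sqrt N=M^{\gamma/(2(\gamma+1))}$}, and along your diagonal this averaging is not harmless. Indeed, by the covariance identity \eqref{eq:CovZtN},
\[
	\bbcov\big(Z^{\beta_N}_{0,M}(\ind,y_1),\,Z^{\beta_N}_{0,M}(\ind,y_2)\big)
	=\sigma_N^2\sum_{n=1}^{M}q_{2n}(y_1-y_2)\,\bbE\big[Z^{\beta_N}_{M-n}(0)^2\big]
	\;\approx\;\frac{\hat\beta_M^2}{1-\hat\beta_M^2}\,\frac{\log\big(M/|y_1-y_2|^2\big)}{\log M}\,,
\]
since $\sigma_N^2=\hat\beta_M^2/R_M$ with $\hat\beta_M^2\to\rho/(1+\rho)$ and $\bbE[Z^{\beta_N}_{M-n}(0)^2]\to(1-\hat\beta_M^2)^{-1}$ for all $n$ outside a log-negligible range. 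Averaging over $y_1,y_2$ in the ball of radius $\sqrt N$, where $\log|y_1-y_2|^2=\log N+O(1)$ for all but a negligible fraction of pairs, the variance of your approximant converges to $\rho\,\frac{\log T}{\log M}=\frac{\rho}{\gamma+1}$, \emph{not} $\rho$: averaging over scale $\sqrt N$ wipes out the fluctuations carried by the fraction $\gamma/(\gamma+1)$ of time scales below $N$. Consequently the diagonal sequence cannot converge to $\SHF^{\theta_T}_T(\cU_{B(0,1)})$ (so no uniform rate can rescue it), and your ``hard part 1'' --- that averaging preserves the law of $Z^{\beta_N}_M(0)$ --- is not a gap to be filled but a false statement at this calibration: the averaged quantity would be asymptotically log-normal with variance $\log(1+\frac{\rho}{\gamma+1})$. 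The match of the point-to-plane second moment with $1+\rho$ is an artefact of how $\gamma$ was chosen and does not transfer to the averaged object. The viable route --- and the one the paper's meta-theorem points to --- is to treat the \emph{averaged} quantity itself as the subcritical-like partition function (its effective strength $\rho/(1+\rho)$ coming from the scales that survive the averaging) and prove its asymptotic log-normality directly, e.g.\ via your second suggestion: a multiscale decomposition of $\log\SHF^{\theta_T}_T(\cU_{B(0,1)})$ through the flow property into approximately independent Gaussian increments whose variances sum to $\log(1+\rho)$, together with the requisite moment/concentration control; none of that is carried out in the proposal.
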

\begin{remark}
The meta-theorem suggested by Theorem \ref{th:log-normality-SHF} is that, averaging the polymer partition functions (or averaging the critical $2d$ SHF) has the effect of reducing the disorder strength.
If the averaging is on a suitably chosen spatial scale such that the mean and variance remain of a constant order, then the averaged partition function behaves
like the point-to-plane partition function of a $2d$ polymer in the subcritical regime with a log-normal limit. Theorem \ref{T:subcritical} is one special case. We refer to \cite{CSZ25} for details.
\end{remark}

\section{Discussions and open questions}\label{S:open}

\subsection{Disordered systems and singular SPDEs} \label{S:disrel}
Our motivation for studying the $2d$  SHE came from the study of continuum limits of disordered systems, with the DPM being one particular example.

Recalling its definition from Section \ref{S:DPM}, we can regard the DPM as a disorder perturbation of the underlying random walk $S$. In dimensions $d\geq 3$, this perturbation is called {\em irrelevant} in the language of renormalisation group theory because the critical disorder strength (inverse temperature) $\beta_c(d)>0$, and it has been shown that \cite{CY06} for $\beta<\beta_c$, the polymer measure converges to the same limiting Wiener measure
as the underlying random walk $S$. On the other hand, in $d=1$ and $2$, $\beta_c(d)=0$ and hence at any $\beta>0$, the polymer measure experiences path localisation \cite{CSY03} and the path is expected to be super-diffusive. Therefore the disorder perturbation is {\em relevant} in $d=1$ and $2$.
Dimension $d=2$ turns out to be critical, and only finer details of the model determine whether disorder perturbation is relevant (called {\em marginal relevance}) or irrelevant (called {\em marginal irrelevance}). In general, a disorder perturbation of an underlying pure model (without disorder) is called {\em relevant} if any fixed disorder strength $\beta>0$, no matter how small, changes the large scale behaviour of the model (in particular, its scaling
limit), and it is called {\em irrelevant} if small $\beta>0$ does not change the large scale behaviour and the scaling limit. We refer to \cite{Gi11} for more discussions on disordered systems and the {\it Harris criterion} \cite{H74} on when disorder perturbation is predicted by physicists to be relevant/irrelevant.

The connection between disordered systems and singular SPDEs is that, we can regard such SPDEs as a disorder perturbation of the deterministic PDE without the noise term. As we saw in \eqref{eq:SHErg}, in dimensions $d<2$, the effective strength of the noise tends to zero as we zoom into smaller
and smaller space-time scales, which makes the disorder perturbation of the heat equation an irrelevant perturbation, while in dimensions $d>2$,
the disorder perturbation is a relevant perturbation, and $d=2$ is marginal. If we consider instead large scale behaviour by sending $\eps\uparrow\infty$ in \eqref{eq:SHErg}, then the disorder perturbation is relevant in $d<2$, irrelevant in $d>2$, and marginal in $d=2$, which corresponds exactly to
the DPM. Therefore the notion of disorder relevance (resp.\ irrelevance) for disordered systems, which is concerned with large scale behaviour, corresponds to the notion of the singular SPDE being subcritical (resp.\ supercritical), which is concerned with small scale behaviour. Marginality
for disordered systems corresponds to criticality for singular SPDEs.

If a disorder perturbation of a pure model is relevant, then heuristically, the effective strength of disorder will diverge as we zoom out in space-time
(equivalently, take the continuum limit by sending the lattice spacing to $0$). It should then be possible to send the disorder strength to $0$ at a suitable rate as the lattice spacing tends to $0$, such that we obtain a continuum limit that has non-trivial disorder dependence. For disordered systems
that are disorder relevant (not marginal), this was first carried for the one-dimensional directed polymer in \cite{AKQ14a}, which led to the continuum directed
polymer model \cite{AKQ14b}. Subsequently, it was shown in \cite{CSZ17a} that similar results should hold for more general disorder relevant binary-valued spin systems,
including long range DPM \cite{CSZ17a}, the disordered pinning model \cite{CSZ16}, and random field perturbation of the critical $2d$  Ising model \cite{CSZ17a, BS22}. See \cite{CSZ17a} for a more detailed discussion, and \cite{LMS24} for an extension to non-binary valued spin systems.

It is our attempt to investigate the disordered continuum limit of the DPM in the critical dimension $d=2$ that led to the results discussed in this article. Following a similar approach, the scaling limit of disordered pinning model with critical tail exponent~$\alpha=\frac{1}{2}$
was  recently obtained in \cite{WY24}, which is also connected to singular stochastic Volterra equations.

Note that disordered systems in general, such as the random field Ising model, have no time dimension in the noise and hence do not correspond to any singular SPDE. In light of our results for the $2d$  DPM, a natural question is:
\medskip

{\bf Q.} Are there marginally relevant disordered systems without a time dimension, for which we can obtain a non-trivial disordered continuum limit?

\subsection{More open questions}
We close by presenting some open questions and possible directions for future research.

\begin{itemize}

\item[1.] {\bf Universality}. It would be interesting to show that the critical $2d$ SHF arises as the universal scaling limit of models
beyond the directed polymer model and the mollified SHE. Potential candidates include the non-linear SHE considered in \cite{DG22, Tao24, DG23, DG24}, although even identifying the correct critical point appears challenging. It will also be interesting to explore connections to other statistical mechanics models.

\item[2.] {\bf Structure of the critical $2d$ SHF at a given time.}
We have established in \cite{CSZ25} that the critical $2d$ SHF at any deterministic time is almost surely singular with respect to the
Lebesque measure. This raises the question: is the measure supported on a set of fractal Hausdorff dimension? It is plausible that this dimension is marginally below $2$. Another unresolved question is whether the critical $2d$ SHF has infinite speed of propagation, that is, starting with an initial measure with compact support, at any later time, the SHF will almost surely assign positive mass to any open ball.\footnote{After the completion of these lecture notes, this question has been answered in the affirmative recently in \cite{CT25} and \cite{N25b}.}

\item[3.] {\bf Structure of the maxima.} The critical $2d$ SHF is a non-negative measure-valued process, which is a log-correlated field at each time.
Over the past decade there has been significant interests in studying log-correlated fields or their exponential (Gaussian Multiplicative Chaos), and in particular the structure of their maxima. Some examples are \cite{RV14, BDZ16, DSRV17, CGRV19, B20, CFLW21}, though the list is much longer.
A natural question is whether a similarly detailed picture can be achieved for the peaks of the critical $2d$ SHF. The study of maxima is already very interesting in the sub-critical regime, which is more closely connected to Gaussian Multiplicative Chaos (see \cite{CNZ25} for some recent progress).

\item[4.] {\bf Relation to GMC}. We have shown in \cite{CSZ23b} that at any given time, the critical $2d$ SHF cannot be realised as the (Wick) exponential of a generalised Gaussian field, and hence is not a Gaussian Multiplicative Chaos (GMC).
However, it remains open whether the critical $2d$ SHF could be the exponential of a perturbation of a Gaussian field. Alternatively, one might ask whether the law of the critical $2d$ SHF at each time is absolutely continuous with respect to the law of a GMC.

Given the form of the exponential weight in \eqref{eq:FK1}, it is natural to regard it as the exponential of a Gaussian field indexed by paths
in the Wiener space, which could lead to a GMC on path space. This was carried out in~\cite{BM20, BLM24} in dimension $d\geq 3$ in the subcritical regime. In dimension $2$, Clark and Mian \cite{CM24} constructed a continuum polymer measure that is the path space extension of the critical $2d$ SHF.  Although this polymer measure cannot be realised as a GMC with respect to the underlying Wiener measure, an interesting open question raised in \cite{CM24} is whether the continuum polymer measure satisfies the {`\it conditional GMC'} structure similar to what is known on the continuum diamond hierarchical lattice \cite{Cla23}. More precisely, this means that for any $\theta>\theta'$, the continuum polymer measure with
parameter $\theta$ can be realised as a GMC with the reference measure being the continuum polymer measure with parameter $\theta'$.\footnote{This conjecture has recently been proved in \cite{CT25}.}

\item[5.] {\bf On the moments.} It follows from the results in \cite{GQT21} that all moments of 
 $\SHF_{s,t}^{\theta} (\phi,\psi)$, with $\phi, \psi \in L^2(\R^2)$, are finite (the extension to $L^p(\R^2)$ spaces with $p\in(1,\infty)$ was achieved in \cite{CSZ23a}).
The upper bound on the growth of the $n$-th moment is $\exp(e^{c n^2})$ (see \cite[(8.26)]{GQT21}), while existing lower bounds are of the order $\exp(c n^2)$ \cite{CSZ23b}.
However, predictions in the physics literature \cite{R99} suggest that the growth should be $\exp(e^{n})$.\footnote{A lower bound of this order has been obtained recently in \cite{GN25}.} Although the rapid growth of moments does not allow
one to uniquely determine the distribution, it would be valuable to identify the correct order of growth and  understand the underlying mechanism.
Such asymptotics could shed light on the tail statistics of $\SHF_{s,t}^{\theta} (\phi,\psi)$ (for suitable $\phi,\psi$). On the other hand, it will also be interesting to investigate whether $\SHF_{s,t}^{\theta} (\phi,\psi)$ possesses negative moments. Recently, moment asymptotics have also been investigated in the sub- and quasi-critical regime \cite{CN25} or over shrinking balls in the critical window \cite{LiuZ24}. Extending and sharpening these results will also be interesting.

\item[6.] {\bf Interpolating nature and going beyond the critical window.}
The critical $2d$ SHF sits precisely at the boundary between the weak and strong disorder phases ($\hat\beta<1$ vs $\hat\beta>1$ in \eqref{eq:hbeta}).
The parameter $\theta$ in $\SHF_{s,t}^{\theta}$ provides an interpolation between the weak disorder phase
(as $\theta\to -\infty)$ and the strong disorder phase (as $\theta\to+\infty$). An interesting question is whether we can obtain information
on the strong disorder phase by taking $\theta\to+\infty$. A first step is to determine whether the random measure $\SHF_{s,t}^{\theta}$
converges locally to $0$ as $\theta\uparrow \infty$.\footnote{This has been proved recently in \cite{CT25} and \cite{BCT25}, with the latter providing quantitative bounds.} Any progress in understanding potential scaling limits in the strong disorder phase $\hat\beta>1$, or the very strong disorder phase $\beta_N\equiv \beta>0$ in \eqref{eq:hbeta}, would be very interesting.

\item[7.] {\bf Construct the Critical $2d$ Polymer Measure.}
The critical $2d$ SHF arises as the scaling limit of the directed polymer partition functions on the intermediate disorder scale.
From a statistical physics point of view, it will be very interesting to construct a corresponding continuum polymer model and investigate
its path properties and phase transition. The continuum polymer measure constructed in \cite{CM24} is an infinite measure whose law is
translation invariant in space. An open question is to construct a continuum polymer probability measure started at a single point,
similar to its discrete analogue defined in \eqref{eq:pathmeasure}. The main difficulty is that, the point-to-plane partition function,
which serves as a normalising constant in the discrete polymer measure in \eqref{eq:pathmeasure}, converges to $0$ in the critical window
as shown in Theorem \ref{T:subcritical}.

\item[8.] {\bf Black noise.} The critical $2d$ SHF
determines a family of $\sigma$-fields
$\cF_{s,t} := \sigma\big(\SHF_{u,v}^\theta \colon s \le u < v \le t\big)$
on the underlying probability space. The almost sure Chapman-Kolmogorov property established by Clark and Mian \cite{CM24} and stated in
Theorem \ref{thm:Clark} already implies that
this family of $\sigma$-fields is a \emph{noise}
in the sense of Tsirelson,
which is a continuum generalisation of the notion of a family of i.i.d.\ random variables. (The
family $(\cF_{s,t})_{s < t}$ being a noise means that, for $s < t < u$,
the $\sigma$-fields $\cF_{s,t}$ and $\cF_{t,u}$
are independent and $\cF_{s,u}$ is generated by $\cF_{s,t}$ and $\cF_{t,u}$;
moreover, there is a group $(\theta_h)_{h \in \R}$ of measure preserving maps (time shifts)
such that $\theta_h(A) \in \cF_{s+h,t+h}$ for $A \in \cF_{s,t}$.)

It would be very interesting to show that the noise generated by the
critical $2d$ SHF is in fact a (one-dimensional) \emph{black noise}\footnote{The black noise property was recently established in 
\cite{GT25}. Closely related results on noise sensitivity were obtained in \cite{CD25}.}
(we refrain from giving here
a precise definition of black noise and refer to the survey by Tsirelson \cite{Tsi04a}).
Examples of one-dimensional black noise include the Brownian web \cite{Tsi04b} and the directed landscape \cite{HP24},
and examples of two-dimensional black noise, which is much more challenging to prove, include the continuum limit
of critical planar percolation \cite{SS11} and the Brownian web \cite{EF16}.

\item[9.] {\bf Renormalisation of the 1-point statistics.} Theorem \ref{T:subcritical} establishes
that, in the critical window of the intermediate disorder scale, the point-to-plane partition function
$Z^{\beta_N}_N(0)$ of the directed polymer converges in distribution to $0$. This aligns with the fact that the critical $2d$
SHF is almost surely singular with respect to the Lebesque measure.
A natural question is to determine the rate at which $\log Z^{\beta_N}_N(0)$
(i.e., the one-point statistic of the discretized $2d$ KPZ equation in the critical window) converges to $-\infty$.
For critical GMC, which is connected to the directed polymer on trees, such a rate of convergence to $0$ has been identified
\cite{RV14}.

Based an extrapolation from the subcritical regime, we conjecture that in the critical window,
$$
\E[\log Z^{\beta_N}_N(0)] \sim -\frac{1}{2}\log \log N,
$$
with a variance of the same order $\log \log N$.  It will be very interesting
to identify the limiting law of $\log Z^{\beta_N}_N(0)$ after proper centering and scaling, which would provide a partial
interpolation between the Gaussian distribution in the weak disorder phase $\hat\beta<1$ and the $2d$ analogue of the
Tracy-Widom distribution in the very strong disorder phase $\beta_n\equiv \beta>0$ in \eqref{eq:hbeta}.

\item[10.] {\bf Construct the Critical $2d$ KPZ.}
As shown in Theorem \ref{T:KPZ}, in the subcritical regime $\hat\beta<1$, the $2d$ KPZ has Gaussian fluctuations and solves
the $2d$ Edwards-Wilkinson equation. It remains open to define the solution of the $2d$ KPZ in the critical window since we cannot
perform the Cole-Hopf transformation on the critical $2d$ SHF, which is a random measure and not a function. The challenge is to
study the field of log partition functions $\log Z^{\beta_N}_N(z)$, not only its mean and variance at each $z\in\Z^2$,
but also its covariance at different $z\in \Z^2$. It would also be interesting to compare with physics works \cite{FT94} where
a dynamic renormalisation approach suggests that a non-Gaussian limit (referred to as `fixed point' in the physics language)
 exists if the parameter $\theta=\theta_N$ in the critical window \eqref{eq:sigma} is chosen to diverge at the rate of $\log\log N$.\footnote{We thank A. Kupiainen for bringing \cite{FT94} to our attention. NZ also thanks R. Bauerschmidt and P. Le Doussal for illuminating discussions on
 the implied parameter scaling $\theta_N$.}

\item[11.] {\bf Step into critical singular SPDEs}.
The theory of singular SPDEs has undergone revolutionary developments thanks to the frameworks
of regularity structures \cite{H14}, paracontrolled calculus \cite{GIP15}, renormalisation group approach \cite{Kup16, D22}, energy solutions \cite{GJ14, GP18}, etc. However, these theories are restricted to subcritical singular SPDEs. For SHE and KPZ, this means dimension $d<d_c=2$.
The critical $2d$ SHF provides a rare example of a model in the critical dimension and at the critical point, which has a non-Gaussian scaling
limit. A natural question is whether the critical $2d$ SHF can help shed some light on other critical singular SPDEs, such as dynamics for the $\phi^4$ model \cite{MW17, AK20, BG20, GH21} and the Yang-Mills model \cite{CCHS22, CCHS24, CS23} at the critical dimension $4$. Recently, there have much progress in the analysis of critical and supercritical singular SPDEs that admit an explicit Gaussian stationary measure, see the lecture notes
\cite{CT24} and the references therein. However, these models do not exhibit a phase transition as we see in the $2d$ SHE. It will be interesting
to find other critical or supercritical singular SPDEs that exhibit a phase transition.

 \end{itemize}

\section*{Acknowledgements}
The material of these notes was motivated by and presented in mini-courses in the Summer School ``{\it Statistical Mechanics and Stochastic PDEs}'' at Cetraro, Italy,
under Fondazione CIME, in the ``{\it Spring School on Critical Singular Stochastic PDEs}''  at
the Beijing International Center for Mathematical Research, and in the ``{\it Intensive Lecture Series}'' at Seoul National University.
We thank the organisers of these meetings -- CIME foundation,
Weijun Xu, Insuk Seo and Sung-Soo Byun -- for providing the opportunity to present these lectures.
We are also grateful to \href{https://www.flickr.com/photos/antistath/}{Stathis Floros}
for his kind permission to use his \href{https://en.wikipedia.org/wiki/Meteora\#/media/File:Meteora's_monastery_2.jpg}{picture of Meteora}.

F.C.~is supported by INdAM/GNAMPA.
R.S.~is supported by NUS grant A-8001448-00-00 and NSFC grant 12271475.



\newcommand{\etalchar}[1]{$^{#1}$}

\end{document}